\documentclass[11pt]{article}
\usepackage{amssymb,amsmath,amsfonts,amsthm,epsfig,latexsym,color}

\textwidth=16. true cm
\textheight=22. true cm
\voffset=-2. true cm
\hoffset = -2.5 true cm

\makeatletter
\newcommand{\subsectionruninhead}{\@startsection{subsection}{2}{0mm}
{-\baselineskip}{-0mm}{\bf\large}}
\newcommand{\subsubsectionruninhead}{\@startsection{subsubsection}{3}{0mm}
{-\baselineskip}{-0mm}{\bf\normalsize}}
\makeatother

\newtheorem*{theorem*}{Theorem}
\newtheorem*{maintheorem}{Main Theorem}
\newtheorem*{addenduma}{Addendum A}
\newtheorem*{addendumb}{Addendum B}
\newtheorem*{strong}{Palis conjecture}
\newtheorem{theorem}{Theorem}
\newtheorem{proposition}{Proposition}[section]
\newtheorem*{proposition*}{Proposition}
\newtheorem*{corollary*}{Corollary}
\newtheorem{corollary}[proposition]{Corollary}
\newtheorem{lemma}[proposition]{Lemma}

\newtheorem{claim}[proposition]{Claim}
\newtheorem*{claim*}{Claim}
\theoremstyle{definition}
\newtheorem{definition}[proposition]{Definition}
\theoremstyle{remark}
\newtheorem{remark}[proposition]{Remark}
\newtheorem{remarks}[proposition]{Remarks}

\numberwithin{equation}{section}

 \def\RR{{\mathbb R}}  
   
 \def\ZZ{{\mathbb Z}}

    \def\cS{\mathcal{S}}
    
\def\cC{\mathcal{C}}   \def\cO{\mathcal{O}} \def\cU{\mathcal{U}}
\def\cD{\mathcal{D}}    \def\cV{\mathcal{V}}
    
   \def\cR{\mathcal{R}}

\newcommand{\Tang}{\operatorname{Tang}}
\newcommand{\Cycl}{\operatorname{Cycl}}

\newcommand{\supp}{\operatorname{Supp}}
\newcommand{\diff}{\operatorname{Diff}}

\newcommand{\diam}{\operatorname{Diam}}

\setcounter{tocdepth}{3}

\begin{document}
\title{Partial hyperbolicity far from homoclinic bifurcations}
\author{Sylvain Crovisier}
\date{\today}
\maketitle

\begin{abstract}
We prove that any diffeomorphism of a compact manifold can be
$C^1$-approximated by a diffeomorphism which exhibits a homoclinic bifurcation
(a homoclinic tangency or a heterodimensional cycle) or by a diffeomorphism
which is partially hyperbolic (its chain-recurrent set splits into partially hyperbolic
pieces whose centre bundles have dimensions less or equal to two).
We also study in a more systematic way the central models introduced in~\cite{crovisier-palis-faible}.
\begin{center}
\bf R\'esum\'e
\end{center}
\emph{Hyperbolicit\'e partielle loin des bifurcations homoclines.}
Nous montrons que tout dif\-f\'e\-o\-mor\-phis\-me d'une vari\'et\'e compacte peut \^etre approch\'e en topologie
$C^1$ par un diff\'eomorphisme qui pr\'esente une bifurcation homocline (une tangence homocline ou un
cycle h\'e\-t\'e\-ro\-di\-men\-si\-on\-nel) ou bien par un diff\'eomorphisme partiellement hyperbolique
(son ensemble r\'ecurrent par cha\^\i nes se d\'ecompose en pi\`eces partiellement hyperboliques dont les fibr\'es centraux sont de dimensions au plus \'egales \`a 2). Nous \'etudions \'egalement d'un point de vue plus syst\'ematique
les mod\`eles de dynamiques centrales introduits en~\cite{crovisier-palis-faible}.
\vskip 2mm

\begin{description}
\item[\bf Key words:] Homoclinic tangency, heterodimensional cycle, hyperbolic diffeomorphism, ge\-ne\-ric dynamics,
homoclinic class, partial hyperbolicity.
\item[\bf MSC 2000:] 37C05, 37C20, 37C29, 37C50, 37D25, 37D30.
\end{description}
\end{abstract}

\setcounter{section}{-1}
\tableofcontents

\section{Introduction}
\subsection{Towards a characterisation of non-hyperbolic systems}
One of the goal of the dynamics is the study of the evolution of most systems.
It appeared in the early sixties with Smale~\cite{smale}
that the dynamics on a compact manifold $M$
can be well described from topological and statistical points of view for
a large class of differentiable systems, the \emph{uniformly hyperbolic} ones:
these are the dynamics whose recurrence locus (the chain-recurrent set)
decomposes into a finite number of invariant pieces whose tangent spaces
split into two invariant subbundles $TM=E^s\oplus E^u$, the first being uniformly contracted and the
second uniformly expanded.  These dynamics have a nice property: they are stable (hence form an open set of systems). But they are not ``universal": there exist open sets of systems which are not hyperbolic.

Two simple obstructions for the hyperbolicity have been discovered.
On the one hand Abraham and Smale~\cite{abraham-smale} have build an open set of diffeomorphisms
such that two hyperbolic periodic orbits with different stable dimensions are linked by the dynamics:
by perturbation one gets a diffeomorphism having a \emph{heterodimensional cycle}, i.e.
such that the stable manifold of each of these orbits intersects the unstable manifold of the other.
On the other hand Newhouse~\cite{newhouse} has obtained a $C^2$-open set of diffeomorphisms
which possess a hyperbolic set whose stable and unstable laminations have non-transverse intersections:
by perturbation one gets hyperbolic periodic points whose stable and unstable manifolds
have a \emph{homoclinic tangency}. These two mechanisms have taken a major role in the study of non-hyperbolic
dynamics, see~\cite{PT,BDV}.
\medskip

Palis has proposed~\cite{palis-conjecture}
that these two obstructions characterise the non-hyperbolic systems.
\begin{strong}
Any $C^r$-diffeomorphism ($r\geq 1$) of a compact manifold
can be $C^r$-appro\-xi\-ma\-ted by a hyperbolic diffeomorphism or
by a diffeomorphism having a homoclinic tangency or a heterodimensional cycle.
\end{strong}
If this holds we would have on one side some systems whose global dynamics is
well described and stable, and on the other side a dense set of systems presenting
local bifurcations easy to detect (they involve periodic orbits and their local invariant manifolds)
and which generate important changes on the dynamics.

Due to the difficulties to perturb the orbits of a system in topology $C^r$ for $r>1$,
the main progresses have only been obtained for the $C^1$-dynamics.
On surfaces, the conjecture has been solved by E. Pujals and M. Sambarino~\cite{pujals-sambarino1}.
In higher dimensions, some partial results have been obtained, for instance~\cite{wen-conjecture,pujals2}.
\medskip

One approach consists in studying the generic dynamics in the set
$\diff^1(M)\setminus \overline{\Tang\cup \Cycl}$ of diffeomorphisms far from homoclinic tangencies
and in proving some hyperbolicity properties:
various forms of weak hyperbolicity have been already considered.
For instance, a splitting $TM=E\oplus F$ of the tangent bundle above an invariant set $K$
is \emph{dominated} if there exists $n\geq 1$ such that for any $x\in K$ and unitary vectors
$u\in E_x, v\in F_x$ one has $\|Df^n.u\|<\|Df^n.v\|$.
A set $K$ is \emph{partially hyperbolic} if the tangent bundle above $K$
admits a dominated splitting $E^s\oplus E^c\oplus E^u$ such that $E^s$ is uniformly contracted
and $E^u$ is uniformly expanded (one allows to one of the bundles $E^s$ and $E^u$ to be degenerated
but not both): the difference with the uniform hyperbolicity is the presence of a central bundle.
Generalising the definition of hyperbolic diffeomorphisms, one defines
the \emph{partially hyperbolic diffeomorphisms} as the diffeomorphisms whose chain-recurrent set
decomposes into finitely many invariant pieces, each of them being partially hyperbolic.

One can also benefit from the recent progresses towards the understanding of general $C^1$-generic
dynamics. In particular, with C. Bonatti~\cite{BC} we have raised the interest of Conley's decomposition
of the chain-recurrent set into (disjoint, invariant and compact) chain-recurrence classes.
For a $C^1$-generic diffeomorphism any chain-recurrence class which contains a hyperbolic periodic orbit $\cO$
coincides with the \emph{homoclinic class} $H(\cO)$ of $\cO$: this is the smallest compact set which
contains all the periodic orbits $\cO'$ such that $\cO,\cO'$ are \emph{homoclinically related},
that is such that the stable manifold of each of these periodic orbits intersects transversely
the unstable manifold of the other. The chain-recurrence classes without periodic points are called \emph{aperiodic classes}.
\medskip

\subsection{Main results}
In~\cite{wen-conjecture}, L. Wen
has shown that for any $C^1$-generic diffeomorphism far from homoclinic bifurcations
the minimally non-hyperbolic sets are partially hyperbolic
with centre bundles of dimensions smaller or equal to two.
Our main result is a global version of this work.

\begin{maintheorem}
There exists a dense G$_\delta$ subset $\cR$ of $\diff^1(M)\setminus \overline{\Tang\cup\Cycl}$
such that any diffeomorphism $f\in \cR$ is \emph{partially hyperbolic}:
\begin{itemize}
\item[--] each homoclinic class $H$ has a dominated splitting
$T_{H}M=E^s\oplus E^c_1\oplus E^c_2\oplus E^u$ which is partially hyperbolic
and such that each central bundle $E^c_1, E^c_2$ has dimension $0$ or $1$;
\item[--] each aperiodic class $\cC$ of $f$ has a partially hyperbolic structure
$T_\cC M=E^s\oplus E^c \oplus E^u$ with $\dim(E^c)=1$.
\end{itemize}
\end{maintheorem}
\medskip

\noindent
The Main theorem is a consequence of two results which provide
more precise informations on the chain-recurrence classes of $f$:

\begin{addenduma}
Each \emph{homoclinic class} $H=H(\cO)$ of $f\in \cR$ is ``weakly hyperbolic":
\begin{itemize}
\item[--] $H$ has a dominated splitting $T_{H}M=E\oplus F$ such that $\dim(E)$
is equal to the stable dimension of $\cO$.
\item[--] There is a continuous family of ``chain-stable" manifolds
$(\cD^{cs}_x)_{x\in H}$ of dimension $\dim(E)$:
for each point $x\in H$, $\cD^{cs}_x$ is a $C^1$-embedded disk tangent to $E_x$
contained in the chain stable set of $H$
and there exist arbitrarily small open neighbourhoods $U_x$
of $x$ in $\cD^{cs}_x$ such that the family $(U_x)_{x\in H}$ satisfies:
$$f(\overline{U_x})\subset U_{f(x)}.$$
\item[--] There is a continuous family of ``chain-unstable" manifolds
$(\cD^{cu}_x)_{x\in H}$ of dimension $\dim(F)$ satisfying the same properties for $f^{-1}$.
\end{itemize}

\noindent Moreover if the bundle $E$ is not uniformly contracted, then:
\begin{itemize}
\item[--] there is a dominated decomposition
$E=E^s\oplus E^c_1$ such that $E^s$ is uniformly contracted and $E^c_1$ has dimension $1$,
\item[--] the class $H$ contains some periodic points $q$ homoclinically related to $\cO$
and whose Lyapunov exponent along $E^c_1$ is arbitrarily close to $0$.
\end{itemize}
A similar property holds if $F$ is not uniformly expanded.
\end{addenduma}
\medskip

\begin{addendumb}
Each \emph{aperiodic class} $\cC$ of $f\in \cR$ is a minimal set
and its dynamics along the central bundle $E^c$ is ``neutral":
there is a continuous family of $C^1$-embedded curves
$(\cD^{c}_x)_{x\in \cC}$ tangent to $E^c_x$
and there exist arbitrarily small open neighbourhoods $U_x, V_x$ of $x$ in $\cD^{cs}_x$
such that the families $(U_x)_{x\in H(p)}$, $(V_x)_{x\in H(p)}$ satisfy:
$$f(\overline{U_x})\subset U_{f(x)}\quad \text{and} \quad
f^{-1}(\overline{V_x})\subset V_{f^{-1}(x)}.$$
In particular the Lyapunov exponent along $E^c$ for each invariant probability measure
supported on $\cC$ is equal to $0$.
\end{addendumb}

\begin{remark}
The proof will show that the extremal bundles $E^s,E^u$ of the aperiodic classes
are not degenerated. For the homoclinic classes (different from the sinks and sources)
the same result holds: this follows from~\cite{pujals-sambarino1,pujals-sambarino2}
if $\dim(M)=2$ or if the central bundle $(E^c_1\oplus E^c_2)$ has dimension $1$;
when the central bundle of the class is two-dimensional, this follows from~\cite{CP}.
\end{remark}
\begin{remark}
There are only finitely many homoclinic classes whose central bundle has dimension $2$
(see the end of section~\ref{s.conclusion}).
\end{remark}
\medskip

Let us explain how this is related to other works.
With Pujals we use these results for proving~\cite{CP}
that any $C^1$-generic diffeomorphism far from the homoclinic bifurcations
is \emph{essentially hyperbolic}: there exists a finite number of hyperbolic attractors
whose basin is dense in the manifold.
Recently J. Yang has announced~\cite{yang-aperiodic} that a part of the Main Theorem
can be extended to the diffeomorphisms far from the homoclinic tangencies:
any aperiodic class of a $C^1$-generic diffeomorphism
$f\in \diff^1(M)\setminus \overline{\Tang}$ has a partially hyperbolic structure
$TM=E^s\oplus E^c\oplus E^u$ with $\dim(E^c)=1$.
\medskip

The setting of our proofs is the $C^1$-generic dynamics far from the homoclinic tangencies.
Three ingredients are used.
\begin{itemize}
\item[--] Wen has shown~\cite{wen-tangence} that the splitting into stable and unstable
spaces above periodic orbits having a same stable dimension is dominated.
\item[--] One then uses Liao's selecting lemma in order to discuss the hyperbolicity properties of these
splittings: when a bundle is not uniformly contracted or expanded, the dynamics contains
a minimal set $K$ which possesses a partially hyperbolic structure $E^s\oplus E^c\oplus E^u$
with $\dim(E^c)=1$. The dynamics in the central direction $E^c$ is not well described by the dynamics
of the tangent map, since all the measures supported on $K$ have a Lyapunov exponent along $E^c$ equal to zero.
\item[--] One thus describes the dynamics in the central direction through a more topological approach:
this is done by using the central models introduced in~\cite{crovisier-palis-faible}.
\end{itemize}
The main part of the paper is devoted to a more systematic study of the central models.
The topological description of the central dynamics of a partially hyperbolic set
can be illustrated in the simple case of a germ of an orientation preserving diffeomorphism $f$ of $\RR$
that fixes a point $p$. The dynamics falls in one of the following types
(see figure~\ref{f.type}):
\begin{figure}[ht]
\begin{center}
\input{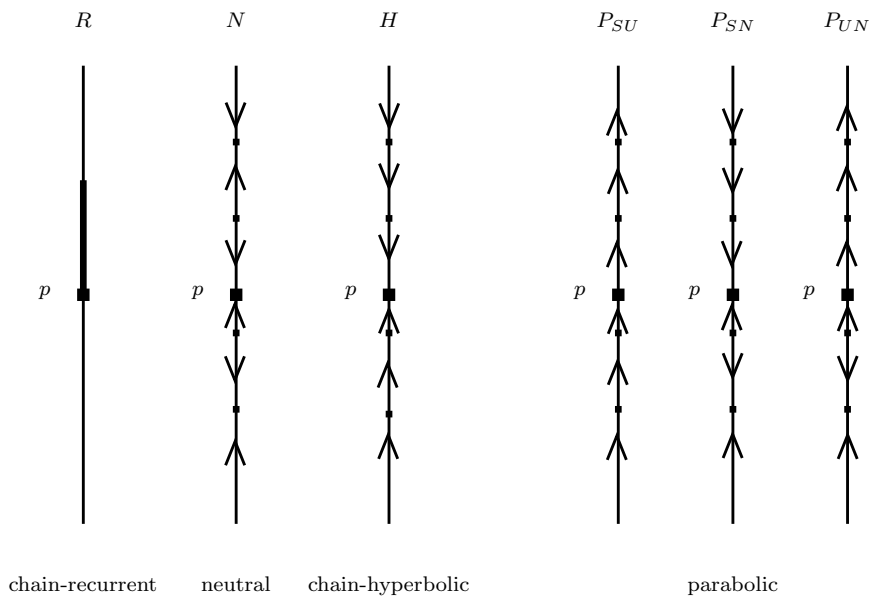}
\end{center}
\caption{Types for the central dynamics near a fixed point.\label{f.type}}
\end{figure}
\begin{itemize}
\item[--] \emph{Chain-recurrent}: a non-trivial interval of fixed points contains $p$.
\item[--] \emph{Neutral}: $p$ is accumulated on both sides by wandering intervals that are mapped to the right
and other that are mapped to the left.
\item[--] \emph{Chain-hyperbolic}: in the attracting case,
$p$ is accumulated on both sides by wandering intervals that are mapped towards $p$
and there is no wandering interval mapped away from $p$.
One defines similarly the chain-hyperbolic repelling type.
\item[--] \emph{Parabolic}: one side is chain-hyperbolic attracting and the other chain-hyperbolic repelling;
or one side is chain-hyperbolic and the other side is neutral.
\end{itemize}
If the diffeomorphism is $C^1$-generic,
we then prove that if the set $K$ has chain-recurrent or chain-hyperbolic type
or if it has parabolic type and is non-twisted, then
$K$ is contained in a homoclinic class.
If $K$ has parabolic type and a twisted geometry, one can get a heterodimensional cycle by perturbation.
If $K$ has neutral type and is contained in a larger chain-transitive set $\Lambda$,
there exist in $\Lambda$ some pseudo-orbit connecting $K$ to a given point in $\Lambda\setminus K$:
the neutral property should imply that these pseudo orbits leave a small neighbourhood of $K$
along the strong stable and strong unstable leaves of $K$; it is then possible to create a
heterodimensional cycle.

\paragraph{Acknowledgements.}
\emph{I am gratefull to Christian Bonatti,
Rafael Potrie, Enrique Pujals, Martin Sambarino and Dawei Yang
for their comments about this work.}

\subsection{Notations and definitions}
We fix a diffeomorphism $f\in \diff^1(M)$ and recall briefly some definitions.
\smallskip

A sequence $(z_n)$ in $M$
is a $\varepsilon$-\emph{pseudo-orbit} if $d(f(z_n),z_{n+1})$ is smaller than $\varepsilon>0$
for each $n$.
An invariant compact set $K$ is \emph{chain-transitive} if
for any $\varepsilon>0$ it contains a periodic $\varepsilon$-pseudo-orbit
that is $\varepsilon$-dense in $K$.
The maximal chain-transitive sets are the \emph{chain-recurrence classes}:
two classes are disjoint or equal. The union of the chain-recurrence classes is a compact set
called the \emph{chain-recurrent set}.
If $K$ is an invariant compact set, its \emph{chain-stable set} is the set of points
$x$ that may be joint to $K$ by a $\varepsilon$-pseudo-orbit for any $\varepsilon>0$;
its \emph{chain-unstable set} is the chain-stable set of $K$ for $f^{-1}$.
\smallskip

The \emph{index} of a hyperbolic periodic orbit $\cO$ is the dimension of its stable space $E^s(\cO)$.
Its \emph{homoclinic class} $H(\cO)$ is the closure of the transverse intersection points between
the invariant manifolds $W^s(\cO)$ and $W^u(\cO)$. Two hyperbolic periodic orbits $\cO,\cO'$
are \emph{homoclinically related} if both $W^s(\cO)\cap W^u(\cO')$ and
$W^u(\cO)\cap W^s(\cO')$ contain transverse intersection points: the homoclinic class $H(\cO)$
also coincides with the closure of the set of hyperbolic periodic points whose orbit is homoclinically related
to $\cO$. A \emph{homoclinic tangency} associated to a hyperbolic periodic orbit $\cO$
is a non-transverse intersection point in $W^s(\cO)\cap W^u(\cO)$.
A \emph{cycle} associated to two hyperbolic periodic orbits $\cO,\cO'$ is the compact invariant set
obtained as union of $\cO$, $\cO'$
with two heteroclinic orbits: one in $W^s(\cO)\cap W^u(\cO')$ and one in $W^u(\cO)\cap W^s(\cO')$.
When the indices of $\cO,\cO'$ are different, the cycle is \emph{heterodimensional}.
\smallskip

If $E$ is a continuous invariant one-dimensional bundle over an invariant compact set $K$ and 
if $\mu$ is an invariant probability measure supported on $K$, the \emph{Lyapunov exponent of $\mu$
along $E$} is
$$\lambda(\mu,E^c)=\int \log \|Df_{|E}\|d\mu.$$
A measure is \emph{hyperbolic} if its Lyapunov exponents are all non-zero.
\smallskip

Let $K$ be an invariant set.
A decomposition $T_KM=E\oplus F$ of the tangent bundle over $K$ into
two linear subbundles is a \emph{dominated splitting} if there exists an integer $N\geq 1$ such that
for any point $x\in K$ and any unitary vectors $u\in E_x$ and $v\in F_x$, we have
$$2\|D_xf^N.u\|\leq \|D_xf^N.v\|.$$
(One says that the decomposition is \emph{$N$-dominated}.)

A linear subbundle $E$ of $T_{K}M$
is \emph{uniformly contracted} if there exists an integer $N\geq 1$
such that for any point $x\in K$ one has $\|D_xf^N\|\leq \frac{1}{2}$.
An invariant compact set $K$ is \emph{partially hyperbolic} if its tangent bundle
decomposes as the sum of three linear subbundles $E^{s}\oplus E^c\oplus E^{u}$
(at most one of the two extremal bundles $E^s,E^u$ is non-trivial) so that:
\begin{itemize}
\item The splittings $E^{s}\oplus (E^c\oplus E^{u})$ and $(E^{s}\oplus E^c)\oplus E^{u}$
are dominated.
\item The bundle $E^{s}$ is uniformly contracted by $f$ and the bundle $E^{u}$ is
uniformly contracted by $f^{-1}$.
\end{itemize}
The partially hyperbolic structure extends to any
invariant compact set contained in a small neighbourhood of $K$.
(We refer to~\cite{BDV} for a survey on the properties satisfied by partially hyperbolic
sets and dominated splittings.)
\smallskip

If $K$ is an invariant compact set and $E$ a continuous linear subbundle of $T_KM$,
a \emph{cone around $E$} is a cone field defined on a neighbourhood of $K$
induced by a neighbourhood of $E$ in the unitary tangent space of $M$.
\smallskip

Let $K$ be a compact set which has a partially hyperbolic structure
$E^s\oplus E^c\oplus E^u$ such that $E^c$ is one-dimensional.
A periodic orbit $\cO\subset K$ has a \emph{strong homoclinic intersection} if
$W^{ss}(\cO)\setminus \cO$ and $W^{uu}(\cO)\setminus \cO$ have an intersection.
Strong homoclinic intersections sometimes produce heterodimensional cycles by $C^1$-per\-tur\-ba\-tion.
The following proposition is proved in~\cite[proposition 2.4]{pujals1} and~\cite[proposition 2.5]{pujals2}.
\begin{proposition}\label{p.strong-cycle}
Let $f$ be a diffeomorphism, $\cU$ be a neighbourhood of $f$ in $\diff^1(M)$
and $K$ be a compact set which has a partially hyperbolic structure
$E^s\oplus E^c\oplus E^u$ such that $E^c$ is one-dimensional.
Then, there exists $\delta>0$ such that,
for any periodic orbit $\cO\subset K$  whose central exponent along $E^c$ belongs to $[-\delta,\delta]$ and which has a strong homoclinic connection
there exists $g\in \cU$ which has a heterodimensional cycle.

Moreover if  $U$ is a neighbourhood of $\cO$ and $V$ is an open set containing $\cO$ and the
homoclinic orbit, then  the heterodimensional cycle is contained in $V$ and associated to
periodic orbits contained in $U$.
\end{proposition}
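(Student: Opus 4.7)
The plan follows Pujals' strategy: combine a Franks' lemma adjustment of the central eigenvalue of $\cO$ with a saddle-node unfolding along the one-dimensional centre $E^c$, and then exploit the strong homoclinic connection to produce the second arm of a heterodimensional cycle. Writing $i=\dim E^s$, I aim to construct $g\in\cU$ together with two hyperbolic periodic orbits $\cO_-$ and $\cO_+$ of indices $i$ and $i+1$ respectively, linked by heteroclinic intersections $W^u(\cO_-)\cap W^s(\cO_+)\neq\emptyset$ and $W^u(\cO_+)\cap W^s(\cO_-)\neq\emptyset$.

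I first choose $\delta>0$ small enough that any modification of the derivative of $f$ along $\cO$ of magnitude at most $\delta$ in the central direction produces a diffeomorphism in $\cU$ and preserves the domination $E^s\oplus E^c\oplus E^u$ on a fixed neighbourhood of $\cO$. Applying Franks' lemma, I obtain $g_1\in \cU$ coinciding with $f$ outside an arbitrarily small neighbourhood of $\cO$ and for which the central eigenvalue of $\cO$ is exactly $\pm 1$; since the extremal bundles are untouched, $W^{ss}(\cO)$ and $W^{uu}(\cO)$ depend $C^1$-continuously and still have a strong homoclinic intersection close to its original position. A further arbitrarily $C^1$-small localized perturbation $g_2\in \cU$ unfolds the saddle-node, so that the degenerate orbit of $g_1$ splits into two hyperbolic orbits $\cO_-$ (with $E^c$ unstable, index $i$) and $\cO_+$ (with $E^c$ stable, index $i+1$), both $C^1$-close to $\cO$. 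The saddle-node geometry produces a one-dimensional heteroclinic arc inside the centre manifold from the centre-repeller $\cO_-$ to the centre-attractor $\cO_+$, giving the first connection $W^u(\cO_-)\cap W^s(\cO_+)\neq\emptyset$.

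The second connection is extracted from the persistent strong homoclinic. Because of the index assignment, $W^s(\cO_-)=W^{ss}(\cO_-)$ and $W^u(\cO_+)=W^{uu}(\cO_+)$, so producing the cycle amounts to realizing $W^{ss}(\cO_-)\cap W^{uu}(\cO_+)\neq\emptyset$. Using the freedom in the saddle-node direction, combined if necessary with a supplementary $C^1$-small perturbation along the strong-homoclinic orbit, one adjusts the centre-manifold traces of the forward and backward asymptotics of the strong homoclinic so that they sit respectively on $\cO_-$ and on $\cO_+$; this identifies the strong stable leaf through the persistent homoclinic point with $W^{ss}(\cO_-)$, and the strong unstable leaf with $W^{uu}(\cO_+)$. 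The two connections together form the required cycle for $g=g_2$. Localization (cycle inside $V$, periodic orbits inside $U$) is automatic: both perturbations are supported in arbitrarily small neighbourhoods of $\cO\subset U$ and of the strong-homoclinic orbit contained in $V$, the first heteroclinic lies on the central arc near $\cO$, and the second is a $C^1$-small perturbation of the original strong homoclinic orbit.

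The hard part is this last step. The centre dynamics of $g_2$ naturally drives forward asymptotics towards the centre-attractor $\cO_+$ and backward asymptotics towards the centre-repeller $\cO_-$, which is the \emph{opposite} matching from what the cycle requires. Forcing the correct matching amounts to imposing two codimension-one conditions on the pair of centre-projections, which is why one needs both the one-parameter freedom in the saddle-node unfolding and the additional localized perturbation along the strong-homoclinic orbit. Controlling these two conditions by a transversality argument, exploiting the smallness of $\delta$ so that the whole perturbation family stays in $\cU$, is what makes the proposition go through.
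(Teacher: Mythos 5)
The paper itself does not prove this proposition: it simply cites \cite[Proposition 2.4]{pujals1} and \cite[Proposition 2.5]{pujals2}, so there is no in-paper argument to compare against. Your overall plan (Franks' lemma to normalise the central eigenvalue, a saddle-node unfolding along $E^c$ producing $\cO_-$ of index $\dim E^s$ and $\cO_+$ of index $\dim E^s+1$ together with the robust centre-arc connection $W^u(\cO_-)\pitchfork W^s(\cO_+)$, and then exploitation of the strong homoclinic orbit to close the cycle with the fragile connection $W^{ss}(\cO_-)\cap W^{uu}(\cO_+)$) is precisely the strategy of the cited references, and the index bookkeeping, the identification $W^s(\cO_-)=W^{ss}(\cO_-)$, $W^u(\cO_+)=W^{uu}(\cO_+)$, and the localisation claims (cycle in $V$, orbits in $U$) are all correct.

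Where the proposal falls short is at the very step you flag as ``the hard part.'' You correctly observe that, after the unfolding, the strong homoclinic orbit naturally lands forward in $W^s(\cO_+)$ and backward in $W^u(\cO_-)$ (reproducing the connection you already have from the centre arc), and that forcing the other matching amounts to two codimension-one conditions on the centre projections of the two asymptotics. You then assert these conditions can be met ``by a transversality argument'' using ``the one-parameter freedom in the saddle-node unfolding'' together with an unspecified ``supplementary $C^1$-small perturbation along the strong-homoclinic orbit.'' This is exactly the technical content of the proposition and it is not carried out: you never exhibit a family of perturbations, show that its parameters move the two centre-projections independently and at a scale comparable to the unfolding amplitude $\mu$, or argue that one can solve for $\mu\neq 0$. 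A workable route is to use three explicit degrees of freedom with disjoint supports --- the unfolding amplitude $\mu$, and two localised translations along $E^c$ supported near $f^{m}(z)$ and near $f^{-m}(z)$ for $m$ large (hence away from $\cO$ and from each other), which control the forward and backward centre-projections independently; but this must be written down and verified, not invoked. A secondary point: you write ``$\pm 1$'' for the normalised centre eigenvalue, but the unfolding of a $-1$ eigenvalue is a period-doubling, not a pair of fixed points, so the argument you sketch treats only the orientation-preserving case and the orientation-reversing case needs separate handling.
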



\section{Non-hyperbolic dominated splittings}
In this section $f$ is a diffeomorphism that is not the $C^1$-limit of diffeomorphisms
exhibiting a homoclinic tangency.
We prove the existence of invariant compact sets
having dominated and partially hyperbolic splittings with a one-dimensional
central bundle.

\subsection{Existence of dominated splittings}
We first recall a theorem of Wen~\cite{wen-tangence}
(an improved version was proved by N. Gourmelon, see~\cite{gourmelon-tangence}).

\begin{theorem*}[Wen]
Let $f$ be a diffeomorphism in $\diff^1(M)\setminus \overline{\Tang}$ and $i\in\{1,\dots,\dim(M)-1\}$.
Then, the tangent bundle above the set of hyperbolic periodic points with index $i$ has a dominated decomposition
$E\oplus F$ with $dim(E)=i$.
\end{theorem*}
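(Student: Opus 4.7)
The plan is to argue by contradiction: assume that the closure of the set $P_i$ of hyperbolic periodic points of index $i$ carries no dominated splitting $E\oplus F$ with $\dim(E)=i$. On each periodic orbit in $P_i$, the tangent bundle carries the natural hyperbolic splitting $E^s\oplus E^u$ with $\dim(E^s)=i$, so the contradiction hypothesis means precisely that this orbit-wise splitting is not $N$-dominated, uniformly in the orbit, for any $N\geq 1$.

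The first step is to translate this global failure of domination into a concrete geometric statement along a single periodic orbit, using a standard criterion of Ma\~n\'e/Liao type: for every $N$ there exists an orbit $\cO_N\in P_i$ and an integer $m$ such that the product along $m$ consecutive iterates of the ratio $\|Df^N_{|E^u}\|/\|Df^N_{|E^s}\|$ stays close to $1$. By a selection/extraction argument one then obtains orbits of arbitrarily long period along which the angle between $E^s$ and $E^u$ can be made arbitrarily small at some iterate while the rates of contraction and expansion remain comparable.

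Next I would apply Franks' lemma (in the refined form due to Gourmelon, which preserves the hyperbolicity and index of the considered orbit and keeps the existing transverse homoclinic intersections) to perturb the derivative cocycle along $\cO_N$ within an arbitrarily small $C^1$-neighbourhood of $f$, obtaining a diffeomorphism $g$ for which $\cO_N$ is still periodic and hyperbolic of index $i$ but for which the stable and unstable eigenspaces at some orbit point make an arbitrarily small angle, so that $W^s(\cO_N,g)$ and $W^u(\cO_N,g)$ are nearly tangent there. A further local $C^1$-small perturbation, localised away from $\cO_N$ in a small neighbourhood of a homoclinic intersection (obtained via a connecting-lemma-type argument from a transverse homoclinic intersection of $\cO_N$ preserved under $g$), then bends one manifold into the other to produce an actual homoclinic tangency associated to $\cO_N$ for a diffeomorphism $C^1$-close to $f$. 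This contradicts $f\in \diff^1(M)\setminus \overline{\Tang}$.

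The main obstacle is quantitative control in the last step: one has to ensure that as $N\to\infty$ the size of the Franks perturbation needed to align $E^s$ and $E^u$ stays bounded by a fixed $C^1$-neighbourhood of $f$, and simultaneously that the bending perturbation which actually realises the tangency can be performed with perturbation size controlled only by the angle one wishes to close. Wen's argument, and its refinement by Gourmelon, consists precisely in making these two perturbations compatible and quantitative, using that small-angle rotations of the derivative cost proportionally small $C^1$-norm and that the connecting lemma can be applied inside an arbitrarily small neighbourhood of $f$.
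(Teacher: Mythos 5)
The paper does not prove this theorem; it merely recalls it with a reference to Wen~\cite{wen-tangence} (and to Gourmelon~\cite{gourmelon-tangence} for an improved version), so there is no internal proof to compare against. Your sketch does follow what is, in outline, the strategy of Wen's argument: contradiction, translate the failure of uniform $N$-domination into weak separation of $E^s$ and $E^u$ along a periodic orbit, use a Franks-type derivative perturbation to shrink the angle between the stable and unstable eigenspaces, and then realise a homoclinic tangency by a further localised perturbation.

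There is, however, a genuine gap in the last step. You propose producing the tangency ``in a small neighbourhood of a homoclinic intersection (obtained via a connecting-lemma-type argument from a transverse homoclinic intersection of $\cO_N$ preserved under $g$).'' A hyperbolic periodic orbit of index $i$ need not carry any transverse homoclinic intersection, so you cannot take one for granted, nor ask that the Franks perturbation preserve it. And even if you first create a homoclinic intersection with the connecting lemma, that intersection point will generally lie far from the iterate of $\cO_N$ at which Franks' lemma gave you a small eigenspace angle; along the intervening stretch of orbit the bundles $E^s$ and $E^u$ re-separate, so the near-tangency you produced at one point gives no control at the other, and a $C^1$-small bending cannot close an angle of order one at the far-away intersection. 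The argument needs a localisation that your sketch omits: one must arrange, using the length of the orbit and the failure of domination, that the tangency is created directly near the iterate where the angle has been made small, by pushing the local stable and unstable manifolds of that part of the orbit into each other, rather than by invoking a pre-existing homoclinic point. This is precisely the delicate content of Wen's proof (and of Gourmelon's quantitative refinement), and your appeal to a transverse homoclinic intersection of $\cO_N$ does not substitute for it.
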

One can derive a uniform version of this result (see~\cite[lemmas 3.3 and 3.4]{wen-conjecture}):

\begin{corollary}[Wen]\label{c.wen}
Let $f$ be a diffeomorphism in $\diff^1(M)\setminus \overline{\Tang}$.
Then, there exist a $C^1$-neighbourhood $\cU$ of $f$,
some integers $N,\nu\geq 1$, some constants $\delta_0,C>0$ and $\lambda>1$ such that, for any $g\in \cU$ and any periodic orbit $\cO$ of $g$, the following property holds.

Let $E^s\oplus E^c\oplus E^u$ be the decomposition of the tangent bundle $TM$ over $\cO$
into characteristic spaces whose Lyapunov exponents belong to 
$]-\infty,-\delta_0[$, $[-\delta_0,\delta_0]$ and $]\delta_0, +\infty[$ respectively. Then,
\begin{enumerate}
\item[1)] $E^c$ has dimension at most $1$;
\item[2)] the splitting $E^s\oplus E^c\oplus E^u$ is $N$-dominated;
\item[3)] denoting by $\tau$ the period of $\cO$, we have for any $x\in \cO$,
$$\prod_{i=0}^{\tau/\nu-1}\|D f^{\nu}_{|E^s}(f^{i\nu}(x))\|\leq C.\lambda^{-\tau}
\text{ and } \prod_{i=0}^{\tau/\nu-1}\|D f^{-\nu}_{|E^u}(f^{-i\nu}(x))\|\leq C.\lambda^{-\tau}.$$
\end{enumerate}
\end{corollary}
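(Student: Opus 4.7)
The strategy is to bootstrap Wen's theorem stated just above into a fully uniform statement over both $\cU$ and all periodic orbits, to recover the splitting $E^s\oplus E^c\oplus E^u$ by choosing $\delta_0$ small enough relative to the domination constant, and finally to obtain the uniform contraction/expansion bound by a Pliss-type argument. First I would establish that there exist a $C^1$-neighbourhood $\cU$ of $f$ and an integer $N_0\ge 1$ such that, for every $g\in\cU$, every index $i\in\{1,\dots,\dim(M)-1\}$, and every periodic orbit $\cO$ of $g$ of index $i$, the Wen decomposition $E\oplus F$ with $\dim(E)=i$ is $N_0$-dominated. The argument is by contradiction and compactness: if uniformity failed for some fixed $i$, one would find $g_n\to f$ and periodic orbits $\cO_n$ of $g_n$ of index $i$ whose index-$i$ splitting is not $n$-dominated; passing to a Hausdorff limit of the $\cO_n$ produces an $f$-invariant compact set on which no $i$-dimensional dominated splitting exists, and then a Franks-type perturbation along the $\cO_n$ creates a diffeomorphism arbitrarily $C^1$-close to $f$ exhibiting a homoclinic tangency, contradicting $f\notin\overline{\Tang}$.

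\smallskip

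Next, choose $\delta_0>0$ depending only on $\cU$ and $N_0$ so that any two characteristic exponents of $Dg^\tau|_\cO$ lying in $[-\delta_0,\delta_0]$ cannot be separated by an $N_0$-dominated splitting; this is possible because an $N_0$-dominated splitting between two one-dimensional characteristic subbundles of a periodic orbit forces a definite multiplicative gap between their expansion rates. Given $g\in\cU$ and a periodic orbit $\cO$, declare $E^s$ (resp.\ $E^u$) to be the sum of the characteristic subbundles whose exponents lie in $(-\infty,-\delta_0)$ (resp.\ in $(\delta_0,+\infty)$), and $E^c$ the remaining block. Step~1 applied at the actual stable index of $\cO$, combined with Franks-lemma perturbations that shift exponents across zero in order to realise neighbouring indices on $\cO$, shows that the splitting $E^s\oplus E^c\oplus E^u$ is $N_0$-dominated, giving~(2). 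For~(1), if $\dim(E^c)\ge 2$, a Franks' lemma perturbation pushes one of the small exponents below $-\delta_0$ and another above $+\delta_0$, turning $\cO$ into an orbit of a new index whose Wen-splitting from Step~1 would now have to separate the two originally small exponents by $N_0$-domination; this contradicts the defining property of $\delta_0$, forcing $\dim(E^c)\le 1$.

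\smallskip

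For~(3), by taking $\nu$ large enough (depending on $N_0$) and passing to an adapted metric \`a la Gourmelon, I can arrange that $\log\|Dg^\nu|_{E^s}(x)\|$ is pointwise bounded above by a constant strictly less than $0$ whenever the averaged exponent of $Dg^\tau|_{E^s}$ over the period is at most $-\delta_0$; a Pliss-type telescoping, together with the uniform control provided by $N_0$-domination, then yields the bound $\prod_{i=0}^{\tau/\nu-1}\|Dg^\nu|_{E^s}(g^{i\nu}x)\|\le C\lambda^{-\tau}$ with uniform $C>0$ and $\lambda>1$, and the symmetric bound on $E^u$ follows by applying the same argument to $g^{-1}$. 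The main obstacle I expect lies in Step~1: the Franks-type mechanism that converts a degenerating $n$-domination into a homoclinic tangency must preserve the index $i$ along the sequence $g_n$ and produce the tangency in the $C^1$-limit, and the bookkeeping needed to extract a tangency uniformly over $\cU$ and over arbitrary indices $i$ is precisely the technical heart of Wen's and Gourmelon's work, which would need to be invoked in a quantitative form here.
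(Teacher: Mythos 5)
The paper does not actually prove this corollary: it is stated as an explicit citation of Wen's lemmas 3.3 and 3.4 from~\cite{wen-conjecture}. So your blind attempt is being compared to Wen's argument (which the paper only invokes), not to a proof written here. Your overall skeleton — uniformity of the domination constant over a $C^1$-neighbourhood via compactness plus Franks-type perturbations producing tangencies (Step 1), tuning $\delta_0$ against the gap forced by $N_0$-domination to control the central block (Step 2), and a telescoping product estimate (Step 3) — is structurally the right kind of argument, and your candid acknowledgement that Step 1 contains the technical heart of Wen's and Gourmelon's work is fair.

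There is, however, a genuine gap in Step 3 as you state it. The claim that one can arrange $\log\|Dg^\nu|_{E^s}(x)\|$ to be \emph{pointwise} bounded above by a negative constant whenever the \emph{averaged} exponent along $E^s$ over the period is at most $-\delta_0$ is not derivable: no adapted metric converts an average bound into a pointwise one, and for a periodic orbit the adapted metric realizing such a pointwise estimate has a comparison factor to the ambient metric that a priori grows with the period $\tau$, which is precisely what would destroy the uniformity of $C$. The Pliss lemma likewise only yields a definite \emph{fraction} of hyperbolic times, never all of them. The mechanism that actually works uses the $N_0$-domination to compare $\|Dg^{N_0}|_{E^s}\|$ against $\|Dg^{N_0}|_{E^c}\|$ pointwise, and then exploits the one-dimensionality of $E^c$ so that products of norms along $E^c$ are exactly multiplicative:
\[
\prod_{i=0}^{\tau/N_0-1}\|Dg^{N_0}|_{E^s}(y_i)\|\;\leq\;2^{-\tau/N_0}\prod_{i=0}^{\tau/N_0-1}\|Dg^{N_0}|_{E^c}(y_i)\|\;=\;2^{-\tau/N_0}\,\|Dg^{\tau}|_{E^c}(x)\|\;\leq\;2^{-\tau/N_0}e^{\delta_0\tau},
\]
which yields $\lambda=2^{1/N_0}e^{-\delta_0}>1$ as soon as $\delta_0<(\log 2)/N_0$. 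This argument as written presupposes $E^c$ nontrivial; the case $\dim E^c=0$ must be handled separately, since there is no central line to compare against. A secondary imprecision: in Step 2 you transfer the domination obtained for the Franks-perturbed diffeomorphism back to the unperturbed one without comment. Since the Franks lemma changes the derivative along $\cO$, the domination constant degrades under the passage, and one must tune $\delta_0$ jointly against the domination gap and the admissible size of the perturbation to stay within $\cU$.
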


\begin{remark}
Consider a diffeomorphism $f$ whose periodic orbits are hyperbolic.
By~\cite{crovisier-approximation} any chain-recurrence class of $f$ is the Hausdorff limit of hyperbolic periodic orbits associated to a sequence of diffeomorphisms that converges towards $f$ in $\diff^1(M)$.
A result of Pliss~\cite{pliss} implies that there exists only finitely many sinks or sources
whose exponents are all in the complement set of $[\delta_0,\delta_0]$.
One deduces that there exists $N\geq 1$ such that
all chain-recurrence classes of $f$ but maybe finitely many sinks and sources have a non-trivial
dominated splitting that is $N$-dominated.
\end{remark}

Using an improved version of Ma\~n\'e's ergodic closing lemma provided by~\cite{abc-mesure}, one can extend these results
to the ergodic measures. A similar result has been obtained independently by Yang~\cite{yang-mesure}.

\begin{corollary}\label{c.measure-tangency}
Let $f$ be a diffeomorphism in $\diff^1(M)\setminus \overline{\Tang}$.
Then, there exist a $C^1$-neighbourhood $\cU$ of $f$,
some integers $N,\nu\geq 1$ and some constants $\delta,\rho>0$
such that for any $g\in \cU$ and any ergodic measure $\mu$ of $g$ the following property holds.

Let $E^s\oplus E^c\oplus E^u$ be the Oseledets decomposition of the tangent bundle $TM$ at $\mu$-a.e. point
into spaces whose Lyapunov exponents belong to 
$]-\infty,-\delta[$, $[-\delta,\delta]$ and $]\delta, +\infty[$ respectively. Then,
\begin{enumerate}
\item[1)] $E^c$ has dimension at most $1$;
\item[2)] the splitting $E^s\oplus E^c\oplus E^u$ is $N$-dominated (and hence extends to the
support of $\mu$),
\item[3)] for $\mu$-a.e. point $x$ we have
$$\lim_{k\to \infty}\frac 1 k \sum_{i=0}^{k-1} \log \|D f^{\nu}_{|E^s}(f^{i\nu}(x))\|
\leq -\rho \;\text{ and}\;
\lim_{k\to \infty}\frac 1 k \sum_{i=0}^{k-1} \log \|D f^{-\nu}_{|E^u}(f^{-i\nu}(x))\|
\leq -\rho.$$
\end{enumerate}
\end{corollary}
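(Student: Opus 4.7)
The plan is to reduce the statement about ergodic measures to Corollary~\ref{c.wen} on periodic orbits, via the refinement of Ma\~n\'e's ergodic closing lemma established in~\cite{abc-mesure}. I take $\cU$, $N$, $\nu$, $\delta_0$, $C$, $\lambda$ as provided by Corollary~\ref{c.wen} applied to $f$, set $\delta := \delta_0/2$, pick any $\rho \in (0,\tfrac{1}{2}\log\lambda)$, and shrink $\cU$ if needed so that the approximating diffeomorphisms produced below still lie inside Wen's neighbourhood.

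Fix $g \in \cU$ and an ergodic measure $\mu$ of $g$. The refined closing lemma of~\cite{abc-mesure} furnishes diffeomorphisms $g_n \to g$ in $\diff^1(M)$ and hyperbolic periodic orbits $\cO_n$ of $g_n$ such that the invariant probabilities $\mu_n$ carried by $\cO_n$ converge weakly to $\mu$, the supports $\cO_n$ converge in Hausdorff distance to $\supp(\mu)$, and the Lyapunov exponents of $\mu_n$ converge, with multiplicity, to those of $\mu$. For $n$ large $g_n\in\cU$, so Corollary~\ref{c.wen} provides an $N$-dominated splitting $E^s_n\oplus E^c_n\oplus E^u_n$ over $\cO_n$ with $\dim E^c_n\leq 1$, separating the $\mu_n$-exponents at the threshold $\delta_0$. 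Since exponents converge and $\delta<\delta_0$, the three dimensions stabilise and match the corresponding $\mu$-counts for the threshold $\delta$; in particular the would-be central dimension is at most one, giving item~(1).

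For item~(2), I pass to a Hausdorff limit: by compactness in the Grassmann bundles the splittings $E^s_n\oplus E^c_n\oplus E^u_n$ converge (after extraction) to a splitting above $\supp(\mu)$ with the correct dimensions, and the $N$-domination inequality survives since $Dg_n\to Dg$ uniformly on a neighbourhood of $\supp(\mu)$. Uniqueness of the dominated splitting identifies this extension with the Oseledets decomposition of $\mu$ almost everywhere. For item~(3), taking logarithms in Corollary~\ref{c.wen}(3) and dividing by $\tau_n/\nu$ produces the time-average bound $\int \log\|Dg_n^\nu|_{E^s_n}\|\,d\mu_n \leq -\nu\log\lambda + o(1)$. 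The function $x\mapsto \log\|Dg^\nu|_{E^s}(x)\|$ is continuous on a neighbourhood of $\supp(\mu)$ by the extension just established and varies continuously with the diffeomorphism, so the bound passes to the limit to give $\int \log\|Dg^\nu|_{E^s}\|\,d\mu \leq -\nu\rho$. Birkhoff's theorem for $\mu$ then yields the required $\mu$-a.e.\ asymptotic estimate; the argument for $E^u$ is symmetric under $g\leftrightarrow g^{-1}$.

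The main obstacle is securing convergence of the Lyapunov exponents, not merely weak convergence of the measures $\mu_n$: without it one cannot force the central dimensions of $\mu_n$ and $\mu$ to coincide, nor propagate the integrated exponent bound. This refinement is precisely what is proved in~\cite{abc-mesure}, and once it is available the remainder of the argument is a standard limiting procedure on dominated splittings of the kind already used in this section.
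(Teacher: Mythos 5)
Your proof follows the same overall strategy as the paper: invoke the refined ergodic closing lemma of \cite[proposition~6.1]{abc-mesure} to produce periodic orbits $\cO_n$ of nearby diffeomorphisms whose periodic measures and Lyapunov spectra converge to those of $\mu$, apply Wen's Corollary~\ref{c.wen} to each $\cO_n$, and pass to the limit. Items (1) and (2) are handled identically.

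For item (3) you take a genuinely different route, and there is a gap in it. You integrate Wen's estimate over $\cO_n$ to get $\int\log\|Dg_n^\nu|_{E^s_n}\|\,d\mu_n\leq -\nu\log\lambda+o(1)$, pass this to the limit $\int\log\|Dg^\nu|_{E^s}\|\,d\mu\leq-\nu\rho$, and invoke Birkhoff's theorem. But the Birkhoff sums in item (3) are taken along the map $g^\nu$, not $g$, and $\mu$ need only be ergodic for $g$ — under $g^\nu$ it may split into several ergodic components, cyclically permuted by $g$. Birkhoff then says the limit equals $\int\log\|Dg^\nu|_{E^s}\|\,d\mu'$ where $\mu'$ is the $g^\nu$-ergodic component through the point, and the function $\log\|Dg^\nu|_{E^s}\|$ is \emph{not} $g$-invariant, so these per-component integrals can differ and need not all be $\leq-\rho$; only their average is controlled. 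Thus an integral bound against $\mu$ does not yield the $\mu$-a.e.\ pointwise conclusion. The paper sidesteps this: it uses Pliss's lemma to extract, on each $\cO_n$, a \emph{uniform proportion} of points $x$ satisfying the strong pointwise estimate $\prod_{i=0}^{k-1}\|Df_n^\nu|_{E^s}(f_n^{i\nu}x)\|\leq e^{-k\rho}$ for all $k\geq0$, passes these closed conditions to a set $X$ with $\mu(X)>0$, and then applies $f$-ergodicity of $\mu$. This gives a uniform-in-$k$ pointwise control on a positive-measure set, which is strictly stronger than the integral bound and is what makes the a.e.\ conclusion go through. To repair your argument you would need to supplement the integration step with a Pliss-type extraction (or argue directly on each $g^\nu$-ergodic component).
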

\begin{proof}
Let us consider the constants $N,\nu,\delta_0,\lambda$ given by Wen's corollary~\ref{c.wen} above
and choose $\delta\in (0,\delta_0)$ and $\rho\in (0,\log(\lambda))$.
By~\cite[proposition~6.1]{abc-mesure}, there exists a sequence of
diffeomorphisms $(f_n)$ that converge toward $f$ and for each $f_n$ there exists
a periodic orbit $\cO_n$ with the following properties.
The Lyapunov exponents of the orbits $\cO_n$ for $f_n$ converge
towards the Lyapunov exponents of $\mu$ for $f$;
moreover the $f_n$-invariant probability measures supported on the orbits $\cO_n$
converge toward $\mu$.
By Wen's corollary~\ref{c.wen}, each orbit $\cO_n$ admits an $N$-dominated
splitting $E_1\oplus E_2\oplus E_3$ such that the dimensions of
$E_1,E_2,E_3$ coincide with the dimensions of $E^s,E^c,E^u$.
This implies that $E^c$ is trivial or one-dimensional.

Since the domination extends to the closure, the support of $\mu$ also carries
an $N$-dominated splitting $E_1\oplus E_2\oplus E_3$. It coincides with
$E^s\oplus E^c\oplus E^u$ at $\mu$-a.e. point, by definition of Oseledets splitting.
This proves the first item.

The third item of Wen's corollary and a classical argument due to Pliss
now imply that a uniform proportion of points $x$ in each orbit $\cO_n$ satisfies
for any $k\geq 0$,
$$\prod_{i=0}^{k-1}\|D {f_n^{\nu}}_{|E^s}(f_n^{i\nu}(x))\|\leq e^{-k\rho}.$$
Since the measures supported on the $\cO_n$ approximate the measure $\mu$,
one deduces that the same property holds for $f$ and
the points $x$ in a set $X$ with positive
$\mu$-measure. By ergodicity, $\mu$-a.e. point $x$ has a forward iterate in $X$
and hence satisfies
$$\lim_{k\to \infty}\frac 1 k \sum_{i=0}^{k-1} \log \|D f^{\nu}_{|E^s}(f^{i\nu}(x))\|
\leq -\rho.$$
The same argument applies to the bundle $E^u$.
\end{proof}
\medskip

The classical Anosov closing lemma is still valid for hyperbolic ergodic measures of $C^1$-diffeo\-mor\-phisms,
provided that the Oseledets decomposition is dominated.
\begin{proposition}\label{p.anosov}
Let $f$ be a $C^1$-diffeomorphism and $\mu$ a hyperbolic ergodic measure
whose Oseledets decomposition $E^s\oplus E^u$ is dominated.
Then $\mu$ is supported on a homoclinic class: there exists a sequence of hyperbolic periodic orbits
$(\cO_n)$ whose index is equal to $\dim(E^s)$, that are homoclinically related together, that converge toward the support of $\mu$ for the Hausdorff topology
and such that the invariant measures supported on the $\cO_n$ converge toward $\mu$
for the weak-$*$ topology.
\end{proposition}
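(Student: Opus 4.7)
The plan is to combine the enhanced ergodic closing lemma of \cite{abc-mesure} (the same tool already invoked in Corollary~\ref{c.measure-tangency}) with Pliss's lemma and an invariant-manifold argument in cone-fields extending the dominated splitting. First, I apply the ergodic closing lemma to produce a sequence of diffeomorphisms $f_n\to f$ in $\diff^1(M)$ and a periodic orbit $\cO_n$ of $f_n$ such that the invariant probability $\mu_n$ supported on $\cO_n$ converges weak-$*$ to $\mu$, the supports converge to $\supp(\mu)$ in the Hausdorff topology, and the Lyapunov exponent vector of $\cO_n$ for $f_n$ converges to the exponent vector of $\mu$ for $f$.

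Next, I use that $N$-domination is an open condition which extends to a neighbourhood of $\supp(\mu)$; hence for $n$ large the splitting $E^s\oplus E^u$ extends to $\cO_n$ as an $N$-dominated splitting $E_1^n\oplus E_2^n$ of the same dimensions. Since $\mu$ is hyperbolic, all its exponents along $E^s$ are bounded above by some $-\rho<0$ and along $E^u$ below by $\rho$; by convergence of Lyapunov exponents, for $n$ large the same inequalities hold (with $\rho/2$) along $E_1^n, E_2^n$. For a periodic orbit these averages are the usual exponential rates, so $\cO_n$ is hyperbolic with stable dimension $\dim(E^s)$.

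To obtain the homoclinic relation between the $\cO_n$'s (and hence to conclude that $\mu$ is supported on a homoclinic class), I fix a cone-field around $E^s$ and a cone-field around $E^u$ in a neighbourhood of $\supp(\mu)$, both preserved by the dominated splitting. Pliss's lemma applied to each $\cO_n$ yields a positive density of points $x\in\cO_n$ at which all forward products along $E_1^n$ and all backward products along $E_2^n$ decay exponentially with a uniform rate; by the standard graph-transform in the cones, at each such $x$ one gets local stable and unstable manifolds of uniform size $\delta>0$, tangent respectively to $E_1^n$ and $E_2^n$. Since $\mu_n\to\mu$ weak-$*$, this uniform density of Pliss points forces the set of such $x$ to accumulate on every point of $\supp(\mu)$; picking such $x_n\in\cO_n$ and $x_m\in\cO_m$ close to a common point of $\supp(\mu)$, the unstable disk through $x_n$ and the stable disk through $x_m$ are $C^1$-close to disks tangent to $E^u$ and $E^s$, so the domination forces a transverse intersection. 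This provides transverse points in $W^u(\cO_n)\cap W^s(\cO_m)$ and, by symmetry, in $W^s(\cO_n)\cap W^u(\cO_m)$; taking $n=m$ gives a transverse homoclinic point, while for $n\neq m$ large one obtains that $\cO_n$ and $\cO_m$ are homoclinically related. In particular $\supp(\mu)\subset H(\cO_{n_0})$ for any large $n_0$.

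The main obstacle is the uniform invariant-manifold step: one must pass from the \emph{averaged} hyperbolicity of $\cO_n$ (controlled only through integrated Lyapunov exponents) to local stable and unstable disks of \emph{uniform} size located at a point close to $\supp(\mu)$. This is exactly where the hypothesis that the Oseledets splitting of $\mu$ is dominated is used in an essential way: it is what lets the cone-fields, hence the graph-transform and the uniform size of the invariant manifolds, be transported from $\supp(\mu)$ to the approximating orbits $\cO_n$; without it, one would only get Pesin-type manifolds whose size shrinks along the orbit and the transverse intersection argument would fail.
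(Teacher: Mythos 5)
There is a genuine gap, and it is the central one: your argument never produces periodic orbits of $f$ itself. The improved ergodic closing lemma of \cite{abc-mesure} (as used in Corollary~\ref{c.measure-tangency}) gives a sequence $f_n\to f$ in $\diff^1(M)$ and $f_n$-periodic orbits $\cO_n$, not $f$-periodic orbits. All of your subsequent reasoning --- extending the dominated splitting to $\cO_n$, Pliss points, uniform-size invariant manifolds, intersection of stable and unstable disks --- takes place for the perturbed diffeomorphisms. In the final step you pick $x_n\in\cO_n$ and $x_m\in\cO_m$ close to a common point of $\supp(\mu)$ and claim transverse intersections between the corresponding stable and unstable disks; but these are invariant manifolds for the two distinct diffeomorphisms $f_n$ and $f_m$, so the intersection has no dynamical meaning for either of them, let alone for $f$. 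In particular ``homoclinically related'' and ``$\supp(\mu)\subset H(\cO_{n_0})$'' are statements about $f$ and are simply not reached.

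To repair this one has two natural options, and they are exactly what distinguishes your route from the paper's. The paper avoids the closing lemma entirely: it uses \cite[lemma~8.4]{abc-mesure} to produce, for $f$ itself, a positive-measure set $X$ of points with uniform forward and backward Pliss estimates along $E^s$ and $E^u$, then picks a returning orbit segment in $\supp(\mu)$ with endpoints in $X$, and applies the Liao--Gan shadowing lemma \cite{gan} (which requires only the domination and the Pliss-type estimates, no perturbation) to obtain an actual $f$-periodic orbit $\cO$ that shadows the segment. Because shadowing keeps the Pliss estimates, $\cO$ is hyperbolic of index $\dim(E^s)$ with uniform local manifolds; repeating gives the $(\cO_n)$ and the uniform size yields the homoclinic relation for $n$ large. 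Alternatively, if you insist on keeping the ergodic closing lemma as the first step, you would have to add an argument that the uniform hyperbolicity of $\cO_n$ (coming from the Pliss estimates) guarantees that $\cO_n$ has a hyperbolic continuation $\cO_n^f$ for $f$, and that $\cO_n^f$ is Hausdorff-close to $\cO_n$ because $\|f_n-f\|_{C^1}\to 0$ while the hyperbolicity strength stays bounded away from zero; only then can you run the intersection argument for a single diffeomorphism $f$. As written your proof omits this continuation step, which is precisely the role the shadowing lemma plays in the paper, so the argument does not go through.
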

\begin{proof}
By~\cite[lemma 8.4]{abc-mesure}, there exists $\rho>0$ and an integer $\nu\geq 1$
such that for $\mu$-almost every point $x\in M$,
the Birkhoff averages
$$\frac{1}{\nu k}\sum_{i=0}^{k-1}\log\|Df^\nu_{|E^s}(f^{i\nu}(x))\|,\quad
\frac{1}{\nu k}\sum_{i=0}^{k-1}\log\|Df^{-\nu}_{|E^u}(f^{-i\nu}(x))\|$$
converge when $k\to +\infty$ towards a number less than $-\rho$.
One deduces that there exists $C>0$
and a set $X$ with positive $\mu$-measure such that for each point
$x\in X$ and each $k\geq 0$ one has
$$\prod_{i=0}^{k-1}\|D f^{\nu}_{|E^s}(f^{i\nu}(x))\|\leq C.e^{-k\rho}
\text{ and } \prod_{i=0}^{k-1}\|D f^{-\nu}_{|E^u}(f^{-i\nu}(x))\|\leq C.e^{-k\rho}.$$
This implies that one can find segments of orbits
$(x,\dots,f^m(x))$ in the support of $\mu$ such that $x,f^m(x)$ belong to $X$,
the distance $d(x, f^m(x))$ is arbitrarily small and
the (non-invariant) atomic measure
$\frac 1 m \sum_{i=0}^{m-1} \delta_{f^i(x)}$ is arbitrarily close to $\mu$.
In particular for each $0\leq k\leq m$ one has
\begin{equation}\label{e.1}
\prod_{i=0}^{k-1}\|D f^{\nu}_{|E^s}(f^{i\nu}(x))\|\leq C.e^{-k\rho}
\text{ and } \prod_{i=0}^{k-1}\|D f^{-\nu}_{|E^u}(f^{m-i\nu}(x))\|\leq C.e^{-k\rho}.
\end{equation}
This property and the domination $E^s\oplus E^u$ allow to apply Liao-Gan's shadowing lemma~\cite{gan}:
the segment of orbit $(x,\dots f^m(x))$ is $\varepsilon$-shadowed by a periodic orbit
$\cO=(y,\dots, f^m(y)=y)$ where $\varepsilon$ goes to zero when $d(x, f^ m(x))$ decreases.
In particular, $\cO$ is arbitrarily close to the support of $\mu$ for the Hausdorff topology
and it supports a periodic measure arbitrarily close to the measure $\mu$.
Note that the segment of orbit $(y,\dots,f^m(y))$ satisfies an estimate like~(\ref{e.1})
(with constants $\rho',C'$ close to $\rho,C$). Consequently the orbit $\cO$
is hyperbolic and has index $\dim(E^s)$.
Repeating this argument, one obtains
a sequence of such periodic points $(y_n)$ whose orbits $(\cO_n)$ converge toward $\mu$.
By the estimate~(\ref{e.1}), the size of the
local stable and local unstable manifolds at $y_n$ is uniform.
Hence the periodic orbits $\cO_n$ are homoclinically related for $n$ large.
\end{proof}

Let us sum up the previous discussions.
\begin{corollary}\label{c.dicho-mesure}
Let $f$ be a diffeomorphism in $\diff^1(M)\setminus \overline{\Tang}$ and $\mu$ be an ergodic measure.
Two cases are possible.
\begin{itemize}
\item[--] Either $\mu$ is hyperbolic.
Its support is then contained in a homoclinic class having index equal to the dimension of the stable spaces of $\mu$;
\item[--] or $\mu$ has an unique Lyapunov exponent equal to $0$ and its support
admits a dominated splitting $E^s\oplus E^c\oplus E^u$ which coincides $\mu$-a.e.
with the Oseledets decomposition into stable, central and unstable spaces.
\end{itemize}
\end{corollary}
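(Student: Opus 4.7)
The plan is to combine Corollary~\ref{c.measure-tangency} with Proposition~\ref{p.anosov} through a short case analysis. First I would apply Corollary~\ref{c.measure-tangency} to $\mu$: this produces on the support of $\mu$ an $N$-dominated splitting $E^s\oplus E^c\oplus E^u$ with $\dim(E^c)\leq 1$, the Oseledets exponents of $\mu$ along $E^s$ lying in $]-\infty,-\delta[$, along $E^u$ in $]\delta,+\infty[$, and along $E^c$ in $[-\delta,\delta]$. In particular any vanishing Lyapunov exponent must be the one carried by $E^c$.

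Next I would separate two cases according to whether $\mu$ possesses a zero Lyapunov exponent. If $\mu$ has a zero exponent, then by the previous remark $E^c$ is necessarily one-dimensional and this exponent is unique; moreover the Oseledets decomposition of $\mu$ into stable, central, and unstable spaces agrees $\mu$-a.e.\ with $E^s\oplus E^c\oplus E^u$, which gives the second alternative of the corollary. If $\mu$ has no zero exponent then $\mu$ is hyperbolic, and the central exponent (if $E^c$ is non-trivial) is either strictly negative or strictly positive.

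In the hyperbolic case I would verify that the Oseledets stable/unstable splitting of $\mu$ is itself dominated, so that Proposition~\ref{p.anosov} applies. When $E^c$ is trivial the Oseledets decomposition is $E^s\oplus E^u$ and is already dominated. When $E^c$ is one-dimensional, the Oseledets stable/unstable decomposition is either $(E^s\oplus E^c)\oplus E^u$ or $E^s\oplus (E^c\oplus E^u)$ according to the sign of the central exponent; both are dominated as coarsenings of the $N$-dominated splitting $E^s\oplus E^c\oplus E^u$. Proposition~\ref{p.anosov} then yields a sequence of hyperbolic periodic orbits homoclinically related together, converging to $\supp(\mu)$ in Hausdorff topology, whose index equals the dimension of the Oseledets stable space of $\mu$, which is exactly the stable dimension of $\mu$. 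Hence $\supp(\mu)$ lies in a homoclinic class of that index.

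The argument is essentially a packaging of the preceding two results, so no genuine technical obstacle arises. The one point demanding care is the hyperbolic case when $E^c$ is one-dimensional with a small non-zero central exponent: here the ``stable dimension of $\mu$'' differs from $\dim(E^s)$, and the homoclinic class produced by Proposition~\ref{p.anosov} has index $\dim(E^s\oplus E^c)$ rather than $\dim(E^s)$. It is important to phrase the conclusion in terms of the Oseledets stable dimension of $\mu$ rather than of the $E^s$ coming from Corollary~\ref{c.measure-tangency}.
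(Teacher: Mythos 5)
Your proof is correct and follows exactly the route the paper intends: the corollary is stated in the paper as ``summing up the previous discussions,'' and your packaging of Corollary~\ref{c.measure-tangency} (to get the dominated splitting and the constraint $\dim(E^c)\leq 1$) together with Proposition~\ref{p.anosov} (to place a hyperbolic $\mu$ in a homoclinic class) is precisely that implicit argument. You also correctly flag the one point needing care, namely that in the hyperbolic case with a non-zero central exponent the Oseledets stable dimension of $\mu$ is $\dim(E^s\oplus E^c)$ or $\dim(E^s)$ depending on sign, and that the dominated splitting fed to Proposition~\ref{p.anosov} must be the corresponding coarsening.
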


\subsection{Non-uniformly contracted bundles}
Let us now give a consequence of Liao's selecting lemma that improves some results in~\cite{wen-conjecture}.
It allows to obtain minimal sets that have a partially hyperbolic splitting with a one-dimensional central bundle.

\begin{theorem}~\label{t.trichotomie}
Let $f$ be a diffeomorphism in $\diff^1(M)\setminus \overline{\Tang}$ and $K_0$ be an invariant compact set
having a dominated splitting $E\oplus F$. If $E$ is not uniformly contracted, then one of the following cases occurs.
\begin{enumerate}

\item $K_0$ intersects a homoclinic class whose index is strictly less than $dim(E)$.

\item $K_0$ intersects homoclinic classes $H$ whose index is equal to $dim(E)$
and which contain weak periodic orbits: for any $\delta>0$, there exists a sequence
of hyperbolic periodic orbits $(\cO_n)$ that are homoclinically related together,
that converge for the Hausdorff topology toward a compact subset $K$ of $K_0$, whose index is equal to $\dim(E)$ and whose maximal exponent along $E$ belongs to $(-\delta,0)$.

\item There exists an invariant compact set $K\subset K_0$ which has a partially hyperbolic structure
$E^{s}\oplus E^c\oplus E^{u}$. Moreover $\dim(E^{s})< \dim(E)$,
the central bundle $E^c$ is one-dimensional and any invariant measure supported on $K$ has a Lyapunov exponent along $E^c$ equal to $0$.

\end{enumerate}
\end{theorem}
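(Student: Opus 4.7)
The plan is to exploit the failure of uniform contraction on $E$ to extract an ergodic measure $\mu$ on $K_0$ whose top Lyapunov exponent along $E$ is non-negative, and then to follow the dichotomy provided by Corollary~\ref{c.dicho-mesure}.

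A standard Pliss-type selection combined with Krylov--Bogolyubov produces such a $\mu$: non-uniform contraction of $E$ means one can find points $x_k \in K_0$ and times $n_k \to \infty$ with $\frac{1}{n_k}\log\|Df^{n_k}_{|E}(x_k)\| \geq -\varepsilon_k$, $\varepsilon_k \to 0$, and a weak-$*$ accumulation point of the associated empirical measures yields $\mu$. If $\mu$ is hyperbolic, then, having no zero exponent, its top exponent along $E$ is strictly positive, so the stable Oseledets space of $\mu$ sits strictly inside $E$ with dimension $<\dim(E)$; Proposition~\ref{p.anosov} then places $\mu$ in a homoclinic class of this index meeting $K_0$, giving case~1.

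Otherwise $\mu$ has a unique zero exponent and its support $\Lambda$ carries a dominated splitting $E^s_\mu \oplus E^c_\mu \oplus E^u_\mu$ with $\dim(E^c_\mu)=1$. The domination $E\oplus F$ combined with the non-negativity of the top exponent of $\mu$ along $E$ forces the zero direction to lie in $E$, so along $\Lambda$ we have $E = E^s_\mu \oplus E^c_\mu$, $F = E^u_\mu$, and in particular $\dim(E^s_\mu) = \dim(E)-1 < \dim(E)$. I would then analyse the range of central Lyapunov exponents $\lambda(\nu,E^c_\mu)$ as $\nu$ ranges over ergodic measures on $\Lambda$; this range already contains $0$ thanks to $\mu$. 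If some ergodic $\nu$ has positive central exponent it is hyperbolic of stable index $\dim(E)-1$, and Proposition~\ref{p.anosov} again yields case~1. If every invariant measure on $\Lambda$ has zero central exponent, setting $K=\Lambda$ gives case~3.

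The remaining configuration---some ergodic measure on $\Lambda$ has strictly negative central exponent while none has positive central exponent---produces case~2 and contains the main obstacle. Such a $\nu$ is hyperbolic of stable index $\dim(E)$. For each $\delta>0$ one must produce a sequence of hyperbolic periodic orbits of index $\dim(E)$ with central exponent in $(-\delta,0)$, pairwise homoclinically related, and converging in Hausdorff topology to a compact subset of $K_0$. The core step is Liao's selecting lemma, in the spirit of~\cite{wen-conjecture}: it delivers quasi-hyperbolic strings whose average central contraction is controlled in the prescribed window $(-\delta,0)$; the Liao--Gan shadowing lemma, used exactly as in the proof of Proposition~\ref{p.anosov}, upgrades them to the required periodic orbits. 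The uniform size of their local stable and unstable manifolds along $E^s_\mu$ and $E^u_\mu$---ensured by the partial hyperbolicity over $\Lambda$---makes the orbits pairwise homoclinically related for $n$ large. The delicate point is precisely this \emph{uniform} control of the central exponent in a prescribed window rather than mere convergence to zero; the rest of the argument is standard bookkeeping with Anosov closing and Oseledets splittings.
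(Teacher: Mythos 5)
Your plan follows essentially the same skeleton as the paper's proof (Pliss/Krylov--Bogolyubov extraction of an ergodic measure witnessing the failure of contraction, the dichotomy via Corollary~\ref{c.dicho-mesure}, Liao's selecting lemma together with the Liao--Gan shadowing lemma to manufacture the periodic orbits), and most of the intermediate reasoning is sound. However there are two genuine gaps, of which the second is the serious one.

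\emph{First gap.} The assertion that the Oseledets decomposition of $\mu$ aligns as $E = E^s_\mu \oplus E^c_\mu$, $F = E^u_\mu$, with $\dim(E^s_\mu)=\dim(E)-1$, is not automatic. Knowing only $\lambda^+(E,\mu)\geq 0$ and that $\mu$ is non-hyperbolic, the unique zero exponent can sit strictly inside $E$: if $\lambda^+(E,\mu)>0$ then $E$ contains the zero direction \emph{and} some positive directions, so $E^s_\mu\oplus E^c_\mu\subsetneq E$ and $\dim(E^s_\mu)<\dim(E)-1$. The paper avoids this by first choosing the obstructing measure to \emph{minimize} the stable dimension (which forces, via Claim~\ref{cl.0} and the fact that we are not in case~1, the stable bundle $E^s$ over the support to be uniformly contracted), and then, at the very end of case~2, pins down the index by observing that it cannot be smaller than $\dim(E)$ precisely because case~1 was excluded. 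Your argument needs this escape to be stated explicitly; as written you conclude the index too early.

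\emph{Second gap (the main one).} Appealing to Liao's selecting lemma as ``standard bookkeeping'' hides the actual difficulty: that lemma requires a uniform hypothesis on \emph{every} proper invariant closed subset of the ambient set (property (b) in the paper's proof), which the support $\Lambda=\operatorname{Supp}(\mu)$ of your measure need not satisfy. A proper invariant closed subset of $\Lambda$ could support only measures with zero central exponent, in which case there is no uniform rate and the selecting lemma does not apply. The paper's fix is a Zorn-type replacement: among all invariant closed subsets $K'$ of $\Lambda$ carrying a unit central vector $v\in E^c_{K'}$ with $\|Df^k.v\|\geq\|v\|$ for all $k\geq 0$, take one minimal for inclusion. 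On such a minimal $K$ every proper invariant closed subset has $E^c$ uniformly contracted (property (*)), and this is exactly what makes properties (a) and (b) for Liao's selecting lemma checkable. Without this minimality reduction, the $L<0$ case in your plan (where $L$ is the supremum of the strictly negative central exponents) is unproved. It is also worth keeping the $L=0$ case separate: there one needs no selecting lemma, only the Anosov closing argument of Proposition~\ref{p.anosov} applied to ergodic measures whose central exponent is already in $(-\delta,0)$.
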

\begin{remark}
In the second case there exists a dominated splitting $E'\oplus E^c\oplus F$
on the whole homoclinic class $H$ such that $\dim(E)=\dim(E'\oplus E^c)$
and $E^c$ is one-dimensional: indeed, by definition $H$ contains a dense set of periodic points
with index equal to $\dim(E)$ and whose largest Lyapunov exponent along $E$ is close to zero.
From corollary~\ref{c.wen}, one deduces that there exists on these periodic orbits a dominated
splitting $E'\oplus E^c\oplus F$ such that $\dim(E)=\dim(E'\oplus E^c)$
and $E^c$ is one-dimensional. The dominated splitting extend to the closure, hence to the whole $H$.
\end{remark}
\begin{proof}
We first state a characterisation of not uniformly contracted bundles.
If $E_0\subset T_{K}M$ is an invariant continuous subbundle over an invariant compact set $K\subset K_0$
and $\mu$ is an invariant measure supported on $K$, then
we denote by $\lambda^+(E_0,\mu)$ (resp. $\lambda^-(E_0,\mu)$)
the maximal (resp. minimal) Lyapunov exponent of $\mu$ along $E_0$.
If $E_0$ is one-dimensional, we just write $\lambda(E_0,\mu)$.
\begin{claim}\label{cl.0}
The bundle $E_0$ is not uniformly contracted if and only if there exists an invariant ergodic measure $\mu_0$ supported
on $K$ such that $\lambda^+(E_0,\mu_0)\geq 0$.
\end{claim}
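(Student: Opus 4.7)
The plan is to prove the two implications separately. For the easy direction ($\Leftarrow$), I would observe that uniform contraction of $E_0$ means some iterate $N$ satisfies $\|Df^N_{|E_0}(x)\|\leq 1/2$ uniformly on $K$; then for any ergodic invariant $\mu$ supported on $K$, Kingman's subadditive ergodic theorem forces
$\lim_n \tfrac{1}{n}\log\|Df^n_{|E_0}(x)\|\leq -\tfrac{\log 2}{N}<0$
for $\mu$-a.e.\ $x$, so $\lambda^+(E_0,\mu)<0$. This contradicts the existence of the measure $\mu_0$ hypothesised in the claim.

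For the reverse direction ($\Rightarrow$), I would start from the subadditive sequence $a_N:=\sup_{x\in K}\log\|Df^N_{|E_0}(x)\|$. Fekete's lemma produces $\alpha:=\lim_N a_N/N=\inf_N a_N/N$. If $\alpha<0$, then $a_N<0$ for some $N$, whence $\sup_K\|Df^N_{|E_0}\|<1$, and by submultiplicativity and compactness of $K$ iterating gives uniform contraction --- a contradiction. Hence $\alpha\geq 0$, and it remains to produce an ergodic measure whose top exponent along $E_0$ is at least $\alpha$. To that end I would choose $x_N\in K$ with $\log\|Df^N_{|E_0}(x_N)\|\geq a_N-1$, form the empirical measures $\mu_N=\tfrac{1}{N}\sum_{i=0}^{N-1}\delta_{f^i(x_N)}$, and extract a weak-$*$ subsequential limit $\mu$, which is automatically $f$-invariant and supported on $K$.

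The key step is a telescoping estimate: for each fixed $n\geq 1$, by splitting the orbit segment $\{x_N,\dots,f^{N-1}(x_N)\}$ into length-$n$ blocks and summing the subadditive bound over the $n$ starting shifts $s=0,\dots,n-1$, I would obtain
\[
n\,\log\|Df^N_{|E_0}(x_N)\|\leq \sum_{i=0}^{N-1}\log\|Df^n_{|E_0}(f^i(x_N))\|+C_n,
\]
where $C_n$ depends on $n$ but not on $N$, being controlled by $M_n:=\sup_K|\log\|Df^n_{|E_0}\||$. Dividing by $nN$, letting $N\to\infty$ along the convergent subsequence (using continuity of $\log\|Df^n_{|E_0}\|$ on $K$), and then minimising over $n$ using Kingman's theorem gives
\[
0\leq \alpha\leq \inf_{n}\tfrac{1}{n}\int\log\|Df^n_{|E_0}\|\,d\mu=\int \ell\,d\mu,
\]
where $\ell(x)=\lim_n \tfrac{1}{n}\log\|Df^n_{|E_0}(x)\|$ is the top Lyapunov exponent along $E_0$ at $x$. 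An ergodic decomposition of $\mu$ then selects an ergodic component $\mu_0$ with $\int\ell\,d\mu_0\geq 0$; since $\ell$ is $\mu_0$-a.e.\ equal to the constant $\lambda^+(E_0,\mu_0)$, the claim follows.

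The main obstacle will be the telescoping inequality: it encodes the variational principle for subadditive cocycles, namely that the maximal finite-time exponential growth rate of $\|Df^{\cdot}_{|E_0}\|$ on $K$ is attained by some invariant (hence, after decomposition, ergodic) measure. The remaining ingredients --- Fekete's lemma, Krylov--Bogolyubov extraction, Kingman's subadditive ergodic theorem and ergodic decomposition --- are classical.
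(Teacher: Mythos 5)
Your proof is correct, but proceeds along a genuinely different route from the paper. The paper handles the forward implication by lifting to the unit tangent bundle $UE_0$ over $K$: it first observes (by a compactness argument) that non-uniform contraction of $E_0$ produces a single unit vector $v\in E_0$ with $\|Df^k v\|\geq\|v\|$ for all $k\geq 0$, then takes a weak-$*$ limit of Cesaro averages of Dirac masses along the $Df$-orbit of $v$ in $UE_0$. The point is that upstairs the cocycle becomes the Birkhoff sum of the \emph{additive} observable $w\mapsto\log\bigl(\|Df.w\|/\|w\|\bigr)$, which telescopes exactly to $\tfrac1N\log\|Df^N v\|\geq0$, so the resulting $Df$-invariant measure $\tilde\mu_0$ has non-negative average and projects to an invariant measure $\mu_0$ on $K$ with $\lambda^+(E_0,\mu_0)\geq0$; an ergodic component (implicit in the paper) then finishes. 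Your argument never leaves $K$: you prove $\alpha=\lim_N\tfrac1N\sup_K\log\|Df^N_{|E_0}\|\geq0$ via Fekete, and then realise $\alpha$ as a growth rate of an invariant measure through empirical measures at near-maximisers and the telescoping estimate over length-$n$ blocks --- this is the variational principle for subadditive cocycles, and your uses of Kingman and ergodic decomposition at the end are the correct way to conclude. The trade-off is that the paper's lift avoids your subadditivity bookkeeping entirely (no Fekete, no telescoping over shifts, no Kingman) because the problem becomes additive on $UE_0$, whereas your version quotes only classical theorems about subadditive sequences/cocycles on the base $K$. Both conclude with the same ergodic-decomposition step, which you make explicit and the paper leaves tacit.
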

\begin{proof}
Note that $E_0$ is uniformly contracted if and only if
for any unitary vector $v\in E_0$, there exists $n\geq 1$ such that
$\|Df^n.v\|<\|v\|$.

Let us first assume that $E_0$ is not uniformly contracted:
there exists a unitary vector $v\in E_0$ such that
for all $k\geq 0$ one has
$\|Df^k.v\|\geq \|v\|$.
One deduces that there exists a probability measure $\tilde \mu_0$ on the unitary bundle associated to $E_0$
which is invariant by $Df$ and whose evaluation on
the function $v\mapsto \log\left(\|Df.v\|/\|v\|\right)$ is non-negative.
This measure projects to an $f$-invariant measure $\mu_0$ on $K$ such that
$\lambda^+(E_0,\mu_0)\geq 0$ as required.
The converse part is clear.
\end{proof}

Let us now come to the proof of the theorem.
We will assume that the first case of the theorem does not occur.
\begin{claim}\label{cl.1}
If we are not in the first case of the theorem,
there exists an invariant closed subset $K\subset K_0$
which admits a dominated splitting $E^{s}\oplus E^c\oplus F'$
such that
\begin{itemize}
\item[--] $E^{s}$ is uniformly contracted and has dimension $\dim(E^{s})<\dim(E)$,
\item[--] $E^ c$ is one-dimensional, for any invariant measure $\mu$ supported
on $K$ the exponent $\lambda(E^ c,\mu)$ is non-positive, and
there exists a unitary vector $v\in E^c$ such that
\begin{equation}\label{e.2}
\forall k\geq 0,\quad \|Df^k.v\|\geq \|v\|.
\end{equation}
\end{itemize}
\end{claim}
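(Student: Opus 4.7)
The plan is to find $K$ as the support of a carefully chosen ergodic measure, refined by a short descending iteration if the stable bundle is not already uniformly contracted. By Claim~\ref{cl.0}, since $E$ is not uniformly contracted on $K_0$, there is an ergodic invariant measure $\mu_0$ on $K_0$ with $\lambda^+(E,\mu_0)\geq 0$. Corollary~\ref{c.measure-tangency} endows $\supp(\mu_0)$ with a dominated splitting $E^s_0\oplus E^c_0\oplus E^u_0$ refining the Oseledets decomposition of $\mu_0$, with $\dim E^c_0\leq 1$.

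The crux is to reduce to the case $\lambda^+(E,\mu_0)=0$ under the standing assumption that we are not in case~1 of the theorem. If $\mu_0$ is hyperbolic with $\lambda^+(E,\mu_0)\geq 0$, then Proposition~\ref{p.anosov} places it in a homoclinic class of index $\dim E^s_{\mu_0}<\dim E$ (the domination $E\oplus F$ forces $F\subset E^u_{\mu_0}$ since $\lambda^-(F,\mu_0)>\lambda^+(E,\mu_0)\geq 0$, and $E$ carries a positive exponent); this is case~1, a contradiction. If $\mu_0$ has a zero Lyapunov exponent, Corollary~\ref{c.dicho-mesure} asserts this zero is realized on the one-dimensional $E^c_0$; a positive exponent in $E$ would then be forced to lie in $E^u_0$ with value $>\delta$, yielding $F\subset E^u_0$, $E^s_0\oplus E^c_0\subset E$ (by uniqueness of dominated splittings of fixed index) and $\dim E^s_0\leq \dim E-2$, so the perturbed periodic orbits $\cO_n$ provided by Corollary~\ref{c.measure-tangency} would have indices in $\{\dim E^s_0,\dim E^s_0+1\}$, both $<\dim E$, whose persistent hyperbolic continuations for $f$ again yield case~1. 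Thus $\lambda^+(E,\mu_0)=0$, $\lambda(E^c_0,\mu_0)=0$, and the comparison of splittings gives $E^s_0\oplus E^c_0\subset E$, $F\subset E^u_0$ and $\dim E^s_0<\dim E$. Applying the same dichotomy to any ergodic $\mu$ on $\supp(\mu_0)$ gives $\lambda^+(E,\mu)\leq 0$, whence $\lambda(E^c_0,\mu)\leq 0$ for every invariant measure on $\supp(\mu_0)$; and Claim~\ref{cl.0} applied to $E^c_0$ yields the required unit vector $v\in E^c_0$ with $\|Df^k.v\|\geq 1$ for all $k\geq 0$, since $\mu_0$ realizes exponent $0$.

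It remains to ensure the stable bundle is uniformly contracted. If $E^s_0$ is already uniformly contracted on $\supp(\mu_0)$, take $K:=\supp(\mu_0)$ with the splitting just obtained. Otherwise Claim~\ref{cl.0} applied to $E^s_0$ furnishes an ergodic measure $\mu_1$ on $\supp(\mu_0)$ with $\lambda^+(E^s_0,\mu_1)\geq 0$, and rerunning the construction with $E^s_0$ in the role of $E$ produces a closed invariant $K_1\subset\supp(\mu_0)$ of the analogous structure, its new stable bundle having strictly smaller dimension than $E^s_0$ and its new one-dimensional central bundle again contained in $E$. Since the stable dimension strictly decreases at each step, the iteration terminates in at most $\dim E$ rounds with the desired $K$. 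The hardest part is the non-hyperbolic subcase of the crux step: one needs to convert the perturbed periodic orbits supplied by Corollary~\ref{c.measure-tangency} into an actual homoclinic class of $f$ of index $<\dim E$ meeting $K_0$, which rests on the $C^1$-persistence of hyperbolic periodic orbits together with the Hausdorff convergence of the approximating orbits to $\supp(\mu_0)$.
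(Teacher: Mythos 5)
The overall scaffolding of your argument matches the paper's: take an ergodic measure $\mu_0$ with $\lambda^+(E,\mu_0)\geq 0$ (from Claim~\ref{cl.0}), use Corollary~\ref{c.dicho-mesure}/Corollary~\ref{c.measure-tangency} to split $\supp(\mu_0)$, rule out the hyperbolic case via Proposition~\ref{p.anosov}, extract the one-dimensional $E^c$ carrying the zero exponent, get $v$ from the non-contraction of $E^c$, and enforce uniform contraction of the stable bundle by a descending iteration (the paper packages the iteration as ``choose $\mu_0$ with $\dim E_0$ minimal,'' which is the same idea).

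However, there is a genuine gap in your ``crux'' step. You try to establish $\lambda^+(E,\mu_0)=0$ by ruling out the non-hyperbolic subcase in which $E$ carries a strictly positive Oseledets exponent (i.e.\ the zero exponent occupies the $j$-th slot with $j<\dim E$). Your mechanism is: the periodic orbits $\cO_n$ from Corollary~\ref{c.measure-tangency} have indices $\dim E^s_0$ or $\dim E^s_0+1<\dim E$, and ``their persistent hyperbolic continuations for $f$'' give case~1. This does not work. Those $\cO_n$ live on perturbed diffeomorphisms $f_n\to f$, and their central Lyapunov exponent tends to $0$ along with the distance $d_{C^1}(f_n,f)$; the $C^1$-persistence radius of a hyperbolic orbit shrinks with its weakest exponent, so one cannot conclude that any $\cO_n$ has a continuation for $f$, much less that a whole homoclinic class of $f$ is produced. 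In fact that subcase cannot be excluded at all: it is perfectly consistent for a non-hyperbolic $\mu_0$ to have its zero exponent strictly inside $E$, and the paper makes no attempt to rule it out.

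Fortunately, the detour is unnecessary, which is how the paper sidesteps the issue entirely. The claim requires only $\dim E^s<\dim E$, which already follows from $\lambda^+(E,\mu_0)\geq 0$ (the stable Oseledets dimension of $\mu_0$ is then strictly below $\dim E$); one never needs $\dim E^s_0=\dim E-1$, i.e.\ never needs $\lambda^+(E,\mu_0)=0$. Similarly, the inequality $\lambda(E^c,\mu)\leq 0$ for all invariant $\mu$ should not be derived through the dubious bound $\lambda^+(E,\mu)\leq 0$; argue directly as the paper does: if some ergodic $\mu'$ on $K$ had $\lambda(E^c,\mu')>0$, then by domination $\mu'$ would be hyperbolic with stable dimension $\dim E^s<\dim E$, putting us in case~1 of the theorem, contradiction. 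With those two adjustments your argument closes up and matches the paper's.
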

\begin{proof}
Since $E$ is not uniformly contracted,
one considers an invariant ergodic measure $\mu_0$ supported on $K_0$
and such that $\lambda^+(E,\mu_0)\geq 0$ as given by claim~\ref{cl.0}.
By corollary~\ref{c.dicho-mesure}, the set $K=\supp(\mu_0)$
has a dominated decomposition $E_0\oplus F_0$ such that $\dim(E_0)<\dim(E)$,
$\lambda^+(E_0,\mu_0)<0$ and $\lambda^-(F_0,\mu_0)\geq 0$.
One may choose such a measure $\mu_0$ and a set $K$ such that $\dim(E_0)$ is minimal.
Using again the claim~\ref{cl.0}, this implies that the bundle $E^{s}:=E_0$ is uniformly contracted on $K$.

If $\mu_0$ is hyperbolic, then by corollary~\ref{c.dicho-mesure}
the set $K\subset K_0$ is contained in a homoclinic class with index equal to $\dim(E^{s})$
and we are in the first case of the theorem, contradicting our assumption.
We thus have $\lambda^-(F_0,\mu_0)=0$ so that by corollary~\ref{c.dicho-mesure},
the bundle $F_0$ decomposes as a dominated splitting $E^c\oplus F'$ where $E^c$ is one-dimensional
and satisfies $\lambda(E^c,\mu_0)=0$.
In particular $E^c$ is not uniformly contracted and as in the proof of the claim~\ref{cl.0},
there exists a unitary vector $v\in E^c$ such that~(\ref{e.2}) holds, as announced.

For any other invariant measure $\mu$ supported on $K$, the exponent $\lambda(\mu,E^ c)$ should be
non-positive: otherwise $\mu$ would be hyperbolic with stable dimension smaller than $\dim(E)$
and corollary~\ref{c.dicho-mesure} would imply that we are in the first case of the theorem,
contradicting our assumption.
\end{proof}
\medskip

Let $K$ be an invariant compact set as in the previous claim.
Let us consider the family of invariant closed subsets $K'\subset K$ such that (\ref{e.2})
holds for some unitary vector $v\in E^c_{K'}$. This family is invariant under decreasing
intersections, hence one can replace $K$ by some subset $K'$ in this family which is minimal for the inclusion;
by this argument, one can assume furthermore:
\begin{description}
\item[(*)] For any invariant closed proper subset $K'\subsetneq K$,
the bundle $E^c_{K'}$ is uniformly contracted.
\end{description}
(In the case $K$ holds a minimal dynamics, property (*) is obviously empty.)
\medskip

For any ergodic measure $\mu$ supported on $K$ the exponent $\lambda(E^c,\mu)$ is non-positive.
If $\lambda(E^ c,\mu)=0$ for any $\mu$,
then we are in the third case of the theorem.
It remains to consider the case, for some invariant ergodic measure $\mu$ supported on $K$ the exponent
$\lambda(E^c,\mu)$ is negative.
Let us define $L\leq 0$  as the supremum of the negative exponents $\lambda(E^c,\mu)$
over all such measures.
\medskip

In the case $L$ is zero, for each $\delta>0$, there exists an
ergodic measure $\mu$ supported on $K$ such that $\lambda(E^c,\mu)$ belongs to $(-\delta,0)$.
If $\delta$ is small enough, by corollary~\ref{c.dicho-mesure} this measure is hyperbolic
and its stable bundle coincides with $E'=E^{s}\oplus E^c$.
Consider as in proposition~\ref{p.anosov} a sequence of periodic orbits $\cO_n$
close to $K$ whose associated invariant measures converge toward $\mu$.
Then, these orbits are homoclinically related, and their maximal Lyapunov exponent along
$E'$ (i.e. their exponent along $E^c$)
converges toward $\lambda(E^c,\mu)$, hence belong to $(-\delta,0)$.
Their index $\dim(E')$ is not greater than $\dim(E)$ by definition, is not smaller either since we are not in the first case of the theorem, hence $E=E'$ on $K$.
We have shown that we are in the second case of the theorem.
\medskip

In the case $L$ is negative,
we introduce the splitting $E'\oplus F':=(E^{s}\oplus E^c)\oplus F'$ over $K$.
Using the domination $E'=E^{s}\oplus E^c$,
one sees that there exists some integer $\nu>1$ and a positive constant $\varepsilon\ll L$ such that
$\|Df^\nu_{|E'}\|\leq e^{\varepsilon_nu}.\|Df^\nu_{|E^c}\|$.
By property (*), any invariant closed subset $K'\subset K$
supports an ergodic measure $\mu$ whose exponent $\lambda(E^c,\mu)$ is negative,
hence smaller than $L$.
With~(\ref{e.2}) above, the following properties are satisfied.
\begin{enumerate}
\item[(a)] There exists $y\in K$ such that for any $k\geq 0$,
$$\prod_{i=0}^{k-1}\|Df^\nu_{|E'}(f^{i\nu}(y))\|\geq \|Df^{k\nu}_{|E^c}(y)\|\geq 1.$$
\item[(b)] For any invariant compact set $K'\subset K$,
there exists a point $x\in K'$ such that for any $k\geq 0$,
$$\prod_{i=0}^{k-1}\|Df^\nu_{|E'}(f^{i\nu}(x))\|\leq e^{k\varepsilon}.\|Df^{k\nu}_{|E^c}(x)\|
\leq e^{-k\nu (L-\varepsilon)}.$$
\end{enumerate}
These properties allow to apply Liao's selecting lemma and its consequences
(see~\cite[proposition 3.7 and lemma 3.8]{wen-conjecture} and~\cite{wen-selecting}): for any
$\delta>0$, there exists a sequence of
periodic orbits $(\cO_n)$ which converge to a subset of $K$ for the Hausdorff topology,
that are homoclinically related and whose maximal exponent along $E'$
(i.e. their exponent along $E^c$) belongs to $(-\delta,0)$.
One concludes as in the case $L=0$: we are in the second case of the theorem.
This ends the proof of theorem~\ref{t.trichotomie}.
\end{proof}

\subsection{Minimally non-hyperbolic sets}
Theorem~\ref{t.trichotomie} implies in particular one of the main results of~\cite{wen-conjecture}.
\begin{theorem*}[Wen]
Let $f$ be a $C^1$-generic diffeomorphism in $\diff^1(M)\setminus \overline{\Tang\cup\Cycl}$.
Then, any minimally non-hyperbolic chain-transitive set $K$ is partially hyperbolic;
its central bundle is either one-dimensional or the dominated sum of two one-dimensional bundles.
\end{theorem*}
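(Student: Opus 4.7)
The plan is to apply Theorem~\ref{t.trichotomie} to $K$ in both forward and backward time, then use the minimally non-hyperbolic hypothesis together with the avoidance of cycles to discard its first two alternatives. Since $f$ is $C^1$-generic outside $\overline{\Tang}$, the remark following Corollary~\ref{c.wen} provides a non-trivial uniformly $N$-dominated splitting on every chain-recurrence class except for finitely many sinks and sources; those exceptional classes are hyperbolic, so $K$ (being non-hyperbolic) lies in a class carrying a dominated splitting $T_KM=E\oplus F$. At least one extremal bundle fails to be uniformly hyperbolic; after possibly replacing $f$ by $f^{-1}$, assume $E$ is not uniformly contracted and apply Theorem~\ref{t.trichotomie}.

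In the first alternative $K$ meets a homoclinic class $H$ of index strictly less than $\dim E$. By $C^1$-genericity $H$ coincides with its chain-recurrence class, so $K\subset H$, and Corollary~\ref{c.wen} applied to the periodic orbits of $H$ of smaller index refines the splitting on $K$ by peeling off a uniformly contracted bundle of strictly smaller dimension. In the second alternative, the remark after Theorem~\ref{t.trichotomie} produces on the ambient homoclinic class, hence on $K$, a finer dominated splitting $E'\oplus E^c_1\oplus F$ with $\dim E^c_1=1$ and $\dim E'<\dim E$. In either case $K$ now admits a dominated splitting with a strictly smaller stable candidate, so iterating at most $\dim E$ times forces the third alternative to eventually occur. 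This yields an invariant compact $K'\subset K$ carrying a partially hyperbolic structure $E^s\oplus E^c_1\oplus G$ where $E^s$ is uniformly contracted, $\dim E^c_1=1$, and every invariant measure on $K'$ has zero Lyapunov exponent along $E^c_1$.

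The minimally non-hyperbolic hypothesis now forces $K'=K$: otherwise $K'$ would be a proper invariant compact subset of $K$, hence hyperbolic, contradicting the existence of an invariant measure on $K'$ with a zero Lyapunov exponent. Running the symmetric argument for $f^{-1}$, starting from the dominated splitting $(E^s\oplus E^c_1)\oplus G$, refines $G$ into $E^c_2\oplus E^u$ with $E^u$ uniformly expanded and $\dim E^c_2\in\{0,1\}$. Putting everything together yields the desired partially hyperbolic decomposition $E^s\oplus E^c_1\oplus E^c_2\oplus E^u$ with one- or two-dimensional dominated central bundle.

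The main obstacle is the treatment of Cases 1 and 2 of the trichotomy in a way compatible with iteration: one must argue that $K$ genuinely inherits the refined dominated splitting on all of $K$ (and not just on $H\cap K$), which uses both the $C^1$-generic identification of chain-recurrence classes with homoclinic classes and the assumption $f\notin\overline{\Cycl}$ to prevent parasitic bifurcations from being created in the process. A secondary subtlety is verifying that the symmetric application of Theorem~\ref{t.trichotomie} to $f^{-1}$ refines only the dominating half $G$ without producing extra one-dimensional central bundles beyond $E^c_2$; this is guaranteed by working at each stage with the finest dominated splitting, since any further zero Lyapunov exponent would, via Theorem~\ref{t.trichotomie}, expose a proper invariant subset of $K$ incompatible with the minimally non-hyperbolic hypothesis.
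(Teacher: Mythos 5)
The crux of your argument---``iterating at most $\dim E$ times forces the third alternative to eventually occur''---is false, and this is where the proof genuinely breaks. The iteration can equally well terminate because the refined bundle $E'$ becomes uniformly contracted, at which point the hypothesis of Theorem~\ref{t.trichotomie} no longer holds and Case~3 never arises. This is exactly what happens when the theorem's conclusion is a \emph{two}-dimensional central bundle: if $K\subset H(P)$ has a partially hyperbolic structure $E^s\oplus E^c_1\oplus E^c_2\oplus E^u$ with both $E^c_i$ non-uniform and unique index $\dim(E^s)+1$, then $H(P)$ contains periodic orbits whose exponent along $E^c_1$ is close to zero (hence strictly positive along $E^c_2$ by domination) and other periodic orbits whose exponent along $E^c_2$ is close to zero (hence strictly negative along $E^c_1$). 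Any invariant compact $K'\subset K$ on which all measures have zero exponent along $E^c_1$ must exclude these periodic orbits, hence is a \emph{proper} subset---and a proper invariant compact subset of a minimally non-hyperbolic set is hyperbolic, contradicting the fact that every invariant measure on $K'$ would then have a zero Lyapunov exponent. So Case~3 cannot occur, the iteration halts with $E'$ uniformly contracted, and your argument produces no conclusion. (There is also a misreading of Case~1: Wen's theorem supplies a dominated splitting whose $E$-part has dimension equal to the index; it does not declare that bundle uniformly contracted, so nothing is ``peeled off.'')

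The paper proceeds in the opposite direction and handles both branches. It dichotomises at the start: either $K$ contains a non-hyperbolic subset carrying a partially hyperbolic structure with one-dimensional centre---then minimality forces that subset to equal $K$ and the central bundle is one-dimensional---or it does not. In the second branch, Case~3 of Theorem~\ref{t.trichotomie} is excluded precisely by that standing assumption, Case~1 is excluded by uniqueness of the index of $H(P)$, so Case~2 always applies, giving $E=E'\oplus E^c_1$; the bundle $E'$ must then be uniformly contracted, since a further application of the theorem to $E'\oplus(E^c_1\oplus F)$ would contradict the unique index. The symmetric argument for $F$ produces $E^c_2\oplus F'$ with $F'$ expanded, yielding the two-dimensional central bundle. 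Your proof, because Case~3 already supplies $E^u$ uniformly expanded, can only ever end with a trivial $E^c_2$ and therefore never reaches the two-dimensional conclusion at all; a simple patch of the iteration will not fix this, since the missing case requires ruling Case~3 \emph{out}, not forcing it to occur.
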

\begin{remark}
With the same proof one obtains also a result due to S. Gan, D. Yang and Wen~\cite{GYW}:
any minimally non-hyperbolic chain-transitive set $K$
of a $C^1$-generic diffeomorphism in $\diff^1(M)\setminus \overline{\Tang}$ has a dominated decomposition
$$T_KM=E^s\oplus E^c\oplus E^u$$
where $E^s,E^u$ are uniform.
Moreover in the case $\dim(E^c)>1$, the set $K$ is contained in a homoclinic
class whose set of indices contains $i_1=\dim(E^s)+1$ and $i_2=dim(E^s\oplus E^c)-1$.
From~\cite{abcdw}, all the $i\in \{i_1,\dots,i_2\}$ are also indices of the class
and consequently, $F$ decomposes as the dominated sum of one-dimensional central invariant bundles.
\end{remark}
\begin{proof}
Since $f$ is $C^1$-generic, one can assume by~\cite{BC}
that all the homoclinic classes of $f$ are either disjoint or equal
and that $K$ is either contained in a homoclinic class or disjoint from all of them.
Since $f$ is $C^1$-generic and belongs to $\diff^1(M)\setminus \overline{\Cycl}$,
\cite[section 2.4]{abcdw} implies that each homoclinic class has a unique index.
Let us assume that $K$ does not contain a non-hyperbolic set $K'$ with
a partially hyperbolic splitting and a one-dimensional central bundle:
in this case one would have $K=K'$ (by minimality property of $K$) and the conclusion of the theorem would hold.

Let us consider a dominated splitting $E\oplus F$ over $K$.
If $E$ is not uniformly contracted, then theorem~\ref{t.trichotomie}
applies. The third case of the theorem can not occurs
since $K$ does not contains a non-hyperbolic set which has a partially hyperbolic structure
and a one-dimensional central bundle.
If one of the two first cases occurs, $K$ meets a homoclinic class
with index smaller than or equal to $\dim(E)$.

Note that one can consider on $K$
the trivial dominated splitting $T_KM\oplus \{0\}$
and that the first bundle is not uniformly contracted by assumption, so the previous argument
applies, proving that $K$ is contained in a homoclinic class $H(P)$.
Since $f$ belongs to $\diff^1(M)\setminus \overline{\Tang}$,
Wen's theorem in~\cite{wen-tangence} implies that $H(P)$ (and $K$) has another dominated splitting
$E\oplus F$ such that $\dim(E)$ coincides with the index of $P$.
Since $K$ is not hyperbolic either $E$ is not uniformly contracted or $F$ is not uniformly expanded.

If $E$ is not uniformly contracted,
since the index of $H(P)$ is unique and equal to $\dim(E)$,
we are in the second case of theorem~\ref{t.trichotomie}.
In particular, the bundle $E$ splits as a sum $E'\oplus E^c_1$
where $E^c_1$ is one-dimensional.
The bundle $E'$ is uniformly contracted since
otherwise one can apply theorem~\ref{t.trichotomie} again to the
splitting $E'\oplus (E^c_1\oplus F)$, contradicting the fact that the
index of $H(P)$ is unique.

If $F$ is not uniformly expanded, the same arguments hold
and prove that $F$ splits as a dominated decomposition $E^c_2\oplus F'$
where $E^c_2$ is one-dimensional and $F'$ is uniformly expanded.
\end{proof}

\subsection{Spreading the dominated splittings}\label{ss.spread}
In the previous sections we obtained some partially hyperbolic sets $K$
with a one-dimensional central bundle.
In the case $K$ is minimal, we explain here how the dominated decomposition on $K$
extends to a larger set.

\begin{proposition}\label{p.extension}
Let $f$ be a diffeomorphism in a dense G$_\delta$ set
$\cR_{Ext}\subset\diff^1(M)\setminus \overline{\Tang}$
and let $K$ be a minimal set which has a partially hyperbolic splitting
$E^s\oplus E^c \oplus E^u$ such that $E^c$ is one-dimensional
and any invariant measure supported on $K$ has a Lyapunov exponent along $E^c$ equal to $0$.
Then, for any chain-transitive set $A$ that strictly contains $K$,
there exists a chain-transitive set $A'\subset A$
that strictly contains $K$ and there exists a dominated decomposition $E_1\oplus E^c \oplus E_3$
on $A'$ that extends the partially hyperbolic structure of $K$.
\end{proposition}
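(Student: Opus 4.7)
The strategy has two parts: first extend the dominated splitting from $K$ by cone fields to any $f$-invariant compact set in a fixed neighbourhood $V$ of $K$; second produce, using the $C^1$-genericity of $f$ and the hypothesis $A\supsetneq K$, a chain-transitive set $A'$ with $K\subsetneq A'\subseteq A$ squeezed into $\overline V$, so that the extension of the splitting applies to it.

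\emph{Cone-field extension.} Fix an integer $N$ such that the partially hyperbolic splitting $E^s\oplus E^c\oplus E^u$ on $K$ is $N$-dominated. By the standard cone-field criterion for domination, one can choose a compact neighbourhood $V$ of $K$ and continuous cone fields $\mathcal{C}^s,\mathcal{C}^{cs},\mathcal{C}^{cu},\mathcal{C}^u$ on $V$ of dimensions $\dim E^s$, $\dim(E^s\oplus E^c)$, $\dim(E^c\oplus E^u)$, $\dim E^u$, such that every compact $f$-invariant subset $K'\subset V$ admits an $N$-dominated splitting $E_1\oplus E^c\oplus E_3$ with $E_1\subset\mathcal{C}^s$, $E_3\subset\mathcal{C}^u$, $\dim E^c=1$, and which coincides with $E^s\oplus E^c\oplus E^u$ on $K$ whenever $K\subset K'$. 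It therefore suffices to produce a chain-transitive $A'\subset A\cap\overline V$ with $K\subsetneq A'$.

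\emph{Construction of $A'$.} Incorporate into $\cR_{Ext}$ the generic properties from~\cite{BC,crovisier-approximation}: in particular every chain-transitive set of $f$ is a Hausdorff limit of a sequence of periodic orbits $\cO_n$ of diffeomorphisms $g_n\to f$. Fix $V'$ with $K\subset V'\Subset V$ and $\epsilon>0$ small with $\overline{V_\epsilon(K)}\subset V'$. Apply the approximation to $A$; if $\cO_n\subset V'$ for infinitely many $n$, their Hausdorff limit gives an $f$-invariant compact set $A'\subseteq A\cap\overline{V'}$ containing $K$ and, since $\cO_n\to A\supsetneq K$, strictly so. Otherwise each $\cO_n$ exits $V'$; decompose it into the maximal arcs contained in $V'$, and keep among these the arcs $\sigma_n$ whose endpoints are $\epsilon$-close to $K$ and which visit some point of $V'\setminus V_\epsilon(K)$. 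These arcs exist for $n$ large because $\cO_n$ Hausdorff-approximates $A\supsetneq K$. Extract a Hausdorff-subsequential limit $\sigma_n\to S\subset\overline{V'}$ and set $A':=K\cup S$. Then $K\subsetneq A'\subseteq A\cap\overline V$ and $A'$ is $f$-invariant. Chain-transitivity of $A'$ follows by concatenation: two points of $A'$ lying on arcs of $S$ can be joined by pseudo-orbits that enter $K$ near the endpoints of the corresponding arcs, traverse $K$ using the minimality of $K$, and exit $K$ near the endpoints of another arc of $S$. Applying the first step with $K'=A'$ yields the desired dominated decomposition $E_1\oplus E^c\oplus E_3$ on $A'$.

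\emph{Main obstacle.} The delicate point is the construction of $A'$: the excursions of the approximating periodic orbits must simultaneously (i) begin and end close to $K$, so that they can be chain-concatenated through $K$, (ii) visit points at a definite positive distance from $K$, so that the Hausdorff limit strictly contains $K$, and (iii) remain inside $V'\subset V$, so that the cone-field extension applies. The assumption $A\supsetneq K$ provides non-trivial excursions of the orbits $\cO_n$, and the minimality of $K$ allows one to close up the selected arcs into pseudo-orbits inside $A'$; the rest of the proof is then formal.
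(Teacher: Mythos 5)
Your cone-field paragraph is correct as far as it goes, but it commits you to producing a chain-transitive $A'$ with $K\subsetneq A'\subset A$ inside a small neighbourhood $V$ of $K$, and that is precisely what cannot always be done. The paper isolates the obstruction as property (I) of lemma~\ref{l.isolation}: for some neighbourhood $U$ of $K$, every chain-transitive $\Lambda$ with $K\subset\Lambda\subset U\cap A$ equals $K$. If (I) fails for every small $U$, one has $A'$ for free — this is essentially the situation your argument handles — but when (I) holds with $U\subset V$, no chain-transitive set strictly between $K$ and $A$ lies inside $V$, so the set your cone fields were built to serve does not exist. Your arc construction exhibits this concretely: you decompose $\cO_n$ into maximal arcs contained in $V'$ and keep those ``whose endpoints are $\epsilon$-close to $K$,'' but the endpoints of a maximal arc of $\cO_n$ in $V'$ are within one iterate of leaving $V'$, hence lie near $\partial V'$, which you chose disjoint from $\overline{V_\epsilon(K)}$ — so this filter selects nothing; and if instead you take arcs bounded by consecutive visits to $V_\epsilon(K)$, nothing forces them to stay in $V'$ in between. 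Conditions (i)--(iii) of your ``main obstacle'' paragraph are incompatible under (I), and the hypothesis $A\supsetneq K$ alone does not rescue them.

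The paper's argument is therefore structurally different. Under (I), lemma~\ref{l.isolation} furnishes two orbits in $A$ — a forward-asymptotic point $x^+$ and a backward-asymptotic point $x^-$ with $\omega(x^+),\alpha(x^-)\subset\Delta^\pm\subset U\cap A$ — and then Hayashi's connecting lemma together with lemma~\ref{l.connecting} produce $C^1$-perturbations $g_n\to f$ and $g_n$-periodic orbits $\cO_n$ that spend most of their period near $\Delta^+\cup\Delta^-\subset U$ (hence have $(\dim E^s+1)$-th Lyapunov exponent in $(-\delta_0,\delta_0)$) and that converge to an invariant set $A'\subset A$ strictly containing $K$. This $A'$ need not lie in any small neighbourhood of $K$, so the cone-field extension is unavailable; the uniform dominated splitting on each $\cO_n$ is instead supplied by Wen's corollary~\ref{c.wen} — where the far-from-tangencies hypothesis does its work — and passes to $A'$ in the Hausdorff limit. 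Thus, beyond the missing construction of $A'$, your proposal replaces the actual mechanism (Wen's uniform domination over weak periodic orbits produced by perturbation) with cone fields near $K$, which only covers the easy case.
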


At several times in the paper we will use the following result.

\begin{lemma}\label{l.isolation}
Let $A$ be a chain-transitive set, $K\subsetneq A$ be a minimal set
and assume that there exists a neighbourhood $U$ of $K$ having the following property:
\begin{description}
\item[(I)] Any chain-transitive set
$\Lambda$ satisfying $K\subset \Lambda\subset U\cap A$ coincides with $K$.
\end{description}
Then there exists an invariant  compact set $\Delta^+\subset U\cap A$
and a point $x^+\in U\setminus \Delta^ +\cap A$ such that
\begin{itemize}
\item[--] $\Delta^+$ contains $K$ and $\omega(x^+)$;
\item[--] for any $\varepsilon>0$ and $z\in \Delta^+$, there exists a $\varepsilon$-pseudo-orbit
of $f$ in $\Delta^ +$ that joints $z$ to $K$.
\end{itemize}
There also exists a compact set $\Delta^-\subset U\cap A$
and a point $x^-\in U\setminus \Delta^ -\cap A$
such that the same properties hold if one replaces $f$ by $f^ {-1}$.

Moreover for any such pairs $(\Delta^ +,x^ +)$ and $(\Delta^ -,x^ -)$
one has $\Delta^ -\cap \Delta^ +=K$.
\end{lemma}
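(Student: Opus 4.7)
First, property (I) forces $A\not\subset U$: were $A$ entirely contained in $U$, it would itself be an invariant compact chain-transitive subset of $U\cap A$ strictly larger than $K$, contradicting (I). Hence $A\setminus U$ is nonempty. I fix five nested open neighbourhoods
\[
K\subset V_1\subset \overline{V_1}\subset V_2\subset\overline{V_2}\subset V_3\subset\overline{V_3}\subset V_4\subset\overline{V_4}\subset V_5\subset\overline{V_5}\subset U;
\]
the buffers decouple the position of $x^+$ (outer annulus) from that of its $\omega$-limit (inner region).

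For each $n\geq 1$, chain-transitivity of $A$ yields a finite $\varepsilon_n$-pseudo-orbit $(z_0^n,\dots,z_{M_n}^n)$ in $A$ with $z_0^n\in A\setminus U$ and $z_{M_n}^n\in K$, with $\varepsilon_n\to 0$ chosen so fast that $\varepsilon_n\cdot L^{T_n}\to 0$ (where $L=\operatorname{Lip}(f)$ and $T_n$ is fixed below). Since the pseudo-orbit goes from outside $\overline{V_5}$ into $V_1$, it visits the annulus $V_5\setminus\overline{V_4}$; let $r_n$ be its last visit there, set $w_n:=z_{r_n}^n$ and $T_n:=M_n-r_n$. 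For $\varepsilon_n$ smaller than a fixed fraction of $\operatorname{dist}(\overline{V_3},M\setminus V_4)$, no single step can cross the gap between $V_3$ and the complement of $V_4$, so the pseudo-orbit after $w_n$, being connected and ending in $K\subset V_3$, must stay entirely in $V_3$. Shadowing then gives $f^k(w_n)\in\overline{V_3}$ for $1\leq k\leq T_n$ and $d(f^{T_n}(w_n),K)\leq \varepsilon_n L^{T_n}\to 0$. Passing to a convergent subsequence, $x^+:=\lim w_n\in \overline{V_5}\setminus V_4\subset U\cap A$ is not in $K\subset V_4$, and continuity of each $f^k$ places the entire forward orbit of $x^+$ in $\overline{V_3}$. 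In particular $\omega(x^+)\subset \overline{V_3}\subset U\cap A$ is an invariant compact internally chain-transitive set.

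A diagonal argument --- first choosing the $\varepsilon_n$'s so that $\varepsilon_n L^{T_n}\to 0$, then further thinning the subsequence so that $L^{T_{n_k}}d(x^+,w_{n_k})\to 0$ as well --- shows that $f^{T_{n_k}}(x^+)\to p$ for some $p\in K$, so that $\omega(x^+)\cap K\neq\emptyset$. Minimality of $K$ then gives $K\subset\omega(x^+)$, and property (I) applied to the chain-transitive set $\omega(x^+)\subset U\cap A$ forces $\omega(x^+)=K$. Set $\Delta^+:=K$: it is an invariant compact subset of $U\cap A$ containing $K$ and $\omega(x^+)$, with the pseudo-orbit property trivially satisfied, and $x^+\in (U\cap A)\setminus\Delta^+$. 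The symmetric construction for $f^{-1}$ yields $(\Delta^-,x^-)$.

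For the moreover claim, let $(\Delta^+,x^+)$ and $(\Delta^-,x^-)$ be any admissible pairs and $z\in\Delta^+\cap\Delta^-$. Given $\varepsilon>0$, concatenate an $\varepsilon$-pseudo-orbit of $f$ from $z$ to $K$ inside $\Delta^+$, a chain-transitive $\varepsilon$-loop inside the minimal set $K$, and the time-reversal of an $\varepsilon$-pseudo-orbit of $f^{-1}$ from $z$ to $K$ inside $\Delta^-$ (an $\varepsilon$-pseudo-orbit of $f$ from $K$ to $z$, modulo a Lipschitz rescaling of $\varepsilon$). This produces a periodic $\varepsilon$-pseudo-orbit of $f$ inside $\Delta^+\cup\Delta^-\subset U\cap A$ passing through both $K$ and $z$. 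Hence the chain-recurrence class of $K$ inside the compact set $\Delta^+\cup\Delta^-$ is a chain-transitive invariant compact subset of $U\cap A$ containing $K\cup\{z\}$, which by (I) must equal $K$, so $z\in K$. Combined with the trivial inclusion $K\subset\Delta^+\cap\Delta^-$, this gives $\Delta^+\cap\Delta^-=K$.

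\textbf{Hardest step.} The delicate point is the diagonal argument establishing $\omega(x^+)\cap K\neq\emptyset$: the shadowing error $\varepsilon_n L^{T_n}$ and the a priori uncontrolled rate $d(x^+,w_n)\to 0$ must be synchronised by pre-thinning the pseudo-orbit sequence so that $L^{T_{n_k}}d(x^+,w_{n_k})\to 0$. The five nested buffers $V_1,\dots,V_5$ are precisely what guarantees that, simultaneously, $x^+$ lands outside $V_4$ and its forward orbit stays inside $\overline{V_3}$, so that $x^+\notin\Delta^+$ while $\omega(x^+)\subset\Delta^+$.
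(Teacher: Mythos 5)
The shadowing strategy has a circularity that you cannot escape. To control the shadowing error over $T_n$ steps you need $\varepsilon_n L^{T_n}\to 0$, but $T_n$ is the length of the tail of the chosen $\varepsilon_n$-pseudo-orbit and is therefore a \emph{consequence} of the choice of $\varepsilon_n$, not a quantity you can preempt. Moreover $T_n\to\infty$ is forced, since $T_n$ bounded would yield $f^T(x^+)\in K$ for a fixed $T$, hence $x^+\in K$ by invariance of $K$, contradicting the choice of $x^+$ outside $K$. The same problem recurs in the second thinning: $T_n$ grows as $d(x^+,w_n)$ shrinks, so nothing guarantees one can arrange $L^{T_{n_k}}d(x^+,w_{n_k})\to 0$. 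Independently of this, the confinement estimate is wrong: an $\varepsilon$-pseudo-orbit satisfies $d(f(z_k),z_{k+1})<\varepsilon$, not $d(z_k,z_{k+1})<\varepsilon$, so a single step can move a point across an arbitrary distance whenever $f$ itself does. Choosing $\varepsilon_n<\operatorname{dist}(\overline{V_3},M\setminus V_4)$ therefore does not prevent a step from leaving $V_4$, and for the same reason the pseudo-orbit from $A\setminus\overline{V_5}$ to $K$ need not visit the annulus $V_5\setminus\overline{V_4}$ at all, so $w_n$ may fail to exist.

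Even granting the shadowing, the conclusion $\omega(x^+)=K$ and $\Delta^+=K$ is strictly stronger than the lemma and is false in general: the pseudo-orbit tails can, on their way to $K$, accumulate on some other chain-transitive set inside $U\cap A$ that does not contain $K$, and then $\omega(x^+)$ need not lie in $K$ (property (I) only constrains chain-transitive sets that \emph{contain} $K$). The paper's proof avoids shadowing altogether. It truncates the pseudo-orbit at its last exit from a small $U'\supset K$ and argues, by contradiction with (I), that some point $z_j$ of the truncated pseudo-orbit is isolated at a uniform scale $\eta$: otherwise the Hausdorff limits of these pseudo-orbits would be compact invariant sets with chain-recurrent induced dynamics, contained in $U$ and strictly containing $K$. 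Then $x^+$ is a limit of these isolated points, $Z^+$ is the Hausdorff limit of the corresponding tails, and $\Delta^+=Z^+\setminus\{f^n(x^+),\,n\geq 0\}$, which need not equal $K$. Your ``moreover'' paragraph, by contrast, is correct and simply spells out the paper's final line.
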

\begin{proof}
Let us fix a point $x_0\in A\setminus U$
and a much smaller neighbourhood $U'\subset U$ of $K$
For any $\varepsilon>0$, there exists a $\varepsilon$-pseudo-orbit
$(z_0,\dots,z_n)$ in $A$ with $z_0=x_0$ and $z_n\in K$.
Let $z_i$ be the last point of the pseudo-orbit outside $U'$.
We will only use the suborbit $(z_i,\dots,z_n)$.

We claim that there exists $\eta>0$ and for each $\varepsilon>0$
and each $\varepsilon$-pseudo-orbit $(z_i,\dots,z_n)$ there exists a point $z_j$
such that the ball $B(z_j,\eta)$ does not contain any other
point $z_k$ for $i\leq k \leq n$ and $k\neq j$.
Indeed if this fails, then any limit set $X$ for the pseudo-orbits
$(z_i,\dots,z_n)$ as $\varepsilon$ goes to $0$ would be
an invariant compact set whose induced dynamics is chain-recurrent;
as $X$ is contained in $U$ and strictly contains $K$, this
contradicts the assumption (I).

One may extract a sequence of pseudo-orbits $\{z_j,z_{j+1},\dots,z_n\}$ which
converges as $\varepsilon$ goes to zero toward a compact set $Z^+$ and such that the points $z_j$
converge toward a point $x^+\in Z^+$.
By construction $Z^+$ is compact, forward invariant, included in $U\cap A$ and contains $K$.
For every $\varepsilon>0$ and every point $z\in Z^+$,
there exists a $\varepsilon$-pseudo-orbit in $Z^+$ that joints $z$ to $K$.
By our hypothesis on $z_j$,
the point $x^+$ is non-periodic and isolated in $Z^+$, hence it is disjoint from $K$.
One deduces that the set $\Delta^+=Z^+\setminus \{f^n(x^+),n\geq 0\}$ is compact,
contains $\omega(x^+)$ and is invariant by $f$, as required.

The same construction holds if one replaces $f$ by $f^{-1}$. This gives a point $x^-$ and a set
$\Delta^-$. The set $\Delta^-\cap \Delta^+$ contains $K$, is invariant and chain-transitive;
by (I) it coincides with $K$.
\end{proof}

We recall a consequence of the connecting lemma for pseudo-orbits
(see~\cite[theorem 7]{crovisier-approximation}).
\begin{lemma}\label{l.connecting}
There exists a dense G$_\delta$ subset $\cR_{Chain}\subset\diff^1(M)$
of diffeomorphisms $f$ having the following property.
Let $X\subset M$ be a compact set and let $x,y\in X$ be two points
such that for any $\varepsilon>0$, there exists a $\varepsilon$-pseudo-orbit
in $X$ that joints $x$ to $y$; then, for any
$\varepsilon>0$, there exists a segment of orbit
$(z,\dots, f^n(z))$ contained in the $\varepsilon$-neighbourhood of $X$
such that $z$ and $f^n(z)$ are $\varepsilon$-close to $x$ and $y$ respectively.
\end{lemma}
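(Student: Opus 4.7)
The plan is to deduce this from the connecting lemma for pseudo-orbits in its strong form, using a Baire category argument to handle all data $(X,x,y,\varepsilon)$ simultaneously. First I would fix a countable basis of the topology of $M$ and use it to enumerate a countable family of parameters: pairs of nonempty open sets $(U,V)$ of small diameter, a rational $\varepsilon>0$, and a dense countable family $\cK$ of compact subsets of $M$ (for example, finite unions of closed balls of rational radii centred at points in a fixed dense countable subset of $M$). For each triple $(U,V,\varepsilon)$ and each $X\in\cK$ I would consider the property $P(U,V,\varepsilon,X)$: either no pseudo-orbits in $X$ connect $\overline U\cap X$ to $\overline V\cap X$, or there exists a segment of true orbit contained in the $\varepsilon$-neighbourhood of $X$ whose endpoints lie in $U$ and $V$. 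This is visibly open in $f$ (the second alternative is open, while the first is preserved by perturbation since pseudo-orbits are a soft notion). The set $\cR_{Chain}$ would be the intersection over the countable family of parameters of the interior-of-density sets, which is a dense $G_\delta$ as soon as each $P(U,V,\varepsilon,X)$ is dense.

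The density for a fixed quadruple is the main step and is precisely where the connecting lemma for pseudo-orbits enters. Given $f$ for which pseudo-orbits in $X$ link a point $x\in\overline U\cap X$ to a point $y\in\overline V\cap X$, the Bonatti--Crovisier connecting lemma produces a $C^1$-small perturbation $g$ of $f$ and a true $g$-orbit that joins a neighbourhood of $x$ to a neighbourhood of $y$. The key refinement I would extract from the proof is a locality statement: the perturbation is supported in a finite union of balls centred on points of the pseudo-orbit, so both the support and the produced true orbit can be chosen inside the $\varepsilon/2$-neighbourhood of $X$. Because the condition $P(U,V,\varepsilon,X)$ we want is open, a further tiny perturbation brings us back to $g$ close to $f$ while keeping the orbit inside the $\varepsilon$-neighbourhood of $X$.

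Once density is established for each parameter quadruple, it remains to check that the property of $f\in\cR_{Chain}$ implies the statement of the lemma for an \emph{arbitrary} compact $X$, not just $X\in\cK$. For this I would approximate an arbitrary $X$ from outside by elements of $\cK$: given any pseudo-orbit connection from $x$ to $y$ in $X$, one finds $X'\in\cK$ containing $X$ in its $\eta$-neighbourhood together with approximating points $x',y'$ still connected by pseudo-orbits in $X'$; applying $P(U',V',\varepsilon/2,X')$ for suitable $U',V'$ and $X'$ yields a true orbit segment in the $\varepsilon$-neighbourhood of $X$ with endpoints close to $x$ and $y$.

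The main obstacle is the locality of the connecting lemma used in the density step: the classical pseudo-orbit connecting lemma only guarantees that the true orbit lies near the sequence of perturbation balls, and one must be careful enough in choosing these balls (centred on points of the pseudo-orbit, hence in $X$) and their radii to force the whole orbit to stay within the $\varepsilon$-neighbourhood of $X$. This is precisely the refinement achieved in \cite{crovisier-approximation}, which is why the lemma is invoked from there rather than reproved.
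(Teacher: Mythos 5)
The paper does not actually prove this lemma: it invokes~\cite[theorem 7]{crovisier-approximation} and moves on, so there is no in-text proof to compare against. Your reconstruction is nonetheless correct and follows the route one would indeed take: reduce to a countable family of parameters $(U,V,\varepsilon,X)$, observe that the target condition (no small pseudo-orbits from $\overline U\cap X$ to $\overline V\cap X$, or else a true orbit segment in the $\varepsilon$-neighbourhood of $X$ with endpoints near $U$ and $V$) is open in $f$, establish its density via the support-controlled form of the Bonatti--Crovisier connecting lemma for pseudo-orbits, intersect, and finally pass to arbitrary compacta by outside approximation. Two small points worth tightening: the phrase ``interior-of-density sets'' is imprecise — the sets $\{f: P(U,V,\varepsilon,X)\}$ are already open and dense, so one just intersects them; and the density step should explicitly first perturb $f$ to a Kupka--Smale diffeomorphism (the hypothesis of the Bonatti--Crovisier theorem), noting that the pseudo-orbit connections survive this perturbation with a slightly larger error constant. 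Neither of these affects the correctness of the overall argument.
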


We also recall Hayashi's connecting lemma~\cite{hayashi} (see also~\cite[theorem 5]{crovisier-approximation}).
\begin{theorem*}[Hayashi]
To any $f\in \diff^1(M)$ and to any neighbourhood $\cU$ of $f$
is associated an integer $N\geq 1$ such that any non-periodic point $x\in M$
admits two neighbourhoods $W\subset \widehat W$
with the following property: for any $p,q\in M\setminus (f(\widehat W)\cup\dots\cup f^{N-1}(\widehat W))$
such that $p$ has a forward iterate $f^{n_p}(p)$ and $q$ a backward iterate $f^{-n_q}(q)$
which both belong to $W$, there exists a perturbation
$g\in \cU$ of $f$ with support in the sets $\widehat W,f(\widehat W),\dots, f^{N-1}(\widehat W)$
such that $q$ coincides with a forward iterate $g^m(p)$ of $p$ by $g$.

Moreover $\{p,g(p)\dots, g^m(p)\}$ is contained in the union of the orbits
$\{p,f(p)\dots, f^{n_p}(p)\}$,
$\{f^{-n_q}(q),\dots,q\}$ and of the neighbourhoods $\widehat W,\dots, f^N(\widehat W)$.
Also the neighbourhoods $\widehat W, W$ can be chosen arbitrarily small.
\end{theorem*}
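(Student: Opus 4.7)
The plan is to prove Hayashi's connecting lemma by a tower construction centred at the non-periodic point $x$. Since $x$ is non-periodic, given any prescribed integer $N\ge 1$ one can choose a sufficiently small neighbourhood $\widehat W$ so that the iterates $\widehat W, f(\widehat W),\dots, f^{N-1}(\widehat W)$ are pairwise disjoint; the inner neighbourhood $W\subset \widehat W$ will be chosen much smaller so that any small $C^1$-perturbation supported in the tower stays in $\cU$. The integer $N$ should be fixed a priori in terms of $\cU$ by a standard quantitative estimate: a perturbation pushing points by a distance of order $\diam(\widehat W)$ can be split into $N$ incremental shifts whose $C^1$-size is of order $\diam(\widehat W)/N \cdot \sup\|Df\|$, which can be made to lie in $\cU$ once $N$ is large enough.

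The next step is the \emph{selection lemma}, which is the combinatorial heart of the argument. The forward segment $(p,f(p),\dots,f^{n_p}(p))$ visits $W$ at the endpoint, and similarly the backward segment $(f^{-n_q}(q),\dots,q)$ has its endpoint in $W$; a priori both segments might enter and leave the tower many times. I would use a pigeonhole argument over the times the orbits intersect $\widehat W$: among these, one can select a forward visit of $p$ and a backward visit of $q$ landing in $W$ that are "last" in the sense that the corresponding segments do not enter $f(\widehat W)\cup\dots\cup f^{N-1}(\widehat W)$ on the forbidden sides. This reduces the problem to a single passage through the tower and isolates the piece of orbit that needs to be modified.

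Once the selection is done, I would construct the perturbation by interpolating across the $N$ slices of the tower: pick smooth bump functions $\varphi_i$ supported in $f^i(\widehat W)$ and define $g=f\circ h$, where $h$ is the composition of $N$ maps, each equal to a translation by $(1/N)\cdot v_i$ inside $f^i(\widehat W)$ (with $v_i$ the displacement needed at level $i$) and tapered off to the identity via $\varphi_i$. Since the supports are disjoint, $g$ differs from $f$ only on $\widehat W\cup f(\widehat W)\cup \dots\cup f^{N-1}(\widehat W)$, and the $C^1$-size is controlled as above; by construction the perturbed orbit joins the selected forward iterate of $p$ to the selected backward iterate of $q$, so $q=g^m(p)$ for some $m$, and the orbit $\{p,\dots,g^m(p)\}$ is contained in the stated union. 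The main obstacle is the \emph{uniform} choice of $N$ and the quantitative control of the bump functions' derivatives: the slopes of $\varphi_i$ must be compensated by the smallness of $v_i/N$, and one must verify that the selection step — hence the perturbation size — does not degrade when $n_p, n_q$ are arbitrarily large, which is precisely where the pairwise disjointness of the tower and the independence of $N$ from $(p,q)$ become indispensable.
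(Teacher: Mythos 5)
The paper does not actually prove this statement: Hayashi's connecting lemma is quoted as a black box, with references to Hayashi's original article and to \cite{crovisier-approximation}. So there is no ``paper's own proof'' to compare against; I evaluate your sketch against the standard proof.

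Your outline names the right ingredients (a tower $\widehat W, f(\widehat W),\dots,f^{N-1}(\widehat W)$ over the non-periodic point, a quantitatively determined $N$, small perturbations distributed over the $N$ levels), but the central difficulty of the theorem is exactly the one you describe as a ``selection lemma'' and then dispose of in one sentence, and that disposal does not work. The forward segment $(p,\dots,f^{n_p}(p))$ and the backward segment $(f^{-n_q}(q),\dots,q)$ may enter and leave $\widehat W\cup\dots\cup f^{N-1}(\widehat W)$ many times before the final visits to $W$. Since the perturbation is supported on the \emph{whole} tower, every one of these earlier passages is affected; you cannot simply pick a ``last'' passage and ignore the rest, because modifying $f$ on the tower breaks the orbit at each earlier crossing, and there is no a priori bound on how many crossings there are (indeed $n_p,n_q$ are arbitrary). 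Hayashi's contribution is precisely a non-trivial combinatorial mechanism to deal with this: one tiles $\widehat W$ by a finite family of controllable boxes and runs a recursive ``shortcut'' argument --- whenever two of the relevant orbit points fall in the same tile, one discards the piece of orbit between them and rejoins --- which terminates after finitely many steps and produces a single pseudo-orbit to be realised. A pigeonhole choosing one visit is not a substitute for this recursion.

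There is a secondary quantitative point worth flagging. The estimate ``$C^1$-size $\sim \operatorname{diam}(\widehat W)/N \cdot \sup\|Df\|$'' is not the correct form of the bound: pushing by $v/N$ at each level does not make the increments uniformly small in $C^1$, because the displacement required at level $k$ is governed by $D_x f^k$, which can expand arbitrarily as $k$ grows. The actual estimate goes through Pugh's lemma on finite sequences of linear maps (the statement appears in the paper as the lemma attributed to Pugh in Section~4.3), which balances the displacements level by level against the quantities $d\bigl(T_kA(\sqrt\lambda C_0),\,\RR^s\setminus T_kA(\lambda C_0)\bigr)$; $N$ is produced by that lemma, not by dividing a fixed distance into $N$ pieces. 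Without both the tiling/shortcut recursion and the Pugh-type balancing, the argument does not close.
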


One can now come to the proof of the proposition.
\begin{proof}[Proof of proposition~\ref{p.extension}]
Let $\cR_{Ext}$ be the set of diffeomorphisms that belong to  $\cR_{Chain}$
and whose periodic orbits are hyperbolic.
Let us assume that $f$ belongs to $\cR_{Ext}$ and prove the proposition.
One fixes $\delta_0>0$ as in corollary~\ref{c.wen} and a small neighbourhood $\cU$ of $f$ in $\diff^1(M)$.

One can assume that for some arbitrarily small neighbourhood $U$ of $K$
the isolation assumption (I) is satisfied since otherwise
the conclusion of the proposition holds obviously.
If $U$ has been chosen small enough,
then for any invariant measure in $U$ the exponent along
$E^c$ belongs to $(-\delta_0,\delta_0)$.

Let $x^+,x^-$ be two points in $A$ and $\Delta^+,\Delta^-$ be two subsets of $A$ as given
by lemma~\ref{l.isolation}.
One applies the connecting lemma and finds an integer $N\geq 1$
and some neighbourhoods $W^+\subset \widehat W^+$,
$W^-\subset \widehat W^-$ at $x^+$ and $x^-$ respectively.
If the neighbourhoods are chosen small enough, the following is satisfied.
\begin{itemize}
\item[--] All the iterates $f^i(\widehat W^+)$, $f^{-j}(\widehat W^-)$
for $0\leq i,j\leq N$ are pairwise disjoint and have closures disjoint from the set $\Delta^+\cup \Delta^-$.
\item[--] The iterates $f^{i}(\widehat W^+)$ for $0\leq i\leq N$ are disjoint from the
backward orbit $\{f^{-k}(x^-),k\geq 0\}$.
\end{itemize}
By lemma~\ref{l.connecting} there exists a segment of orbit
$(x_0,\dots,f^\ell(x_0))$ contained in a small neighbourhood of $A$
such that $x_0\in W^-$ and $f^\ell(x_0)\in W^+$.

By the connecting lemma there exists a perturbation
$h_0 \in \cU$ of $f$ supported in the $f^k(\widehat W^+)$
for $0\leq k \leq N$ such that the orbit of $x_0$ contains the forward orbit $\{f^{i}(x^+),i\geq m\}$
for some $m$ large.
Note that the sets $\Delta^+\cup \Delta^-$ and the backward orbit of $x^-$
have not been modified by the perturbation.
One can apply the connecting lemma a second time (using the fact that the 
$f^i(\widehat W^-)$, $f^j(\widehat W^-)$ for $0\leq i,j\leq N$ are pairwise disjoint)
and find a perturbation $h_1$ of $h_0$ supported in the
$f^{-k}(\widehat W^-)$ for $0\leq k \leq N$ such that the backward orbit $\{f^{-i}(x^-), i\geq m^-\}$ and the
forward orbit $\{f^{i}(x^+), i\geq m^+\}$, for some $m^-,m^+$ large, are connected.
Moreover $h_1$ still belongs to $\cU$.
(This way to compose perturbations with disjoint supports
is detailed in~\cite[section 2.1]{crovisier-approximation}
and is related to the lift axiom of~\cite[section 2]{pugh-robinson}.)
We denote by $x$ a point of this ``heteroclinic" orbit for $h_1$ from $\Delta^-$ to $\Delta^+$.

Since the set $\Delta=\Delta^+\cup \Delta^-$ has a dominated splitting $E^s\oplus E^c\oplus E^u$,
and since the orbit of $x$ accumulates on this set in the future and in the past,
there exist at $x$ some (unique) tangent subspaces $E^{u}_x\subset E^{cu}_x$ such that
$D(h_1)^{-n}.E^u_x$ and $D(h_1)^{-n}.E^{cu}_x$ accumulate as $n$ goes to $\infty$
towards the bundles $E^u$ and $E^c\oplus E^u$ respectively.
Similarly there exists at $h_1(x)$ two subspaces $E^{s}_{h_1(x)}\subset E^{cs}_{h_1(x)}$ such that
$D(h_1)^{n}.E^s_{h_1(x)}$ and $D(h_1)^{n}.E^{cs}_{h_1(x)}$ accumulate as $n$ goes to $\infty$
towards the bundles $E^s$ and $E^s\oplus E^c$ respectively.
By a $C^1$-small perturbation $h$ of $h_1$ at $x$, one can ensure that $Dh.E^{cu}_x$ is transverse to
$E^s_{h(x)}$ and that $Dh.E^{u}_x$ is transverse to $E^{cs}_{h(x)}$.
Hence the dominated splitting on $\Delta$ extends to the orbit of $x$.

We now realise a $C^1$-small perturbation $g$ of $h$ which possesses a periodic orbit $\cO$
contained in an arbitrarily small neighbourhood of $\Delta\cup \{h^n(x),n\in\ZZ\}$ and
has a point close to $x$.
By construction, there exists a large forward iterate $h^L(x)$
and a large backward iterate $h^{-L}(x)$ which coincide with some large
iterates $f^{n^+}(x^+)$ and $f^{-n^-}(x^-)$.
By lemma~\ref{l.connecting}, there exists
an orbit $(\zeta,f(\zeta),\dots, f^r(\zeta))$ contained in
an arbitrarily small neighbourhood of
$$\{f^n(x^+), n\geq n^+\}\cup \Delta\cup \{f^{-n}(x^-), n\geq n^-\}$$
such that $\zeta$ is close to $f^{n^+}(x^+)$
and $f^r(\zeta)$ is close to $f^{-n^-}(x^-)$.
One deduces that $h^{-L}(\zeta)$
and $h^{r+L}(\zeta)$ are close to $x$.
By the connecting lemma applied at $x$, one can close as required the orbit of $\zeta$
by a perturbation $g$.
Note that the period $r+2L$ of the obtained orbit $\cO$ is arbitrarily large
if $\zeta$ is chosen close enough to $f^{n^+}(x^+)$,
and that $r$ iterates are contained in a small neighbourhood
of $\Delta$. This implies that the $(\dim(E^s)+1)$-th exponent of $\cO$
belongs to $(-\delta_0,\delta_0)$.

We have shown that there exists a sequence of diffeomorphisms $(g_n)$ which converge to $f$,
and a sequence of periodic orbits $(\cO_n)$ which converge toward an invariant compact subset $A'$ of $A$
which strictly contains $K$ (it contains $x^-$ and $x^+$).
Moreover each orbit $\cO_n$ has a $(\dim(E^s)+1)$-th Lyapunov exponent
in $(-\delta_0,\delta_0)$,
hence by corollary~\ref{c.wen}, $\cO_n$
holds a dominated splitting $E_3\oplus E_2\oplus E_3$ with $\dim(E_2)=1$ and $\dim(E_1)=\dim(E^s)$.
Since the splitting is uniform, it can be passed to the limit set $A'$.
By uniqueness of the dominated splittings, $E^s\oplus E^c\oplus E^u$ and $E_1\oplus E_2\oplus E_3$
coincide on $K$.
\end{proof}

\begin{remark} Under the setting of proposition~\ref{p.extension},
an additional property is satisfied (that we will not use later):

\noindent
\emph{ The set $A'$ is the Hausdorff limit of periodic orbits of index $\dim(E^s)$
whose Lyapunov exponent along $E^c$ is arbitrarily close to zero.}

The proof uses the results proved in sections~\ref{s.model} and~\ref{s.HPR}.
Let us give a sketch.
in the case the isolation property (I) is satisfied, this is a consequence of
the proof of proposition~\ref{p.extension}.
If the isolation property (I) fails, the set $A'$ may be chosen in an arbitrarily small neighbourhood
of $A$, hence it is partially hyperbolic; moreover
by~\cite{crovisier-approximation}, $A'$ is the Hausdorff limit of a sequence of periodic orbits $(\cO'_n)$.
If all the measures supported on $A'$ have a Lyapunov exponent
along $E^c$ equal to zero, then the central Lyapunov exponents of the orbits $\cO'_n$ are close to zero
and the additional property holds again.
In the remaining case, the discussion of section~\ref{s.model} will imply that the dynamics
in the central direction is either chain-transitive or chain-hyperbolic: in both cases
by the results of section~\ref{s.HPR}, $A'$ is
limit of periodic orbits $(\cO'_n)$ that are homoclinically related to each other
in a small neighbourhood of $A'$.
One may consider smaller sets $A''\subset A'$ and the same argument implies
that $A''$ is the limit of periodic orbits $(\cO''_n)$ that are homoclinically related to the $\cO'_n$
in a small neighbourhood of $A'$. Note that the central exponents of the periodic orbits
$\cO''_n$ are arbitrarily close to zero if $A''$ is close enough to $K$.
Since $f$ is $C^1$-generic, the periodic $\cO_n'$ and $\cO_n''$ are homoclinically related.
By the transition property~\cite{BDP}, there exists periodic orbits close to $A'$ for the Hausdorff topology
and whose central exponent is close to zero. This is the required property.
\end{remark}


\section{Central models}\label{s.model}
Let $f$ be any diffeomorphism in $\diff^1(M)$ and
$K$ be a chain-transitive compact set which
has a dominated splitting $T_K M=E_1\oplus E^c \oplus E_3$ where $E^c$ is a one-dimensional subbundle.
In this section we analyse in a neighbourhood of $K$ the dynamics that is ``tangent to $E^c$"
and study the expansivity property of $K$.
The description is topological and gives some information even when the Lyapunov exponents of $K$
along $E^c$ are all zero. In the two first sections
we recall definitions and results from~\cite{crovisier-palis-faible}.

\subsection{Construction and definition}
The plaque family theorem~\cite[theorem~5.5]{hirsch-pugh-shub} associates to any invariant compact set $K$, which has a
dominated decomposition $E_{1}\oplus E^c\oplus E_{3}$, a \emph{plaque family tangent to $E^c$}
(not unique in general).
This is a continuous map $\cD^c$ from the linear bundle $E^c$ (over $K$) into $M$ satisfying:
\begin{itemize}
\item[--] for each $x\in K$, the induced map $\cD^c_x\colon E^c_{x}\to M$ is a $C^1$-embedding
that is tangent to $E^c_{x}$ at the point $\cD^c_x(0)=x$;
\item[--] $(\cD^c_x)_{x\in K}$ is a continuous family of $C^1$-embeddings of $E^c_x$ into $M$;
\item[--] the plaque family is \emph{locally invariant}, i.e. there exists a neighbourhood $U$ of the section $0$
in $E^c$ such that for each $x\in K$, the image of $\cD^c_x(E^c_x\cap U)$ by $f$ is contained in $\cD^c_{f(x)}(E_{f(x)})$.
\end{itemize}
The last property allows to lift the dynamics of $f$ as a fibred dynamics $\hat f$ on the bundle $E^c$
that is locally defined in a neighbourhood of the (invariant) section $0$.
\smallskip

In our setting $K$ is chain-transitive and $E^c$ is one-dimensional. So, two cases occur.
\begin{description}
\item[-- \emph{The orientable case}.] There exists a continuous orientation of the bundle $E^c$
that is preserved by $Df$. Taking the opposite orientation, one gets exactly two preserved orientations:
equivalently, the unitary bundle $UE^c$ decomposes as the union of two disjoint chain-transitive sets.
The bundle $E^c$ may be trivialised as $K\times \RR$ in such a way that the lifted dynamics $\hat f$ is fibrewise orientation preserving.
One can thus study separately the dynamics of $\hat f$ on $K\times [0,+\infty)$ and $K\times (-\infty,0]$.
In the following, we consider these dynamics as two fibred systems $\hat f^+$ and $\hat f^-$ on
$\hat K\times [0,+\infty)$ where $\hat K=K$.

\item[-- \emph{The non-orientable case}.] There does not exist any continuous orientation of the bundle $E^c$
that is preserved by $Df$. Equivalently, the dynamics of $Df$ on $UE^c$ is chain-transitive.
Writing any element $v\in E^c$ as a product $t.\bar v$ with $(\bar v,t)\in UE^c\times [0,+\infty)$, the dynamics of
$\hat f$ may be considered on the space $\hat K\times [0,+\infty)$ where $\hat K=UE^c$.
\end{description}
The fibred systems $\hat f^-,\hat f^+$ or $\hat f$ on $\hat K\times [0,+\infty)$ are \emph{central models}
associated to the dynamics of $f$ along the central plaques $\cD^c_{x}$ of $K$.
\bigskip

More generally, in \cite{crovisier-palis-faible} we defined (abstract) central models $(\hat K,\hat f)$
where the basis $\hat K$ is a compact metric space
and $\hat f$ is a continuous map $\hat K\times [0,1]\to \hat K\times [0,+\infty)$,
such that
\begin{itemize}
\item[--] $\hat f(\hat K\times \{0\})=\hat K\times \{0\}$,
\item[--] $\hat f$ is a local homeomorphism in a neighbourhood of $\hat K\times \{0\}$,
\item[--] $\hat f$ is a skew product and takes the form
$ \hat f (x,t)= (\hat f_1(x),\hat f_2(x,t))$.
\end{itemize}

\subsection{Classification}

\begin{definition}
Let $(\hat K,\hat f)$ be a central model.
\smallskip

\noindent
A \emph{chain-recurrent central segment} is a (non-trivial) segment $I=\{x\}\times [0,a]$
contained in a chain-transitive $\hat f$-invariant compact subset of $\hat K\times [0,1]$.
\smallskip

\noindent
An open neighbourhood $S$ of $\hat K\times \{0\}$ is a \emph{trapping strip}
if, for any point $x\in \hat K$, the intersection $S\cap \left(\{x\}\times [0,+\infty)\right)$ is an interval
and if $\hat f(\overline{S})\subset S$.
\smallskip

\noindent
The \emph{chain-stable set} of $\hat K\times \{0\}$ is
the set of points $z\in \hat K\times [0,1]$ such that for each $\varepsilon>0$,
there is a $\varepsilon$-pseudo-orbit
$(z_0,\dots,z_n)$ in $\hat K\times [0,1]$ with $z_0=z$ and $z_n\in \hat K\times \{0\}$.
(One defines symmetrically the chain-unstable set.)
\end{definition}

The following result restates~\cite[section 2]{crovisier-palis-faible}.
\begin{proposition}\label{p.conley}
Let $(\hat K,\hat f)$ be a central model whose basis is chain-transitive.
Then, four disjoint cases are possible:
\begin{itemize}
\item[--] The chain-stable and the chain unstable sets are both non-trivial.
Equivalently, there exists a chain-recurrent central segment.
\item[--] The chain-stable and the chain-unstable sets of $\hat K\times \{0\}$ are trivial:
they are reduced to $\hat K\times \{0\}$.
Equivalently, there exist arbitrarily small neighbourhoods of $\hat K\times \{0\}$ that are trapping strips for $\hat f$ and
arbitrarily small neighbourhoods that are trapping strips for $\hat f^{-1}$.
\item[--] The chain-stable set contains a neighbourhood of $\hat K\times \{0\}$ and the chain-unstable set
is trivial. In particular, there exist arbitrarily small neighbourhoods of $\hat K\times \{0\}$
that are trapping strips for $\hat f$.
\item[--] The chain-stable set is trivial and the chain-unstable set contains a neighbourhood of $\hat K\times \{0\}$:
this case is symmetrical to the former one.
\end{itemize}
\end{proposition}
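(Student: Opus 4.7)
My plan is to reduce Proposition~\ref{p.conley} to two equivalences, from which the four disjoint cases follow by exhausting the combinations of their binary alternatives:
(E1) the chain-unstable set of $\hat K\times\{0\}$ is trivial if and only if arbitrarily small trapping strips for $\hat f$ exist (and symmetrically for chain-stable and $\hat f^{-1}$);
(E2) both the chain-stable and chain-unstable sets are non-trivial if and only if a chain-recurrent central segment exists.
Given (E1) and (E2), the four cases of the proposition correspond exactly to the four combinations of chain-stable and chain-unstable being trivial or not. The ``contains a neighbourhood'' strengthening in cases 3 and 4 follows from a Conley-type attractor argument: if $S$ is an arbitrarily small trapping strip for $\hat f$, then the maximal invariant set $\bigcap_n \hat f^n(\bar S)$ shrinks to $\hat K\times\{0\}$, so every point of $S$ is chain-attracted to $\hat K\times\{0\}$ and $S$ sits inside the chain-stable set.

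The easy directions are direct. If $S$ is a trapping strip for $\hat f$ with $\hat f(\bar S)\subset S$, any $\varepsilon$-pseudo-orbit issued from $\hat K\times\{0\}$ with $\varepsilon$ small enough remains in $\bar S$, so the chain-unstable set lies in $\bar S$; taking $S$ arbitrarily small gives triviality. For the analogous direction of (E2), if a chain-recurrent central segment $I=\{x\}\times[0,a]$ sits in a chain-transitive invariant compact set $\Lambda$, then $\Lambda\cap(\hat K\times\{0\})$ is non-empty and $\hat f_1$-invariant, and the chain-transitivity of the base $\hat K$ forces it to equal $\hat K\times\{0\}$; every point of $I$ other than $(x,0)$ is then chain-equivalent to $\hat K\times\{0\}$, yielding non-triviality of both the chain-stable and chain-unstable sets.

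The converse direction of (E1) is the core step. Assuming the chain-unstable set is trivial, I would fix a small open neighbourhood $U$ of $\hat K\times\{0\}$ and consider, for each $\varepsilon>0$, the set $W_\varepsilon(U)$ of points in $U$ reachable by $\varepsilon$-pseudo-orbits inside $U$ starting from $\hat K\times\{0\}$. Triviality of the chain-unstable set together with compactness forces $W_\varepsilon(U)\to\hat K\times\{0\}$ as $\varepsilon\to 0$, so a small open thickening of $W_\varepsilon(U)$ for a well-chosen $\varepsilon$ produces an open forward-invariant set $S$ satisfying $\hat f(\bar S)\subset S$. To enforce the fiberwise-interval (strip) form, I would replace $S$ by its fiberwise interval hull: the skew-product decomposition $\hat f=(\hat f_1,\hat f_2)$ together with the monotonicity of $\hat f_2(x,\cdot)$ near $0$ (automatic in the orientable case, and reduced to it via the lift to $\hat K=UE^c$ in the non-orientable case) guarantees that the hull remains forward-invariant. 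Letting $U$ shrink yields arbitrarily small trapping strips. For the converse of (E2), given $z^\pm$ in the chain-unstable and chain-stable sets outside $\hat K\times\{0\}$, I would build a chain-transitive invariant compact set $\Lambda$ containing $\hat K\times\{0\}\cup\{z^+,z^-\}$ as the closure of the pseudo-orbit trajectories linking them, and then exhibit a non-trivial fiber segment inside $\Lambda$ by a Zorn-type minimality argument applied to chain-transitive subsets combined with fiberwise monotonicity of $\hat f_2(x,\cdot)$.

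The main obstacle I expect is the strip upgrade in the proof of (E1): the standard Conley argument produces forward-invariant neighbourhoods, but guaranteeing that they intersect each fiber in an interval containing $0$ while preserving the strict containment $\hat f(\bar S)\subset S$ requires a careful interplay between the skew-product structure and the (merely topological) monotonicity of the central fibre dynamics near the base.
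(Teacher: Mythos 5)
Your plan reduces the proposition to (E1) and (E2), but these do not suffice for cases 3 and 4, which require that the non-trivial set actually \emph{contains a neighbourhood} of $\hat K\times\{0\}$. The paper obtains this from a stronger dichotomy quoted from~\cite[section 2.5]{crovisier-palis-faible}: \emph{if there is no chain-recurrent central segment, then the chain-stable set of $\hat K\times\{0\}$ is either trivial or contains a neighbourhood of $\hat K\times\{0\}$, and the same holds for the chain-unstable set.} It is this dichotomy, not the equivalence (E2), that rules out the ``intermediate'' possibility of a non-trivial chain-stable set that fails to contain a neighbourhood; (E1) and (E2) alone leave that possibility open.

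Your Conley-attractor argument for the strengthening is incorrect. If $S$ is a trapping strip, points of $S$ are chain-attracted to the attractor $A_S=\bigcap_{n\geq 0}\hat f^n(\bar S)$, but $A_S$ need not be chain-transitive, so this does not give chain-attraction to $\hat K\times\{0\}\subset A_S$. Indeed, if the argument were valid it would already show in case 2 that the chain-stable set contains a neighbourhood, contradicting its triviality there. A concrete central model making the failure explicit: take $\hat K$ a point and $\hat f$ an orientation-preserving homeomorphism of $[0,\infty)$ fixing $0$ and each $p_n=1/n$, with the wandering interval $(p_{n+1},p_n)$ translated towards $p_n$ when $n$ is odd and towards $p_{n+1}$ when $n$ is even. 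Then $[0,q)$ with $q\in(p_{n+1},p_n)$, $n$ even, is a trapping strip, and such strips exist at arbitrarily small scale, so the chain-unstable set of $\{0\}$ is trivial; the attractor of such a strip is a full segment $[0,p_{n+1}]$, not $\{0\}$. Moreover the chain-stable set is also trivial, since every forward orbit converges to some attracting fixed point $p_m$ ($m$ odd) from which no small pseudo-orbit can escape. So ``chain-unstable trivial'' does not imply ``chain-stable contains a neighbourhood''. You must replace the Conley argument either by citing the dichotomy from~\cite{crovisier-palis-faible} or by an argument that genuinely uses the one-dimensional, order-preserving skew-product structure (e.g.\ that the trace of the chain-stable set on each fibre is an interval containing $0$, together with the chain-transitivity of the base), rather than general attractor theory.
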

\begin{proof}
Let us first recall the results in~\cite[section 2.5]{crovisier-palis-faible}.
The chain-unstable set is trivial if and only if there exist arbitrarily small neighbourhoods of
$\hat K\times \{0\}$ that are trapping strips for $\hat f$.
Moreover if there is no chain-recurrent central segment then the chain-stable set
is either trivial or contains a neighbourhood of $\hat K\times \{0\}$, and the same holds
for the chain-unstable set.

From this one easily gets the equivalences in the two first cases of the proposition.
If we are not in one of these two cases, no chain-recurrent central segment exists,
either the chain-stable or the chain-unstable set of $\hat K\times \{0\}$ is trivial,
and the other set contains a neighbourhood of $\hat K\times \{0\}$.
As a consequence we are in one of the last two cases of the proposition.
\end{proof}
\bigskip

Let us come back to the description of the dynamics along
the central plaques $\cD^c_{x}$ for an invariant compact set $K$ as considered in the introduction of section~\ref{s.model}.
Using proposition~\ref{p.conley}, one gets the four following (non-disjoint) types
(see also figure~\ref{f.type}).
\begin{description}
\item[\bf{-- Type (R)}, chain-recurrent.]
For any cone $\cC$ around $E^c$ and
for any $\varepsilon>0$, there exists a point $x\in K$ and
a non-trivial $C^1$-curve $\gamma$ containing $x$ and  contained in the chain-recurrence class of $K$,
such that for any $n\in \ZZ$, the iterate $f^n(\gamma)$ remains tangent to $\cC$
and has a length bounded by $\varepsilon$.
The curve $\gamma$ is called a \emph{central segment} of $K$.

\item[\bf{-- Type (N)}, chain-neutral.] There exist arbitrarily small neighbourhoods of the section $0$ in $E^c$
that are attracting for $\hat f$, and arbitrarily small neighborhoods that are attracting for $\hat f^{-1}$.

\item[\bf{-- Type (H)}, chain-hyperbolic.]
There exists arbitrarily an small closed neighbourhood of $K\times \{0\}$ in $E^c$ that is
mapped into its interior by $\hat f$ (in the \emph{attracting case})
or by $\hat f$ (in the \emph{repelling case}) and whose image by $\cD^c$ is contained in the chain-stable or
in the chain-unstable set of $K$ respectively.

\item[\bf{-- Type (P)}, chain-parabolic.]
We are in the orientable case. Different situations occur.
\begin{itemize}
\item[--] {\bf Type (P$_{\mathbf{SU}}$)}.
For one central model, $K\times [0,+\infty)$, there exists arbitrarily small neighbourhoods of $K\times \{0\}$
that are trapping strips for $\hat f^+$ and which projects by $\cD^c$ in the chain-stable set of $K$;
for the other one, $K\times (-\infty,0]$, there exists arbitrarily small neighbourhoods of $K\times \{0\}$
that are trapping strips for $(\hat f^-)^{-1}$ and which projects by $\cD^c$ in the chain-unstable set of $K$.
\item[--] {\bf Types (P$_\mathbf{SN}$)} and {\bf (P$_\mathbf{UN}$)} respectively.
For one central model, $K\times [0,+\infty)$, there exists arbitrarily small neighbourhoods of $K\times \{0\}$
that are trapping strips for $\hat f^+$ (resp. for $(\hat f^+)^{-1}$)
and which projects by $\cD^c$ in the chain-stable set of $K$ (resp. in the chain-unstable set of $K$);
for the other one, $K\times (-\infty,0]$, there exists arbitrarily small neighbourhoods of $K\times \{0\}$
that are trapping strips for $\hat f^-$
and there exists arbitrarily small neighbourhoods of $\hat K\times \{0\}$
that are trapping strips for $(\hat f^-)^{-1}$.
\end{itemize}
\end{description}
\medskip

\begin{remarks}\label{r.classification}
\begin{itemize}
\item[a)] For types (R), (N), (H), one considers the chain-stable and chain-unstable sets
and the chain-recurrence classes for the dynamics of $f$ in $M$ and not only in the central models.
Since the chain-recurrence class of $K$ can be much larger than $K$,
the global dynamics of $f$ is used in this definition.
In particular, the four types are not disjoint;
proposition~\ref{p.conley} however says that they cover all the cases.
One could have been more restrictive in the definition, considering only pseudo-orbits in the central models:
one would have obtained four disjoint types.
But the definition we have chosen here will be more adapted to our needs.
 
\item[b)] If $K$ has type (N), this also holds on subsets $K'\subset K$.
If $K$ has type (R), this also holds on larger sets $K'\supset K$.

\item[c)] If $K$ has type (N) or (P), all the Lyapunov exponents along the bundle $E^c$
for the invariant measures supported on $K$ have to be equal to zero.
In particular there is no hyperbolic periodic orbit in $K$, the bundle $E_1$ is uniformly contracted
and the bundle $E_3$ is uniformly expanded.
Any periodic orbit contained in a small neighbourhood of $K$ has also a Lyapunov exponent along $E^c$ close to $0$.

If $K$ has type (H) attracting (resp. repelling), all the Lyapunov exponents along the bundle $E^c$
have to be non-positive (resp. non-negative).
In particular the bundle $E_1$ (resp. $E_3$) is uniformly contracted (resp. expanded).

\item[d)] ``Having type (N), (H)-attracting or (P$_{SN}$)" is equivalent to
the existence of arbitrarily small neighbourhoods of the section $0$
in $E^c$ that are trapping strips for $\hat f$.
Since $E_1$ is uniformly contracted in these cases,
one can choose a plaque family $\cD^{cs}$ tangent to $E_1\oplus E^c$
over $K$ which satisfies the following \emph{trapping property}:
$$\forall x\in K,\quad f(\overline{\cD^{cs}_x})\subset \cD^{cs}_{f(x)}.$$

\end{itemize}
\end{remarks}
\medskip

\subsection{The type is well defined}
We now prove that the type of a set is well-defined and does not depend on the choice of a plaque family $\cD^c$.
This is clear for type (R) since the definition does not involve the choice of a central model.
For the other types, this is based on the following classical lemma.
\begin{lemma}\label{l.uniq-cs}
Let $f$ be a diffeomorphism and $K$ be an invariant compact set
which has a dominated splitting $E_1\oplus E_2$.
Then there exists $r>0$ such that any plaque family $\cD$
tangent to $E_1$ whose plaques have a diameter bounded by $r$
and satisfy the \emph{trapping property}
$$\forall x\in K,\quad f(\overline{\cD_x})\subset \cD_{f(x)},$$
also satisfies the following:
\begin{description}
\item[-- \emph{Uniqueness.}]
If $\cD'$ is another plaque family tangent to $E_1$ then
there exists a neighbourhood $U$ of the section $0$ in $E_1$
such that $\cD_x(U_x)\subset \cD'_x$ for each $x\in K$.
\item[-- \emph{Coherence.}]
There exists $\varepsilon>0$ such that for any points $x,x'\in K$
that are $\varepsilon$-close with $x'\in \cD_x$, one also has
$f(\overline{\cD_{x}})\subset \cD_{f(x')}$.
\end{description}
\end{lemma}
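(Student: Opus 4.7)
The plan is to characterize the trapped plaque $\cD_x$ intrinsically as a local center-stable set and derive both conclusions from this characterization. The main technical point --- and the principal obstacle --- is this characterization, which rests on a standard cone/graph-transform argument exploiting the domination $E_1\oplus E_2$.

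Concretely, fix $r>0$ small (in terms of the domination constant, the transport of cones around $E_1$ and $E_2$, and a Lipschitz bound on $f$) and consider any plaque family $\cD$ tangent to $E_1$ with plaques of diameter at most $r$ satisfying the trapping property. Set
\[
W^{cs}_r(x) := \{y \in M : d(f^n(y), f^n(x)) \leq r \text{ for all } n \geq 0\}.
\]
Iterating the trapping property yields $f^n(\cD_x) \subset \cD_{f^n(x)}$, hence $\cD_x \subset W^{cs}_r(x)$. For the converse, if $y$ is close to $x$ and has its whole forward orbit in the tube $\bigcup_{n\geq 0} B(f^n(x), r)$, then by domination the displacement of $y$ from $x$ must lie in a cone around $E_1(x)$ --- otherwise the $E_2$-component would be transported faster than the $E_1$-component at the dominated rate and would force the orbit out of the tube. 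Combined with the fact that $\cD_x$ is a $C^1$ locally invariant graph through $x$ tangent to $E_1$, and with the standard uniqueness of such graphs (via the graph transform on the transverse cone), this forces $y\in \cD_x$. Hence $\cD_x$ coincides with $W^{cs}_r(x)$ on a uniform neighborhood of $x$. The uniqueness clause is now immediate: for any other plaque family $\cD'$ tangent to $E_1$, local invariance is part of the plaque family definition, so the same argument gives $\cD'_x = W^{cs}_r(x)$ locally; hence there exists a uniform neighborhood $U$ of the zero section in $E_1$ with $\cD_x(U_x)\subset \cD'_x$ for every $x\in K$.

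For coherence, shrink the plaque diameter to at most $r/3$ and pick $\varepsilon>0$ small enough that $d(x,x')<\varepsilon$ forces $d(f(x),f(x'))<r/3$. Let $x,x'\in K$ with $d(x,x')<\varepsilon$ and $x'\in \cD_x$. Iterated trapping gives $f^n(x')\in \cD_{f^n(x)}$ for every $n\geq 0$, so the orbit of $x'$ tracks the orbit of $x$ within $r/3$. For any $z\in f(\overline{\cD_x})\subset \cD_{f(x)}$ and any $n\geq 0$,
\[
d(f^n(z), f^{n+1}(x')) \leq d(f^n(z), f^{n+1}(x)) + d(f^{n+1}(x), f^{n+1}(x')) \leq r/3 + r/3 < r,
\]
so $z\in W^{cs}_r(f(x'))$; the same bound at $n=0$ gives $d(z,f(x'))\leq 2r/3$, so by the local characterization at $f(x')$ one concludes $z\in \cD_{f(x')}$. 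This establishes $f(\overline{\cD_x})\subset \cD_{f(x')}$, as required. Once the characterization is in hand, uniqueness and coherence are triangle-inequality bookkeeping.
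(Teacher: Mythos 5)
Your plan hinges on an intrinsic characterization of the trapped plaque, namely that $\cD_x$ coincides with the local centre--stable set $W^{cs}_r(x)$ on a uniform neighbourhood of $x$. The inclusion $\cD_x\subset W^{cs}_r(x)$ is indeed immediate from the trapping property, but the converse inclusion is false in general, and the reason you give for it (``otherwise the $E_2$-component would be transported faster than the $E_1$-component at the dominated rate and would force the orbit out of the tube'') does not hold. Domination only says that $E_2$ is \emph{relatively} expanded with respect to $E_1$; it does not say that $E_2$ is expanded. If $E_2$ happens to be contracted along the orbit of $x$, then any point near $x$ with a nonzero $E_2$-displacement stays in the tube forever. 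A concrete obstruction: take $f(u,v)=(u/4,\,v/2)$ near the fixed point $0\in\RR^2$, with $E_1=\RR\times\{0\}$ and $E_2=\{0\}\times\RR$. The splitting is dominated, but $W^{cs}_r(0)=B(0,r)$ is a full two-dimensional ball, whereas any plaque $\cD_0$ tangent to $E_1$ is a curve. So $W^{cs}_r(x)\subset\cD_x$ cannot hold, and both the uniqueness step (``$\cD'_x=W^{cs}_r(x)$ locally'') and the coherence step (``$z\in W^{cs}_r(f(x'))$ therefore $z\in\cD_{f(x')}$'') collapse, since they invoke exactly this identity. Note also that your argument for $\cD'_x\subset W^{cs}_r(x)$ would require $\cD'$ to be trapped, which the statement does not assume.

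The paper's proof avoids this entirely by never characterising $\cD_x$ intrinsically. It instead compares two plaques directly: take a curve $\gamma$ tangent to a cone around $E_2$ joining a point of one plaque to a point of the other, and take the curves $\sigma,\sigma'$ inside the two plaques joining their common basepoint to the endpoints of $\gamma$. Because plaques are locally invariant (and $\cD$ is trapped), $f^n(\sigma),f^n(\sigma')$ stay tangent to a cone around $E_1$, while $f^n(\gamma)$ stays tangent to the cone around $E_2$; domination then makes the \emph{ratio} $|f^n(\sigma)|/|f^n(\gamma)|$ and $|f^n(\sigma')|/|f^n(\gamma)|$ go to zero, which violates the triangle inequality $|f^n(\gamma)|\le|f^n(\sigma)|+|f^n(\sigma')|$ unless $\gamma$ is a point. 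This uses only relative expansion, which is all domination gives you, and is the argument you should be running in place of the $W^{cs}_r$ characterisation.
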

\begin{proof}
Since $K$ has a dominated splitting $E_1\oplus E_2$, 
there exists a neighbourhood $V$ of $K$ where a locally forward invariant cone field $\cC$ around the direction
$E_2$ is defined.
One chooses $r>0$ small so that the plaques of $\cD$ over $K$ are contained in $V$.

We prove the uniqueness.
There exists a neighbourhood $U$ of the section $0$ in $E_{1}$ such that
for each $x\in K$ and for each $z\in \cD_{x}(U_x)$ there exists a curve $\gamma$
tangent to $\cC$ which connects $z$ to a point of
$\cD'_x$. By forward iterations $f^n(\gamma)$ is still contained in $V$,
tangent to the cone $\cC$ and connects the plaques $\cD_{f^n(x)}$ and $\cD'_{f^n(x)}$.
Let $\sigma\subset \cD_x$ and $\sigma'\subset \cD'_x$ be two curves
that connect $x$ to the endpoints of $\gamma$.
If th elength of $\gamma$ is non-zero,
due to the domination, for $n$ large the lengths of $f^n(\sigma)$ and $f^n(\sigma')$
decrease exponentially faster than the length of $f^n(\gamma)$, which contradicts
the triangular inequality.
One thus deduces that $\cD_x(U_x)$ is contained in $\cD'_x$ as announced.

The argument for proving the coherence is similar:
$f(\overline{\cD_{f^{-1}(x)}})$ is a compact neighbourhood of $x$ in $\cD_x$.
If $x,x'$ are close enough, then for any point $z\in f(\overline{\cD_{f^{-1}(x)}})$
there exists a curve $\gamma$ tangent to $\cC$
which connects $z$ to a point of $\cD_{x'}$.
As before one deduces $f(\overline{\cD_{f^{-1}(x)}})\subset \cD_{x'}$.
By the trapping property one has $f(x')\in \cD_{f(x)}$, so the same proof gives also
$f(\overline{\cD_{x}})\subset \cD_{f(x')}$.
\end{proof}

\begin{lemma}\label{l.uniq-NH}
Let $f$ be a diffeomorphism, $K$ be a chain-transitive set
which has a dominated splitting $E_1\oplus E^c\oplus E_3$ such that $E^c$ is one-dimensional
and $\cD^c$ be a plaque family tangent to $E^c$ having type (N) or (H)-attracting.
Let $\cD^{cs}$ be any plaque family tangent to $E_1\oplus E^c$.
Then,
\begin{itemize}
\item[--] there exists a neighbourhood $U$ of the section $0$ in $E^{c}$ such that
for each $x\in K$ one has $\cD^c_x(U_x)\subset \cD^{cs}_x$;
\item[--] any other plaque family ${\cD^c}'$ tangent to $E^c$ has also type (N) or (H)-attracting.
\end{itemize}
\end{lemma}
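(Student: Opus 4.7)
The plan is to use Remark~\ref{r.classification}(d) to produce a plaque family $\cD^{cs}_0$ tangent to $E_1\oplus E^c$ satisfying the trapping property $f(\overline{\cD^{cs}_{0,x}})\subset \cD^{cs}_{0,f(x)}$, and to deduce both items from it.

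For the first item I first establish the inclusion $\cD^c_x(U_x)\subset \cD^{cs}_{0,x}$ near $x$, and then pass to an arbitrary $\cD^{cs}$ by invoking the uniqueness conclusion of Lemma~\ref{l.uniq-cs} applied to the splitting $(E_1\oplus E^c)\oplus E_3$, which gives $\cD^{cs}_{0,x}\subset \cD^{cs}_x$ in a neighbourhood of the zero section. For the inclusion in $\cD^{cs}_0$ itself I adapt the cone-field argument from Lemma~\ref{l.uniq-cs}: fix a cone field $\cC$ around $E_3$ in a neighbourhood $V$ of $K$ that is forward-invariant under $Df$ (available by the domination $(E_1\oplus E^c)\oplus E_3$), and shrink $U$ inside the trapping strip of the central model of $\cD^c$ so that forward iterates of points of $\cD^c_x(U_x)$ stay in the domain of local invariance and thus $f^n(z)\in\cD^c_{f^n(x)}$. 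For $z\in\cD^c_x(U_x)$ I join $z$ to $\cD^{cs}_{0,x}$ by a short curve $\gamma$ tangent to $\cC$; then $f^n(\gamma)$ stays tangent to $\cC$, and trapping of $\cD^{cs}_0$ keeps the other endpoint of $f^n(\gamma)$ in $\cD^{cs}_{0,f^n(x)}$. Curves $\sigma_n\subset\cD^c_{f^n(x)}$ and $\sigma'_n\subset\cD^{cs}_{0,f^n(x)}$ joining $f^n(x)$ to the two endpoints of $f^n(\gamma)$ are bounded by the uniform plaque diameter, so the triangular inequality together with the uniform expansion of $E_3$ (which holds under the type hypothesis: the non-positivity of the $E^c$-exponents and the domination $E^c\oplus E_3$ force the $E_3$-exponents to be bounded below by a positive constant, hence $E_3$ is uniformly expanded) forces $\gamma$ to be trivial, i.e.\ $z\in\cD^{cs}_{0,x}$.

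For the second item I transfer the trapping of the open set $V_0:=\bigcup_{x\in K}\cD^{cs}_{0,x}$, which satisfies $f(\overline{V_0})\subset V_0$, back into the central model of ${\cD^c}'$. By the first item applied to ${\cD^c}'$ there is a neighbourhood $U'$ of the zero section in $E^c$ with ${\cD^c}'_x(U'_x)\subset \cD^{cs}_{0,x}\subset V_0$. The set $\hat S':=\{(x,t)\in E^c:t\in U'_x,\ {\cD^c}'_x(t)\in V_0\}$ is then an open neighbourhood of the zero section in $E^c$ with interval fibres, and the strict trapping $f(\overline{V_0})\subset V_0$ together with local invariance of ${\cD^c}'$ yields $\hat f'(\overline{\hat S'})\subset \hat S'$. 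Since $\hat S'$ can be made arbitrarily small by shrinking $V_0$, Remark~\ref{r.classification}(d) places ${\cD^c}'$ in one of the types (N), (H)-attracting, or (P$_{SN}$). If $\cD^c$ is (N), the symmetric construction run with $f^{-1}$ (using a trapping $\cD^{cu}_0$ tangent to $E^c\oplus E_3$ for $f^{-1}$, available since $E_3$ is uniformly expanded and the $E^c$-exponents vanish) produces a backward trapping strip for ${\cD^c}'$, so ${\cD^c}'$ is (N); if $\cD^c$ is (H)-attracting but not (N), swapping $\cD^c$ and ${\cD^c}'$ in the above argument shows that ${\cD^c}'$ cannot acquire backward trapping either, and so ${\cD^c}'$ is (H)-attracting.

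The delicate step will be the cone-field argument in the first item: everything hinges on the type hypothesis to confine the forward orbits of points of $\cD^c_x(U_x)$ to the domain of local invariance of $\cD^c$, so that $\sigma_n$ actually lies in the plaque $\cD^c_{f^n(x)}$ and enjoys the uniform length bound. Without this ingredient the triangular inequality would not close against the exponential growth of $\|Df^n_{|E_3}\|$, and the analogue of Lemma~\ref{l.uniq-cs} for the codimension-one subbundle $E^c\subset E_1\oplus E^c$ would fail.
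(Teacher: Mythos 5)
Your handling of the first item is essentially the paper's argument: reduce $\cD^c$ to satisfy the trapping property of lemma~\ref{l.uniq-cs} (via remark~\ref{r.classification}(d)), run the cone-field triangle-inequality argument against a trapping $\cD^{cs}_0$, then transfer to an arbitrary $\cD^{cs}$ via the uniqueness clause of lemma~\ref{l.uniq-cs}. One incorrect side remark: you claim $E_3$ is uniformly expanded ``under the type hypothesis,'' but for type (H)-attracting the $E^c$-exponents are only non-positive (possibly quite negative), and domination of $E^c\oplus E_3$ then gives no positive lower bound on the $E_3$-exponents; remark~\ref{r.classification}(c) only guarantees $E_1$ uniformly contracted in that case. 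Fortunately this is harmless, because the argument of lemma~\ref{l.uniq-cs} only needs the \emph{domination} $(E_1\oplus E^c)\oplus E_3$ (the ratio of lengths grows), not absolute expansion of $E_3$.

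For the second item you take a genuinely different route and it has gaps. The paper keeps everything \emph{inside} the plaques $\cD^{cs}_x$: since $E_1$ is uniformly contracted, each plaque is foliated by strong stable leaves, and both $\cD^c_x$ and ${\cD^c}'_x$ lie in $\cD^{cs}_x$ transverse to that foliation, so the strong stable holonomy gives an explicit conjugacy between the two central models. All properties involved (trapping strips, containment of the plaques in the chain-stable set) are then transported by the conjugacy. Your construction instead pulls back an \emph{ambient} set $V_0=\bigcup_{x\in K}\cD^{cs}_{0,x}$, and here two things break down. First, unless $E_3$ is trivial the set $V_0$ is not open in $M$, and it is not justified that $\hat S'=\{(x,t):t\in U'_x,\ {\cD^c}'_x(t)\in V_0\}$ is an \emph{open} neighbourhood of the zero section in the sense required by the definition of a trapping strip; moreover when you shrink $V_0$, a plaque $\cD^{cs}_{0,y}$ over a point $y\neq x$ may still meet ${\cD^c}'_x$ far from $x$, so the fibres of $\hat S'$ need not shrink with $V_0$. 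Second, to conclude ${\cD^c}'$ has type (H)-attracting (rather than merely having forward trapping strips, which by remark~\ref{r.classification}(d) only places it in ``(N), (H)-attracting, or (P$_{SN}$)''), you must also verify that the trapping plaques lie in the chain-stable set of $K$, a condition your argument never addresses. Both problems evaporate with the holonomy conjugacy.
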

\begin{proof}
Reducing the plaques $\cD^{c}$, one may assume that they satisfy the trapping property
stated in lemma~\ref{l.uniq-cs} (see remark~\ref{r.classification}.d).
Arguing as in the previous lemma one gets the first item.

Since $E^c$ has type (N) or (H)-attracting, the bundle $E_1$ is uniformly
attracting and the plaques $\cD^{cs}$ are foliated by strong stable manifolds
tangent to $ E_1$. Since $\cD^c$ satisfies the trapping property and is contained in $\cD^{cs}$,
one deduces that $\cD^{cs}$ can be chosen to satisfy also the trapping property.

If $\cD^c,{\cD^c}'$ are two plaques families tangent to $E^c$,
then by the first item one can assume that they are contained in the plaques of $\cD^{cs}$.
The central dynamics along $\cD^c$ and ${\cD^c}'$ are conjugated by the strong stable holonomies:
this proves that if a central model associated to $\cD^c$ has a small neighbourhood 
which is a trapping strip for $\hat f$ or $\hat f^{-1}$,
then the same holds for a central model associated to ${\cD^c}'$.
Hence if $\cD^c$ has type (N), ${\cD^c}'$ has type (N) also.
If $\cD^c$ has type (H)-attracting, then the central models associated to ${\cD^c}'$ have
arbitrarily small trapping strips and the plaques $\cD^{cs}$ are contained in the chain-stable set of $K$;
consequently ${\cD^c}'$ has type (H)-attracting also.
\end{proof}

The same argument holds for the type (P). Hence we obtain:

\begin{corollary}
Let $f$ be a diffeomorphism, $K$ a chain-transitive set
which has a dominated splitting $E_1\oplus E^c\oplus E_3$ such that $E^c$ is one-dimensional, and
$\cD^c,{\cD^c}'$ two plaque families tangent to $E^c$.
Then, the dynamics along the plaques of ${\cD^c}$ and along those of ${\cD^c}'$
have the same types.
\end{corollary}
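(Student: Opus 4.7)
The plan is to handle type (R) separately and then reduce the other types to Lemma~\ref{l.uniq-NH} (and its analogues) via the classification given by Proposition~\ref{p.conley}.

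First I would note that type (R) is intrinsic: its definition refers only to the existence of a $C^1$-curve contained in the chain-recurrence class of $K$, tangent to an arbitrarily thin cone around $E^c$, whose forward and backward iterates stay tangent to that cone and uniformly short. Since this condition mentions neither $\cD^c$ nor ${\cD^c}'$, it automatically passes from one plaque family to the other. Furthermore, by Proposition~\ref{p.conley} applied to each central model built from $\cD^c$ (one model $\hat f$ in the non-orientable case, two models $\hat f^+, \hat f^-$ in the orientable case), a central segment exists for some such model if and only if $K$ carries a central segment in the intrinsic sense; so type (R) is equivalent to having case~1 of Proposition~\ref{p.conley} appear in one of the models, independently of the plaque family.

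Next, I would invoke Lemma~\ref{l.uniq-NH}: if $\cD^c$ has type (N) or (H)-attracting, so does ${\cD^c}'$. Running exactly the same argument for $f^{-1}$ (exchanging the roles of $E_1$ and $E_3$) gives the corresponding statement for type (H)-repelling. The key mechanism in both cases is the uniqueness part of Lemma~\ref{l.uniq-cs}: after shrinking plaques one obtains a trapping plaque family $\cD^{cs}$ tangent to $E_1\oplus E^c$ that contains the plaques of both $\cD^c$ and ${\cD^c}'$, and the strong stable holonomy inside $\cD^{cs}$ conjugates the two central dynamics near the zero section, so trapping strips in one model correspond to trapping strips in the other.

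Finally, for type (P), I would run the same strategy in the orientable case. One produces a plaque family $\cD^{cs}$ tangent to $E_1\oplus E^c$ (for the (P$_{SN}$) or (P$_{SU}$) side) and a dual plaque family $\cD^{cu}$ tangent to $E^c\oplus E_3$ (for the (P$_{UN}$) or (P$_{SU}$) side), and use the strong stable/strong unstable holonomies to transport trapping strips of $\hat f^+$ and $(\hat f^-)^{-1}$ (or the appropriate combination) from the model built on $\cD^c$ to the model built on ${\cD^c}'$. The main potential obstacle is that for the (P) subcases this holonomy argument must be applied side-by-side and is purely one-sided (only $E_1$ or only $E_3$ is uniformly contracted/expanded at each side), so one must carefully keep track of orientation and of which extremal bundle is uniform on which side; however this is exactly the content of the remark following Lemma~\ref{l.uniq-NH}, which asserts that the same argument goes through in the (P) setting. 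Combining all four cases, we conclude that the type is independent of the choice of plaque family tangent to $E^c$.
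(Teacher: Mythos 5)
Your proposal is correct and follows essentially the same route as the paper's proof: handle type (R) as an intrinsic notion, then transport trapping strips from one central model to the other via strong-stable holonomy inside a trapped plaque family $\cD^{cs}$ (as in Lemma~\ref{l.uniq-NH}), working one half-plaque at a time so that the parabolic case comes along with the same argument. One minor clarification: for type (P) \emph{both} extremal bundles $E_1$ and $E_3$ are uniform (Remark~\ref{r.classification}.c), so the ``one-sided uniformity'' worry you flag does not actually arise, and what you cite as a ``remark following Lemma~\ref{l.uniq-NH}'' is just the sentence preceding the corollary.
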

\begin{proof}
We first note that $E_1=E^s$ is uniformly contracted.
Let $\hat f^+$ be one of the central models associated to the plaque family $\cD^c$
and assume for instance that there exists trapping strips
that are arbitrarily small neighborhoods of $\hat K\times \{0\}$.
Let us consider a plaque family $\cD^{cs}$ tangent to the bundle $E^s\oplus E^c$.
Each set $\cD^{cs}_x\setminus W^{ss}(x)$,
$\cD^{c}_x\setminus W^{ss}(x)$, $\cD^{c'}_x\setminus W^{ss}(x)$ has two components:
we denote by $\cD^{cs,+}_x$, $\cD^{c,+}_x$ and
$\cD^{c'+}_x$ the component tangent to the half plaques of the central model $\hat f^+$.
Arguing as before, one proves that $\cD^{c,+}_x,\cD^{c',+}_x$ are contained in $\cD^{cs,+}$. 
The strong stable lamination on the plaques $\cD^{cs}$
conjugates the dynamics along $\cD^{c,+}$ and $\cD^{c',+}$, so that
the central model along the plaque family $\cD^{c',+}$ also admits trapping strips
that are arbitrarily small neighborhoods of $\hat K\times \{0\}$.
The end of the proof follows.
\end{proof}

\subsection{Extremal bundles}
We now discuss the possibility for one of the bundles $E_1$ or $E_3$ to be degenerated.

\begin{proposition}\label{p.extremal}
Let $f$ be a diffeomorphism and $K$ be a chain-transitive set
which has a dominated splitting $E_1\oplus E^c$ such that $E^c$ is one-dimensional.
If $K$ has type (N), (P) or (H)-attracting, then $K$ contains a periodic orbit.
\end{proposition}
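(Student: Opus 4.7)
The plan is to exploit the triviality of $E_3$ --- which makes $E_1\oplus E^c = TM$, so each plaque $\cD^{cs}_x$ is an open neighborhood of $x$ in $M$ of full dimension and $\overline{\cD^{cs}_x}$ is a topological closed ball --- and to combine the trapping property with Brouwer's fixed-point theorem, after reducing to a minimal invariant subset of $K$.

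First, I would extract a plaque family with trapping. In cases (N), (H)-attracting and (P$_{SN}$), Remark~\ref{r.classification}(d) directly provides a plaque family $\cD^{cs}$ tangent to $E_1\oplus E^c$ with the trapping property $f(\overline{\cD^{cs}_x})\subset \cD^{cs}_{f(x)}$ for every $x\in K$, using that $E_1$ is uniformly contracted by Remark~\ref{r.classification}(c). For types (P$_{SU}$) and (P$_{UN}$), only a one-sided trapping exists in general; on the chain-stable side one uses a half-plaque bounded by the strong stable manifold, which satisfies an analogous one-sided trapping. This is equally suitable for a fixed-point argument on a closed half-ball.

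Second, I would apply Zorn's lemma to the nonempty family of compact $f$-invariant subsets of $K$ (ordered by reverse inclusion) to obtain a minimal invariant subset $K^\ast\subset K$; the trapping plaques restrict to $K^\ast$ with the same property, and by minimality every point of $K^\ast$ is recurrent. Choose $x\in K^\ast$ and $n_k\to\infty$ with $f^{n_k}(x)\to x$. Using the continuity of the plaque family and the coherence of Lemma~\ref{l.uniq-cs}, I would show that for $k$ sufficiently large, $f^{n_k}(\overline{\cD^{cs}_x})\subset \cD^{cs}_x$. Brouwer's theorem applied to the closed topological ball $\overline{\cD^{cs}_x}$ then produces a periodic point $p\in \overline{\cD^{cs}_x}$ of period dividing $n_k$.

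The hard part will be to show that the periodic orbit obtained in this way actually lies in $K^\ast$ (hence in $K$), rather than merely in a small neighborhood of it. Iterating the trapping property gives $f^n(p)\in \cD^{cs}_{f^n(x)}$ for all $n\ge 0$, so the forward orbit of $p$ remains within the plaque diameter from $\overline{\operatorname{orb}(x)}=K^\ast$. By performing the construction with plaque families of arbitrarily small diameter and passing to a Hausdorff limit of the resulting periodic orbits, one obtains a nonempty $f$-invariant subset of $K^\ast$, which equals $K^\ast$ by minimality. A final argument combining the uniform contraction of $E_1$ with the trapping along $E^c$ on the minimal set $K^\ast$ then upgrades this accumulation into the statement that $K^\ast$ is itself a periodic orbit.
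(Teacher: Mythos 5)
Your overall strategy---reduce to a minimal subset, exploit the trapping plaque family, and apply a fixed-point argument on a full-dimensional closed ball---overlaps in spirit with the paper's proof, and the observation that the triviality of $E_3$ makes $\overline{\cD^{cs}_x}$ a closed topological ball in $M$ is the right starting point. But there is a genuine gap precisely at what you call ``the hard part''. Brouwer's theorem locates a fixed point $p$ of $f^{n_k}$ \emph{somewhere} in $\overline{\cD^{cs}_x}$, and nothing in your argument forces $p\in K$. Shrinking the plaques and passing to a Hausdorff limit shows only that $K^\ast$ is a Hausdorff limit of periodic orbits of unbounded period; this does \emph{not} imply that $K^\ast$ contains a periodic point (an invariant irrational rotation circle, as in Remark~\ref{r.extremal}, is a Hausdorff limit of nearby periodic orbits produced by exactly this kind of closing argument, yet contains none). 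Nothing in your sketch invokes the trapping hypothesis so as to exclude this behaviour, and the closing sentence ``a final argument \ldots\ upgrades this accumulation into the statement that $K^\ast$ is itself a periodic orbit'' is the entire content of the proposition, not an argument.

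What actually closes the gap is to track the orbit of $x\in K$ itself rather than an abstract fixed point. After a case analysis on returns---which your sketch omits, and which is unavoidable since a return $f^n(x)$ close to $x$ may lie on the side of $W^{ss}_{loc}(x)$ for which the half-plaque of $x$ is \emph{not} carried into itself by $f^n$---one obtains a half-plaque $\cD^{cs,+}_x\cup W^{ss}_{loc}(x)$ that is forward invariant under some $f^n$. It is foliated by strong stable leaves tangent to $E_1=E^s$ (uniformly contracted, by Remark~\ref{r.classification}.c), and the quotient by this foliation is a continuous self-map of a compact interval; the image of the boundary leaf has a monotone forward orbit, hence converges to a fixed point, and by the strong stable contraction the orbit of $x$ converges to an $n$-periodic point on the corresponding leaf. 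Since $x\in K$ and $K$ is compact and invariant, that periodic point lies in $K$. This is the step Brouwer cannot replace: it asserts the existence of a fixed point but says nothing about where it sits relative to $K$, whereas what you need is that a specific point of $K$ limits onto a periodic point.
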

\begin{proof}
One can assume that $K$ is minimal. We know that the bundle
$E^s=E_1$ is uniformly contracted.
We will deal with the orientable case (the non-orientable case is similar and simpler).
By assumption there exists a central model for $K$ which has arbitrarily small trapping strips.
By projecting this central model,
one finds a family of half open plaques $\cD^{cs,+}_x$, $x\in K$ tangent to $E^s\oplus E^c$
bounded by $W^{ss}_{loc}(x)$ and such that
$$f\left(\overline{\cD^{cs,+}_x}\right)\subset \cD^{cs,+}_{f(x)}\cup W^{ss}_{loc}(f(x)).$$
Each plaque $\cD^{cs,+}_x$ is a component of $\cD^{cs}_x\setminus W^{ss}_{loc}(x)$
and hence is a uniform half neighbourhood of $x$.
Consequently if $f^n(x)$ is a return close to $x$, either
$f^n(x)\in W^{ss}_{loc}(x)$,
or $x\in \cD^{cs,+}_{f^n(x)}$,
or $f^n(x)\in \cD^{cs,+}_{x}$.
In the first case $(f^n(x))_{n\geq 0}$ converges to a periodic orbit.
In the second case, since $K$ is minimal, $x'=f^n(x)$ has a return $f^{n'}(x')$ arbitrarily close to $x$,
hence which satisfies $f^{n'}(x')\in \cD^{cs,+}_{x'}$; we are thus reduced to the third case.
In the third case, the set $\cD^{cs,+}_x\cup W^{ss}_{loc}(x)$
is forward invariant by $f^n$ and foliated by strong stable one-codimensional manifolds.
One concludes that there exists a closed segment $\sigma\subset \cD^{cs,+}_x\cup W^{ss}_{loc}(x)$
tangent to the central bundle, which is $n$-periodic and such that
the orbit of any point in $\overline{\cD^{cs,+}_x}$ converge toward a periodic point in $\sigma$.
In all the cases, the orbit of $x$ accumulates on a periodic point of $K$.
\end{proof}

\begin{remark}
\label{r.extremal}
The same result in case (R) does not hold: there exist a diffeomorphism
which possesses a fixed circle $C$
which is normally attracting and whose induced dynamics is conjugated to an irrational
rotation; the minimal set in $C$ has type (R) but does not contain any periodic orbit.
The dynamics in case (R) is covered by~\cite[section 3]{pujals-sambarino3}:
in particular if $f$ is $C^2$, then $K$ contains either a periodic orbit or 
a periodic circle with the dynamics of an irrational rotation.
\end{remark}

As a consequence of proposition~\ref{p.extremal},
if $f$ is a Kupka-Smale diffeomorphism, then a chain-transitive set which has a splitting
$E_1\oplus E^c$ cannot have type (N) or (P).
It will follows from results the next sections (see proposition~\ref{p.R-extremal})
that if $f$ belongs to a dense G$_\delta$ subset of $\diff^1(M)$,
the set can not have type (R) either. Hence, it has type (H);
if moreover it has type (H)-attracting, this is a sink.

\subsection{A weak shadowing lemma}

In this section, $f$ is a diffeomorphism and $K$ is a chain-transitive set
which has a partially hyperbolic splitting $E^s\oplus E^c\oplus E^u$
such that $E^c$ is one-dimensional. One fixes a small neighbourhood $U_0$ of $K$
such that the maximal invariant set
$\widehat K$ in $\overline{U_0}$ still has the partially hyperbolic structure of $K$.
One also fixes a plaque family $\cD^{cs}$ tangent to $E^s\oplus E^c$ over $\widehat K$.

Let $0<\delta<r$.
An orbit $(f^n(x))_{n\geq 0}$ in $\widehat K$ is said to
\emph{$r$-shadows} a sequence $(z_n)_{n\geq 0}$ in $U_0$ if
for each $n\geq 0$, the point $z_n$ is at distance $<r$ from $f^n(x)$.
It is said to \emph{$(r,\delta)$-cs-shadows} a sequence $(z_n)_{n\geq 0}$ in $U_0$ if
it $r$-shadows $(z_n)_{n\in \ZZ}$ and for each $n\geq 0$, the point $z_n$ is at distance
$<\delta$ from $\cD^{cs}_{f^n(x)}$. The following result generalises the classical shadowing lemma.
(See also~\cite{gan} for another generalisation.)

\begin{lemma}\label{l.shadows}
If $E^c$ has type (N), (H)-attracting or (P$_{SN}$),
then for any $r>0$ there exists a neighbourhood $U_1\subset U_0$ of $K$
such that for any $\delta>0$ there exists $\varepsilon>0$ with the following properties.
\begin{itemize}
\item[--]
Any $\varepsilon$-pseudo-orbit $(z_n)_{n\geq 0}$ in $U_1$ is $(r,\delta)$-cs-shadowed
by an orbit $(f^n(x))_{n\geq 0}$ in $\widehat K$.
Moreover if $(z_n)_{n\geq 0}$ is $\tau$-periodic, one can choose $(f^n(x))_{n\geq 0}$ to be $\tau$-periodic.
\item[--]
If $(f^n(x))_{n\geq 0}$ and $(f^n(x'))_{n\geq 0}$ both $r$-shadow $(z_n)_{n\geq 0}$,
then $x'$ belongs to $\cD^{cs}_x$.
\end{itemize}
\end{lemma}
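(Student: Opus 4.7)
\emph{Plan.} The hypothesis on the type of $E^c$ gives, via Remark~\ref{r.classification}(d), a plaque family $\cD^{cs}$ tangent to $E^s\oplus E^c$ satisfying the strict trapping property $f(\overline{\cD^{cs}_x})\subset \cD^{cs}_{f(x)}$. After shrinking $U_0$, this plaque family and its trapping extend to $\widehat K$; by the coherence part of Lemma~\ref{l.uniq-cs}, whenever a sequence $(y_n)\subset \widehat K$ has $d(y_{n+1},f(y_n))$ sufficiently small, the trapping chain $f(\overline{\cD^{cs}_{y_n}})\subset \cD^{cs}_{y_{n+1}}$ persists. Since $E^u$ is uniformly expanded at some rate $\lambda>1$ on $\widehat K$, one also has local strong-unstable manifolds $W^{uu}_\eta(\cdot)$ of a uniform size, transverse to $\cD^{cs}$, producing a local product chart on a neighbourhood of $\widehat K$.

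Choose plaques of diameter less than $r/4$ and take $U_1\subset U_0$ such that every $z\in U_1$ lies at distance at most $r/4$ from $\widehat K$ and inside the product chart. Given $\delta>0$, I pick $\varepsilon>0$ small enough for the coherence above to hold along any $\varepsilon$-pseudo-orbit in $U_1$ and satisfying $C\varepsilon/(\lambda-1)<\delta$ for a geometric constant $C$ bounding the local holonomies and derivatives. For the pseudo-orbit $(z_n)_{n\geq 0}$ I pick references $y_n\in\widehat K$ with $d(z_n,y_n)<r/4$; the trapping chain then holds along $(y_n)$. I construct the shadowing orbit as the accumulation point of nested intersections $x_N\in \cD^{cs}_{y_0}\cap W^{uu}_\eta(\xi_N)$, where $\xi_N\in \widehat K$ is chosen (by inverse iteration and the product chart) so that $f^N(\xi_N)$ is $W^{uu}$-close to $z_N$. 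The forward orbit of $x=\lim x_{N_k}$ is trapped in the plaques $\cD^{cs}_{y_n}$, while its backward orbit $W^{uu}$-tracks that of the $\xi_N$'s arbitrarily long, so all iterates of $x$ stay in $\overline{U_0}$ and $x\in\widehat K$.

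The metric bound $d(f^n(x),z_n)\leq \diam(\cD^{cs}_{y_n})+d(y_n,z_n)<r$ is immediate from the plaque size and the choice of references. For the cs-shadowing, $z_n$ can only differ from $\cD^{cs}_{f^n(x)}$ along the $W^{uu}$-direction, and this discrepancy equals the accumulated pseudo-orbit drift contracted backward by factors $\lambda^{-k}$; it is therefore bounded by $C\varepsilon/(\lambda-1)<\delta$. Uniqueness: if both $(f^n(x))$ and $(f^n(x'))$ $r$-shadow $(z_n)$, then $d(f^n(x),f^n(x'))<2r$ for every $n\geq 0$, which by uniform $E^u$-expansion forces $x$ and $x'$ to share the same local $W^{uu}_\eta$-coordinate, so $x'\in \cD^{cs}_x$. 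Finally, if $(z_n)$ is $\tau$-periodic, choose $(y_n)$ also $\tau$-periodic; then $f^\tau$ is a continuous self-map of the closed topological disk $\overline{\cD^{cs}_{y_0}}$, and Brouwer's fixed point theorem yields a fixed point of $f^\tau$, which is the desired $\tau$-periodic shadowing orbit.

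The main difficulty is keeping the two error scales cleanly separated: the bulk $r$-bound on pointwise distance is produced by the plaque size, whereas the finer $\delta$-bound on the cs-distance must be extracted from $\varepsilon$ alone via $W^{uu}$-contraction of the accumulated drift. Handling this rigorously requires working in the local product chart and using the coherence of Lemma~\ref{l.uniq-cs} to glue the trapping along the reference pseudo-orbit $(y_n)\subset\widehat K$ to the true orbit of $x$.
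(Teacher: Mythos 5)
Your overall plan is close in spirit to the paper's (trapping plaques plus a local product structure and nested intersections), but there is a genuine gap in the step on which everything rests.

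You assert: ``by the coherence part of Lemma~\ref{l.uniq-cs}, whenever a sequence $(y_n)\subset\widehat K$ has $d(y_{n+1},f(y_n))$ sufficiently small, the trapping chain $f(\overline{\cD^{cs}_{y_n}})\subset\cD^{cs}_{y_{n+1}}$ persists.'' This is a misreading of coherence. Coherence requires $x'$ to lie on the plaque $\cD_x$, not merely to be $\varepsilon$-close to $x$; it gives $f(\overline{\cD_x})\subset\cD_{f(x')}$ \emph{only when} $x'\in\cD_x$. A point $y_{n+1}\in\widehat K$ that is merely near $f(y_n)$ can be offset from $\cD^{cs}_{f(y_n)}$ in the $E^u$-direction, and then $\cD^{cs}_{y_{n+1}}$ simply does not contain $f(\overline{\cD^{cs}_{y_n}})$. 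Worse, your $y_n$ are only required to be within $r/4$ of the $z_n$ (not within $\varepsilon$), so $d(f(y_n),y_{n+1})$ is of order $r$, outside coherence's range anyway. Because of this, the claim that ``the forward orbit of $x=\lim x_{N_k}$ is trapped in the plaques $\cD^{cs}_{y_n}$'' is unjustified, and the rest of the construction, including the Brouwer argument in the periodic case (which needs $f^\tau$ to be a self-map of $\overline{\cD^{cs}_{y_0}}$), inherits the same gap.

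The paper sidesteps this precisely by not trying to establish a literal trapping chain of plaques along an auxiliary sequence: it works with tubular neighbourhoods $T_x$ of width $\delta_0$ in the $E^u$-direction around the trapped plaques $\widetilde\cD^{cs}_x$. The elementary but crucial observation is that the trapping of the plaque combined with $E^u$-expansion makes $f(T_x)\cap T_{x'}$ a non-degenerate tubular neighbourhood of a subset of $\widetilde\cD^{cs}_{x'}$ whenever $f(x)$ and $x'$ are $\varepsilon_0$-close --- no condition that $x'$ lie on $\cD^{cs}_{f(x)}$ is needed. The nested intersections of pullbacks $f^{-k}(T_{z_k})$ then produce the shadowing orbit; the $(r,\delta)$-refinement is obtained by shrinking $\varepsilon$ and comparing with the tubular neighbourhoods along the true orbit of $z_0$, contracted by $f^{-N}$. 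Your ``local product chart'' is a surrogate for this, but without the tube-intersection argument the construction of $x$ doesn't go through. You would also need to supply the missing verification in the periodic case: Brouwer (if the self-map hypothesis were established) gives a $\tau$-periodic $p$ in the plaque, but one must still check that the orbit of $p$ is $(r,\delta)$-\emph{cs}-shadowing, not just $r$-shadowing; the paper does this by first producing a non-periodic cs-shadowing orbit $(f^n(x))$ and then showing the orbit of $p$ is its pointwise limit.
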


\begin{remarks}
If $K$ has type (N), the central plaques $\cD^c$ are uniquely defined and
coincides with the intersections $\cD^{cs}\cap \cD^{cu}$ (by lemma~\ref{l.uniq-cs}).
If $(z_n)_{n\in \ZZ}$ is a $\varepsilon$-pseudo-orbit in $U_1$,
one deduces that there exists an orbit $(f^n(x))_{n\in \ZZ}$
which $(r,\delta)$-c-shadows $(z_n)_{n\in \ZZ}$:
for each $n$, the point $z_n$ is at distance $<\delta$ from $\cD^{c}_{f^n(x)}$.
\end{remarks}
\begin{proof}
The proof is similar to the geometrical proof of the classical shadowing lemma,
but the uniform contraction along the stable direction has to be replaced by the topological trapping property
stated in lemma~\ref{l.uniq-cs}.

By remark~\ref{r.classification}.d), one may consider a plaque families $\widetilde \cD^{cs}$
whose plaques have a diameter much smaller than $r$ and whose restriction to the set $K$
has the trapping property of lemma~\ref{l.uniq-cs}.
This trapping property extends to the maximal invariant set in a neighbourhood $U_1$ of $K$.
By lemma~\ref{l.uniq-NH} the plaques $\widetilde \cD^{cs}$ are contained in the plaques
$\cD^{cs}$.

Let us fix $\delta_0>0$ small.
For each point $x\in \widehat K$, one can define a tubular neighbourhood $T_x$ of $\widetilde \cD^{cs}_x$
with width $\delta_0$ in the direction $E^u$.
The trapping property and the uniform expansion along $E^u$
implies that $f(T_x)\cap T_{f(x)}$ is a tubular neighbourhood
$f(\widetilde \cD^{cs}_x)$ having the same width. Hence it is contained in $T_{f(x)}$.
By continuity there exists $\varepsilon_0>0$ such that
if $f(x)$ and $x'$ are $\varepsilon_0$-close then
$f(T_x)\cap T_{x'}$ is a tubular neighbourhood of a subset of $\widetilde \cD^{cs}_{x'}$
with width $\delta_0$.
By induction if $(z_n)_{n\geq 0}$ is a $\varepsilon_0$-pseudo-orbit
then for any $n\geq 0$ the intersection of the $f^{n-k}(T_{z_k})$
for $0\leq k \leq n$ is a (non-empty) tubular neighbourhood of a subset of $\widetilde \cD^{cs}_{z_n}$.

One may reduce the set $U_1$ so that any $\varepsilon_0$-pseudo-orbit
$(z_n)_{n\geq 0}$ in $U_1$ can be extended in the past.
With this property, one can define the intersection $A_n$ of the $f^{-k}(T_{z_k})$
for $|k| \leq n$: the previous argument shows that this is non-empty.
One deduces that the intersection of the $A_n$ is non-empty: it contains points $x$
in $\widehat K$ such that each iterate $f^n(x)$ is $\delta_0$-close to a point
in $\widetilde \cD^{cs}_{z_n}$. In particular the points $f^n(x)$
and $z_n$ are at distance less than $\diam(\widetilde \cD^{cs}_{z_n})+\delta_{0}<r$.

Let us assume that the sequence $(z_n)_{n\geq 0}$ is a $\varepsilon$-pseudo-orbit
for $\varepsilon$ smaller. We have to show that it is $(r,\delta)$-cs-shadowed
by the half orbits $(f^n(x))_{n\geq 0}$ that we have built, where the constant $\delta$ can be taken
arbitrarily small as $\varepsilon$ goes to zero.
We thus fix some $\delta'>0$ and consider an integer $N\geq 1$
such that a tubular neighbourhood (of width $\delta_0$) is contracted by $f^{-N}$
as a tubular neighbourhood of width less than $\delta'$.
For any point $y\in \widehat K$, the closure of the set $f(T_{f^{-1}(y)})\cap T_y\cap f^{-1}(T_{f(y)})$
is contained in the interior of $T_y$, so that 
by choosing $\varepsilon$ small enough for any $k\in \{0,\dots,N\}$,
the set $f(T_{z_{k-1}})\cap T_{z_k}\cap f^{-1}(T_{z_{k+1}})$
is contained in $T_{f^{k}(z_0)}$.
One deduces that the intersection of the $f^{-k}(T_{z_k})$ for $0\leq k\leq N$
is contained in the intersection of the $f^{-k}(T_{f^k(z_0)})$ for $0\leq k< N$,
hence is contained in the $\delta'$-tubular neighbourhood of $\widetilde \cD^{cs}_{z_0}$.
With this argument one deduces that each forward iterate $f^n(x)$
of a point $x$ built above is both $\delta'$-close to a point in $\widetilde \cD^{cs}_{z_n}$
and $r$-close to $z_n$. The point $z_n$ is thus $\delta$-close to $\cD^{cs}_{f^n(x)}$
for some constant $\delta$ that goes to zero as $\delta'$ decreases.
This proves the required property and half of the first item of the lemma.

Let us assume that two orbits $(f^n(x))_{n\geq 0}$ and $(f^n(x'))_{n\geq 0}$
$r$-shadow the same pseudo-orbit $(z_n)_{n\geq 0}$.
One can repeat the argument of lemma~\ref{l.uniq-NH}: for each $n\geq 0$, the points
$f^n(x')$ and $f^n(x)$ are $2r$-close. Let $\gamma$ be a curve tangent to a cone
around the direction $E^u$ that joints $x'$ to $\cD^{cs}_x$.
If the length of $\gamma$ is non-zero,
by forward iterates the length of $f^n(\gamma)$ increases exponentially,
contradicting the fact that $f^n(x')$ and $f^n(x)$ remain $2r$-close.
This proves that $x'\in\cD^{cs}_x$ and proves the second item of the lemma.

Let us assume that the sequence $(z_n)_{n\geq 0}$ is $\tau$-periodic
and let $(f^n(x))_{n\geq 0}$ be a sequence that $(r,\delta)$-cs-shadows $(z_n)_{n\geq 0}$.
Then the sequences $(f^{n+k\tau}(x))_{n\geq n}$, for any $k\geq 1$, also $(r,\delta)$-cs-shadows $(z_n)_{n\geq 0}$,
so that $x'$ and $f^\tau(x)$ are $2r$-close and
$f^\tau(x)$ belongs to the plaque $\cD^{cs}_{x}$.
Note that one can have reduced the plaques $\cD^{cs}$ from the very beginning so that
they satisfy the trapping property of lemma~\ref{l.uniq-cs}.
One then deduces that $f^\tau(\overline{\cD^{cs}_{x}})$ is mapped inside
$\cD^{cs}_{x}$ and that $x$ belongs to the stable manifold of a $\tau$-periodic point $p$.
The sequences $(f^{n+k\tau}(x))_{n\geq n}$ converge pointwise towards the periodic sequence $(f^n(p))_{n\geq 0}$.
Hence the orbit of $p$ also $(r,\delta)$-cs-shadows $(z_n)_{n\geq 0}$.
This finishes the proof of the first item.
\end{proof}

As a consequence one can control the strong stable manifolds of periodic orbits
close to a partially hyperbolic set of type (N).

\begin{corollary}\label{c.instable-fort}
Let $K$ be a chain-transitive set which has a partially hyperbolic splitting
$E^s\oplus E^c\oplus E^u$ such that $E^c$ is one-dimensional and has type (N).
Let $x\in K$.
Then, for every $\eta>0$ there exists a neighbourhood $U$ of $K$ with the following property.
For any periodic orbit $\cO$ contained in $U$, there exists a point $y\in W^{ss}(\cO)$
that is $\eta$-close to $x$ and whose forward orbit is contained in the $\eta$-neighbourhood of $K$.
\end{corollary}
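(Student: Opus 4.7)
The plan is to apply the weak shadowing lemma (Lemma~\ref{l.shadows}) to an infinite $\varepsilon$-pseudo-orbit joining $x$ to $\cO$ through $K$, and then to refine the resulting shadowing point so that it lies exactly on a strong stable leaf of $\cO$.

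\textbf{Setup.} Given $\eta>0$, first I would fix $r<\eta/4$ and apply Lemma~\ref{l.shadows} to obtain a neighbourhood $U_1\subset U_0$. Since $K$ has type~(N), Remark~\ref{r.classification}.d) provides a plaque family $\cD^{cs}$ tangent to $E^s\oplus E^c$ satisfying the trapping property $f(\overline{\cD^{cs}_q})\subset \cD^{cs}_{f(q)}$; shrinking $U_1$, I may assume each plaque has diameter less than $\eta/4$. Then take $U\subset U_1$ small enough that any periodic orbit $\cO\subset U$ is contained in $\widehat K$ and every point of $\cO$ lies within $r/2$ of $K$.

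\textbf{Pseudo-orbit and shadowing.} Fix $\cO\subset U$ of period $\tau$ and a point $p_0\in \cO$. Using chain-transitivity of $K$, I build for any $\varepsilon>0$ an $\varepsilon$-pseudo-orbit $x=z_0,\ldots,z_{N-1}$ in $K$ terminating near $f^{-1}(p_0)$, and extend it by $z_n=f^{n-N}(p_0)$ for $n\geq N$. Lemma~\ref{l.shadows} supplies $y^*\in\widehat K$ whose forward orbit $(r,\delta)$-cs-shadows $(z_n)_{n\geq 0}$; in particular $d(y^*,x)<r$ and the forward orbit stays within the $\eta$-neighbourhood of $K$. The tail $(z_{N+k})_{k\geq 0}$ is exactly the orbit of $p_0$, which $0$-shadows itself, so the uniqueness part of Lemma~\ref{l.shadows} (combined with the coherence statement of Lemma~\ref{l.uniq-cs}) yields $f^N(y^*)\in \cD^{cs}_{p_0}$.

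\textbf{Refinement to $W^{ss}(\cO)$.} Inside $\cD^{cs}_{p_0}$ the strong stable foliation has codimension one, with the central plaque $\cD^c_{p_0}$ as a one-dimensional transversal through $p_0$. I slide $f^N(y^*)$ along $\cD^c_{p_0}$ to the unique point $\hat y$ lying on $W^{ss}_{loc}(p_0)$; by the $\delta$-cs closeness this slide has length $O(\delta)$. Set $y:=f^{-N}(\hat y)$. Then $y\in f^{-N}(W^{ss}(p_0))=W^{ss}(f^{-N}(p_0))\subset W^{ss}(\cO)$. Because $K$ has type~(N), Remark~\ref{r.classification}.c) ensures the central Lyapunov exponents near $\cO$ are close to zero, so pulling the $O(\delta)$ central slide back by $f^{-N}$ expands distances only by a bounded factor once $U$ is small enough. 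Hence $d(y,y^*)=O(\delta)$, and choosing $\delta$ small gives $d(y,x)<\eta$; the forward orbit of $y$ differs from that of $y^*$ by a strong stable perturbation that contracts under iteration, so it also stays in the $\eta$-neighbourhood of $K$.

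\textbf{Main obstacle.} The delicate step is matching scales in the pullback estimate: the length $N$ of the transition pseudo-orbit is determined by the chain-transitivity of $K$ (hence by $\varepsilon$), whereas the central expansion of $f^{-N}$ depends on the central Lyapunov exponent along $\cO$. Type~(N) is what makes this work: by a compactness argument on invariant measures, it forces the central exponent along any orbit sufficiently close to $K$ to be arbitrarily small, so after fixing $\eta$ (and then $r$, $\delta$), one can shrink $U$ enough that the central expansion over $N$ backward iterations stays bounded.
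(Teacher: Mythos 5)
Your overall framework is the same as the paper's: apply the weak shadowing lemma (Lemma~\ref{l.shadows}) to an $\varepsilon$-pseudo-orbit in $K$ from $x$ to $\cO$, observe that the shadowing orbit eventually lands in the centre-stable plaques $\cD^{cs}$ of $\cO$, locate the intersection with $W^{ss}_{loc}(\cO)$, and then pull back. The problem is in the pullback step, which you yourself flag as ``the main obstacle'' and then resolve with a Lyapunov-exponent argument that does not close.

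Concretely, you claim that because the central Lyapunov exponents near $\cO$ are close to zero, ``pulling the $O(\delta)$ central slide back by $f^{-N}$ expands distances only by a bounded factor once $U$ is small enough.'' This is not a valid estimate. A Lyapunov exponent near zero controls asymptotic exponential growth rates, not finite-time distortion: over $N$ backward iterations the cumulative expansion along $E^c$ can be on the order of $e^{N\lambda}$ with $\lambda$ small but $N$ unbounded. And here $N$ is tied to $\varepsilon$ (and hence to $\delta$) through the length of the connecting pseudo-orbit, so shrinking $\delta$ to make the initial slide small forces $N$ to grow, and the two effects do not decouple. Nor does a per-iterate bound on $\|Df^{-1}|_{E^c}\|$ help, since it compounds over $N$ steps. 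Finally, the orbit of $\cO$ itself need not be assumed to have any uniform backward-contraction along $E^c$: near a type~(N) set the central behaviour is neutral, not uniformly contracting.

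What the paper actually uses here is the topological content of type~(N), namely that the central model has arbitrarily small trapping strips for $\hat f^{-1}$ as well as for $\hat f$. The short central segment joining $f^n(x')$ to $y^n\in W^{ss}_{loc}(f^n(p))$ inside $\cD^c_{f^n(x')}$ lies, if $r$ is small, inside a backward trapping strip; hence all its backward iterates stay small, with no rate estimate at all. This gives $d(f^{-n}(y^n),x')$ small uniformly in $n$, which is exactly the bound you need. Your argument invokes type~(N) only through Remark~\ref{r.classification}.c) (zero central Lyapunov exponents), but the property you actually need is the bidirectional trapping of Proposition~\ref{p.conley} together with the uniqueness and coherence of plaques (Lemmas~\ref{l.uniq-cs} and~\ref{l.uniq-NH}), and you never bring these into play for the backward direction.
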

\begin{proof}
We apply the weak shadowing lemma to the set $K$ and to a small constant $r\in (0,\eta)$ to be precised later.
Let $\delta,\varepsilon>0$ be two small constants as in lemma~\ref{l.shadows}.
Let $\cO$ be a periodic orbit contained in a $\varepsilon$-neighbourhood of $K$.

Since $K$ is chain-transitive there exists a $\varepsilon$-pseudo-orbit $(z_{n})_{n\geq 0}$ in $K$
whose tail coincides with $\cO$ and such that $z_0=x$.
One deduces that there exists an orbit $(f^n(x'))_{n\geq 0}$
that $(r,\delta)$-cs-shadow the pseudo-orbit. Moreover, by uniqueness in lemma~\ref{l.shadows},
for $n$ large $f^n(x')$ belongs to $\cD^{cs}_{f^n(p)}$ where $p$ is a point in $\cO$.
Since the points $f^n(x')$ and $f^n(p)$ are at distance $r$,
one deduces from lemmas~\ref{l.uniq-cs} and~\ref{l.uniq-NH} that a uniform
neighbourhood of $f^n(x')$ in $\cD^{c}_{f^n(x')}$ is contained in the plaque $\cD^{cs}_{f^n(p)}$.
In particular $W^{ss}_{loc}(f^n(p))$ and $\cD^{c}_{f^n(x')}$
intersect at some point $y^n$.

Having chosen $r>0$ small, the distance between $f^n(x')$ and $y^n\in \cD^c_{f^n(x')}$ is small.
Since the central dynamics along the plaques $\cD^c$ is trapped under backward iterations,
the distance between $f^{-n}(y^n)$ and $x'$ is small also.

We thus have shown that provided $r$ is small,
the point $x'$ is arbitrarily close to a point $y\in W^{ss}(\cO)$.
Since $r<\eta$,
by construction the forward orbit of $y$ belongs to the $\eta$-neighbourhood of $K$.
\end{proof}

The existence of non-minimal sets of type (N) allows to create heterodimensional cycles.

\begin{corollary}\label{c.NN}
Let $f$ be a diffeomorphism whose periodic orbits are hyperbolic
and let $A$ be a chain-transitive partially hyperbolic set
with a one-dimensional central bundle of type (N).
If $A$ is not minimal, then for each minimal set $K\subset A$ and
each neighbourhoods $U$ of $K$ and $V$ of $A$,
there exists a diffeomorphism $g$ arbitrarily $C^1$-close to $f$
having a heterodimensional cycle contained in $V$
and associated to periodic orbits contained in $U$.
\end{corollary}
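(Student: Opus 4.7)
The plan is to apply Proposition~\ref{p.strong-cycle}, so I first aim to produce, after a $C^1$-small perturbation $g_1 \in \cU$ of $f$, a periodic orbit $\cO \subset U$ together with a strong homoclinic intersection $z \in (W^{ss}_{g_1}(\cO) \cap W^{uu}_{g_1}(\cO)) \setminus \cO$ whose orbit lies in $V$. Because $A$ has type (N), Remark~\ref{r.classification}.c tells us that every invariant measure supported on $A$ has vanishing central Lyapunov exponent; by continuity of the exponents one may shrink $U$ at the outset so that any periodic orbit in $U$ automatically has its central exponent in $[-\delta,\delta]$, where $\delta$ is the constant given by Proposition~\ref{p.strong-cycle} for the partially hyperbolic structure. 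Thus the only substantive task is to produce the strong homoclinic point.

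The first ingredient is a periodic orbit of a perturbation of $f$ in $U$. Since $K$ has type (N) it contains no hyperbolic periodic orbit (Remark~\ref{r.classification}.c), and as all periodic orbits of $f$ are hyperbolic, $K$ is periodic-point-free; combining the chain-transitivity of $K$ with an iterated use of Hayashi's connecting lemma along a periodic $\varepsilon$-pseudo-orbit of $K$ yields a $C^1$-small perturbation $g_0$ of $f$ carrying a periodic orbit $\cO \subset U$. Fix a point $x \in K$ and $\eta > 0$ small enough that the $\eta$-neighborhood of $K$ lies in $V$, and apply Corollary~\ref{c.instable-fort} both to $g_0$ and to $g_0^{-1}$ (type (N) is symmetric under time reversal): shrinking $U$ if necessary, there exist points $y^+ \in W^{ss}_{g_0}(\cO)$ and $y^- \in W^{uu}_{g_0}(\cO)$, both at distance less than $\eta$ from $x$, with forward respectively backward orbit contained in the $\eta$-neighborhood of $K$, hence in $V$.

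The core step is to join $y^+$ and $y^-$ by a genuine orbit of a further perturbation, thereby creating a strong homoclinic intersection. Here the non-minimality of $A$ enters: pick $p^* \in A \setminus K$, and use chain-transitivity of $A$ to obtain pseudo-orbits in $A$ from a forward iterate of $y^+$ into a neighborhood of $p^*$ and from $p^*$ into a backward iterate of $y^-$. A double application of Hayashi's connecting lemma, with perturbation boxes located in two small disjoint neighborhoods along these pseudo-orbits (chosen away from $\cO$ and from the perturbation supports of $g_0$, using the composition technique recalled in the proof of Proposition~\ref{p.extension}), produces a perturbation $g_1 \in \cU$ for which a forward $g_1$-iterate of $y^+$ coincides with a backward $g_1$-iterate of $y^-$; the resulting point $z$ lies in $(W^{ss}_{g_1}(\cO) \cap W^{uu}_{g_1}(\cO)) \setminus \cO$ with orbit contained in $V$ by construction. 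Applying Proposition~\ref{p.strong-cycle} to $g_1$, $\cO$, and the homoclinic orbit then yields the desired heterodimensional cycle. The main obstacle is this intersection creation: $\dim W^{ss}(\cO) + \dim W^{uu}(\cO) = \dim M - 1$, so the two manifolds do not meet without a perturbation, and it is precisely the non-minimality of $A$ that supplies an intermediate region near $p^*$ where the two Hayashi perturbations can be placed with disjoint supports without touching $\cO$, keeping the heteroclinic orbits inside $V$ and the associated periodic orbits inside $U$.
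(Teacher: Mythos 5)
Your proposal has the right general flavor (get a periodic orbit near $K$ with small central exponent, use Corollary~\ref{c.instable-fort} to place its strong stable and strong unstable manifolds, close up a strong homoclinic intersection by the connecting lemma, then invoke Proposition~\ref{p.strong-cycle}), but the order and location of the perturbations creates genuine gaps that the paper carefully avoids.

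First, you obtain the periodic orbit $\cO$ by perturbing $f$ along a periodic pseudo-orbit \emph{inside $K$} (closing lemma). This perturbation $g_0$ has support meeting a neighbourhood of $K$, so $K$ need not even be $g_0$-invariant, and certainly the type (N) structure of $K$ for $g_0$ has not been established. You then apply Corollary~\ref{c.instable-fort} ``to $g_0$''; but that corollary was proved for the unperturbed $f$, and its proof relies on the trapped plaque family and the weak shadowing lemma for $f$. The paper sidesteps this entirely: it applies the weak shadowing Lemma~\ref{l.shadows} to a periodic $\varepsilon$-pseudo-orbit in $K$ to produce a genuine periodic orbit $\cO$ \emph{of $f$ itself}, in an arbitrarily small neighbourhood of $K$, with no perturbation at all; hyperbolicity of $\cO$ is then automatic from the hypothesis.

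Second, you take the basepoint $x$ in $K$ and then try to route the connecting perturbation through a point $p^*\in A\setminus K$. But Corollary~\ref{c.instable-fort} forces the forward orbit of $y^+$ (resp.\ the backward orbit of $y^-$) to stay in a small neighbourhood of $K$; they do not visit a neighbourhood of $p^*$, so Hayashi's lemma has no orbit segments to latch onto near $p^*$. Producing such segments from pseudo-orbits in $A$ would require the generic connecting lemma for pseudo-orbits (Lemma~\ref{l.connecting}, which demands $f\in\cR_{Chain}$), a hypothesis Corollary~\ref{c.NN} does not have — it only assumes hyperbolic periodic orbits. The paper uses non-minimality in a much cleaner way: it picks the basepoint $x\in A\setminus K$ from the start (non-periodic), sets up Hayashi's boxes $W\subset\widehat W$ at $x$ with iterates disjoint from $K$ and inside $V$, and then applies Corollary~\ref{c.instable-fort} \emph{to the larger chain-transitive set $A$} (which is also of type (N)) so that \emph{both} $W^{ss}(\cO)$ and $W^{uu}(\cO)$ meet $W$. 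A single application of Hayashi's connecting lemma at $x$, with support disjoint from $K$ and from $\cO$, then closes the strong homoclinic connection without disturbing $\cO$ or its strong manifolds. Your proposal's double-connecting through $p^*$ both misuses the conclusion of Corollary~\ref{c.instable-fort} and silently imports genericity.
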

\begin{proof}
Consider a neighbourhood $\cU$ of $f$ in $\diff^1(M)$ and a point $x\in A\setminus K$.
One may always assume that it is non-periodic:
if $x$ is periodic (and hyperbolic by assumption),
the chain-transitive set $A$ should contain a point $x'\in W^s(x)\setminus \{x\}$
close to $x$ which is non-periodic.
One can apply the connecting lemma to $(f,\cU)$ and introduce an integer $N\geq 1$
and two small neighbourhoods $W\subset \widehat W$ of $x$ such that the $N$ first iterates of $\widehat W$
are disjoint from $K$ and contained in $V$.
By lemma~\ref{l.shadows}, there exists a periodic orbit $\cO$ contained in an arbitrarily small
neighbourhood of $K$ and by corollary~\ref{c.instable-fort},
the strong stable and unstable manifolds of $\cO$ intersect $W$.
The connecting lemma produces a $C^1$-perturbation of $f$ in $\cU$ with support in
the $N$ first iterates of $\widehat W$ (hence disjoint from $K$)
which connects the strong manifolds of $\cO$.
Since the central exponent of $\cO$ is arbitrarily close to $0$ (by remark~\ref{r.classification}.d),
another $C^1$ perturbation gives a heterodimensional cycle (proposition~\ref{p.strong-cycle}).
By construction the heteroclinic orbits are contained in $V$.
\end{proof}

\section{Twisted partially hyperbolic sets}

We here study partially hyperbolic sets which have a twisted geometry that already appeared
in~\cite{bonatti-gan-wen} and~\cite{crovisier-palis-faible}.
It generates strong homoclinic intersections by $C^1$-perturbation.

\subsection{Definition and main result}
Let $K$ be an invariant compact set with a partially hyperbolic structure $E^s\oplus E^c\oplus E^u$,
such that $E^s$, $E^u$ are both non-degenerated and $E^c$ is one-dimensional.
Recall that any point $x\in K$ has local strong stable and local strong unstable manifolds
$W^{ss}_{loc}(x)$ and $W^{uu}_{loc}(x)$ tangent to $E^s_x$ and $E^u_x$; this is also the case
for any diffeomorphism $C^1$-close and any point whose orbit is contained in a small neighbourhood of $K$.

Some pairs of close points $p,q\in K$ are in a \emph{twisted position} if
one can connects $W^{ss}_{loc}(p)$ to $W^{uu}_{loc}(q)$
and $W^{ss}_{loc}(q)$ to $W^{uu}_{loc}(p)$ by two (small) curves tangent to
a central cone field and having a same orientation (see figure~\ref{f.twist}):
this is possible since the bundle $E^c$ can be locally oriented.
In particular if $W^{ss}_{loc}(p)$ and $W^{uu}_{loc}(q)$ intersect, the pair $(p,q)$
is in a twisted position.

\begin{figure}[ht]
\begin{center}
\input{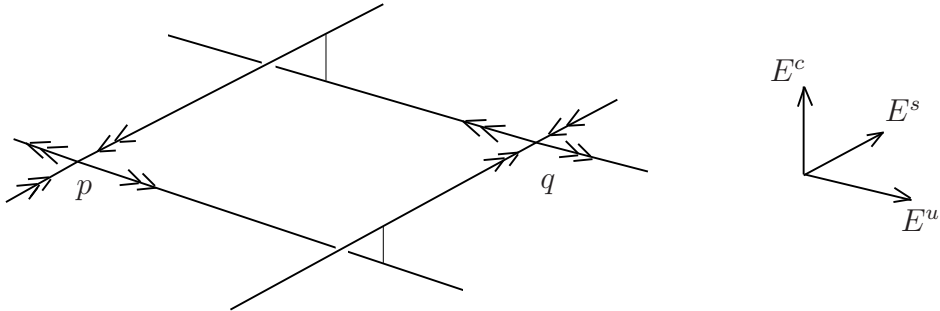}
\end{center}
\caption{Two points $p$, $q$ in a twisted position.\label{f.twist}}
\end{figure}

In principle this definition depends on the choice of local orientations and cone fields
but it is noticed in~\cite{crovisier-palis-faible} that this notion is well-defined when the distance
$d(p,q)$ goes to zero. Hence, this allows the following definition.

\begin{definition}
The set $K$ is said to be \emph{twisted} if there is $\varepsilon>0$ such that
for any $p,q\in K$ satisfying $d(p,q)<\varepsilon$, the pair $(p,q)$ is twisted.
\end{definition}

\begin{proposition}\label{p.twisted}
Let $f$ be a diffeomorphism and let $K$ be a twisted partially hyperbolic set
which holds a minimal dynamics and is non-periodic.
Then for any neighbourhood $\cU$ of $f$ in $\diff^1(M)$ and $U$ of $K$ in $M$,
there exist $g\in \cU$ having a periodic orbit $\cO$ which
exhibits a strong homoclinic connection associated to a homoclinic
orbit contained in $U$.
\end{proposition}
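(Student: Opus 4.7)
The plan is to combine the twisted structure of $K$ with two applications of Hayashi's connecting lemma: the first to realise a long return of $K$ as a periodic orbit $\cO$ of a perturbation $g_1$ of $f$, the second to produce a strong homoclinic intersection for $\cO$ using a central curve supplied by the twisted hypothesis.

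Fix $\varepsilon>0$ smaller than the constant from the definition of twisted and small enough so that any forward orbit of $f$ remaining in the $\varepsilon$-neighbourhood of $K$ is contained in $U$. Since $K$ is minimal and non-periodic, one may pick a non-periodic point $x\in K$ and an integer $n\geq 1$ such that $y=f^n(x)$ is $\varepsilon$-close to $x$ and distinct from it. By hypothesis the pair $(x,y)$ is in twisted position: there exist two short $C^1$-curves $\gamma_1,\gamma_2$ tangent to a central cone field around $E^c$, with $\gamma_1$ joining $a_1\in W^{ss}_{loc}(x)$ to $b_1\in W^{uu}_{loc}(y)$ and $\gamma_2$ joining $a_2\in W^{ss}_{loc}(y)$ to $b_2\in W^{uu}_{loc}(x)$, both pointing in the same direction along the locally oriented bundle $E^c$.

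Applying Hayashi's connecting lemma at a non-periodic point $z\in K$ chosen disjoint from the local strong manifolds at $x$ and $y$ and from $\gamma_1,\gamma_2$, one obtains a diffeomorphism $g_1\in \cU$ whose support lies near the first few iterates of a small neighbourhood of $z$, and a periodic orbit $\cO\subset U$ of $g_1$ of period close to $n$ passing arbitrarily close to $x$ and to $y$. By openness of partial hyperbolicity, $\cO$ inherits a splitting $E^s\oplus E^c\oplus E^u$, and its local strong stable and strong unstable manifolds at the points near $x$ and $y$ are $C^1$-close to the corresponding manifolds at $x,y$ for $f$; after a slight adjustment of endpoints, the curves $\gamma_1,\gamma_2$ continue to join $W^{ss}(\cO)$ to $W^{uu}(\cO)$ with the same coherent orientation. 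In particular, an intersection of the component of $W^{ss}(\cO)$ near $x$ with the component of $W^{uu}(\cO)$ near $y$ would already provide a strong homoclinic intersection for $\cO$.

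For this second step, pick a non-periodic point $w$ of $g_1$ in the interior of $\gamma_1$, chosen disjoint from $\cO$ and from the support of the first perturbation, and apply Hayashi's connecting lemma at $w$: since the forward $g_1$-orbit of $a_1\in W^{ss}(\cO)$ and the backward $g_1$-orbit of $b_1\in W^{uu}(\cO)$ both accumulate on $\cO$ and pass arbitrarily close to $w$, one obtains $g\in \cU$ coinciding with $g_1$ on a neighbourhood of $\cO$ and such that some forward $g$-orbit of a point close to $a_1$ meets some backward $g$-orbit of a point close to $b_1$. This intersection orbit lies in $W^{ss}(\cO)\cap W^{uu}(\cO)\setminus \cO$ and stays in a small neighbourhood of $\cO\cup\gamma_1\subset U$, yielding the desired strong homoclinic connection. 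The main obstacle is to ensure that the two perturbations do not interfere and that the twisted data survives the first one: this is handled by localising both supports away from $\cO$ and from the local strong manifolds at $x,y$, and it is precisely the coherence of orientations between $\gamma_1$ and $\gamma_2$ supplied by the twisted hypothesis that allows $\gamma_1$ to be converted into an actual intersection without being forced into an inconsistent configuration by the second curve $\gamma_2$.
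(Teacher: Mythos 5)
Your proposal misses the central difficulty of the proposition and both of the two steps you describe have gaps that would need to be filled by essentially reproducing the paper's actual argument.

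First, the second application of Hayashi's lemma is incorrect as stated. You pick $w$ in the interior of $\gamma_1$ and claim that the forward $g_1$-orbit of $a_1\in W^{ss}(\cO)$ and the backward $g_1$-orbit of $b_1\in W^{uu}(\cO)$ ``pass arbitrarily close to $w$''. They do not: the forward orbit of $a_1$ converges to $\cO$ and the backward orbit of $b_1$ converges to $\cO$, while $w$ lies on the small central arc $\gamma_1$, uniformly away from $\cO$. So there is no common neighbourhood of $w$ through which both orbits pass, and Hayashi's lemma cannot be applied there. More fundamentally, a strong homoclinic intersection is an intersection of $W^{ss}(\cO)$ and $W^{uu}(\cO)\setminus\cO$, which are of complementary dimension $\dim(M)-1$ in total --- it is a codimension-one phenomenon. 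Producing it is not a matter of connecting two orbits that nearly meet; it requires a geometric mechanism that forces an intersection. In the paper that mechanism is precisely the ``twisted'' structure \emph{of the whole periodic orbit}, applied via the technique of \cite{bonatti-gan-wen} (see lemma~\ref{l.t2}).

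Second, and this is the key point the paper's proof is built around: after the closing perturbation, the resulting periodic orbit $\cO$ is \emph{not} automatically twisted, and checking the twisted geometry only near your two chosen points $x$ and $y$ is nowhere near sufficient. The twisted property needs to hold for \emph{all} pairs of nearby returns of $\cO$ (this is what the paper calls ``almost twisted returns''), because the construction of the strong homoclinic intersection in \cite{bonatti-gan-wen} uses the global recurrence of the whole periodic orbit. The danger is that the iterates modified by the closing perturbation land near other unmodified iterates in a non-twisted position. Most of the proof of proposition~\ref{p.twisted} (lemma~\ref{l.t1}, the codimensional closing lemma proposition~\ref{p.closing}, and the measure-theoretic cube selection lemma~\ref{l.measure}) exists exactly to control the perturbed iterates so that they remain essentially tangent to $E^s\oplus E^u$ (i.e.\ their displacement in the $E^c$ direction stays comparable to their displacement in the transverse direction), which is what preserves the twisted condition across the perturbed part of the orbit. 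Your proposal neither notices this obstruction nor contains anything that could replace that control.

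A minor additional point: your first application of Hayashi's lemma at an auxiliary point $z$ ``disjoint from the local strong manifolds at $x$ and $y$'' does not close the near-return $x\mapsto f^n(x)\approx x$; to close it you must perturb near the point where the orbit re-enters a small neighbourhood of itself, i.e.\ near $x$ itself (or near a point on the segment of orbit), which is precisely where the twist is in danger of being destroyed. This is why the paper uses Pugh's closing lemma together with the codimension-controlled variant (proposition~\ref{p.closing}) rather than a generic connecting-lemma step.
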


\begin{corollary}\label{c.twist}
Let $f$ be a diffeomorphism and consider a non-periodic minimal set $K$
which has a partially hyperbolic structure $E^s\oplus E^c\oplus E^u$
such that $E^c$ is one-dimensional.
If $K$ has type (P) or (N) and is twisted, then, for any neighbourhood $\cU$ of $f$ and
$U$ of $K$, there exists a diffeomorphism $g\in \cU$ which exhibit a heterodimensional cycle
contained in $U$.
\end{corollary}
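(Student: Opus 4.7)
The plan is to chain two results already proved in the paper: apply Proposition~\ref{p.twisted} to produce a small perturbation $g_1$ possessing a periodic orbit $\cO$ in $U$ with a strong homoclinic connection, then apply Proposition~\ref{p.strong-cycle} to turn this strong homoclinic connection into a heterodimensional cycle. The whole thing works because the hypothesis of type (N) or (P) is precisely what controls the central Lyapunov exponent of nearby periodic orbits, which is the only extra input required by Proposition~\ref{p.strong-cycle}.

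First I would fix a neighbourhood $\cU_0 \subset \cU$ of $f$ and a neighbourhood $V_0$ of $K$ with $\overline{V_0} \subset U$ such that, for every $g \in \cU_0$, the partially hyperbolic splitting $E^s \oplus E^c \oplus E^u$ persists on the maximal invariant set of $\overline{V_0}$. Let $\delta > 0$ be the constant provided by Proposition~\ref{p.strong-cycle} applied to $(f, \cU_0)$ with this partially hyperbolic continuation. Since $K$ has type (N) or (P), Remark~\ref{r.classification}.c) guarantees that every $f$-invariant measure supported on $K$ has Lyapunov exponent along $E^c$ equal to $0$. By compactness and continuity of the central bundle (the exponent along $E^c$ is upper and lower semicontinuous as a function of the invariant measure on the maximal invariant set in $\overline{V_0}$), there exists a smaller neighbourhood $V_1 \subset V_0$ of $K$ such that any periodic orbit of any $g \in \cU_0$ entirely contained in $V_1$ has central exponent in $(-\delta, \delta)$.

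Now I would apply Proposition~\ref{p.twisted}, whose hypotheses are met: $K$ is a twisted partially hyperbolic set (so in particular $E^s$ and $E^u$ are both non-degenerate by definition of \textit{twisted}), holds a minimal dynamics, and is non-periodic. With neighbourhoods $\cU_0$ and $V_1$ one obtains $g_1 \in \cU_0$ having a periodic orbit $\cO \subset V_1$ with a strong homoclinic intersection whose homoclinic orbit also sits in $V_1$. By the choice of $V_1$, the central Lyapunov exponent of $\cO$ lies in $(-\delta, \delta)$. Finally, apply Proposition~\ref{p.strong-cycle} to $(g_1, \cU_0, \cO)$, with neighbourhoods of $\cO$ and of $\cO$ together with its strong homoclinic connection both chosen inside $V_0 \subset U$: this produces $g \in \cU_0 \subset \cU$ with a heterodimensional cycle contained in $U$, as required.

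The main conceptual point is the coordination between the three small parameters: the size of $V_1$, which must be small enough for the central exponent to stay below $\delta$; the size of $\cU_0$, which must preserve partial hyperbolicity on a fixed neighbourhood of $K$; and the value of $\delta$ itself, which is dictated by Proposition~\ref{p.strong-cycle}. Once one chooses $\delta$ first, then $\cU_0$, then $V_1$ in that order, each input to the next perturbation lemma lands inside the domain produced by the preceding one, and there is no circular dependency. Everything else is a direct assembly of Propositions~\ref{p.strong-cycle} and~\ref{p.twisted}.
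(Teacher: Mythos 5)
Your proof is correct and follows essentially the same route as the paper: apply Proposition~\ref{p.twisted} to get a perturbation with a periodic orbit having a strong homoclinic connection, note that type (N) or (P) forces the central exponent of that orbit to be close to $0$, and finish with Proposition~\ref{p.strong-cycle}. Your write-up is more explicit about the order in which $\delta$, $\cU_0$, and $V_1$ are chosen, but the content is the same three-step argument the paper uses.
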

\begin{proof}
One applies proposition~\ref{p.twisted}:
there exists a perturbation $g_0$ of $f$ having a periodic orbit $\cO$
which exhibits a strong homoclinic connection associated to a homoclinic orbit contained in
a small neighbourhood of $K$.
Since $K$ has type (P) or (N), the exponent of $\cO$ along $E^c$ is close to zero.
By proposition~\ref{p.strong-cycle}, one gets with another $C^1$-perturbation a heterodimensional cycle in $U$.
\end{proof}

The end of the section is devoted to the proof of proposition~\ref{p.twisted}.
One will first create
a perturbation $g$ of $f$ and a periodic orbit $\cO=(x,g(x),\dots,g^\tau(x)=x)$ by the closing lemma:
the period of $\cO$ can be chosen arbitrarily long but there exists an integer $N\geq 1$
(which is the time needed to perform the perturbation) such that the
segment of orbit $(x,g(x),\dots, g^{\tau-N}(x))$ coincides with
$(x,f(x),\dots, f^{\tau-N}(x))$ and belongs to $K$.
Since $K$ is twisted and most of the iterates of $\cO$ belong to $K$, we will deduce that
the orbit $\cO$ is ``almost twisted" for $g$.
Since its period is arbitrarily long, one can repeat the argument in~\cite{crovisier-palis-faible}
for twisted orbits and by perturbation one will obtain a strong homoclinic intersection for $\cO$.

\subsection{The closing lemma argument}
We first recall how one can perturb the dynamics of $f$ in order to create a periodic
orbit close to $K$. We denote by $d$ the dimension of $M$.
A \emph{standard cube} in $\RR^d$ is a set of the form
$C=[-a,a]^d+v$ where $a>0$ and $v\in \RR^d$.
For $t>0$ we denote by $t.C$ the standard cube $[-t.a,t.a]^d+v$.
A main step towards Pugh's closing lemma is the following
(see~\cite[th\'eor\`eme 22]{arnaud} and~\cite[th\'eor\`eme A.1]{BC}).

\begin{theorem*}[Pugh]
Let $f$ be a diffeomorphism, $\cU_0$ be a neighbourhood
of $f$ in $\diff^1(M)$, $z\in M$ be a non-periodic point and $\lambda>1$.
Then there exists
\begin{itemize}
\item[--] a chart $\varphi\colon \Omega\subset \RR^d \to M$ around $z$,
\item[--] an integer $N\geq 1$,
\end{itemize}
with the following property.

For any standard cube $\lambda.C\subset \Omega$ and any $x,y\in \varphi(C)$,
there exists $g\in \cU_0$ such that:
\begin{itemize}
\item[--] $g^N(y)=f^N(x)$;
\item[--] $g$ is a perturbation of $f$ with support in the union of the $f^k(\varphi(\lambda.C))$
with $0\leq k<N$.
\end{itemize}
\end{theorem*}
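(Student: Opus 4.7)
The plan is to use Pugh's ``spread a perturbation over many iterates'' trick. A single $C^1$-small perturbation cannot realise a displacement of arbitrary chart-size in one step, but I will realise the required displacement as the composition of $N$ translation-type perturbations, each of $C^1$-norm $O(1/(N(\lambda-1)))$, which is $<\varepsilon$ once $N$ is chosen large enough.

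First I would fix $\varepsilon>0$ with $\{g:d_{C^1}(g,f)<\varepsilon\}\subset\cU_0$. Since $z$ is non-periodic, for every $N$ the points $z,f(z),\dots,f^N(z)$ are pairwise distinct, so one can pick charts $\psi_i:B(0,r)\to M$ at $f^i(z)$ (for $0\le i\le N$) with pairwise disjoint images and uniform bounds $\|D\psi_i^{\pm 1}\|\le L$. Set $\varphi:=\psi_0$, $\Omega:=\psi_0(B(0,r))$, and $F_i:=\psi_{i+1}^{-1}\circ f\circ\psi_i$; shrinking $r$, one may assume $f^i(\Omega)\subset\psi_i(B(0,r))$ and $\|DF_i^{\pm 1}\|\le L'$ uniformly. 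Then I choose $N$ large enough that $K/(N(\lambda-1))<\varepsilon$ for a constant $K$ depending on $L$, $L'$, $d$, and the bump-function bookkeeping below.

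Given a standard cube $\lambda C\subset\Omega$ with $C=[-a,a]^d+v$ and points $x,y\in\varphi(C)$, write $\tilde x=\varphi^{-1}(x)$, $\tilde y=\varphi^{-1}(y)$, $w=\tilde x-\tilde y$ (so $\|w\|\le 2a\sqrt d$). I would define iteratively translation vectors $s_0,\dots,s_{N-1}\in\RR^d$ with $\|s_i\|=O(\|w\|/N)$ such that the orbit $\tilde y_0:=\tilde y$, $\tilde y_{i+1}:=F_i(\tilde y_i+s_i)$ lands at $\tilde y_N=F_{N-1}\cdots F_0(\tilde x)$: the simplest recipe prescribes $\tilde y_i$ to ``linearly converge'' from $\tilde y$ at $i=0$ to the target $F_{N-1}\cdots F_0(\tilde x)$ at $i=N$, and the recursion then determines each $s_i$, with size controlled by $\|DF_i^{\pm 1}\|\le L'$. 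For each $i$ I pick a bump $\beta_i:\RR^d\to[0,1]$ equal to $1$ on $F_{i-1}\cdots F_0(C)$, vanishing outside $F_{i-1}\cdots F_0(\lambda C)$, with $\|\nabla\beta_i\|_\infty=O(1/((\lambda-1)a))$, and set $\sigma_i(u):=u+\beta_i(u)\,s_i$. The candidate $g$ is then defined to equal $f$ off $\bigcup_{i<N}f^i(\varphi(\lambda C))$, and on $f^i(\varphi(\lambda C))$ to equal $\psi_{i+1}\circ F_i\circ\sigma_i\circ\psi_i^{-1}$. Each $\sigma_i$ has $C^1$-distance to the identity of order $\|s_i\|/((\lambda-1)a)=O(1/(N(\lambda-1)))$, so the chart bounds give $d_{C^1}(g,f)<\varepsilon$ and $g\in\cU_0$; the support condition is manifest and $g^N(y)=\psi_N(\tilde y_N)=f^N(x)$ by construction.

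The main obstacle will be uniformity: $N$ and $\Omega$ must be chosen before knowing the sub-cube $C$ or the points $x,y$. This is handled by the scale invariance of the bump-function estimate, where the displacement $\|w\|\le 2a\sqrt d$ and the gradient scale $1/((\lambda-1)a)$ combine so that the factor $a$ cancels; the resulting $C^1$-bound depends only on $\lambda,N,L,L'$ and $d$. One then needs $\Omega$ (hence $r$) small enough that the linearisation of each $F_i$ at $0$ is a good approximation on $\Omega$ (so that $F_{i-1}\cdots F_0(\lambda C)$ is a near-cube of diameter $\Theta(a)$, justifying the bound on $\|\nabla\beta_i\|_\infty$ and the estimate $\|s_i\|=O(\|w\|/N)$), which is possible once $N$ is fixed because $Df$ is continuous on the finite orbit $\{z,\dots,f^{N}(z)\}$.
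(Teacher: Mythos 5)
Your plan hinges on the estimate $\|D\sigma_i-\mathrm{Id}\| = O\bigl(1/(N(\lambda-1))\bigr)$, which rests on two claims: that $\|s_i\|=O(\|w\|/N)$ and that $\|\nabla\beta_i\|_\infty=O\bigl(1/((\lambda-1)a)\bigr)$. Neither of these is correct, and the failure is precisely the difficulty that makes Pugh's closing lemma a deep result rather than a routine one. Writing $T_i:=DF_{i-1}\cdots DF_0$, the set $F_{i-1}\cdots F_0(\lambda C)$ is (to first order) the parallelepiped $T_i(\lambda C)$, \emph{not} a ``near-cube of diameter $\Theta(a)$''; its inradius is of order $a/\|T_i^{-1}\|$, so a bump that is $1$ on $T_i(C)$ and vanishes outside $T_i(\lambda C)$ has $\|\nabla\beta_i\|_\infty\sim\|T_i^{-1}\|/((\lambda-1)a)$. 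Meanwhile a linear interpolation $\tilde y_i = F_{i-1}\cdots F_0(\tilde y)+\frac{i}{N}T_i(w)$ forces $\|s_i\|\sim\frac{1}{N}\|T_i(w)\|$, which can be of order $\|T_i\|\cdot a/N$. The resulting $C^1$ bound is thus of order $\kappa(T_i)^2/(N(\lambda-1))$, where $\kappa(T_i)=\|T_i\|\|T_i^{-1}\|$ is the condition number. Since $\kappa(T_i)$ typically grows \emph{exponentially} in $i$ (any hyperbolicity of $Df$ along the orbit produces this), the factor $\kappa(T_N)^2/N$ does not tend to zero as $N\to\infty$, so enlarging $N$ does not make the perturbation $C^1$-small. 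Shrinking $\Omega$ does not help either: the distortion is a feature of the compositions $T_i$, not of the nonlinear error in linearising $F_i$.

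This is exactly the obstruction that Lemma~\ref{l.pugh} in the paper is designed to overcome. Pugh's ``linear allocation'' lemma produces the intermediate points $z_0,\dots,z_N$ \emph{non-linearly} in $k$: roughly, one moves each coordinate direction at steps where the distortion of $T_k$ is favourable in that direction, and one pre-composes with a well-chosen linear map $A$ and works in the intermediate cube $\sqrt\lambda\,C_0$. The key conclusion of that lemma is the inequality $d(T_kA(z_{k-1}),T_kA(z_k))<\eta\,d\bigl(T_kA(\sqrt\lambda\,C_0),\;\RR^s\setminus T_kA(\lambda C_0)\bigr)$, in which both sides are distorted by $T_k$ in a matching way, so the ratio $\eta$ is achievable uniformly in $k$. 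Your proposal replaces this with a straight-line interpolation and loses exactly this cancellation; without an input of the type of Lemma~\ref{l.pugh}, the argument cannot be repaired.
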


In order to create a periodic orbit, one looks for a (non-trivial) segment of orbit
$(x,\dots,f^n(x))$ with $x,f^n(x)\in \varphi(C)$ such that all the intermediary iterates
$f^k(x)$ for $0<k<n$ avoid $\varphi(\lambda.C)$.
One possible approach is to assume that $z$ belongs to the support of an invariant probability
measure $\mu$ and to argue in a similar way as in the proof of Ma\~n\'e's ergodic closing lemma~\cite{mane-ergodic}.
The restriction of the measure $\mu$ to the image of $\varphi$
is non-zero and lifts as a measure $\tilde \mu$ on $\Omega$.

\begin{lemma}\label{l.measure}
If $\lambda^{d+1}<\frac 3 2$,
there exists a standard cube $\lambda.C\subset \Omega$
which satisfies $\tilde \mu(C)>\frac 1 2 \tilde \mu(\lambda.C)$.
\end{lemma}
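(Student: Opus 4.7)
The plan is to average the quantity $\tilde\mu(C)-\tfrac{1}{2}\tilde\mu(\lambda.C)$ over all admissible standard cubes $C$ at a fixed (small) scale, show this average is strictly positive thanks to the consequence $\lambda^d<2$ of the hypothesis, and then extract a cube satisfying the required inequality by Fubini.

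For $a>0$, write $C(a,v):=[-a,a]^d+v$ and set $V_a:=\{v\in\RR^d:\lambda.C(a,v)\subset \Omega\}$. Consider
$$I(a):=\int_{V_a}\Bigl(\tilde\mu(C(a,v))-\tfrac{1}{2}\tilde\mu(\lambda.C(a,v))\Bigr)\,dv.$$
Since $x\in C(a,v)\Leftrightarrow v\in C(a,x)$, Fubini's theorem rewrites this as
$$I(a)=\int_\Omega\Bigl(\mathrm{Leb}(V_a\cap C(a,x))-\tfrac{1}{2}\mathrm{Leb}(V_a\cap C(\lambda a,x))\Bigr)\,d\tilde\mu(x).$$
Introduce the ``bulk'' set $\Omega_a:=\{x\in\Omega:C(2\lambda a,x)\subset\Omega\}$, which is precisely the set of $x$ for which $C(\lambda a,x)\subset V_a$. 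For $x\in\Omega_a$ the integrand equals
$(2a)^d-\tfrac{1}{2}(2\lambda a)^d=(2a)^d\bigl(1-\lambda^d/2\bigr),$
which is strictly positive since $\lambda^d<2$ (note $\lambda^d=\lambda^{d+1}/\lambda<\tfrac{3}{2\lambda}<2$). For $x\notin\Omega_a$ the integrand is crudely bounded below by $-\tfrac{1}{2}(2\lambda a)^d$. Thus
$$I(a)\;\geq\;(2a)^d\Bigl[\bigl(1-\tfrac{\lambda^d}{2}\bigr)\tilde\mu(\Omega_a)-\tfrac{\lambda^d}{2}\,\tilde\mu(\Omega\setminus\Omega_a)\Bigr].$$

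Now $\Omega_a$ increases to $\Omega$ as $a\to 0$, since $\Omega$ is open. By continuity of the finite measure $\tilde\mu$, we get $\tilde\mu(\Omega\setminus\Omega_a)\to 0$ and $\tilde\mu(\Omega_a)\to \tilde\mu(\Omega)>0$, so the bracket converges to the positive number $(1-\lambda^d/2)\tilde\mu(\Omega)$. Hence $I(a)>0$ for $a$ sufficiently small. Since the integrand is measurable with positive integral, it is strictly positive on a subset of $V_a$ of positive Lebesgue measure; any such $v$ produces a standard cube $C=C(a,v)$ with $\lambda.C\subset\Omega$ and $\tilde\mu(C)>\tfrac{1}{2}\tilde\mu(\lambda.C)$.

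The main subtlety is controlling the boundary contribution of $x\notin\Omega_a$, which is handled entirely by the inner regularity of $\tilde\mu$ and the smallness of $a$. No density or Lebesgue-point hypothesis on $\tilde\mu$ is needed; the whole argument relies only on the elementary fact $\lambda^d<2$, so the stronger quantitative bound $\lambda^{d+1}<\tfrac{3}{2}$ plays its role later in Pugh's scheme rather than in this lemma.
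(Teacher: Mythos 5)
Your proof is correct, and it is genuinely different from the one in the paper. The paper argues by contradiction: assuming $\tilde\mu(C)\le\frac12\tilde\mu(\lambda.C)$ for every admissible cube, it halves a cube (dividing measure by at most $2^d$), rescales by $\lambda^{d+1}$ (multiplying measure by at least $2^{d+1}$), and iterates to get a nested sequence of cubes with measure growing geometrically, contradicting finiteness of $\tilde\mu$; here $\lambda^{d+1}<\frac32$ is used both for the measure estimate (the factor $\lambda$ is applied $d+1$ times) and to keep the doubled cubes nested and shrinking (ratio $\lambda^{d+1}/2<3/4$). You instead average the defect $\tilde\mu(C(a,v))-\frac12\tilde\mu(\lambda.C(a,v))$ over translates $v$ and apply Tonelli, reducing the problem to the elementary Lebesgue-measure inequality $(2a)^d>\frac12(2\lambda a)^d$, i.e.\ $\lambda^d<2$, plus control of the boundary term as $a\to0$. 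Your argument is arguably sharper: it shows the lemma holds under the weaker and natural hypothesis $\lambda^d<2$ (the threshold realised by Lebesgue measure), whereas the stronger $\lambda^{d+1}<\frac32$ is tailored to the paper's recursive construction rather than to the statement itself. Both proofs are short; the paper's is self-contained and avoids integration, yours is more conceptual and exposes the exact constant that matters.
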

\begin{proof}
The proof is by contradiction:
one assumes that for each standard cube $\lambda.C\subset \Omega$,
one has $\tilde \mu(C)\leq \frac 1 2 \tilde \mu(\lambda.C)$.
Let $C_0$ be a standard cube with positive measure $m_0=\tilde \mu(C_0)$
and such that $2.C_0$ is contained in $\Omega$.
One builds inductively a sequence of standard cubes $C_n$ of positive measure
$m_n=\tilde \mu(C_n)$ in the following way.
The cube $C_n$ can be divided into $2^d$ standard cubes with disjoint interior
and same radii. One of them, denoted by $C'_n$, has measure larger than $2^{-d}m_n$.
We define $C_{n+1}=\lambda^{d+1}.C'_n$.
By our assumption it has measure larger than $2^{d+1}\tilde \mu(C'_n)$, hence larger than
$2 \tilde \mu(C_n)$. However by our choice of $\lambda$,
the cube $2.C_{n+1}$ is contained in $2.C_n$ and has a radius less
than $3/4$ times the radius of $2.C_n$.
The cubes $C_n$ are thus all contained in $\Omega$ and
their measures increase exponentially contradicting the fact that $\tilde \mu$ is finite.
\end{proof}

As a consequence there exists a set $A\subset \varphi(C)$
with positive measure for $\mu$
and an integer $n\geq 1$ such that $f^k(A)\cap \varphi(\lambda.C)=\emptyset$
for any $0<k<n$ and $f^n(A)\subset \varphi(C)$.
Any segment of orbit $(x,\dots,f^n(x))$ with $x\in A$ can be closed by Pugh's theorem.

\subsection{A codimensional version of the closing lemma.}
Pugh's theorem allows to modify the image of a point with
a good control on the support of the perturbation.
We state a variation of this result which allows to modify the image of a point
by moving its first $N$ iterates in a given direction.
We fix some integer $1\leq s\leq d$ and denote by $p_2$ the orthogonal projection
of $\RR^n$ on $\{0\}^s\times \RR^{n-s}$.

\begin{proposition}\label{p.closing}
Let $f$ be a diffeomorphism, $\cU_0$ be a neighbourhood
of $f$ in $\diff^1(M)$,
$K\subset M$ be an invariant compact set with
a continuous splitting $T_KM=E\oplus F$ with $s=\dim(E)$,
$z\in K$ be a non-periodic point and $\lambda>1$.
Then, there exists
\begin{itemize}
\item[--] a chart $\varphi\colon \Omega\subset \RR^d \to M$
around $z$ satisfying $D_0\varphi.(\RR^s\times \{0\}^{n-s})=E_z$,
\item[--] an integer $N\geq 1$ and a constant $\alpha_0>0$,
\item[--] two functions $\beta,\gamma\colon (0,\alpha_0)\to (0,1)$ satisfying
$\beta(\alpha),\gamma(\alpha)\underset{\alpha\to 0}\longrightarrow 0$,
\end{itemize}
with the following property.

For any $\alpha\in (0,\alpha_0)$, any standard cube $\lambda.C\subset B_{\RR^d}(0,\gamma(\alpha))$,
and any $x,y\in \varphi(C)$ satisfying
\begin{equation}\label{e.codim}
d(p_2\circ \varphi^{-1}(x),p_2\circ \varphi^{-1}(y))<\alpha. \operatorname{diam}(C),
\end{equation}
there exists $g\in \cU_0$ such that
\begin{itemize}
\item[--] $g^N(y)=f^N(x)$;
\item[--] $g$ is a perturbation of $f$ with support in the union of the $f^k(\varphi(\lambda.C))$
with $0\leq k<N$;
\item[--] for each $1\leq k \leq N$ one has
\begin{equation}\label{e.pince}
d(p_2\circ \varphi^{-1}(f^{-k}g^k(x)),p_2\circ \varphi^{-1}(y))<\beta(\alpha). \operatorname{diam}(C).
\end{equation}
\end{itemize}
\end{proposition}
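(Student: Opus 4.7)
The plan is to execute the classical Pugh construction recalled just above, with a specific choice of the displacement vectors adapted to the splitting $E\oplus F$, and then to verify the codimensional bound~(\ref{e.pince}) by a direct first-order computation.

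First, apply Pugh's theorem to obtain a chart $\psi$ and an integer $N$; precomposing $\psi$ with a linear isomorphism of $\RR^d$ (which only amounts to shrinking the admissible neighbourhood $\cU_0$), I may assume the resulting chart $\varphi$ satisfies $D_0\varphi(\RR^s\times\{0\}^{d-s})=E_z$. Pugh's perturbation $g$ is realised as a composition of $N$ elementary $C^1$-bumps, the $k$-th one supported in the tube $f^k(\varphi(\lambda C))$ and shifting points of its central region by a prescribed small vector $v_k\in T_{f^k(z)}M$. Since $x,y\in \varphi(C)$ and the cube is small, both $f^k(x)$ and $f^k(y)$ lie in the central region of the $k$-th tube, so both orbits undergo essentially the same shift at each step.

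The closing condition $g^N(y)=f^N(x)$ prescribes the total shift $\sum_{k<N}Df^{N-k}(v_k)=f^N(x)-f^N(y)$. I distribute it evenly by setting $v_k:=Df^k(u)$, where $u:=(\varphi^{-1}(x)-\varphi^{-1}(y))/N\in T_zM$ is identified with a vector of $\RR^d$ via $D_0\varphi$. Iterating and using $D f^{-k}\circ Df^{N-k} = D f^{N-2k}$ at the chain-rule level, one gets, to first order in $\operatorname{diam}(C)$,
\[
 f^{-k}g^k(x)\;\approx\;\varphi^{-1}(x)+\tfrac{k}{N}\bigl(\varphi^{-1}(x)-\varphi^{-1}(y)\bigr)\quad\text{in the chart } \varphi.
\]
Its $p_2$-projection differs from $p_2(\varphi^{-1}(y))$ by $(1+k/N)\cdot\|p_2(\varphi^{-1}(x)-\varphi^{-1}(y))\|\leq 2\alpha\operatorname{diam}(C)$, by hypothesis~(\ref{e.codim}). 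This is the heart of the codimensional control: the fraction of the displacement already realised at intermediate time $k$ is proportional to its total, so its $p_2$-component inherits the smallness of the full $p_2$-displacement between $x$ and $y$.

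The main obstacle is upgrading this first-order computation into the precise estimate~(\ref{e.pince}). Higher-order errors come from four sources: (i) the nonlinearity of $f^k$ on the cube $\varphi(\lambda C)$; (ii) the fact that Pugh's bumps are not exact constant shifts throughout the central region; (iii) the deviation of the tangent map $Df^k$ at $f^k(y)$ from $Df^k$ at $f^k(z)$; and (iv) the fact that the chart $\varphi$ is not linear. Because $N$ is fixed once $f$ and $\cU_0$ are chosen, each error is of the form $c_i(\gamma(\alpha))\cdot\operatorname{diam}(C)$ for a continuous modulus $c_i$ vanishing at $0$; choosing $\gamma(\alpha)\to 0$ as $\alpha\to 0$ produces a function $\beta(\alpha)=2\alpha+o_{\alpha\to 0}(1)$ satisfying the required limits. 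The admissibility of the $v_k$ within the range of elementary Pugh perturbations is automatic, since $\|v_k\|\lesssim \operatorname{diam}(C)$, which is exactly the order allowed by the classical construction.
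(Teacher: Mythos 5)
Your proposal breaks down at its central step: the choice $v_k:=Df^k(u)$ with $u=(\varphi^{-1}(x)-\varphi^{-1}(y))/N$, i.e.\ distributing the displacement evenly over the $N$ stages. This is exactly what one cannot do, and the difficulty it sweeps under the rug is the entire content of Pugh's lemma. At time $k$ the elementary bump must be $C^1$-small, which means the displacement it produces must be small compared with the gap between $f^k(\varphi(\sqrt\lambda\, C))$ and the complement of $f^k(\varphi(\lambda\, C))$; that gap is governed by the smallest singular value of $D_zf^k$ (times $\operatorname{diam}(C)$), while your shift $v_k=Df^k(u)$ is governed by the singular value of $D_zf^k$ in the direction of $u$. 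When $D_zf^k$ is very anisotropic — which is the generic situation in a dynamical setting and precisely the situation the closing lemma must handle — the ratio of these two quantities is unbounded, so the elementary perturbation is not close to the identity, independently of $N$. Your remark that ``the admissibility of the $v_k$ is automatic since $\|v_k\|\lesssim\operatorname{diam}(C)$'' misses this: $\|v_k\|$ must be compared with the thinnest dimension of the image cube, not with $\operatorname{diam}(C)$. The sequence $(z_0,\dots,z_N)$ produced by the linear Pugh lemma is not an arithmetic progression in general; it must thread the cube in a path adapted to the geometry of the $T_kA$, which is why the lemma first supplies a change of coordinates $A$. The approximate identity $f^{-k}g^k(x)\approx \varphi^{-1}(x)+\tfrac kN\bigl(\varphi^{-1}(x)-\varphi^{-1}(y)\bigr)$ therefore has no justification, and the codimensional bound~(\ref{e.pince}) does not follow.

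The paper's proof applies Pugh's linear lemma (lemma~\ref{l.pugh}) to the restricted sequence $T_n=D_zf^n|_{E_z}\in GL(s,\RR)$, obtaining $A\in GL(s,\RR)$ and a sequence $(z_k)$ that, by construction, lies in the $s$-plane through $x$ and $y$ when $y-x\in E_z$. It then extends $A$ to $\widehat A\in GL(d,\RR)$ with a large scaling factor $a$ in the $F$-direction, precisely so that the balls $B(c_k,r_k)$ around the midpoints stay inside $\widehat T_k\widehat A(\lambda\widehat C_0)$. The chart is then $\varphi=\exp_z\circ\widehat A$ — not an arbitrary chart with $D_0\varphi(\RR^s\times\{0\}^{d-s})=E_z$, but one built from the auxiliary linear map the lemma provides. (Pre-composing an arbitrary Pugh chart with a linear change of coordinates, as you suggest, can destroy the delicate comparability that the chart is meant to produce.) Finally, the control~(\ref{e.pince}) comes from the observation that for $y-x\in E_z$ the sequence $(z_k)$ can be taken in the affine plane $x+E_z$, so its $p_2$-projection is constant; for $\alpha$ small the sequence stays close to such a plane by continuity. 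Your ``first-order'' heuristic gives the right picture, but you would need to redo it on top of the genuine Pugh sequence, not on top of the arithmetic progression.
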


As for Pugh's theorem above, this can be deduced from
a result about linear maps (see~\cite[proposition A.1]{BC}).

\begin{lemma}[Pugh]\label{l.pugh}
Let $(T_n)_{n\geq 0}$ be a sequence in $GL(s,\RR)$, $\eta$ be a constant in $(0,1)$ and $\lambda>1$.
Then there exist $A\in GL(s,\RR)$ and an integer $N\geq 1$ with the following property.

For any $x,y$ in the standard cube $C_0=[0,1]^s$, there exists a sequence $(z_0,\dots,z_N)$ in the cube
$\sqrt\lambda.C_0$ such that
\begin{itemize}
\item[--] $z_0=x$, $z_N=y$;
\item[--] for each $1\leq k \leq N$ one has
$$d\left(T_{k}A(z_{k-1}),\;T_kA(z_{k})\right)\;<\;\eta.
d\left(T_kA(\sqrt\lambda.C_0),\;\RR^s\setminus T_k(\lambda.C_0)\right).$$
\end{itemize}
\end{lemma}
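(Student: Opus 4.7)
The proof follows the classical Pugh approach. I would take $A = \alpha\,\id$ for a small scalar $\alpha > 0$ and the straight-line interpolation $z_k := x + \tfrac{k}{N}(y - x)$; the parameters $\alpha$ and $N$ are then chosen after bounding the two quantities that appear in the inequality.

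First, I control the safety margin. Since $\lambda > 1$, the origin lies in the interior of $\lambda\,C_0 = \bigl[(1-\lambda)/2,\,(1+\lambda)/2\bigr]^s$ at Euclidean distance $(\lambda-1)/2$ from its boundary. For $\alpha$ below a threshold depending only on $\lambda$ and $s$, the set $A(\sqrt\lambda\,C_0) = \alpha\,\sqrt\lambda\,C_0$ is a small cube concentrated near $0$ and sits inside $\lambda\,C_0$ with a Euclidean buffer $\delta \geq (\lambda-1)/4$. Since $T_k$ is a bijection whose inverse has operator norm $1/\sigma_{\min}(T_k)$, this buffer translates into
$$d\bigl(T_k A(\sqrt\lambda\,C_0),\;\RR^s \setminus T_k(\lambda\,C_0)\bigr) \;\geq\; \sigma_{\min}(T_k)\,\delta.$$

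Next, I control the step size. Each $z_k$ lies in $C_0 \subset \sqrt\lambda\,C_0$ by convexity, and
$$\bigl|T_k A(z_k - z_{k-1})\bigr| \;=\; \frac{1}{N}\bigl|T_k A(y-x)\bigr| \;\leq\; \frac{\alpha\,\sqrt{s}}{N}\,\|T_k\|.$$
Hence the ratio of step size to safety margin is bounded by $C\,\alpha\,\kappa(T_k)/N$, where $\kappa(T_k) := \|T_k\|/\sigma_{\min}(T_k)$ and $C = C(\lambda, s)$ is a geometric constant. To finish, I pick any $N \geq 1$; since the maximum $M_N := \max_{1 \leq k \leq N}\kappa(T_k)$ is a finite positive number, I can take $\alpha > 0$ below both the threshold of the first step and $N\,\eta / (C\,M_N)$, which enforces the desired inequality for every $k \in \{1,\ldots,N\}$.

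The main obstacle, and the reason the lemma is delicate in practice, is the interplay between $\alpha$, $N$ and $M_N$ when the lemma is fed into Proposition~\ref{p.closing}: there $N$ is dictated externally (by the Hayashi connecting-type constraints) and $\alpha$ controls the size of the allowed perturbation, so both must stay bounded below. If $M_N$ blows up as $N$ grows, the scalar ansatz $A = \alpha\,\id$ forces $\alpha$ to shrink too quickly. The remedy, following \cite[Proposition A.1]{BC}, is to replace $\alpha\,\id$ by a non-scalar matrix $A$ built from the singular value decompositions of the initial segment $T_1,\ldots,T_N$, so as to ``precondition'' the sequence: each $T_k A$ then has uniformly bounded condition number on the prescribed time window, and the straight-line interpolation above (or a mild staircase refinement aligned with the smallest singular directions of the $T_k A$) gives the conclusion.
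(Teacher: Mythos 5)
Your proof establishes the lemma exactly as printed, but the printed statement almost certainly contains a misprint: the right-hand side should read $\RR^s\setminus T_kA(\lambda.C_0)$, with the matrix $A$ present, as one sees from the expression defining $\delta_k$ in the proof of Proposition~\ref{p.closing}. As literally written (with $T_k$ alone in the second set), the lemma is trivially true already with $N=1$: take $A=\alpha\,\id$ with $\alpha$ tiny, and the left side tends to zero while the right side stays bounded away from zero — which is exactly the content of your first two paragraphs. But that statement would be useless as Pugh's lemma, since in the application the matrix $A$ governs the size of the chart and thus the amplitude of the perturbation; a version of the lemma that becomes trivial only as $\alpha\to 0$ cannot drive a closing-lemma perturbation of fixed $C^1$-size.

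With the corrected statement the scalar ansatz $A=\alpha\,\id$ gives nothing: $\alpha$ multiplies both sides of the inequality and cancels. Straight-line interpolation then forces $N$ to dominate $\kappa(T_k)=\|T_k\|/\sigma_{\min}(T_k)$ simultaneously for all $1\le k\le N$, with no free parameter left to absorb this, and there is no reason such an $N$ exists for a general sequence $(T_k)$. Your final paragraph correctly senses the obstruction, but the proposed remedy — choose a non-scalar $A$ so that $\kappa(T_kA)$ is uniformly bounded on $\{1,\dots,N\}$ — is not viable either: if $\kappa(T_1A)$ and $\kappa(T_2A)$ were both $\le C$, then $\kappa(T_2T_1^{-1})=\kappa\bigl((T_2A)(T_1A)^{-1}\bigr)\le C^2$, whereas $T_2T_1^{-1}$ is given and may have arbitrarily large condition number. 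Pugh's actual argument — which the paper does not reprove, pointing instead to~\cite[proposition A.1]{BC} and~\cite{arnaud} — is a genuinely combinatorial ``spanning'' or ``staircase'' construction in which the path advances along carefully chosen directions and the matrix $A$ encodes a nesting of scales determined inductively from the $T_k$; it does not reduce to preconditioning followed by straight-line interpolation.
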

\medskip

\begin{proof}[Proof of proposition~\ref{p.closing}]
We fix a Riemannian metric on $M$.
Reducing the neighbourhood $\cU_0$ of $f$ there exist $\delta>0$ and $\rho\in (0,1)$ such that
for any disjoint balls $B(x_1,r_1),\dots,B(x_\ell,r_\ell)$ in $M$ with radius $r_i<\delta$
and any points $a_i,b_i\in f^{-1}(B(x_i,\rho.r_i))$, $i=1,\dots,\ell$, there exists an
(elementary) perturbation $g\in \cU_0$ supported in the union of the balls $f^{-1}(B(x_i,r_i))$ such that
$g(a_i)=f(b_i)$ (this exists by the lift axiom of~\cite[sections 2 and 6]{pugh-robinson}).

Let us fix orthonormal bases at the forward iterates of $z$ by $f$.
Each tangent space is thus identified to $\RR^d$ and one can assume furthermore that
$Df^n.E_z$ is identified to the space $\RR^s\times \{0\}^{d-s}$ for each $n\geq 0$.
The derivative $D_zf^n\colon T_zM\to T_{f^n(z)}M$
induces a linear map $\widehat T_n\in GL(d,\RR)$ and the restriction of
$D_zf^n$ to $E_z$ induces a linear map $T_n\in GL(s,\RR)$.

Since the splitting $E\oplus F$ is continuous, the angle between vectors
in $E$ and $F$ is bounded from below. Consequently, one can
consider the splitting $E_z\oplus F_z$ at $z$:
there exists $\sigma\in (0,1)$ such that for any set $\Delta\subset \RR^s$
and any $n\geq 0$, if a ball $B_{\RR^s}(x,r)$ in $\RR^s$ is contained in $T_n(\Delta)$,
then the ball $B_{\RR^d}(x\times \{0\}^{d-s},\sigma.r)$ in $\RR^d$ is contained in
$\widehat T_n(\Delta\times F)$.

We apply lemma~\ref{l.pugh} to the sequence $(T_n)$ and to the constant $\eta=\rho\sigma$.
This gives us an integer $N\geq 1$ and a linear map $A\in GL(s,\RR)$.
Let us choose $a>0$ and define $\widehat A\in GL(d,\RR)$
which preserves and coincides with $A$ on $\RR^s\times \{0\}^{d-s}=E_z$
and which maps $\{0\}^{s}\times \RR^{d-s}$ on $F_z$ by a conformal linear map
with scaling factor $a$.
Let $C_0=[0,1]^s$ be the standard cube in $\RR^s$ and $\widehat C_0$
be the standard cube in $\RR^d$.
For any $x,y$ in $\widehat C_0$ such that $y-x\in E_z$,
there exists a sequence $(z_0,\dots,z_N)$ in $\sqrt \lambda.\widehat C_0$
such that:
\begin{itemize}
\item[--] $z_0=x$, $z_N=y$,
\item[--] for each $1\leq k\leq N$, the points
$a_k=\widehat T_k\widehat A(z_{k-1})$ and $b_k=\widehat T_k\widehat A(z_k)$
satisfy $a_k-b_k\in \RR^s\times \{0\}^{d-s}$
and their distance is less than $\eta$ times the distance
$$\delta_k=d\left(T_kA(\sqrt\lambda.C_0),\;\RR^s\setminus T_kA(\lambda.C_0)\right).$$
\end{itemize}
For each $1\leq k\leq N$, let us denote by $c_k\in \widehat T_k\widehat A(\sqrt\lambda.\widehat C_0)$
the middle between $a_k$ and $b_k$ and let us set $r_k=\sigma. \delta_k$.
By definition of $\delta_k$ and the choice of $\sigma$, the ball
centred at $c_k$ with radius $r_k$ is contained in
$\widehat T_k\widehat A_k(\lambda .C_0\times \RR^{n-s})=
\widehat T_k(A(\lambda .C_0)\times F)$,
hence in $\widehat T_k\widehat A(\lambda.\widehat C_0)$
If $a$ has been chosen large enough,
the ball centred at $c_k$ with radius $r_k$ is contained in
$\widehat T_k\widehat A(\lambda.\widehat C_0)$.
Moreover the ball centred at $c_k$
with radius $\rho.r_k=\eta\delta_k$ contains both $a_k$ and $b_k$.

By continuity, the same property holds if we replace the assumption $y-x\in E_z$
by $p_2(x-y)$ close to $0$: there exists $\alpha_0>0$ such that,
for any $x,y\in \widehat C_0$ satisfying $d(p_2(x),p_2(y))<\alpha$
with $\alpha\in (0,\alpha_0)$,
there exists a sequence $(z_0,\dots,z_N)$ in $\sqrt \lambda.\widehat C_0$
and for each $1\leq k\leq N$
there is a ball $B(c_k,r_k)$ contained in $\widehat T_k\widehat A(\lambda.\widehat C_0)$
such that $B(c_k,\rho.r_k)$ contains the points
$a_k=\widehat T_k\widehat A(z_{k-1})$ and $b_k=\widehat T_k\widehat A(z_k)$.
Note that when $\alpha$ goes to zero, the sequence $(z_0,\dots,z_N)$
is closer to an affine plane in the direction of $\RR^s\times \{0\}^{d-s}$.
One deduces that the distance between $p_2(a_k)$ and $p_2(b_k)$
is less than some function $\beta_0(\alpha)$ that goes to zero when $\alpha\to 0$.
Note that the same properties are valid if one rescales or translates the standard cube $C_0$.
This ends the proof in the linear case.
\medskip

Let us define the chart $\varphi=\exp_z\circ \widehat A$
where $\exp_z$ is the exponential map from a neighbourhood $\Omega$ of $0$ in $\RR^d$ (identified to $T_zM$) to a neighbourhood of $z$ in $M$.
By construction $\varphi$ maps $0$ to $z$ and $D_0\varphi$ maps
$\RR^s\times \{0\}^{d-s}$ to $E_z$.
Since at smale scales $f, f^2,\dots,f^N$ act on the distances as the linear maps
$T_1,\dots,T_N$, one gets the following construction, assuming that the neighbourhood
$\Omega$ is small enough.
Let $C\subset \RR^d$ be a standard cube such that $\lambda. C\subset \Omega$
and let $x,y\subset\varphi(C)$
satisfying~(\ref{e.codim}) for some $\alpha\in (0,\alpha_0)$.
\begin{itemize}
\item[--] There exists a sequence $(x=z_0,\dots,z_N=y)$ of points in the cube
$\varphi(\sqrt\lambda.C)$ such that $z_0=x$, $z_N=y$ and
a sequence of balls $B(x_1,r_1),\dots,B(x_N,r_N)$ such that
$B(x_k,r_k)$ is contained in $f^k(\varphi(\lambda.C))$
and $B(x_k,\rho.r_k)$ contains the points $a_k:=f^k(z_{k-1})$ and $b_k:=f^k(z_k)$.
\item[--] Since $z$ is non-periodic, by choosing $\Omega$ small,
the iterates $\Omega,f(\Omega),\dots,f^N(\Omega)$ are pairwise disjoint and in particular
the balls $B(x_k,r_k)$ for $k=1,\dots,N$ are disjoint.
One can apply an elementary perturbation to the points
$a_k$ and $b_k$ and obtain a perturbation
$g\in \cU$ supported on the iterates $f^k(\varphi(\lambda.C))$ with $0\leq k \leq N-1$
and such that $g^N(x)=y$.
\end{itemize}
When one decreases $\alpha$, the property obtained in the linear case
implies~(\ref{e.pince})
provided $f^k$ is close enough to $T_k$, ie. that
the cube $\lambda.C$ is contained in a small neighbourhood
$B(0,\gamma( \alpha))$ of $0$ in $\RR^d$.
This gives all the conclusions of the proposition.
\end{proof}

\subsection{Periodic orbits with almost twisted returns.}
The proof of proposition~\ref{p.twisted} will be implied by the two lemmas below.
We first note that
by applying the results of the previous sections at the twisted set $K$, we do not apriori get
a twisted periodic orbit $\cO$. We keep however some information (provided by lemma~\ref{l.t1}):
any close points $p,q\in \cO$ will almost
lie in a plane tangent to $E^s\oplus E^c$.
This allows to get by a new perturbation a strong homoclinic intersection
(lemma~\ref{l.t2}).
\medskip

Fix a central cone field around the bundle $E^c$ and fix
a small neighbourhood $U$ of $K$ and $\cU$ of $f$.
For $\rho>0$ and $g\in \cU$,
two points $p,q$ whose $g$-orbit remain in $U$
are said to be \emph{$\rho$-twisted} for $g$ if
one can connect $W^{ss}_{loc}(p)$ to $W^{uu}_{loc}(q)$
and $W^{ss}_{loc}(q)$ to $W^{uu}_{loc}(p)$ (as local manifolds for $g$)
by two (small) curves tangent to the
central cone and whose length is smaller than $\rho.d(p,q)$.
Since at small scales the bundles $E^s$ and $E^u$ are almost constant,
this property holds for points $p,q$ in the twisted set $K$: \\
\emph{For any $\rho>0$ there is $\varepsilon>0$ such that
any $p,q\in K$ at distance less than $\varepsilon$ are $\rho$-twisted for $f$.}

This property can be also seen in coordinates.
Consider an exponential chart $\exp_z\colon U_z\subset \RR^d\to M$ at some point $z\in K$
and the induced (non-dominated) splitting $(E^s_z\oplus E^u_z)\oplus E^c_z$ of $\RR^d$.
One defines in $\exp_z(U_z)$ a central distance $d^c$ by considering the projection on $E^c_z$
in the chart.
Any points $p,q\in \exp_z(U_z)\cap K$ are in twisted position,
hence $d^c(p,q)\leq \eta. d(p,q)$ where $\eta$
can be chosen arbitrarily small if $U_z$ is small enough.
For $g\in \cU$ and any point $p\in \exp_z(U_z)$ whose orbit by $g$ remains in $U$,
the manifold $\exp_z^{-1}(W^{ss}_{loc}(p))$ is tangent to a cone arbitrarily thin
around $E_z^s$ provided $g$ is close enough to $f$ and $U_z$ is small enough;
a similar property holds for $W^{uu}_{loc}(p)$.
One deduces the following:
\begin{claim}\label{c.twist1}
For any $\rho>0$ and $z\in K$, one can choose $\eta>0$,
a $C^1$-neighbourhood $\cU'$ of $f$ and a neighbourhood $U_z$ of $0\in \RR^d$
with the following property.
For any $g\in \cU'$ and any $p,q\in \exp_z(U_z)$ whose orbits by $g$ remain in $U$,
if $d^c(p,q)<\eta. d(p,q)$ then $p$ and $q$ are $\rho$-twisted for $g$.
\end{claim}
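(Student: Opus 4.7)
The plan is to work in the exponential chart at $z$ and exploit the fact that, for any point whose $g$-orbit stays in a small neighbourhood of $K$, the local strong stable and unstable manifolds are almost affine planes parallel to $E^s_z$ and $E^u_z$ respectively. First, using continuity of the partially hyperbolic splitting on the maximal invariant set in $\overline{U}$ together with the $C^1$-stability of strong invariant manifolds under $C^1$-perturbations, I would fix $\varepsilon>0$ and then take $\cU'$ and $U_z$ small enough that for every $g\in\cU'$ and every $x\in\exp_z(U_z)$ whose $g$-orbit remains in $U$, the set $\exp_z^{-1}(W^{ss}_{loc}(x))$ is the graph of a $C^1$-map $\phi_x\colon B^s\subset E^s_z\to E^c_z\oplus E^u_z$ with $\|D\phi_x\|<\varepsilon$, and symmetrically $\exp_z^{-1}(W^{uu}_{loc}(x))$ is the graph of a $C^1$-map $\psi_x\colon B^u\subset E^u_z\to E^s_z\oplus E^c_z$ with $\|D\psi_x\|<\varepsilon$.

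Given $p,q\in\exp_z(U_z)$ with $g$-orbits in $U$ satisfying $d^c(p,q)<\eta\cdot d(p,q)$, I would locate a connecting curve from $W^{ss}_{loc}(p)$ to $W^{uu}_{loc}(q)$ by solving an implicit matching problem in the chart: writing $\phi_p=(\phi_p^c,\phi_p^u)$ and $\psi_q=(\psi_q^s,\psi_q^c)$, I would look for $x^s$ satisfying $\psi_q^s(\phi_p^u(x^s))=x^s$. This is a contraction fixed-point equation since $\varepsilon<1$, and it produces a point on $\exp_z^{-1}(W^{ss}_{loc}(p))$ and a matching point on $\exp_z^{-1}(W^{uu}_{loc}(q))$ sharing the same $E^s_z\oplus E^u_z$ coordinates. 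Their $E^c_z$ coordinates differ from $p^c-q^c$ by at most $O(\varepsilon\cdot d(p,q))$, thanks to the Lipschitz bound on $\phi_p^c$ and $\psi_q^c$ applied along a path of length $O(d(p,q))$ emanating from $p$ and $q$. The straight segment between these two points is tangent to $E^c_z$ in the chart; its image under $\exp_z$ lies inside the central cone on $M$ provided $U_z$ sits inside the domain on which the cone at $z$ extends, and its length is at most $d^c(p,q)+O(\varepsilon\cdot d(p,q))\leq(\eta+C\varepsilon)\,d(p,q)$.

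It remains to fix the parameters. Choosing $\varepsilon$ so that $C\varepsilon<\rho/2$ and then $\eta<\rho/2$, the first connecting curve has length strictly smaller than $\rho\cdot d(p,q)$; exchanging the roles of $p$ and $q$ produces the second connecting curve with the same length bound, so $(p,q)$ is $\rho$-twisted for $g$. The main obstacle is the uniformity in the first step: one must obtain the $C^1$-closeness of the invariant manifolds to their tangent planes at $z$ uniformly in both $g\in\cU'$ and in the base point $x\in\exp_z(U_z)$. This relies on continuous dependence of the splitting on the base point (extended to the maximal invariant set in $\overline{U}$ via continuity of invariant cone fields and the graph transform on a small neighbourhood of $K$) together with the $C^1$-continuity of the strong invariant manifolds under $C^1$-perturbations of $f$; once this uniform control is secured, the fixed-point and length computations are routine.
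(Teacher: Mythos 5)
Your plan reconstructs the argument that the paper leaves implicit: the text passes from the observation that the strong invariant manifolds of a point with orbit in $U$ are tangent to thin cones around $E^s_z$ and $E^u_z$ directly to the claim, and your fixed-point matching in the chart, together with the length estimate $(\eta+C\varepsilon)\,d(p,q)$, is exactly the calculation that makes this deduction precise. As far as producing two short curves tangent to a central cone is concerned, the plan is sound and matches the paper's approach.

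There is, however, one issue to keep in mind. The notion of twisted position defined just before this claim requires the two connecting curves to have the \emph{same orientation} along $E^c$, and if $\rho$-twistedness is understood to inherit that clause (as the eventual application in lemma~\ref{l.t2} seems to demand), then the length bound alone does not suffice. In the flat approximation that underlies your argument, the two curves have central displacements $\pm(\hat q^c-\hat p^c)+O(\varepsilon\,d(p,q))$, i.e.\ opposite signs to leading order whenever $d^c(p,q)\neq 0$; the sign agreement is not a consequence of the cone-thinness, but of the curling of the manifolds which is precisely what makes $K$ twisted, and it needs to be transported from $K$-points to $p$ and $q$ by a $C^1$-continuity argument together with the choice $\eta\ll\varepsilon$. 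Your plan invokes $C^1$-continuity for the cone estimates but does not invoke the twistedness of $K$ at all, so under this reading the orientation step is missing. (Under the alternative reading in which $\rho$-twistedness is a pure length condition, as the paper's definition is literally worded, the plan is complete; the paper's own one-line remark does not distinguish between the two readings either.)
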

\medskip

Let $(f_n)$ be a sequence of diffeomorphisms which converges toward $f$ in $\diff^1(M)$
and let $(\cO_n)$ be a sequence of $f_n$-periodic orbits which converges toward $K$
in Hausdorff topology. We say that $(\cO_n)$ has \emph{almost twisted returns}
if for any $\rho\in (0,1)$, there exists $\varepsilon>0$ such that, for any $n$,
any points $p,q$
contained in $\cO_n$ and at distance smaller than $\varepsilon$
are $\rho$-twisted for $f_n$.

\begin{lemma}\label{l.t1}
There exists a sequence of diffeomorphisms $(f_n)$ converging towards $f$
in $\diff^1(M)$ and a sequence of periodic orbits $(\cO_n)$ which converges toward $K$
for the Hausdorff topology and which has almost twisted returns.
\end{lemma}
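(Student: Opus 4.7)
The plan is to apply the codimensional closing lemma (Proposition~\ref{p.closing}) with $E=E^s\oplus E^u$ and $F=E^c$, using a recurrent orbit living in $K$. The twisted hypothesis of $K$ precisely says that close pairs of points of $K$ are \emph{almost flat in the $E^c$-direction}, which is exactly the hypothesis~(\ref{e.codim}) of Proposition~\ref{p.closing}. The conclusion~(\ref{e.pince}) then propagates this smallness in the $E^c$-direction to all intermediate iterates along the perturbation, so that the periodic orbit produced inherits an almost twisted geometry. Taking a diagonal sequence in $\rho$ yields the claimed $(f_n,\cO_n)$.

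\paragraph{Construction of a periodic orbit.}
Fix $\rho>0$, let $\cU$ be a small neighbourhood of $f$ and $U$ of $K$. By Claim~\ref{c.twist1}, for each $z\in K$ there is $\eta>0$, a neighbourhood $\exp_z(U_z)$ and a smaller neighbourhood $\cU'\subset\cU$ such that, for any $g\in\cU'$ and any pair $p,q\in \exp_z(U_z)$ whose $g$-orbits stay in $U$, the condition $d^c(p,q)<\eta\,d(p,q)$ in the chart $\exp_z$ implies that $p,q$ are $\rho$-twisted for $g$. Cover $K$ by finitely many such charts. Next, choose $z\in K$ and an invariant probability measure $\mu$ supported on $K$; apply Proposition~\ref{p.closing} with $E=E^s\oplus E^u$, $F=E^c$, getting a chart $\varphi=\exp_z\circ\widehat A$, constants $\alpha_0,\gamma,\beta,N$. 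By Lemma~\ref{l.measure} and a Poincaré-recurrence argument as in Mañé's ergodic closing lemma, find a standard cube $\lambda.C\subset B(0,\gamma(\alpha))$ and a point $y_0\in\varphi(C)\cap K$ having a return $f^m(y_0)\in\varphi(C)\cap K$, with the segment $f(y_0),\dots,f^{m-1}(y_0)$ avoiding the support $\bigcup_{0\le k<N}f^k(\varphi(\lambda.C))$ of the forthcoming perturbation. Using minimality of $K$, take $m$ large enough that this segment is $\varepsilon$-dense in $K$. Since $K$ is twisted and $y_0, f^m(y_0)\in K\cap\varphi(C)$, by shrinking $C$ we may assume $d^c(y_0,f^m(y_0))\le\eta'\,d(y_0,f^m(y_0))$ for any prescribed $\eta'>0$, which after rewriting in the chart $\varphi$ implies the codimensional hypothesis~(\ref{e.codim}) of Proposition~\ref{p.closing}. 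Apply the proposition with $x=y_0$, $y=f^m(y_0)$: this produces $g\in\cU'$ with support in $\bigcup_{0\le k<N}f^k(\varphi(\lambda.C))$ such that $g^N(f^m(y_0))=f^N(y_0)$; combined with the fact that the intermediate $f$-iterates avoid the support, this yields a $g$-periodic orbit $\cO_g$ of period $m+N$ that $\varepsilon$-shadows the piece of $f$-orbit of $y_0$ and is therefore $O(\varepsilon)$-Hausdorff-close to $K$.

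\paragraph{Verification of the almost-twisted property.}
Given a close pair $p,q\in\cO_g$, the orbit $\cO_g$ splits into a long ``clean'' segment $\cO_g\cap K$ (the $f$-orbit $y_0,\dots,f^{m-1}(y_0)$, left unchanged by $g$) and a short ``transition'' segment of length $N$ near the return, on which $g$ differs from $f$. Three cases arise. If both $p,q$ belong to the clean segment, they lie in $K$ and are $\rho$-twisted by the twisted hypothesis on $K$. If both lie in the transition segment of length $N$, we write $p=g^k(y_0)$, $q=g^\ell(y_0)$ with $0\le k,\ell\le N$; by~(\ref{e.pince}) both $f^{-k}(p)$ and $f^{-\ell}(q)$ have $E^c$-projection close to $p_2\varphi^{-1}(y)$ in the chart at $z$, hence $p$ and $q$ themselves have small central distance after transport along $f^k,f^\ell$, and a continuity/shrinking argument using the small diameter of $\varphi(\lambda.C)$ yields $d^c(p,q)<\eta\,d(p,q)$ in the chart at $z$. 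The mixed case is treated similarly, comparing a point of the transition segment with its ``would-be'' $f$-image in $K$. In all cases Claim~\ref{c.twist1} gives the $\rho$-twisted property for $g$. A diagonal extraction with $\rho_n\to 0$ produces the required sequence $(f_n,\cO_n)$.

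\paragraph{Main obstacle.}
The essential difficulty lies in Case~(ii) of the verification: the codimensional estimate~(\ref{e.pince}) is formulated in the fixed chart at $z$, after pulling back by $f^{-k}$, whereas the condition we need from Claim~\ref{c.twist1} is expressed as a central distance in a chart centred at $p$ (or $q$) for the perturbed map $g$. Bridging these two requires one to compare the $E^c$-direction at $f^k(z)$ with the $E^c$-direction at $g^k(y_0)$, to control how the $Df$- and $Dg$-transport of these directions drift, and to check that all the resulting small errors remain controlled by a multiple of $d(p,q)$ (not just by $\operatorname{diam}(C)$). This is achieved by first shrinking $\operatorname{diam}(C)$ so that $p_2\varphi^{-1}$ agrees with $d^c$ up to a uniform factor, and then using the continuity of the bundles $E^s, E^c, E^u$ on $K$ together with the $C^1$-smallness of the perturbation.
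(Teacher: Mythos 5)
Your overall plan is the one the paper follows: apply the codimensional closing lemma (Proposition~\ref{p.closing}) with $E=E^s\oplus E^u$, $F=E^c$, close a recurrent $K$-orbit through a chart at a point $z_0\in K$ chosen so that the return satisfies~(\ref{e.codim}) thanks to twistedness, then check almost-twisted returns by a case analysis on pairs $p,q\in\cO_n$. The construction of $\cO_n$ is essentially right (minor slip: the closed orbit has period $m$, not $m+N$, since the $N$-step perturbation overwrites the first $N$ iterates of the $f$-orbit rather than being appended to it).

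The verification, however, misassigns the role of~(\ref{e.pince}), and this would block you if you tried to carry the sketch through. In your case (ii), where $p=g^{k}(f^{m}(y_0))$ and $q=g^{\ell}(f^{m}(y_0))$ both lie in the transition segment with $k\neq\ell$, the points $p$ and $q$ sit near two \emph{distinct} points $f^{k}(z_0)$ and $f^{\ell}(z_0)$ of $K$; there is no single pull-back by $f^{-k}$ that brings both into the chart at $z_0$, and~(\ref{e.pince}) only gives an \emph{absolute} bound of each $f^{-k}(p)$ in terms of a reference point, not a \emph{relative} flatness bound for the pair $(p,q)$. The paper handles this case by pure continuity/persistence (its property~3): the pair $\bigl(f^k(z_0),f^\ell(z_0)\bigr)$ is twisted because both lie in $K$, and this persists to nearby points whose $g$-orbits stay in $U$. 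The propagation~(\ref{e.pince})/(\ref{e.it}) is essential only in a sub-case you fold into your "mixed case (iii)" without isolating it: when $p\in f^{k}(\varphi(\lambda.C))$ lies in the support and $q$ is a clean $K$-point in the \emph{same} iterate $f^{k}(\varphi(\Omega))$. There one pulls both back by $f^{-k}$ into the chart at $z_0$, controls $d^c\bigl(f^{-k}(p),y_0\bigr)$ by~(\ref{e.it}), controls $d^c\bigl(f^{-k}(q),y_0\bigr)$ by the twistedness of $K$ (the paper's~(\ref{e.twist})), converts these absolute bounds into a relative one using the cube-geometry constant~(\ref{e.cube}), and only then pushes forward to the chart at $f^{k}(z_0)$ via a transport estimate (the paper's property~2). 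You also need the other mixed sub-case ($p$ in the support, $q\in K$ but in a \emph{different} iterate), where again plain persistence suffices. In short, the two key ingredients — continuity of twistedness near $K$, and the codimensional control~(\ref{e.pince}) — are both in your proposal, but you attach~(\ref{e.pince}) to the wrong case; the actual split is: persistence for $(p,q)$ near distinct $K$-points, and the propagated estimate only for $p$ and $q$ near the \emph{same} $K$-point $f^k(z_0)$ with $p$ inside the perturbation support.
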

\begin{proof}
Since $K$ is twisted, there exist two decreasing sequences of positive numbers
$(\varepsilon_n)_{n\geq 0}$ and $(\rho_n)_{n\geq 0}$ which goes to zero as $n$ goes to $+\infty$
and such that any $p,q\in K$ at distance smaller than $2\varepsilon_n$
are $\frac{\rho_n}{2}$-twisted.

Let us fix $n\geq 0$.
The local manifolds at a point $x$ whose orbit is contained in $U$ are $C^1$ immersed discs
which depend continuously on the diffeomorphism and on the point $x$.
This implies that there exists a neighbourhood $\cU_n$ of $f$ satisfying:
\begin{itemize}
\item[1)] For any $0\leq i \leq n$ and any points $p_0,q_0\in K$ at distance less than $2\varepsilon_i$,
there exist some neighbourhoods $U(p_0)$
and $U(q_0)$ such that, for any $g\in \cU_n$, any points $p\in U(p_0)$, $q\in U(q_0)$
whose orbit under $g$ remain in $U$ are $\rho_i$-twisted for $g$.
\end{itemize}
By claim~\ref{c.twist1}, one can cover $K$ by finitely many exponential charts $\exp_{z}(U_{z})$
with $z\in K$, reduce the neighbourhood $\cU_n$ of $f$ and choose $\eta>0$ with the following property.
\begin{itemize}
\item[(*)]
For any $g\in \cU_n$ and any points $p,q$
in a same exponential chart and whose orbit under $g$ remains in $U$,
if $d^c(p,q)<\eta.d(p,q)$, then $p$ and $q$ are $\rho_n$-twisted for $g$.
\end{itemize}
\medskip

Let us now create a periodic orbit $\cO_n$ by using the method described in the previous sections.
One first considers a point $z_0\in K$ which is the centre of one of the exponential charts which cover $K$.
One applies proposition~\ref{p.closing} to $(f,\cU_n)$, to the set $K$,
to the continuous splitting $(E^s\oplus E^u)\oplus E^c$ and to some constant $\lambda>1$
such that $\lambda^{2(d+1)}<\frac 3 2$.
One gets another chart $\varphi\colon \Omega\subset \RR^d\to M$ at $z_0$.
One also gets an integer $N\geq 1$ and two functions $\beta,\gamma\colon (0,\alpha_0)\to (0,1)$.
One may reduce the open set $\Omega$ so that
the iterates $f^k(\varphi(\Omega))$, $0\leq k\leq N$, are disjoint
and each of them is contained in an exponential chart $\exp_{z_k}(U_{z_k})$
from the covering of $K$; in particular $\varphi(\Omega)\subset \exp_{z_0}(U_{z_0})$.
One may reduce $\Omega$ again so that:
\begin{itemize}
\item[2)] There exists $\eta'\in (0,1)$ such that for any $p,q\in \varphi(\Omega)$
satisfying $d^c(p,q)\leq\eta'.d(p,q)$ for the coordinates of the chart $\exp_{z_0}$,
we also have $d^c(f^k(p),f^k(q))\leq\eta.d(f^k(p),f^k(q))$ for the coordinates of the chart $\exp_{z_k}$
and for any $1\leq k\leq N$.
\end{itemize}
Using 1), one can also require:
\begin{itemize}
\item[3)] For any $g\in \cU_n$, any $0\leq i \leq n$, any $k\neq \ell$ in $\{0,\dots, N\}$,
any points $p\in f^k(\varphi(\Omega))$ and  $q\in f^\ell(\varphi(\Omega))$ that are $\varepsilon_i$-close
whose orbit by $g$ remain in $U$, are $\rho_i$-twisted for $g$.
\end{itemize}
There exists a constant $\Delta>0$ such that for any
standard cube $\lambda^2.C\subset \Omega$, and any points $q\not\in \varphi(\lambda^2.C)$
and $x,y\in \varphi(\lambda.C)$ one has
\begin{equation}\label{e.cube}
\operatorname{diam}(\varphi(C))\leq \Delta\;.\; d(q,y) \text{ and }
d(q,x)\leq \Delta\;.\; d(q,y).
\end{equation}
For any points $p,q\in K\cap \varphi(\Omega)$ one has
\begin{equation}\label{e.twist}
d^{c}(p,q)\leq\eta''.d(p,q),
\end{equation}
for the coordinates of $\exp_{z_0}$, where $\eta''>0$
can be chosen arbitrarily small if $\Omega$ is small enough.
As a consequence,
if one applies proposition~\ref{p.closing}
to some standard cube $\lambda.C\subset \Omega$,
and some points $x,y\in \varphi(C)\cap K$,
condition~(\ref{e.pince}) holds with $\alpha$ arbitrarily small.
One deduces from~(\ref{e.pince}) that
there exists a perturbation $g\in \cU_n$ such that $g^N(y)=g^N(x)$ and moreover
\begin{equation}\label{e.it}
d^c(x,f^{-k}(g^k(y)))\leq\beta.\operatorname{diam}(\varphi(C)),
\end{equation}
where $\beta>0$ can be chosen arbitrarily small if $\Omega$ is small.
Consequently, one can choose $\eta''$ and $\beta$ small so that
\begin{equation}\label{e.sum}
(\eta''+\beta).\Delta\leq\eta'.
\end{equation}

Let $\mu$ be an invariant probability measure supported on $K$.
By minimality $\supp(\mu)=K$.
By lemma~\ref{l.measure} and our choice of $\lambda$,
there exists a standard cube $\lambda^2.C\subset \Omega$
such that
\begin{equation}\label{e.measure}
\mu(\varphi(\lambda^2.C))<2\mu(\varphi(C)).
\end{equation}
One may assume that $\varphi(\lambda^2.C)$ is contained in an arbitrarily small neighbourhood of $p$.
From 1) and a compactness argument, this implies the following:
\begin{itemize}
\item[4)] For any $g\in \cU_n$, any $0\leq i \leq n$, any $0\leq k \leq N$,
any points $p\in f^k(\varphi(\lambda.C))$ and $q\in K\setminus f^k( \varphi(\Omega))$
that are $\varepsilon_i$-close and
whose orbit by $g$ remain in $U$ are $\rho_i$-twisted for $g$.
\end{itemize}

One deduces from~(\ref{e.measure}) that there exists a point $x\in \varphi(C)$ having
a return $f^m(x)\in \varphi(C)$ such that all the iterates $f^k(x)$ for $0< k<m$
are disjoint from $\varphi(\lambda^2.C)$.
Consequently, there exists a perturbation $f_n\in \cU_n$ of $f$
supported on the $f^k(\varphi(\lambda.C))$, $0\leq k<N$,
such that $f_n^N(f^m(x))=f^N(x)$.
In particular, the orbit $\cO_n$ of $f^m(x)$ for $f_n$ is $m$-periodic.
\medskip

The lemma will be implied by the following.
\begin{claim} If $p,q\in \cO$ are at distance less than $\varepsilon_i$ for some $i\leq n$,
then they are $\rho_i$-twisted.
\end{claim}
This is proved by assuming $p\neq q$ and by considering several cases.
\begin{itemize}
\item[--] When $p,q$ do not belong to the support of the perturbation,
they belong to $K$ by construction. Property 1) implies the claim in this case.

\item[--] When $p$ belongs to some $f^k(\varphi(\lambda.C))$
and $q$ to some $f^\ell( \varphi(\lambda.C))$, with $0\leq k,\ell\leq N$, one has
$p=f_n^{k}(f^m(x))$ and $q=f_n^{\ell}(f^m(x))$. Hence $k\neq \ell$.
By 3) above, one concludes the claim in this case.

\item[--] When $p$ belongs to some $f^k( \varphi(\lambda.C))$, $0\leq k\leq N$, and $q$ does not belong to
$f^k( \varphi(\Omega))$ nor to any $f^\ell(\varphi(\lambda.C))$, $0\leq \ell\leq N$,
one has $q\in K$. By 4) above, one concludes the claim again.

\item[--] When $p$ belongs to some $f^k(\varphi(\lambda.C))$ and $q$ to $f^k( \varphi(\Omega))$,
for some $0\leq k\leq N$, one considers the points $f^{-k}(p),f^{-k}(q)\in \varphi(\Omega)$.
Note that $p=f^k_n(f^m(x))$.
One has $q\not\in f^k(\varphi(\lambda.C))$, hence
the points $x,f^{-k}(q)\in \varphi(\Omega)$ belong to $K$ and by~(\ref{e.twist}) satisfy
$d^c(x,f^{-k}(q))\leq\eta''.d(x,f^{-k}(q))$ for the coordinates of the chart $\exp_{z_0}$.
One has also $d^c(x,f^{-k}(p))=d^c(x,f^{-k}(f_n^k(f^m(x))))\leq \beta.\operatorname{diam}(\varphi(C))$ by~(\ref{e.it}).
Note also that since $f^{-k}(q)$ is disjoint from $\varphi(\lambda^2.C)$ and
since $f^{-k}(p)$ and $x$ belong to $\varphi(\lambda.C)$, one has by~(\ref{e.cube})
$$\sup(\operatorname{diam}(\varphi(C)),\; d(x,f^{-k}(q)))\; \leq\; \Delta.d(f^{-k}(p),f^{-k}(q)).$$
One thus deduces from~(\ref{e.measure}).
$$d^c(f^{-k}(p),f^{-k}(q))\leq (\eta''+\beta)\Delta.d(f^{-k}(p),f^{-k}(q))\leq
\eta'.d(f^{-k}(p),f^{-k}(q)).$$
Now using 2), one gets $d^c(p,q)\leq\eta.d(p,q)$.
By our choice (*) of the exponential chart $\exp_{z_k}(U_{z_k})$, this implies
that $p$ and $q$ are $\rho_n$-twisted, implying the claim in this case.
\end{itemize}

The claim and the lemma are now proved.
\end{proof}

One can now conclude the proof of proposition~\ref{p.twisted}.
\begin{lemma}\label{l.t2}
Let $(f_n)$ be a sequence of diffeomorphisms which converges toward $f$ in $\diff^1(M)$
and let $(\cO_n)$ be a sequence of $f_n$-periodic orbits which converges toward $K$
in Hausdorff topology and which has almost twisted returns.

Then, for any neighbourhood $U$ of $K$ and $\cU$ of $f$ in $\diff^1$, there exists
$n\geq 1$ and a diffeomorphism $g\in \cU$ which coincides with $f_n$ outside $U$
and on a small neighbourhood of $\cO_n$, such that $\cO_n$ has a strong homoclinic
intersection associated to a homoclinic orbit contained in $U$.
\end{lemma}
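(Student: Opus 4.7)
The plan is to adapt the perturbation argument of \cite{crovisier-palis-faible} for genuinely twisted periodic orbits to the almost twisted orbits $\cO_n$. Fix a small constant $\rho>0$, to be determined later in terms of $\cU$. Since $K$ is non-periodic and minimal and $\cO_n\to K$ in Hausdorff topology, the period of $\cO_n$ tends to infinity and $\cO_n$ contains arbitrarily close pairs of distinct points. By the almost twisted returns hypothesis, for $n$ large enough every pair $p,q\in\cO_n$ with $d(p,q)$ less than some $\varepsilon=\varepsilon(\rho)$ is $\rho$-twisted for $f_n$; by taking $n$ still larger we may assume $f_n\in\cU$ and $\cO_n\subset U$. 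Choose one such pair $p\neq q$ in $\cO_n$ with $d(p,q)<\varepsilon$ very small. The $\rho$-twisted condition then yields two curves $\gamma_1$ and $\gamma_2$ tangent to a central cone, with matching orientations and length $\leq\rho\cdot d(p,q)$, joining $W^{ss}_{loc}(p)$ to $W^{uu}_{loc}(q)$ and $W^{ss}_{loc}(q)$ to $W^{uu}_{loc}(p)$ respectively.

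I would then produce a $C^1$-small perturbation $g$ of $f_n$, supported in a small ball $B$ lying inside $U$ and disjoint from an open neighbourhood of $\cO_n$, designed to translate points in $B$ by a vector of length of order $\rho\cdot d(p,q)$ in the central direction, closing up the gap along $\gamma_1$. Taking $B$ with radius comparable to $d(p,q)$, the $C^1$-size of the perturbation is of order $\rho$, ensuring $g\in\cU$ for $\rho$ small enough. Since the support of the perturbation avoids a neighbourhood of $\cO_n$, the orbit $\cO_n$ is preserved and $g$ coincides with $f_n$ both outside $U$ and in a neighbourhood of $\cO_n$; the perturbed local strong stable manifold $W^{ss}_{loc}(p,g)$, propagated forward by $g$, is pushed across $\gamma_1$ and meets $W^{uu}_{loc}(q,g)$ at a point $r$ close to the interior of $\gamma_1$. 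Since $p,q\in\cO_n$ and $r\notin\cO_n$, this provides a point $r\in(W^{ss}(\cO_n,g)\setminus\cO_n)\cap(W^{uu}(\cO_n,g)\setminus\cO_n)$; by taking the initial neighbourhood $U$ to be a small trapping neighbourhood of $K$ when needed, the entire $g$-orbit of $r$ remains in $U$, giving the required strong homoclinic intersection with homoclinic orbit inside $U$.

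The main obstacle is locating the perturbation ball $B$ off the orbit: because $\cO_n$ densifies $K$, other orbit points of $\cO_n$ and their finitely many iterates may come very close to $\gamma_1$, and one must additionally ensure that the relevant backward iterates of $B$ that end up shaping $W^{ss}_{loc}(p,g)$ near $\gamma_1$ (and the corresponding forward iterates relevant for $W^{uu}_{loc}(q,g)$) also avoid $\cO_n$. The crucial point is that $\gamma_1$ has length $\leq\rho\cdot d(p,q)$, far smaller than the orbital separation $d(p,q)$, and lies transverse to $E^s\oplus E^u$; hence there is a small tube around the midpoint of $\gamma_1$, whose $C^1$-controlled translation along $E^c$ achieves the required displacement while its finitely many iterates involved can, by a careful choice (which mirrors the ``clean return'' construction of the twisted case in \cite{crovisier-palis-faible}), be kept disjoint from $\cO_n$. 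Making this selection precise, using the matching orientation of $\gamma_1$ and $\gamma_2$ (so that the created intersection is of strong-stable--strong-unstable type and not merely of stable--unstable type), is the technical heart of the proof.
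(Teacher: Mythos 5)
Your high-level strategy --- fix a $\rho$-twisted pair $p,q\in\cO_n$ and perturb $f_n$ to close the small central gap between $W^{ss}_{loc}(p)$ and $W^{uu}_{loc}(q)$ --- is the right starting point, and the paper itself gives no proof of this lemma (it refers to \cite[Theorem~3.18]{crovisier-palis-faible} and \cite[Theorem~9]{bonatti-gan-wen}). But your sketch has a genuine gap, which you partly acknowledge without resolving. The perturbation must be $C^1$-small \emph{and} supported off a neighbourhood of $\cO_n$, and a single ball $B$ of radius $\sim d(p,q)$ cannot do both. Writing $q=(q^s,q^c,q^u)$ in coordinates adapted to the splitting at $p$, the endpoints of $\gamma_1$ lie near $(q^s,0,0)$ and $(q^s,q^c,0)$, so $\gamma_1$ sits at $s$-distance $\approx|q^s|$ from $p$ and $u$-distance $\approx|q^u|$ from $q$, and neither of these is controlled from below by $d(p,q)$; moreover there is no a priori control on how closely other points of the dense orbit $\cO_n$ approach $\gamma_1$. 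The cited references deal with this by a Pugh-style perturbation distributed along a long orbit segment between the two near-returns, each elementary box of $C^1$-size $O(\rho)$ placed at a controlled distance from $\cO_n$, the pushes accumulating along the segment to close the gap of size $O(\rho\,d(p,q))$; both the selection of the return pair and the bookkeeping that keeps the boxes off $\cO_n$ are precisely what you defer to a ``careful choice which mirrors the clean return construction.'' That deferred step is the content of the lemma.

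A secondary issue: your account of the role of the matching orientation is not right. Once $W^{ss}(p,g)$ and $W^{uu}(q,g)$ are made to cross at a point off $\cO_n$, the intersection is automatically in $W^{ss}(\cO_n)\cap W^{uu}(\cO_n)$ and is therefore a strong homoclinic intersection regardless of orientation --- the strong manifolds already have complementary codimension up to the single central direction, which is exactly the gap being closed. The orientation enters elsewhere: it is what forces a twisted set to have small relative central separation between close pairs (claim~\ref{c.twist1}), and in the accumulated-perturbation argument it governs whether the elementary pushes along the orbit segment reinforce or cancel. Your sketch never actually uses $\gamma_2$, which is a sign that this part of the structure has been dropped.
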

The proof is the same as for~\cite[theorem 3.18]{crovisier-palis-faible}
or~\cite[theorem 9]{bonatti-gan-wen}.

\section{Partially hyperbolic sets of types (R), (H) or (P)}\label{s.HPR}
In this section $f$ is a $C^1$-generic diffeomorphism. Unless mentioned,
$K$ is a chain-transitive set having a partially  hyperbolic structure $E^s\oplus E^c\oplus E^u$ with a one-dimensional central bundle.
We discuss the existence of a periodic orbit $\cO$ contained in an arbitrarily small neighbourhood of $K$
such that either $\cO$ is contained in the chain-recurrence class of $K$, or by a $C^1$-perturbation of the diffeomorphism $f$,
there exists a strong homoclinic intersection associated to the continuation of $\cO$.

\subsection{The chain-transitive type (R)}
The results in~\cite[section 3.3]{crovisier-palis-faible} give the following.
\begin{proposition}\label{p.R}
Let $f$ be any diffeomorphism in a dense G$_\delta$ subset $\cR_{R}\subset \diff^1(M)$ and
$K$ be a chain-transitive set which has a partially hyperbolic structure $E^s\oplus E^c\oplus E^u$
such that $E^c$ is one-dimensional. If $K$ has type (R), then, for any small neighbourhood $U$ of $K$
there exists a periodic orbit $\cO\subset U$ contained in the chain-recurrence class of $K$.

More precisely:
\begin{itemize}
\item[--] Any periodic orbit $\cO$ contained in $U$ and having a point close to the
middle of a central segment $\gamma$ of $K$ belongs to the chain-recurrence class of $K$.
\item[--] For any such periodic orbit $\cO$, there exist some diffeomorphisms $g$
that are arbitrarily $C^1$-close to $f$
and such that the continuation $\cO_{g}$ of $\cO$ has a strong homoclinic connection.
\end{itemize}
\end{proposition}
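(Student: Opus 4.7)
The plan is to combine the existence of a central segment $\gamma$ inside the chain-recurrence class (guaranteed by type (R)) with the $C^1$-generic consequences of the connecting lemma for pseudo-orbits from \cite{BC} and \cite{crovisier-approximation}. Take $\cR_R$ to be the set of $f\in \cR_{Chain}$ (see lemma~\ref{l.connecting}) whose periodic orbits are all hyperbolic, for which every chain-recurrence class is the Hausdorff limit of a sequence of periodic orbits and coincides with the homoclinic class $H(\cO)$ of any hyperbolic periodic $\cO$ it meets. Fix $f\in\cR_R$ and, by definition of type (R), a central segment $\gamma$ tangent to a narrow cone around $E^c$, entirely contained in the chain-recurrence class $\cC$ of $K$, and whose iterates $f^n(\gamma)$ remain in the cone and of length at most a small $\varepsilon$.

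For the first bullet, consider any hyperbolic periodic orbit $\cO\subset U$ that passes through a point $p$ close to the midpoint $m$ of $\gamma$. Because the central segment $\gamma$ is itself a chain-recurrent piece inside $\cC$, one can find, for every $\eta>0$, $\eta$-pseudo-orbits staying in $\cC$ which join $p$ to $K$ and $K$ to $p$ (approach the midpoint along $\gamma$, then jump to $K$ within $\cC$; symmetric argument in the past). Hence $p$, and therefore $\cO$, lies in $\cC$. By the $C^1$-generic property of $\cR_R$, $\cC=H(\cO)$. To produce such an orbit, apply \cite{crovisier-approximation}: $\cC$ is the Hausdorff limit of a sequence of periodic orbits $\cO_n\subset U$, and at least one of their points approximates $m$.

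For the second bullet, fix such a periodic orbit $\cO$ and let $p\in \cO$ lie close to the interior of $\gamma$. Since $\gamma$ is tangent to a central cone and $E^c$ is transverse to the strong stable and strong unstable directions, the endpoints $q^{\pm}$ of $\gamma$ lie in $\cC\setminus \cO$ (after shrinking $\gamma$ slightly if needed) and accumulate on the local strong stable and strong unstable leaves of $\cO$ once iterated. More precisely, the central-trapping picture gives sequences of iterates $f^{n_k}(\gamma)$ and $f^{-m_k}(\gamma)$ which return arbitrarily close to $p$ while remaining short, so that $W^{ss}_{loc}(p)$ and $W^{uu}_{loc}(p)$ can be $\varepsilon$-joined by pseudo-orbits through points of $\gamma$ that are non-periodic and off the orbit of $\cO$. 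Applying Hayashi's connecting lemma at such a point (combined with lemma~\ref{l.connecting} to realise the pseudo-orbits as true orbit segments of $f$), one obtains a $C^1$-small perturbation $g$ with support disjoint from $\cO$ which produces an intersection of $W^{ss}(\cO_g)$ with $W^{uu}(\cO_g)$.

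The main obstacle is the second bullet: one must apply the connecting lemma while preserving $\cO$ (so the perturbation must be supported away from it) and while connecting the strong manifolds, not the usual stable/unstable manifolds. The key to overcome this is that the central recurrence provided by $\gamma$ produces the required pseudo-orbits precisely along a codimension-two locus transverse to $E^c$, so that the connecting lemma can be applied at non-periodic points of $\gamma\setminus \cO$ without disturbing $\cO$; this is exactly the situation treated in \cite[section 3.3]{crovisier-palis-faible}, from which the technical lemmas can be quoted.
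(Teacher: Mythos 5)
Your overall plan—defining $\cR_R$ from the generic results of \cite{BC,crovisier-approximation} and deferring the hard geometric step to \cite[section~3.3]{crovisier-palis-faible}—is indeed what the paper does: its ``proof'' is a single line citing that reference. So as an approach your proposal is aligned with the paper. However, the sketch you give for the first bullet has a genuine gap that is worth flagging.

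You argue that since $p$ is close to the midpoint $m$ of $\gamma\subset\cC$, one can ``approach the midpoint along $\gamma$, then jump to $K$ within $\cC$'' to produce $\eta$-pseudo-orbits joining $p$ to $K$ and back, for every $\eta>0$. This does not work: proximity to a point of a chain-recurrence class does not put a point inside it. Once $\eta<d(p,m)$, the initial ``jump'' from $p$ to $\gamma$ (and the corresponding return jump) is no longer an admissible $\eta$-pseudo-orbit step, and there is nothing else in your argument forcing $p$ into $\cC$. The actual mechanism, which the cited proposition of \cite{crovisier-palis-faible} uses (and which reappears in the proof of Proposition~\ref{p.R'} of the present paper), is geometric: because all forward and backward iterates of $\gamma$ stay short and tangent to a central cone, domination gives \emph{uniform} strong unstable and strong stable leaves through every point of $\gamma$. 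Hence if $p$ is close enough to $m$, then $W^{ss}_{loc}(p)$ must intersect a strong unstable leaf $W^{uu}_{loc}(z_1)$ for some $z_1\in\gamma$, and $W^{uu}_{loc}(p)$ must intersect a strong stable leaf $W^{ss}_{loc}(z_2)$ with $z_2\in\gamma$; these true intersection points give genuine heteroclinic orbit segments linking $\cO$ and $\gamma$ in both directions, whence $\cO$ is in the chain class of $\gamma\subset\cC$. Without this manifold-intersection argument, the first bullet does not go through.

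Your second bullet is closer in spirit: the two intersection points on $\gamma$ just described are the endpoints to which Hayashi's connecting lemma should be applied, and since they are non-periodic and off $\cO$ one can indeed support the perturbation away from $\cO$. But again, as written, ``accumulate on the local strong stable and strong unstable leaves of $\cO$ once iterated'' is not the precise statement needed—what you actually need is the pair of transverse intersections of $W^{ss}_{loc}(p)$, $W^{uu}_{loc}(p)$ with the strong unstable, resp.\ strong stable, lamination of $\gamma$, and then the connecting lemma to close up a strong homoclinic orbit. You are right that these technicalities are carried out in \cite[section~3.3]{crovisier-palis-faible}, but the argument you substituted for them does not establish them.
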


Using the same kind of argument as the one used in the proof of proposition~\ref{p.R} one gets:
\begin{proposition}\label{p.R'}
Let $f$ be any diffeomorphism in a dense G$_\delta$ subset $\cR'_{R}\subset \diff^1(M)$ and
$H$ be a homoclinic class which has a dominated splitting $E^s\oplus E^c\oplus E_3$
such that the bundle $E^c$ is one-dimensional and the bundle $E^s$ is uniformly contracted.
Let us assume furthermore that $H$ contains periodic orbits whose Lyapunov exponent
along $E^c$ is arbitrarily close to $0$.
If $H$ has type (R), then it contains periodic points with index $\dim(E^s)$.
\end{proposition}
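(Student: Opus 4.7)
The plan is to mirror the strategy of Proposition~\ref{p.R} and combine it with Proposition~\ref{p.strong-cycle} in order to produce, by arbitrarily small $C^1$-perturbations of $f$, a heterodimensional cycle one of whose components has index $\dim(E^s)$. A standard Baire-category argument then yields the desired periodic orbit inside $H$ for $f$ itself.

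First I would observe that if some weak hyperbolic periodic orbit $q \subset H$ (as provided by the hypothesis, with $|\lambda(q, E^c)|$ arbitrarily small) already satisfies $\lambda(q, E^c) \geq 0$, then $q$ has index $\dim(E^s)$ and there is nothing to prove. So assume from now on that every such weak $q$ satisfies $\lambda(q, E^c) < 0$, hence has index $\dim(E^s) + 1$. On the other hand, the type (R) hypothesis gives a non-trivial central segment $\gamma$ at some $x \in H$, whose forward and backward iterates remain small and tangent to a central cone, and which is contained in the chain-recurrence class of $H$. For $C^1$-generic $f$ this chain-recurrence class coincides with $H$ by~\cite{BC}.

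Next I would mimic the construction in Proposition~\ref{p.R}: using the connecting lemma for pseudo-orbits (lemma~\ref{l.connecting}) together with Hayashi's connecting lemma, produce a $C^1$-small perturbation $g_1$ of $f$ having a $g_1$-periodic orbit $\cO$ with a point close to the middle of $\gamma$. By Proposition~\ref{p.R}, $\cO$ lies in the chain-recurrence class of $g_1$ containing the continuation of $H$. The closing construction involves only a uniformly bounded number of iterates outside the shadow of $q$, so by taking the period $\tau$ of $\cO$ large, the central Lyapunov exponent $\lambda(\cO, E^c)$ can be made arbitrarily close to $\lambda(q, E^c)$, hence arbitrarily close to $0$. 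The second part of Proposition~\ref{p.R} then provides a further $C^1$-small perturbation $g_2$ giving a strong homoclinic connection associated to the continuation of $\cO$; and since $|\lambda(\cO, E^c)| < \delta$ for the $\delta$ furnished by Proposition~\ref{p.strong-cycle}, one last perturbation $g_3$ produces a heterodimensional cycle associated to the continuation of $\cO$ (of index $\dim(E^s) + 1$) and a new periodic orbit $\cO'$ of index $\dim(E^s)$, all contained in a prescribed neighborhood of $H$.

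Finally I would conclude by a Baire-category argument. The construction above shows that every $C^1$-neighborhood of $f$ contains a diffeomorphism possessing a periodic orbit of index $\dim(E^s)$ arbitrarily close to (the continuation of) $H$ and homoclinically linked to $H$ through a heterodimensional cycle. By the standard semi-continuity of homoclinic classes at $C^1$-generic diffeomorphisms (see~\cite{BC,BDP}), this upgrades on a dense $G_\delta$ set $\cR'_R$ to the statement that $H$ itself contains a periodic point of index $\dim(E^s)$. The main obstacle I anticipate is precisely this last step: certifying that the orbit $\cO'$ of index $\dim(E^s)$, initially produced only for the perturbation $g_3$, has a continuation landing inside $H$ (rather than in some nearby chain class) for the generic $f$. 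This is where the persistence of the heterodimensional cycle structure, together with the generic continuity properties of~\cite{BC} and the transition property of~\cite{BDP}, must be applied carefully.
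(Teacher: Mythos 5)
Your route is genuinely different from the paper's, and the obstacle you flag at the end is a real gap, not a technicality. The heterodimensional cycle you produce lives for a perturbation $g_3$, and the orbit $\cO'$ of index $\dim(E^s)$ is created by the perturbation (it does not come from $f$). When you try to push this back to the generic $f$, the only robust piece of the cycle is the transversal intersection $W^u(\cO')\pitchfork W^s(\cO)$; the intersection in the other direction (dimensions $d-i-1$ and $i$, codimension one too high) is not transversal, hence not persistent. So for $f$ itself you cannot conclude that $\cO'$ lies in the chain-recurrence class of $\cO$, only that it lies \emph{near} $H$ with a one-sided connection. The semi-continuity properties of~\cite{BC,BDP} do not close this loop on their own, and ``the transition property'' is not applicable here because the orbits involved do not yet have the same index inside one homoclinic class. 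You identified this as the main obstacle, and indeed the argument as written stops short exactly there.

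The paper sidesteps the cycle entirely. It bakes the needed perturbative content into the definition of $\cR'_R$ via the Franks lemma and Baire category: generically, (a) homoclinic classes equal chain classes, (b) same-index orbits in a chain class are homoclinically related, and (c) any Hausdorff limit of index-$i$ orbits with $i$-th exponent going to zero is also a Hausdorff limit of index-$(i-1)$ orbits $\cO'_n$, which moreover can be chosen with $W^u(\cO'_n)\cap W^s(\cO)\neq\emptyset$ transverse. This last robust connection handles one direction of the chain recurrence. The other direction is supplied precisely by the type (R) hypothesis, not by a new perturbation: the central segment $\gamma\subset H$ has uniformly long strong unstable leaves (tangent to $E_3$, by domination and the boundedness of the backward iterates of $\gamma$), and a periodic point $Q$ of index $\dim(E^s)$ close to a point $z\in\gamma$ has a uniform local stable manifold tangent to $E^s$; as in~\cite[proposition 3.7]{crovisier-palis-faible}, these must intersect. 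Thus $W^s(Q)$ meets the chain-unstable set of $H$ and $W^u(Q)$ meets $W^s(\cO)$, so $Q$ is in the chain class of $H$, hence in $H$, for $f$ itself. Your plan would need something equivalent to the strong-unstable-leaf argument along $\gamma$ to finish, and once you add that, the heterodimensional-cycle detour becomes superfluous.
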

\begin{proof}
By~\cite{BC} and the Franks lemma (which allows to change the derivative along a periodic orbit
by a $C^1$-perturbation), there exists a dense G$_{\delta}$ subset $\cR'_{R}$ of $\diff^1(M)$ such that
any diffeomorphism $f\in \cR_{R}$ has the following properties:
\begin{enumerate}
\item[a)] Each homoclinic class of $f$ is a chain-recurrence class.
\item[b)] Any orbits $\cO,\cO'$ in a same chain-recurrence class and with the same index
are homoclinically related.
\item[c)] Any set $\Lambda$ which is the Hausdorff limit of hyperbolic periodic orbits $\cO_{n}$ of index $i$ and whose $i$-th Lyapunov
exponent is arbitrarily close to zero is also the limit of hyperbolic periodic orbits $\cO_{n}'$ of index is $i-1$.
Moreover if there exists a hyperbolic periodic orbit $\cO$ such that
$W^u(\cO_{n})$ intersects transversely $W^s(\cO)$ for each $n\geq 0$,
then the same property can be required for the periodic orbits $\cO'_{n}$.
\end{enumerate}

Let us fix a hyperbolic periodic orbit $\cO$ contained in $H$.
By property a),
the homoclinic class of $\cO$ coincides with $H$.
By assumption, one can choose $\cO$ to have a Lyapunov exponent along $E^c$ close to $0$,
hence the index of $\cO$ is $\dim(E^s)$ or $\dim(E^s)+1$.
In the first case we are done, so we will assume that the index of $\cO$
is $\dim(E^s)+1$.

By properties a) and b), there exists a dense set of periodic points $Q$ in $H$
that are homoclinically related to $\cO$ and have an arbitrarily weak central Lyapunov exponents.
By property c), one can thus conclude that
for any point $z\in H$, any neighbourhoods $U$ of $z$ and $V$ of $H$,
there exists a periodic point $Q\in U$ of index $\dim(E^s)$, whose orbit is contained in $V$,
and such that $W^u(Q)\cap W^s(\cO)\neq \emptyset$.

Let $\gamma$ be a central segment and consider $z$ in its interior.
Since the backward iterates of $\gamma$ remain tangent to a neighbourhood of $E^c$
and have a length bounded, one deduces that the exponential growth of $\|Df^{-n}_{|E^c}(z)\|$
when $n\to +\infty$ is weak. By domination, $\|Df^{-n}_{E_3}(z)\|$
decreases exponentially fast and any point of $\gamma$ has a uniform strong unstable leaf tangent to the bundle $E_3$.

For a periodic point $Q$ as above and close to $z$,
the local stable manifold  (tangent to $E^s$)
has a uniform size.
The proof of~\cite[proposition 3.7]{crovisier-palis-faible} then shows
that $W^{s}(Q)$ intersects the strong unstable leaf of some point in $\gamma\subset H$.
Since we also have $W^u(Q)\cap W^s(\cO)\neq \emptyset$ this implies that $Q$ belongs to the chain-recurrence class of $H$.
By property a), one deduces that $Q$ belongs to $H$, as wanted.
\end{proof}

We also complete remark~\ref{r.extremal}.
\begin{proposition}\label{p.R-extremal}
Let $f$ be any diffeomorphism in a dense G$_\delta$ subset $\cR''_{R}\subset \diff^1(M)$ and
$K$ be a chain-transitive set which has a partially hyperbolic structure $E^s\oplus E^c\oplus E^u$
such that $E^c$ is one-dimensional. If $K$ has type (R), then $\dim(E^s)$ and
$\dim(E^u)$ are non-zero.
\end{proposition}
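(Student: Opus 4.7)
The plan is a proof by contradiction. Since both the hypothesis (type (R), partial hyperbolicity with one-dimensional center) and the conclusion are symmetric under $f \mapsto f^{-1}$, which interchanges $E^s$ and $E^u$, it suffices to show $\dim E^u \neq 0$. Assume for contradiction that $\dim E^u = 0$, so $TM|_K = E^s \oplus E^c$ with $E^s$ uniformly contracted and $E^c$ one-dimensional. Take $\cR''_R$ to be the intersection of $\cR_R$ (from proposition~\ref{p.R}), $\cR'_R$ (from proposition~\ref{p.R'}), and the $f \leftrightarrow f^{-1}$-symmetric generic set of~\cite{BC} in which all periodic orbits are hyperbolic and every chain-recurrence class containing a periodic orbit coincides with the corresponding homoclinic class.

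By proposition~\ref{p.R}, every sufficiently small neighbourhood $U$ of $K$ contains a hyperbolic periodic orbit $\cO$ in the chain-recurrence class of $K$, which may be chosen to pass close to the middle of a central segment $\gamma$ of $K$. By genericity this class is the homoclinic class $H(\cO)$. The splitting $E^s \oplus E^c$ persists on the maximal invariant set in $U$ and thus applies along $\cO$; and since $K$ has type (R), all invariant probability measures supported on $K$ have vanishing central Lyapunov exponent, so by weak-$*$ continuity of the central exponent along the one-dimensional bundle $E^c$, shrinking $U$ forces $|\lambda^c(\cO)|$ to be arbitrarily small (though nonzero, by hyperbolicity). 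If $\lambda^c(\cO) < 0$ then $\cO$ is a sink, whose chain-recurrence class is just $\cO$ itself; this would give $H(\cO) = \cO$ finite, contradicting the inclusion of the non-trivial central segment $\gamma \subset H(\cO)$. Hence we may assume $\lambda^c(\cO) > 0$, so $\cO$ has index $\dim M - 1$ with one-dimensional unstable direction tangent to $E^c$.

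To finish, I would apply proposition~\ref{p.R'} to the inverse diffeomorphism $f^{-1}$: the dominated splitting on the maximal invariant set in $U$, read for $f^{-1}$, becomes $\{0\} \oplus E^c \oplus E^s$; the trivial ``strong-stable'' bundle is vacuously uniformly contracted; the orbit $\cO$ has $f^{-1}$-central exponent $-\lambda^c(\cO)$ arbitrarily close to $0$; and type (R) is symmetric under inversion. The conclusion would produce a periodic point in $H(\cO)$ of $f^{-1}$-index $0$, i.e.\ a sink of $f$, which forces $H(\cO)$ to equal that sink and reproduces the contradiction already obtained in the first case. The hard part will be confirming that the argument of proposition~\ref{p.R'} genuinely survives when the contracting extremal bundle is trivial: inspection of the proof shows that the only non-trivial extremal bundle actually used is $E_3$ (to supply uniform strong-unstable leaves along the central segment), which here corresponds to $E^s$ and is non-degenerate, so no obstacle is expected. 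As a backup, one can instead appeal directly to the second conclusion of proposition~\ref{p.R}: a $C^1$-perturbation would produce a strong homoclinic connection for $\cO_g$, which is impossible once $E^u = 0$, since then $W^{uu}_g(\cO_g) = \cO_g$, yielding the same contradiction more directly.
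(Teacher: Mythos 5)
Your proof takes a genuinely different route from the paper's, and it has a real gap. The crucial unjustified step is the assertion that ``since $K$ has type (R), all invariant probability measures supported on $K$ have vanishing central Lyapunov exponent.'' Remark~\ref{r.classification}.c) establishes this for types (N), (P), and (H)-attracting/repelling, but says nothing for type~(R); indeed a type-(R) set can perfectly well contain a hyperbolic saddle with nonzero exponent along $E^c$ (type~(R) only asks for \emph{one} chain-recurrent central segment somewhere in $K$, not a constraint on every measure). Since you use this claim both to conclude $|\lambda^c(\cO)|$ is small after shrinking $U$ and, implicitly, to verify the ``weak periodic orbits'' hypothesis of proposition~\ref{p.R'}, the gap propagates through your $\lambda^c(\cO)>0$ branch. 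The application of proposition~\ref{p.R'} to $f^{-1}$ with the contracting extremal bundle degenerate is also a nontrivial extrapolation of the statement; you flag it, but it is not verified. Your ``backup'' argument --- invoking the second conclusion of proposition~\ref{p.R} and noting that a strong homoclinic connection is impossible when $W^{uu}(\cO_g)=\cO_g$ --- is appealing, but one must check that the proof of proposition~\ref{p.R} (which lives in~\cite{crovisier-palis-faible}) is actually carried out without assuming $E^u$ nontrivial; otherwise the appeal is circular, since proposition~\ref{p.R-extremal} is precisely what is meant to justify nondegeneracy of the extremal bundles before the strong-homoclinic machinery is used.

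The paper's proof sidesteps all Lyapunov-exponent considerations. It takes a periodic point $z$ close to the middle of the central segment $\gamma$ (density of periodic points is enough; no appeal to proposition~\ref{p.R} is needed). Because $E^u=\{0\}$ the strong stable manifolds of points of $\gamma$ are codimension one, so $z$ lies on $W^{ss}(w)$ for some $w\in\gamma$, and the orbit $\cO$ of $z$ stays in $\overline U$ and carries the dominated splitting $E^s\oplus E^c$. If $\cO$ is a sink one concludes directly. If $\cO$ is a saddle, the inclination lemma shows that the forward iterates $f^n(\gamma)$ accumulate on the (one-dimensional) unstable manifold of $\cO$; since the lengths $|f^n(\gamma)|$ are uniformly bounded by $\varepsilon$, the unstable manifold of $\cO$ has length at most $\varepsilon$ and is therefore bounded by hyperbolic periodic orbits, which must be sinks. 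In either case the chain-recurrence class of $K$ is a sink, which is incompatible with $K$ carrying a nontrivial central segment, giving the contradiction without any exponent estimate. This inclination-lemma step is the idea your argument is missing; with it, the $\lambda^c(\cO)>0$ case, the appeal to proposition~\ref{p.R'}, and the unjustified vanishing-exponent claim all become unnecessary.
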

\begin{proof}
By~\cite{BC} there exists a dense G$_{\delta}$ subset $\cR''_{R}\subset \diff^1(M)$
of diffeomorphisms whose periodic orbits are all hyperbolic
and dense in the chain-recurrent set.

Let us assume by contradiction that $\dim(E^u)=0$.
Consider $\varepsilon>0$ small and
a segment $\gamma$ tangent to a central cone,
contained in the chain-recurrence class of $K$ and
whose iterates are contained in an arbitrarily small neighbourhood $U$ of $K$,
and have a size smaller than $\varepsilon$.

Since $f$ belongs to $\cR''_{R}$, there exists a periodic point $z$ close to the middle of $\gamma$.
One deduces that the strong stable manifold of some point of $\gamma$
intersects $z$. Consequently the orbit $\cO$ is contained in $\overline{U}$
and has the dominated splitting $E^s\oplus E^c$.
By assumption $\cO$ is hyperbolic.
If $\cO$ is a sink, then the chain-recurrence class of $K$ contains a sink.
If $\cO$ is a saddle, the inclination lemma implies that the accumulation set of the
iterates $f^n(\gamma)$, $n\geq 0$, contains the unstable set of $\cO$.
Since the length of $f^n(\gamma)$ is bounded by $\varepsilon$, one deduces that
the length of the unstable manifolds of $\cO$ are bounded by $\varepsilon$.
The unstable manifolds of $\cO$ are thus bounded by a hyperbolic periodic orbit, which is a sink.
This proves that $\gamma$ meets the basin of a sink.
Consequently the chain-recurrence class of $K$ contains a sink.

In any cases, one deduces that the chain-recurrence class of $K$ is a hyperbolic sink.
This contradicts the fact that $K$ has type (R).
\end{proof}

\subsection{The chain-hyperbolic type (H) and the parabolic type (P)}

\begin{proposition}\label{p.CH}
Let $f$ be any diffeomorphism in a dense G$_\delta$ subset $\cR_{H}\subset \diff^1(M)$ and consider any
chain-transitive set $K$ which has a partially hyperbolic structure $E^s\oplus E^c\oplus E^u$
such that $E^c$ is one-dimensional.
If $K$ has type (H) or if $K$ is not twisted and has type (P), then, for any small neighbourhood $U$ of $K$
there exists a periodic orbit $\cO\subset U$ contained in the chain-recurrence class of $K$.

More precisely, if one considers the maximal invariant set $K'$ in a closed neighbourhood of $K$
and a plaque family $\cD^{cs}$ tangent to $E^c$ over $K'$, then
for any periodic orbit $\cO_{0}$ contained in a small neighbourhood of $K$,
the orbit $\cO$ can be chosen in such a way that
\begin{itemize}
\item[--] $\cO$ is contained in the union of the plaques $\cD^{cs}_x$ with $x\in\cO_{0}$,
\item[--] if $K$ has type (H)-attracting, (P$_{SU}$) or (P$_{SN}$),
the index of $\cO$ is equal to $dim(E^{s})+1$.
\end{itemize}
\end{proposition}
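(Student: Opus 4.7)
The plan is to derive the proposition from the weak shadowing lemma (Lemma~\ref{l.shadows}) combined with a short-detour pseudo-orbit argument that forces the shadowing periodic orbit to be chain-equivalent to $K$. I take $\cR_H\subset\diff^1(M)$ to consist of Kupka--Smale diffeomorphisms satisfying the conclusions of~\cite{BC} (periodic orbits are dense in the chain-recurrent set and each chain-recurrence class containing a periodic orbit is a homoclinic class) and for which the connecting lemma for pseudo-orbits (Lemma~\ref{l.connecting}) holds. By replacing $f$ with $f^{-1}$ one reduces to the cases (H)-attracting, (P$_{SN}$) and (P$_{SU}$); the first two fall in the direct scope of Lemma~\ref{l.shadows}, while (P$_{SU}$) will require an adaptation to half-plaques.

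Assume first the type is (H)-attracting or (P$_{SN}$). Let $U_1\subset U$ and $r,\delta,\varepsilon>0$ be the neighbourhood and constants provided by Lemma~\ref{l.shadows}, and shrink to $U_0\subset U_1$ so small that the coherence statement of Lemma~\ref{l.uniq-cs} relates the $cs$-plaques at any two points of $\widehat K\cap U_0$ whose distance is at most $\diam(U_0)$. Given any periodic orbit $\cO_0\subset U_0$, choose $x\in\cO_0$ and $q\in K$ with both $d(f(x),q)$ and $d(f(q),f^2(x))$ less than $\varepsilon$, which is possible since $\cO_0$ lies arbitrarily close to $K$. Build the periodic $\varepsilon$-pseudo-orbit $P$ of the same period as $\cO_0$ obtained by replacing the single iterate $f(x)$ with $q$; Lemma~\ref{l.shadows} then produces a periodic orbit $\cO\subset\widehat K$ that $(r,\delta)$-cs-shadows $P$.

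By the coherence of Lemma~\ref{l.uniq-cs}, the plaque $\cD^{cs}_q$ coincides locally with $\cD^{cs}_{f(x)}$, while every other plaque over $P$ is already a plaque over $\cO_0$; therefore $\cO\subset\bigcup_{y\in\cO_0}\cD^{cs}_y$, which is the first refined bullet. Since one iterate of $\cO$ lies within distance $r$ of $q\in K$ and $\cO$ is periodic, for every $\eta>0$ one builds an $\eta$-pseudo-orbit from $K$ to $\cO$ and another from $\cO$ to $K$ by concatenating a single jump of length at most $r$ with segments traversing $K$ and $\cO$; hence $K\cup\cO$ is chain-transitive and $\cO$ lies in the chain-recurrence class of $K$. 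For the index, the trapping property of $\cD^{cs}$ in types (H)-attracting, (P$_{SN}$) and (P$_{SU}$) implies that for $x\in\cO$ of period $\tau$ the map $f^\tau$ sends $\cD^{cs}_x$ into itself, so the central eigenvalue of $\cO$ has modulus at most $1$; hyperbolicity inherited from $\cR_H$ excludes equality, and the index of $\cO$ equals $\dim(E^s)+1$.

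The main obstacle is the case (P$_{SU}$), where Lemma~\ref{l.shadows} does not apply directly because the central model has forward trapping on only one half of $E^c$ and backward trapping on the other. I expect to resolve it by running the shadowing argument on the two half-plaque families $\cD^{cs,+}$ and $\cD^{cs,-}$ separately, using that each is bounded by the strong-stable lamination $W^{ss}$ and is individually trapped by $\hat f$ or $\hat f^{-1}$. The non-twisted hypothesis intervenes when assembling the two half-shadowings into a single periodic orbit $\cO\subset\bigcup_{y\in\cO_0}\cD^{cs}_y$: it ensures that the local orientations of the half-plaques at $q\in K$ and at $f(x)\in\cO_0$ are compatible across the pseudo-orbit jump, so the half-shadowings glue into an honest periodic orbit instead of producing the strong homoclinic configuration detected by Proposition~\ref{p.twisted}.
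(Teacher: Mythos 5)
Your high-level plan — use the trapped $\cD^{cs}$ plaque family and find a periodic orbit inside the plaques of $\cO_0$ — points in the right direction, and your reduction to the attracting cases via $f\mapsto f^{-1}$ is fine. But the two key inferences are wrong, and the paper in fact needs a different mechanism.

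First, the chain-transitivity argument fails. You conclude that $\cO$ lies in the chain-recurrence class of $K$ because one iterate of $\cO$ is within the shadowing distance $r$ of $q\in K$, and you claim this lets you build $\eta$-pseudo-orbits between $K$ and $\cO$ for \emph{every} $\eta>0$. That is not what proximity gives you: a single jump of length $\leq r$ is not an $\eta$-pseudo-orbit step once $\eta<r$, and $r$ is fixed once the neighbourhood $U_1$ in Lemma~\ref{l.shadows} is chosen. Being Hausdorff-close to $K$ never, by itself, puts an orbit in the chain class of $K$ (think of a non-recurrent point in $W^s(K)$ whose orbit stays close to $K$). The paper does not argue via shadowing at all: it produces genuine heteroclinic connections. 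From the trapping one shows that $W^{uu}_{loc}(x)$ (for $x\in K$) intersects the trapped plaque $\cD^{cs}_{p_0}$ at a point that is in the stable set of some periodic $p\in\cD^{cs}_{p_0}$, giving a pseudo-orbit $K\leadsto p$; and conversely that $W^{uu}_{loc}(p)$ re-enters the plaque $\cD^{cs}_x$, which lies in the chain-stable set of $K$, giving $p\leadsto K$. These are honest intersections of invariant manifolds with sets in the chain-stable/unstable sets, and they survive for all $\varepsilon>0$; your jump-of-size-$r$ step does not.

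Second, the index argument is also incorrect. You assert that, because the plaque family is trapped, $f^\tau$ sends $\cD^{cs}_x$ into itself and therefore the central eigenvalue of any periodic orbit inside the plaque has modulus $\leq 1$. That conclusion is false: a one-dimensional map trapping an interval into its interior can perfectly well have interior fixed points with derivative bigger than $1$. Trapping constrains the boundary behaviour, not the pointwise multiplier. The paper anticipates exactly this: the periodic point $p$ it finds may have index $\dim(E^s)$, and in that case it passes to another periodic point $p'\in\cD^{cs}_{p_0}$ of index $\dim(E^s)+1$ whose stable manifold intersects $W^u(p)$, which then also lies in the chain class. Your proposal has no mechanism for correcting the index when the shadowed periodic orbit happens to have a repelling central eigenvalue.

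Finally, the (P$_{SU}$) case (and, for that matter, where the non-twisted hypothesis enters for type (P)) is left as a sketch. In the paper's argument the non-twisted hypothesis is used to produce a pair of close points $x,y$ for which $W^{uu}_{loc}(y)$ intersects the \emph{correct} half-plaque $\cD^{cs,+}_x$, so that the trapping on the half-plaques can be exploited; it is not used to ``glue two half-shadowings'' into a single orbit — the shadowing construction is not even part of this proof. You would need to substantiate your gluing mechanism, and, more importantly, to repair the two earlier gaps, since they are independent of which subcase you are in.
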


\begin{proof}
The diffeomorphisms in the set $\cR_H$ have their periodic orbits hyperbolic
and dense in the chain-recurrent set.

Let us first assume that $K$ has type (H)-attracting.
By proposition~\ref{p.extremal}, one can assume that $E^u$ is not degenerated (otherwise $K$ is reduced to a sink).
Let $\cD^{cs}$ be a plaque family tangent to $E^s\oplus E^c$ over a $K'$.
By remark~\ref{r.classification}.d), one can reduce the plaques $\cD^{cs}$ and the set $K'$,
so that the plaques are trapped: for each $x\in K'$ one has,
$$f(\overline{\cD^{cs}_x})\subset \cD^{cs}_{f(x)}.$$
In particular, if one considers two points $x,y\in K$ that are close enough, then
for any point $p$ such that $f^{-n}(p)\in \cD^{cs}_{f^{-n}(y)}$ for each $n\geq 0$,
the strong unstable manifold $W^{uu}_{loc}(p)$ meets $\cD^{cs}_x$.
By continuity this property is also satisfied if $y$ is replaced by any point $p_0\in K'$
close enough to $y$.

Let $\cO_0\subset K'$ be a periodic orbit contained in a small neighbourhood of $K$:
there exists $p_0\in \cO$ arbitrarily close to $y$, so that for each periodic point $p\in \cD^{cs}_{p_0}$,
the strong unstable manifold $W^{uu}_{loc}(p)$ meets $\cD^{cs}_x$.
Conversely, since the plaques $\cD^{cs}$ are trapped,
the strong unstable manifold $W^{uu}_{loc}(x)$ meets $\cD^{cs}_{p_0}$
at some point which belongs to the stable manifold of a periodic point $p\in \cD^{cs}_{p_{0}}$.
This implies that $p$ belongs to the chain-recurrence class of $K$.

If the index of $p$ is equal to $\dim(E^s)$, there exists another periodic point $p'\in\cD^{cs}_{p_0}$
of index $\dim(E^s)+1$ and whose stable manifold intersects the unstable manifold of $p$.
Consequently, $p'$ also belongs to the chain-recurrence class of $K$.
This proves that the chain-recurrence class contains periodic orbits $\cO$ of index $\dim(E^s)+1$
that are included in arbitrarily small neighbourhoods of $K$. Moreover $\cO$ can be obtained
inside the centre-stable plaques of any periodic orbit close enough to $K$.

This proves the proposition in the case $K$ has type (H).
In the case $K$ has type (P), the dynamics on $E^c$ is orientable.
Let us fix a central model $(K\times [0,+\infty), \hat f)$
with attracting type associated to $K$ and an invariant orientation on $E^c_K$.
By proposition~\ref{p.extremal}, one can assume again that $E^u$ is not degenerated.
Let $\cD^{cs}$ be a plaque family tangent to $E^s\oplus E^c$ over $K$
and $\cD^c\subset \cD^{cs}$ a plaque family tangent to $E^c$.
For each $x\in K$, the plaque $\cD^{cs}\setminus W^{ss}_{loc}(x)$ has two components:
one of them, $\cD^{cs,+}_x$, is tangent to the central model.
Since the central model has attracting type, one can assume (reducing the plaques)
that these half plaques are trapped: for each $x\in K$, one has
$$f(\overline{\cD^{cs,+}_x})\subset \cD^{cs,+}_{f(x)}\cup W^{ss}_{loc}(f(x)).$$
Moreover, since $K$ is not twisted, there exist two points $x,y$ arbitrarily close such that
one can connect $W^{ss}_{loc}(x)$ to $W^{uu}_{loc}(y)$ and
$W^{uu}_{loc}(x)$ to $W^{ss}_{loc}(y)$ by two curves with non-zero length,
tangent to a central cone field and having the same orientation.
Up to exchange $x$ and $y$, this implies that $W^{uu}_{loc}(y)$ intersects $ \cD^{cs,+}_x$.
By the trapping property, assuming that $x,y$ have been chosen close enough, one deduces
that for each point $y\in \cD^c\cap \cD^{cs,+}_y$
such that $f^{-n}(y)\in \cD^{cs,+}_{f^{-n}(y)}$ for each $n\geq 0$,
the strong unstable manifold $W^{uu}_{loc}(y)$ meets $\cD^{cs,+}_x$.
By continuity, this property is still satisfied if $y$ is replaced by any point $p_0\in K'$
close enough to $y$.
The end of the proof is similar to the case $K$ has type (H)-attracting by considering
the half plaques $\cD^{cs,+}$ instead of the plaques $\cD^{cs}$: one just has to note that if
$p_0\in K'$ is a periodic point, then any periodic point $p\in \cD^{cs}_{p_0}$ also belongs to $\cD^c_{p_0}$.
\end{proof}

\begin{remark}
In the proposition~\ref{p.CH}, by choosing carefully the orbit $\cO_0$,
one may replace the partial hyperbolicity on $K$
by a dominated splitting $E^s\oplus E^c\oplus E_3$ such that $E^s$ is uniformly contracted,
$E^c$ has dimension $1$  and there exists an invariant measure supported on $K$
whose Lyapunov exponents along $E_3$ are all positive (we will not use this generalisation).

Let us sketch the proof. In the case the type is (P) or (H)-repelling,
the bundle $E_3$ is uniformly expanded, the proposition~\ref{p.CH} applies.
Let us consider the type (H)-attracting. One may assume that $K$ is the support of an ergodic measure
whose Lyapunov exponents along $E_3$ are all positive.
From Ma\~n\'e's ergodic closing lemma,
$K$ is the Hausdorff limit of periodic orbits $\cO_n$ whose associated periodic measures converges towards $\mu$.
In particular, there exists $N\geq 1$ such that
for each orbit $\cO_n$ a uniform proportion of iterates $z\in \cO_n$
satisfies for any $k\geq 0$,
\begin{equation}\label{e.unif}
\prod_{i=0}^{k-1} \|Df^{-N}_{|E_3}(f^{-i.N}(z))\|\leq 2^{-k}.
\end{equation}
By passing to the limit, one can thus find
two close points $x,y\in K$ satisfying~(\ref{e.unif})
and a sequence of points $p_n\in \cO_n$ satisfying~(\ref{e.unif})
and converging to $y$.
The points $x,y,p_n$ hence have uniform unstable manifolds tangent to $E_3$
Now the proof is the same as before.
\end{remark}


\section{Partially hyperbolic sets of type (N)}
In this section $f$  is a $C^1$ generic diffeomorphism and
$K$ is a chain-transitive set with a partially hyperbolic structure
$E^s\oplus E^c\oplus E^u$ such that the bundle $E^c$ is one-dimensional and has type (N).
Our aim is to prove that if $K$ is strictly contained in its chain-recurrence class, then
either $f$ is $C^1$-approximated by diffeomorphisms having a heterodimensional cycle,
or $K$ is included in a homoclinic class having periodic
orbits whose $(dim(E^s)+1)$-th Lyapunov exponent is arbitrarily close to $0$.

\subsection{Statement and first reductions}

\begin{proposition}\label{p.N}
Let $f$ be any diffeomorphism in a dense $G_\delta$ subset $\cR_N\subset \diff^1(M)\setminus
\overline{\Tang}$ and consider any minimal set $K$ which has a partially
hyperbolic structure $E^s\oplus E^c\oplus E^u$ such that $E^c$ is one-dimensional and has type (N).
If $K$ is strictly contained in a chain-transitive set $A$, then
one of the two following situations occurs.
\begin{enumerate}
\item[1)] $K$ is contained in a homoclinic class $H$ whose index is equal to $\dim(E^s)$
or $\dim(E^s)+1$ and which contains weak periodic orbits:
for any $\delta>0$, there exists a sequence of periodic orbits
$(\cO_n)$ that are homoclinically related together, that converge for the Hausdorff topology
toward a compact subset of $A$, whose indices are equal to $\dim(E^s)$
or $\dim(E^s)+1$ and whose $(dim(E^s)+1)$-th Lyapunov exponents belong to $(-\delta,\delta)$.
\item[2)] For any neighbourhoods $U$ of $K$ and $V$ of $A$, there exists a $C^1$-perturbation
of $f$ which exhibits a heterodimensional cycle contained in $V$
and associated to periodic orbits contained in $U$ (whose indices
are equal to $\dim(E^s)$ and $\dim(E^s)+1$ respectively).
\end{enumerate}
\end{proposition}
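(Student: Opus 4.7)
The plan is to apply proposition~\ref{p.extension} to extend the partially hyperbolic splitting from $K$ to a larger chain-transitive set $A'\subset A$ strictly containing $K$, then to perform a case analysis according to the central type of $A'$ introduced in section~\ref{s.model} and to whether $K$ is twisted. I take $\cR_N$ to be the intersection of $\cR_{Ext}$, $\cR_R$, $\cR_H$, $\cR_{Chain}$ and the standard $C^1$-generic set from~\cite{BC} in which chain-recurrence classes of hyperbolic periodic orbits coincide with their homoclinic classes and periodic orbits of the same index inside a common class are homoclinically related.

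First I dispatch the twisted case. Since $K$ has type (N), remark~\ref{r.classification}.c) forces every invariant measure supported on $K$ to have zero central Lyapunov exponent, so $K$ is non-periodic. If $K$ is twisted, corollary~\ref{c.twist} then provides a $C^1$-perturbation of $f$ exhibiting a heterodimensional cycle inside any preassigned neighbourhood of $K$, which may be chosen inside $U\cap V$ since $K\subset A$. This is alternative~(2).

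Assume henceforth that $K$ is not twisted; twistedness being hereditary, $A'$ is not twisted either. I analyse the type of $A'$ with respect to its one-dimensional central bundle. If $A'$ has type (N), then $A'$ is non-minimal (it strictly contains the minimal set $K$), and corollary~\ref{c.NN} applied to $A'$ and the minimal subset $K$, with neighbourhoods shrunk inside $U$ and $V$ (possible because $A'\subset A$), yields a heterodimensional cycle, giving~(2). If $A'$ has type (R), proposition~\ref{p.R} produces a periodic orbit $\cO$ contained in the chain-recurrence class of $A'$ and in an arbitrarily small neighbourhood of $A'$; if $A'$ has type (H), or has non-twisted type (P), proposition~\ref{p.CH} produces such an $\cO$ as well. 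In each of these three remaining subcases, the neighbourhood where $\cO$ lives may furthermore be taken arbitrarily small around $K$; since $K$ has type (N), remark~\ref{r.classification}.c) again forces the central Lyapunov exponent of $\cO$ to be arbitrarily close to $0$, and the index of $\cO$ is therefore $\dim(E^s)$ or $\dim(E^s)+1$. Genericity of $f$ then identifies the chain-recurrence class of $\cO$ with the homoclinic class $H(\cO)$, which contains $A'\supset K$. Iterating with shrinking neighbourhoods of $K$ delivers a sequence $(\cO_n)$ whose Hausdorff limits lie in $A'\subset A$; passing to a subsequence of constant index and using the generic property that periodic orbits of the same index inside a common chain-recurrence class are homoclinically related, one may arrange the $\cO_n$ to be pairwise homoclinically related. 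This is alternative~(1).

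The hard part is really the bookkeeping of the non-twisted subcase: one must ensure that the orbits $\cO_n$ produced by propositions~\ref{p.R} and~\ref{p.CH} actually Hausdorff-accumulate on a subset of $A$ (not merely of the somewhat indirectly produced $A'$) and that their indices stabilise so that the generic transition property applies. Both points rely crucially on the spreading construction of proposition~\ref{p.extension}, which ties $A'$ to $A$, and on the proximity to the type-(N) set $K$, which pins the central exponents near $0$ and restricts the possible indices to $\dim(E^s)$ and $\dim(E^s)+1$.
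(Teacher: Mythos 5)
Your plan applies the type analysis (propositions~\ref{p.R}, \ref{p.CH} and corollaries~\ref{c.NN}, \ref{c.twist}) to the larger chain-transitive set $A'$ produced by proposition~\ref{p.extension}. But this is exactly where the proof breaks down: proposition~\ref{p.extension} only gives a \emph{dominated} decomposition $E_1\oplus E^c\oplus E_3$ on $A'$, not a partially hyperbolic one. The bundles $E_1$ and $E_3$ over $A'$ need not be uniformly contracted or expanded --- $A'$ contains points of $A$ far from $K$, and the uniform bounds on $E^s,E^u$ that hold over (small neighbourhoods of) $K$ do not propagate to $A'$. Every tool you invoke for the case analysis is stated for \emph{partially hyperbolic} chain-transitive sets: proposition~\ref{p.R}, proposition~\ref{p.CH}, corollary~\ref{c.NN} and corollary~\ref{c.twist} all assume the $E^s\oplus E^c\oplus E^u$ structure with uniform contraction/expansion in the extremal bundles. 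None of them applies to $A'$ as constructed.

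This is precisely the difficulty the paper's proof is organized around. After the preliminary reductions (which use the type analysis, but only for chain-transitive sets $K'\subset U\cap A$ in a \emph{small} neighbourhood $U$ of $K$, where the partially hyperbolic structure does extend), the paper isolates the hard case via the hypothesis (I') --- every small chain-transitive set near $K$ is minimal, non-twisted and of type (N) only --- and then develops a genuinely new analysis of the structure of stable and unstable sets near $K$. It classifies points $x,y\in A\setminus K$ with $\omega(x)=\alpha(y)\subset U$ into the four cases (1, 2.a, 2.b, 2.c), depending on whether they are approximated by strong stable/unstable manifolds, by large stable/unstable manifolds, or are endpoints of the intervals $I^s_x,I^u_y$, and handles each via a separate closing/connecting argument (lemmas~\ref{l.psl1}, \ref{l.dicho}, \ref{l.stable-controle}, \ref{l.1}, \ref{l.dev1}, \ref{l.segment}). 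Your closing paragraph acknowledges that some ``bookkeeping'' is left, but the gap is not bookkeeping --- it is the entire case analysis of sections 5.2--5.4, which is needed precisely because the type classification cannot be applied over $A'$.
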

\bigskip

The G$_\delta$ dense subset $\cR_N\subset \diff^1(M)$ will be the intersection of:
\begin{itemize}
\item[--] the set of the diffeomorphisms whose periodic orbits are all hyperbolic
(which is G$_\delta$ and dense by Kupka-Smale theorem),
\item[--] a  G$_\delta$ and dense set of diffeomorphisms whose minimal sets are limit of
hyperbolic periodic orbits for the Hausdorff topology
(provided by Pugh's closing lemma),
\item[--] a G$_\delta$ and dense set of diffeomorphisms having the following property
(see~\cite{BC}):
any chain-recurrence class that contains
a hyperbolic periodic orbit $\cO$ coincides with the homoclinic class $H(\cO)$;
any hyperbolic periodic orbits contained in a same chain-recurrence class and
having the same index are
homoclinically related,
\item[--] the sets $\cR_{Chain},\cR_{Ext}$, $\cR_{R}$, $\cR_{H}$ provided by lemma~\ref{l.connecting},
propositions~\ref{p.extension}, \ref{p.R} and~\ref{p.CH},
\item[--] two other G$_\delta$ dense sets specified during the proof
of lemmas~\ref{l.dev1} and~\ref{l.segment} stated below.
\end{itemize}
\bigskip

Before coming into the details of the proof, let us begin with some preliminary
reductions.
\begin{itemize}
\item From the hypothesis $f\in \cR_N$, the set $K$ is not a periodic orbit. Hence,
by proposition~\ref{p.extremal} the extremal bundles $E^s$, $E^u$ are both non-degenerate.

\item We consider small open neighbourhoods $U$ of $K$.
The maximal invariant set in $\overline U$ still has the partially hyperbolic splitting
$E^s\oplus E^c\oplus E^u$.
If for such arbitrarily small $U$ there exist some chain-transitive sets $K'\subset U\cap A$ having type (R), (P) or (H),
or some chain-transitive non-minimal sets $K'$ having type (N),
then the conclusion of the proposition holds immediately:
if $K'$ has type (R), (H) or if $K'$ has type (P) and is untwisted,
then by propositions~\ref{p.R} and~\ref{p.CH}
the chain-recurrence class of $K$ contains periodic orbits that are included in $U$
(hence having a central exponent along $E^c$ close to $0$ if $U$ has been chosen small enough);
if $K'$ has type (N) and is non-minimal or if $K'$ has type (P) and is twisted,
by corollaries~\ref{c.NN} and~\ref{c.twist}
one can create in $U$ a heterodimensional cycle by a $C^1$-small perturbation of $f$.

As a consequence, one can fix a small neighbourhood $U$ of $K$ and make the following isolation hypothesis:
\end{itemize}

\begin{enumerate}
\item[(I')] Any chain-transitive set $K'\subset U\cap A$ is minimal, non twisted and
has type (N) only.
\end{enumerate}

\begin{itemize}
\item By proposition~\ref{p.extension}
one can assume (up to reducing $A$ if necessary) that there exists on $A$ a dominated
splitting $E_1\oplus E^c\oplus E_3$ that extends the partially hyperbolic
structure on $K$.
One can thus fix some plaque families $\cD^{cs}, \cD^c,\cD^{cu}$ tangent to $E_1\oplus E^c$, $E^c$ and $E^c\oplus E_{3}$ over $A$.
One may take the plaques $\cD^c_x$ contained in the intersections $\cD^{cs}_x\cap \cD^{cu}_x$.

Let us consider a small neighbourhood $V_0$ of $A$.
Note that one can choose the plaque families over a set slightly larger than $A$ where the domination extends;
the plaques hence exist at any point whose orbit stays in a neighbourhood $V_0$ of $A$.
All the neighbourhoods $V$ of $A$ we will consider will be contained in $V_0$.
\end{itemize}

\subsection{Structure of stable sets}
In this section one studies the stable sets of invariant sets $K'\subset U\cap A$.
This will allow us to define four cases to be discussed in the proof.
\medskip

The first result approximates the stable sets by stable manifolds of periodic orbits.

\begin{lemma}\label{l.psl1}
Let $x\in A$ be a point whose $\omega$-limit set $\omega(x)$ is contained in $U$.
Then, for any $\varepsilon>0$, there exist a periodic orbit $\cO\subset U$
and a point $y\in W^s(\cO)$ whose forward orbit has a closure $\overline{\{f^n(y),n\geq 0\}}$
that is $\varepsilon$-close to $\overline{\{f^n(x),n\geq 0\}}$ for the Hausdorff distance.
\end{lemma}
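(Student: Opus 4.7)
The point $x$ lies in the chain-transitive set $A$, so its $\omega$-limit set $K':=\omega(x)\subset A$ is compact, invariant, and internally chain-transitive (a classical fact about $\omega$-limit sets of points). Since by hypothesis $\omega(x)\subset U\cap A$, the isolation assumption (I') implies that $K'$ is a minimal, non-twisted set of type (N), inheriting a partially hyperbolic splitting $E^s\oplus E^c\oplus E^u$ with $\dim(E^c)=1$. The plan is to apply the weak shadowing lemma~\ref{l.shadows} to $K'$ twice: first to produce $\cO$, then to locate $y$.

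Fix small $r,\delta>0$ and let $U_1\subset U_0\subset U$ and $\varepsilon_0>0$ be furnished by lemma~\ref{l.shadows}. Choose $N_0$ large enough so that $f^n(x)\in U_1$ for all $n\geq N_0$ and $\{f^n(x):n\geq N_0\}$ is $(\varepsilon/8)$-Hausdorff-close to $K'$, and then $M$ large enough that $\{f^n(x):N_0\leq n\leq N_0+M\}$ is $(\varepsilon/8)$-dense in $\{f^n(x):n\geq N_0\}$. Using the chain-transitivity of $K'$, concatenate the true segment $(f^{N_0+n}(x))_{0\leq n\leq M}$ with an $\varepsilon_0$-pseudo-orbit in $K'$ that is $(\varepsilon/8)$-dense in $K'$ and closes back near $f^{N_0}(x)$; this produces a periodic $\varepsilon_0$-pseudo-orbit $(z_n)_{n\in\ZZ/T\ZZ}$ in $U_1$. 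Lemma~\ref{l.shadows} (periodic case) yields a periodic orbit $\cO\subset\widehat{K'}\subset U$ that $(r,\delta)$-cs-shadows $(z_n)$, and whose orbit Hausdorff-approximates $\{f^n(x):N_0\leq n\leq N_0+M\}\cup K'$.

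To locate $y$, apply lemma~\ref{l.shadows} a second time to the non-periodic forward $\varepsilon_0$-pseudo-orbit $(z'_n)_{n\geq 0}$ defined by $z'_n=f^{N_0+n}(x)$ for $0\leq n\leq M$ and $z'_n=f^{n-M}(p)$ for $n>M$, where $p\in\cO$ is the point shadowing $z_M$ (the gap at $n=M$ is at most $\|Df\|\,r<\varepsilon_0$). This yields an orbit $(f^n(y'))_{n\geq 0}$ in $\widehat{K'}$ that $(r,\delta)$-cs-shadows $(z'_n)$. Since both $(f^{M+k}(y'))_{k\geq 0}$ and $(f^k(p))_{k\geq 0}$ $r$-shadow the same tail $(z'_{M+k})_{k\geq 0}$, the uniqueness clause of lemma~\ref{l.shadows} combined with the trapping property of the cs-plaques (remark~\ref{r.classification}.d) gives $f^n(y')\in\cD^{cs}_{f^{n-M}(p)}$ for every $n\geq M$. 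Since $\cO$ is hyperbolic (by $f\in\cR_N$) with central exponent close to $0$, iteration of $f^T$ inside the trapped bounded plaque $\cD^{cs}_p$ forces the central coordinate of $y'$ to vanish: if the central exponent is negative then $\cD^{cs}_p\subset W^s_{loc}(\cO)$ directly, while if it is positive a non-zero central coordinate would grow exponentially yet remain bounded by trapping, a contradiction. Either way $y'\in W^s(\cO)$. Setting $y:=f^{-N_0}(y')\in W^s(\cO)$: for $0\leq n\leq N_0$, $d(f^n(y),f^n(x))\leq\|Df^{-1}\|^{N_0-n}\,r<\varepsilon$ provided $r$ is small enough; for $n\geq N_0$, $f^n(y)$ is $r$-close to $z'_{n-N_0}$, which together with $K'$ approximates $\overline{\{f^m(x):m\geq 0\}}$ within $\varepsilon/4$.

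\textbf{Main obstacle.} The delicate point is establishing $y'\in W^s(\cO)$ irrespective of the sign of the central Lyapunov exponent of $\cO$; this combines the topological trapping of the cs-plaques with the hyperbolicity of $\cO$ to rule out non-zero central coordinates inside $\cD^{cs}_p$. A secondary technical issue is the order of quantifiers: $r$ must be small enough that $\|Df^{-1}\|^{N_0}\,r<\varepsilon$, so one first fixes preliminary shadowing parameters to determine $N_0$ and then reduces $r$ before invoking lemma~\ref{l.shadows} for the final construction.
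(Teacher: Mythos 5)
Your overall strategy matches the paper's: use the type-(N) trapping of the $\mathcal{D}^{cs}$-plaques together with the weak shadowing lemma~\ref{l.shadows} to follow the forward orbit of $x$ into a periodic pseudo-orbit and recover a true orbit in the stable set of a periodic point. Where the paper applies lemma~\ref{l.shadows} once, to an eventually-periodic forward pseudo-orbit, you apply it twice (once to close up and get $\cO$, once to shadow the transition), which is a harmless reorganisation.

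The one genuine gap is in the sentence ``if the central exponent is negative then $\cD^{cs}_p\subset W^s_{loc}(\cO)$ directly''. That inclusion is \emph{not} direct. The trapping property $f^\tau(\overline{\cD^{cs}_p})\subset\cD^{cs}_p$ (with $\tau$ the period of $p$) gives a nested sequence of compact sets whose intersection $J=\bigcap_k f^{k\tau}(\overline{\cD^{cs}_p})$ is compact, invariant, and need not a priori be the single point $\{p\}$: the one-dimensional central projection $J_c$ could be a nontrivial interval attracting towards a second periodic point lying inside the plaque, in which case $y'$ would converge to that orbit rather than to $\cO$. The paper sidesteps this by concluding only that $f^n(y)$ belongs to the stable set of \emph{some} periodic point $q\in\widetilde\cD^{cs}_{f^n(p)}$, which is all the lemma requires. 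Your stronger assertion can in fact be rescued: if $J_c$ were nontrivial its endpoints would be $f^\tau$-fixed and two adjacent attracting fixed points of a trapped interval map force a repelling fixed point $c$ between them; but $c$ would be a periodic point of $f$ contained in the trapped plaque family, hence (by the same trapping argument you sketch for the positive-exponent case) $c$ would need central exponent $<0$, a contradiction. This takes a real extra step, not the ``directly'' you assert. Similarly, the argument that a positive central exponent is impossible is correct in substance (a local centre-unstable curve through $p$ inside $\cD^{cs}_p$ cannot coexist with strict trapping), but ``would grow exponentially'' is not the right phrasing at a $C^1$ level; one should invoke the local unstable manifold of $p$ tangent to $E^c$ rather than linear growth estimates. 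I would either supply the $J=\{p\}$ argument explicitly, or — simpler and closer to the paper — replace $\cO$ at the end by the periodic orbit to which $y'$ actually converges inside the plaque, which still lies in $U$ because the plaques are small and trapped.
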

\begin{proof}
By our assumptions (I'), the chain-transitive set $K'=\omega(x)$ has type (N).
One considers the maximal invariant set $\widehat K'$ in a small neighbourhood $\overline{U_0}$
of $K'$ and one fixes a plaque family $\widetilde{\cD}^{cs}$ that is tangent to $E^s\oplus E^c$ over $\widehat K'$
and whose plaques have a diameter $r$ smaller than $\varepsilon/2$.
Since $K'$ has type (N), by remark~\ref{r.classification}.d)
one can assume the following trapping property:
for each $x\in \widehat K'$, the closure of $f(\widetilde\cD^{cs}_x)$ is contained in $\widetilde\cD^{cs}_{f(x)}$.
One will apply the weak shadowing lemma~\ref{l.shadows}
to $r$ and to the neighbourhood $U_0$ of $K'$ and $r>0$: it allows to cs-shadows pseudo-orbits in a smaller
neighbourhood $U_1\subset U_0$ of $K'$.
One also considers $n_0\geq 0$ such that $f^n(x)$ belongs to $U_1$ for any $n\geq n_0$.

Let $\delta_0>0$ be smaller than $\varepsilon/ 2\|Df^{-k}\|$ for each $0<k\leq n_0$
and let $\varepsilon_0>0$ be the constant associated to $\delta_0$ by the shadowing lemma~\ref{l.shadows}.
One can build a $\varepsilon_0$-pseudo-orbit $\{z_n\}_{n\geq n_0}$ in $U_1$
by first following the forward orbit $\{f^n(x)\}_{n\geq n_0}$ of $x$ and then a periodic
$\varepsilon_0$-pseudo-orbit in $\omega(x)$.
One thus gets an orbit $\{f^n(y')\}_{n\geq n_0}$ in $\widehat K'$
that $(r,\delta_0)$-cs-shadows $\{z_n\}_{n\geq n_0}$:
in particular the point $f^{n_0}(x)$ is $\delta_0$-close to a point $f^{n_0}(y)\in \widetilde\cD^{cs}_{f^{n_0}(y')}$.
By the trapping property the point $f^n(y)$ still belongs to $\widetilde\cD^{cs}_{f^{n}(y')}$
for $n\geq n_0$.

By our choice of $\delta_0$, the points $f^n(x)$ and $f^n(y)$ are $\varepsilon/2$-close
also for any $0\leq n \leq n_0$.
By having chosen $\delta_0$ small enough, one can assume that the forward iterates of $y$
and $x$ remain arbitrarily close during an arbitrarily long time $n_1$.
After time $n_1$, the plaque $\widetilde \cD^{cs}_{f^n(y')}$ contains both $f^n(y)$
and a point which is $\delta_0$-close to $K'$.
One deduces that $f^n(y)$ belongs to the $(\delta_0+\varepsilon/2)$-neighbourhood of $K'=\omega(x)$.
Hence the sets $\overline{\{f^n(x),n\geq 0\}}$
and $\overline{\{f^n(y),n\geq 0\}}$ are $\varepsilon$-close for the Hausdorff distance.

For $n_2$ large the sequence $(z_n)_{n\geq n_2}$ is periodic
and $(r,\delta_0)$-cs-shadowed by $(f^n(y))_{n\geq n_2}$.
Hence by the weak shadowing lemma
there exists a periodic point $p$ such that $f^n(y)$ belongs to $\widetilde \cD^{cs}_{f^n(p)}$
for each $n\geq n_2$. By the trapping property the point $f^n(y)$ belongs to the stable set of
a periodic point $q\in \widetilde \cD^{cs}_{f^n(p)}$.
This concludes the proof.
\end{proof}
\bigskip

\noindent
This lemma justifies the following definitions.
\begin{definition}\label{d.approx}
Let $x\in A$ such that $\omega(x)\subset U$.
\smallskip

\noindent -- We say that $x$ is \emph{approximated by strong stable manifolds}
if for any $\varepsilon>0$, there exist a periodic orbit $\cO\subset U$
and a point $y\in W^{ss}(\cO)$ such that $\overline{\{f^n(y),n\geq 0\}}$
is $\varepsilon$-close to $\overline{\{f^n(x),n\geq 0\}}$ for the Hausdorff distance.
\smallskip

\noindent -- We say that $x$ is \emph{approximated by large stable manifolds}
if for any $\eta>0$,
there exists $\rho>0$ and for any $\varepsilon>0$ there exists a periodic orbit
$\cO\subset U$ and a point $y\in W^s(\cO)$ such that $\overline{\{f^n(y),n\geq 0\}}$
is $\varepsilon$-close to $\overline{\{f^n(x),n\geq 0\}}$ for the Hausdorff distance,
and there exists a $(\dim(E^s)+1)$-dimensional disc of radius $\rho$ centred at $y$
which belongs to the stable set of $\cO$ and whose forward iterates have a diameter smaller than $\eta$.
\end{definition}
\smallskip

\begin{lemma}\label{l.dicho}
Any point $x\in A$ such that $\omega(x)\subset U$
is either approximated by strong stable manifolds or by large stable manifolds.
\end{lemma}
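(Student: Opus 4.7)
The plan is to apply the construction of lemma~\ref{l.psl1} and then dichotomize according to the index of the periodic orbit it produces. Any hyperbolic periodic orbit of $f$ contained in $U$ has index either $\dim(E^s)$ or $\dim(E^s)+1$, because $E^s$ is uniformly contracted and $E^u$ uniformly expanded on the maximal invariant set in $\overline U$.

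First case: for arbitrarily small $\varepsilon>0$, lemma~\ref{l.psl1} produces a periodic orbit $\cO\subset U$ of index $\dim(E^s)$. Then $W^s(\cO)=W^{ss}(\cO)$, so the point $y$ it provides automatically lies in $W^{ss}(\cO)$, giving approximation by strong stable manifolds. This case costs no additional work.

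Second case: there exists $\varepsilon_1>0$ such that for every $\varepsilon<\varepsilon_1$ the orbit $\cO$ produced by lemma~\ref{l.psl1} has index $\dim(E^s)+1$. In this case I aim to show approximation by large stable manifolds. Given $\eta>0$, I would choose the trapping plaque family $\widetilde\cD^{cs}$ tangent to $E^s\oplus E^c$ used inside the proof of lemma~\ref{l.psl1}, taking plaques of diameter smaller than $\eta$, and set $\rho:=r/4$ where $r$ is the intrinsic inradius of these plaques; crucially $\rho$ depends only on $\eta$. Rerunning the proof of lemma~\ref{l.psl1} with this plaque family and shadowing parameter $\delta_0<\rho$ then yields $\cO\subset U$ of index $\dim(E^s)+1$, a shadowing orbit $(f^n(y'))$ in $\widehat K'$ with $f^n(p)\in\widetilde\cD^{cs}_{f^n(y')}$ for $n$ large (where $p\in\cO$), and a point $y\in\widetilde\cD^{cs}_{y'}$ at plaque-distance at most $\delta_0$ from $y'$ whose forward orbit Hausdorff-approximates that of $x$.

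The heart of the argument, and the step I expect to be the main obstacle, is the identification $\widetilde\cD^{cs}_{y'}\subset W^s(\cO)$. Since $\cO$ has index $\dim(E^s)+1$, its local stable manifold is tangent to $E^s\oplus E^c$ and of uniform size; by the uniqueness and coherence of plaque families (lemma~\ref{l.uniq-cs}) it coincides with the plaques $\widetilde\cD^{cs}$ on a neighborhood of $\cO$. Since $f^n(y')$ approaches $\cO$ as $n\to\infty$, for $n$ large the plaque $\widetilde\cD^{cs}_{f^n(y')}$ is entirely contained in $W^s(\cO)$ by coherence. The trapping property then gives $f^n(\widetilde\cD^{cs}_{y'})\subset\widetilde\cD^{cs}_{f^n(y')}\subset W^s(\cO)$, so $\widetilde\cD^{cs}_{y'}\subset W^s(\cO)$. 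Because $y$ sits within plaque-distance $\delta_0<\rho$ from $y'$, an intrinsic $(\dim(E^s)+1)$-disc of radius $\rho$ around $y$ is contained in $\widetilde\cD^{cs}_{y'}\subset W^s(\cO)$, and its forward iterates remain in plaques of diameter $<\eta$ by trapping. This delivers the large stable manifold approximation and completes the dichotomy.
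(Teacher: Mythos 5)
The dichotomy you propose (index $\dim(E^s)$ versus index $\dim(E^s)+1$ for the periodic orbit $\cO$ produced by lemma~\ref{l.psl1}) is not the dichotomy that distinguishes the two outcomes, and the crucial step in your second case fails. Even when $\cO$ has index $\dim(E^s)+1$, the containment $\widetilde\cD^{cs}_{p}\subset W^{s}(\cO)$ for $p\in\cO$ (and hence $\widetilde\cD^{cs}_{f^{n}(y')}\subset W^{s}(\cO)$) is not established and is generally false. The plaque $\widetilde\cD^{cs}_{p}$ is trapped and foliated by strong stable leaves, so after quotienting it is an interval on which $f^{\tau}$ is an orientation-preserving map with a trapped fixed point; but that interval map may have several fixed points, in which case the plaque is partitioned into basins of several periodic orbits. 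The coherence statement in lemma~\ref{l.uniq-cs} only gives a neighbourhood $U$ of the zero section (depending on both plaque families, hence on $\cO$) with $\widetilde\cD^{cs}_{p}(U_{p})\subset W^{s}_{loc}(\cO)$; it does not say the whole plaque $\widetilde\cD^{cs}_{p}$ is contained in $W^{s}_{loc}(\cO)$. Since $\cO$ is close to the type-(N) set $\omega(x)$, its central Lyapunov exponent tends to zero and its period tends to infinity as $\varepsilon\to 0$, so the piece of plaque one can certify to lie in $W^{s}(\cO)$ is not bounded below by a $\rho$ depending only on $\eta$. This breaks the uniformity that the definition of "approximated by large stable manifolds" demands.

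The paper's proof instead constructs, for each approximating point $y\in W^{s}(\cO)$, a disc $D_{y}$ of a uniform radius $\rho(\eta)$ inside $\cD^{cs}_{y}$ whose forward iterates stay small, and only observes that every point of $D_{y}$ lies in the stable set of \emph{some} periodic orbit inside the trapped plaque over $\cO$. The dichotomy is then purely about $D_{y}$: either $D_{y}$ lies entirely in the stable set of a single orbit (large stable case), or $D_{y}$ meets stable sets of two distinct orbits, in which case the boundary between the basins contains a point of $D_{y}$ on a strong stable manifold (strong stable case). Your first case (index $\dim(E^s)$ so that $W^{s}=W^{ss}$) is correct, but it is not the complement of the large stable case; it is only a sufficient condition for the strong stable case. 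You would need to replace your second case by the paper's analysis of the basins inside $D_{y}$.
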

\begin{proof}
By lemma~\ref{l.psl1}, there exists some points $y\in M$ such that $\overline{\{f^n(y),n\geq 0\}}$
is arbitrarily close to $\overline{\{f^n(x),n\geq 0\}}$ for the Hausdorff distance
and $y$ belongs to the stable set of a periodic orbit $\cO$. We fix some $\eta>0$.

The set $\omega(x)$ has type (N), hence by the trapping property there exists $\rho_{0}>0$
such that all the forward iterates of the $\rho_{0}$-neighbourhood of $x$ in $\cD^{cs}_{x}$
have a diameter smaller than $\eta/2$.

By continuity and trapping, the same property holds for any point $y\in W^s(\cO)$ such that $\overline{\{f^n(y),n\geq 0\}}$
is close to $\overline{\{f^n(x),n\geq 0\}}$ for the Hausdorff distance:
the plaque family $\cD^{cs}$ is defined above the periodic orbit $\cO$ and for each $n\geq 0$
there exists a $(\dim(E^s)+1)$-dimensional disc $D_{n}$ centred at $f^n(y)$ whose large iterates $f^m(D_{n})$ all belong to the centre-stable plaque of $\cO$
and have a diameter bounded by $\eta$.
For $n$ larger than some $n_{0}$, the disc $D_{n}$ can be chosen with a uniform radius $\delta$.
One then choose $\rho$ such that the image of any disc of radius $\rho$ by $f^{n_{0}}$ has diameter bounded by $\delta$:
the preimage $f^{-n_{0}}(D_{n_{0}})$ contains a disc $D_{y}$ centred at $y$ of uniform radius $\rho$ and whose forward iterates
have a diameter smaller than $\max\{\varepsilon/2,\eta\}$.
Note that any point in a disc $D_{y}$ accumulates by forward iterations inside the union of the plaques $\cD^{c}$ over $\cO$;
hence any point in $D_{y}$ belong to the stable manifold of a periodic orbit.

Two cases are now possible.
\begin{itemize}
\item[--] For each point $y$ the disc $D_{y}$ at $y$ is contained in the stable set of $\cO$.
By definition $x$ is then approximated by large stable manifolds.
\item[--] For some point $y$ there exist in $D_{y}$ two points that belong to the stable set of distinct
periodic orbits. The complement of the strong stable sets of the periodic orbits is open in $D_{y}$
and not connected, hence $D_{y}$ contains a point $y'$ which belongs to the strong stable
set of a periodic orbit $\cO'$. By construction $\overline{\{f^n(y'),n\geq 0\}}$
is $\varepsilon/2$-close to $\overline{\{f^n(y),n\geq 0\}}$ hence $\varepsilon$-close to $\overline{\{f^n(x),n\geq 0\}}$.
\end{itemize}
The result follows.
\end{proof}
\bigskip

We now describe the geometry of the stable set of points $x\in A$ such that $\overline {\{f^n(x),n\geq 0\}}\subset U$
inside the plaque $\cD^{cs}_x$.
Since $K$ has type (N), remark~\ref{r.classification}.d) applies.
Hence, if the plaques of $\cD^{cs}$ and the neighbourhood $U$
have been chosen small enough, the forward iterates of the plaque $\cD^{cs}_x$
have a small diameter; consequently the plaques $\cD^{cs}_x$ are foliated by strong
stable leaves $W^{ss}_{loc}(z)$ (tangent to the bundle $E^{ss}$).
By reducing the plaques of $\cD^{cs}$ if necessary one also deduces that
$\cD^c_x$ parametrises the strong stable leaves of $\cD^{cs}_x$.

\begin{definition}
The \emph{local stable set} of a point $x$ such that $\overline {\{f^n(x),n\geq 0\}}\subset U$ is the set:
$$W^s_{loc}(x)=\left\{\zeta,\; f^n(\zeta)\in \cD^{cs}_{f^n(x)} \text{ for every $n\geq 0$ and }
d(f^n(x),f^n(z))\underset{n\to +\infty}{\longrightarrow} 0\right\}.$$
\end{definition}
\noindent Its trace on the central plaque will be denoted by
$$I^s_{x}=W^s_{loc}(x)\cap \cD^c_{x}.$$
By the previous discussion, $I_x^s$ is a subinterval in $\cD^c_x$ containing $x$ such that
$$ W^s_{loc}(x)=\bigcup_{\zeta\in I^s_x}W^{ss}_{loc}(\zeta).$$
Hence $I^s_x$ is reduced to $\{x\}$ if the stable set of $x$
is trivial in the central direction.

Note that by reducing $U$, the trapping properties along the plaques $\cD^c$ implies that
the length of $I^s_{x}$ can be assumed to be bounded by a small constant. In particular
$I^s_x$ and $\cD^{c}_x$ do not coincide and one has
\begin{equation}\label{e.stable}
f(I^s_x)=I^s_{f(x)}.
\end{equation}
One may consider the two \emph{endpoints}
of $I^s_x$ (i.e. the points in $\overline{I^s_x}\cap \overline{\cD^s_x\setminus I^s_x}$).
The following result shows that if $x$ is an endpoint of $I^s_{x}$ then it is approximated by strong stable manifolds.

\begin{lemma}\label{l.stable-controle}
Let $x\in A$ be a point such that $\overline {\{f^n(x),n\geq 0\}}\subset U$
and assume that $x$ is an endpoint of the curve $I^s_{x}$.
Then for any $\varepsilon>0$, there exists a neighbourhood $U'$ of $\omega(x)$ with the following property.
For any periodic orbit $\cO$ contained in $U'$, there exists a point $y\in W^{ss}(\cO)$
whose forward orbit has a closure $\overline{\{f^n(y),n\geq 0\}}$
that is $\varepsilon$-close to $\overline{\{f^n(x'),n\geq 0\}}$ for the Hausdorff distance.
\end{lemma}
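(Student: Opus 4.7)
The plan is to mimic the shadowing-and-intersection argument of corollary~\ref{c.instable-fort}, which treats points lying inside a type-(N) minimal set, and to use the hypothesis that $x$ is an endpoint of $I^s_x$ as a substitute for the membership $x\in K$ that was used there. By assumption~(I'), the $\omega$-limit set $\omega(x)$ is a minimal subset of $U\cap A$ of type~(N), so for any small $r>0$ the weak shadowing lemma~\ref{l.shadows} applies to $\omega(x)$ and yields a neighbourhood $U_1$, constants $\delta,\varepsilon_0>0$, and a trapping plaque family $\widetilde\cD^{cs}$.

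First, fix an integer $N$ with $f^n(x)\in U_1$ for $n\ge N$ and take $U'\subset U_1$ much smaller than $U_1$. For a given periodic orbit $\cO\subset U'$ with base point $p\in\cO$, the next step is to build an $\varepsilon_0$-pseudo-orbit $(z_n)_{n\ge 0}$ starting at $z_0=f^N(x)$, first following the forward orbit of $x$ up to a moment when it is very close to $\omega(x)$, then using chain-transitivity of $\omega(x)$ to reach $p$ along a short pseudo-orbit inside $\omega(x)$, and finally cycling periodically on $\cO$. The weak shadowing lemma then produces an orbit $(f^n(x'))_{n\ge 0}$ with $x'$ within $r$ of $f^N(x)$, with $f^n(x')\in\widetilde\cD^{cs}_{z_n}$ for every $n$, and in particular with $f^n(x')\in\widetilde\cD^{cs}_{q_n}$ for $q_n$ in the orbit of $\cO$ once $n$ is large; the trapping of $\widetilde\cD^{cs}$ forces $x'\in W^s(\cO)$. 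The geometric step of corollary~\ref{c.instable-fort} can then be copied: lemmas~\ref{l.uniq-cs} and~\ref{l.uniq-NH} give, for such $n$, a uniform neighbourhood of $f^n(x')$ in $\cD^c_{f^n(x')}$ contained in $\widetilde\cD^{cs}_{q_n}$, whose transverse intersection with $W^{ss}_{loc}(q_n)$ produces a point $y^n$ close to $f^n(x')$. The backward trapping of the central dynamics in type~(N) (remark~\ref{r.classification}.d) makes $y^{(0)}:=f^{-n}(y^n)$ stay within $r$ of $x'$, and $y^{(0)}\in W^{ss}(\cO)$ by construction. Setting $y:=f^{-N}(y^{(0)})$, one obtains, for $r$ small in terms of $N$ and $\varepsilon$, a point that lies in $W^{ss}(\cO)$, is $\varepsilon$-close to $x$, and whose forward orbit—shadowing the pseudo-orbit—has closure $\varepsilon$-Hausdorff-close to $\overline{\{f^n(x),n\ge 0\}}$.

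The main obstacle is guaranteeing the transverse intersection $W^{ss}_{loc}(q_n)\cap\cD^c_{f^n(x')}\ne\emptyset$ \emph{uniformly} over $\cO\subset U'$, and this is where the endpoint hypothesis is needed. If $x$ were interior to $I^s_x$, then by continuity a full two-sided neighbourhood of $x$ in $\cD^c_x$ would sit inside $W^s_{loc}(x)$, and a periodic orbit $\cO\subset U'$ whose local stable set did not reach this region would have its strong stable leaves $W^{ss}_{loc}(q_n)$ separated from the plaque $\cD^c_{f^n(x')}$ by the trapping barrier, so the required intersection could fail. The endpoint hypothesis supplies, on a definite side of $x$ in $\cD^c_x$, points trapped in $U$ under forward iteration that do not belong to $W^s_{loc}(x)$; by (I') any chain-transitive limit of their forward orbits is again a minimal type-(N) subset of $U\cap A$, and this forces, after enough forward iterates, the central plaque at $f^n(x')$ to meet the strong stable leaf of $q_n$ transversely for every $\cO\subset U'$, producing the point $y^n$ and thereby closing the argument.
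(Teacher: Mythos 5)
The paper's proof does not use the shadowing lemma at all. It parametrises $\cD^c_x\cong [-1,1]_x$, uses the endpoint hypothesis to exhibit an interval $(0,\eta]_x$ disjoint from $I^s_x$, and shows that the forward iterates $f^n([0,\eta]_x)$ have length bounded below by a uniform $\delta>0$ (this is exactly where the endpoint hypothesis is used: an interior point would have both sides of $x$ contracting, so the iterates of a central segment would shrink to zero). The limit of these iterates therefore contains, at each $z\in\omega(x)$, a central interval $J_z$ of length at least $\delta$. The \emph{non-twistedness} of $\omega(x)$ guaranteed by (I') is then invoked to produce $z\neq z'\in\omega(x)$ with $W^{ss}_{loc}(z')$ meeting $\bigcup_{\zeta\in J_z}W^{uu}_{loc}(\zeta)$, and a periodic orbit $\cO$ close to $\omega(x)$ inherits such an intersection. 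The desired point $y$ is then obtained by pulling the intersection point back along the $\cD^{cu}$ plaques.

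Your proposal takes a completely different route, essentially transposing the shadowing-and-ss-projection scheme of corollary~\ref{c.instable-fort}. This is not a minor variation, and it creates two concrete problems. First, the justification you give for the endpoint hypothesis is not convincing. Once the weak shadowing lemma and its uniqueness clause yield $f^n(x')\in\cD^{cs}_{q_n}$ with $d(f^n(x'),q_n)<r$, the transverse intersection $W^{ss}_{loc}(q_n)\cap\cD^c_{f^n(x')}$ exists by the very same plaque-coherence argument used in corollary~\ref{c.instable-fort} (lemmas~\ref{l.uniq-cs} and~\ref{l.uniq-NH}), and that corollary requires no endpoint-type condition on $x$ whatsoever; there is no ``trapping barrier'' that could separate the ss-leaf from the central plaque when both sit inside the same cs-plaque. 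Second, the ingredient that the paper's proof genuinely needs — the non-twistedness of $\omega(x)$ from hypothesis (I') — makes no appearance in your argument. Taken at face value, your reasoning would prove the lemma for every $x$ with $\omega(x)\subset U$, not only for endpoints of $I^s_x$; this would make lemma~\ref{l.dicho} and cases 2.b and 2.c in the subsequent discussion superfluous, which strongly suggests a hidden gap rather than a genuine strengthening. Until you can exhibit a step of your construction that actually fails when $x$ lies in the interior of $I^s_x$ — and pin down how the endpointness repairs it — the proposal cannot be accepted as a proof of the stated lemma.
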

\begin{proof}
One first parametrises the plaque $\cD^c_x$ centred at $x$ and identifies it to the interval
$[-1,1]_x$. Since $x$ is an endpoint of $I^s_x$
one can assume that $I^s_x$ is disjoint from $(0,1]_x$.

Let us choose $\eta>0$ small.
We claim that \emph{there exists $\delta>0$ such that
all the forward iterates of $[0,\eta]_x$ have a length larger than $\delta>0$.}
Indeed, for a central model of $\omega(x)$,
consider some neighbourhood $B$ of the section $0$ that is attracting for $\hat f^{-1}$:
by continuity, one may extend $B$ so that for any point $z$ whose forward orbit remains close to $\omega(x)$
one still has $f(B\cap \cD^c_{z})\supset B\cap \cD_{f(z)}^c$.
For $n_0$ large enough, the forward orbit of $f^n(x)$, $n\geq n_0$,
stays in a small neighbourhood of $\omega(x)$. Hence two cases appear.
\begin{itemize}
\item[--] There exists $n\geq n_0$
such that $f^n([0,\eta]_x)\subset \cD^c_{f^n(x)}$ meets the complement of $B$.
In this case, all the larger iterates are also trapped by $B$ and
the lengths $f^n([0,\eta]_x)$ are bounded from below by the width of $B$, as claimed.
\item[--] The length of $f^n([0,\eta]_x)$ for $n$ large is smaller that the width of $B$.
This case can not occur for any $B$ inside a decreasing sequence of trapping neighbourhoods, since
this would imply that $|f^n([0,\eta]_x)|\to 0$ as $n\to +\infty$ and contradict the fact that
$(0,\eta]_x$ is disjoint from $I^s_x$.
\end{itemize}
We thus obtain the claim.

From the claim, one deduces that
for each $z\in \omega(x)$, the limit set of the curves $f^n([0,\eta]_x)$ contains
an interval $J_z\subset \cD^c_z$ of length larger than $\delta$ and that contains $z$.
Since $\omega(x)$ is a non-twisted minimal set (by assumption (I')),
there exists $z\neq z'$ in $\omega(x)$ such that $W^{ss}_{loc}(z')$ intersects
$\bigcup_{\zeta\in J_z}W^{uu}_{loc}(\zeta)$.
Let $\cO$ be a periodic orbit close to $\omega(x)$: it contains a point $p$ arbitrarily close to $z'$.
There exists an iterate $f^n(x)$ close to $z$, so that $W^{ss}_{loc}(p)$ meets
$\bigcup_{\zeta\in f^n([0,\eta]_x)}W^{uu}_{loc}(\zeta)$ at some point $f^n(y)$.

By construction the iterates $(y,\dots,f^n(y))$ belong to the plaques $\cD^{cu}$ along the
iterates $(x,\dots,f^n(x))$ and the iterates $f^k(y)$ for $k\geq n$
belong to the local strong stable manifold of $\cO$.
Since the plaques and the local invariant manifolds may be assumed to have an arbitrarily
small diameter, this implies that the set $\overline{\{f^n(y),n\geq 0\}}$
can be chosen arbitrarily close to $\overline{\{f^n(x),n\geq 0\}}$ for the Hausdorff distance.
\end{proof}
\bigskip

One considers similarly the $y\in A$ such that $\overline {\{f^{-n}(y),n\geq 0\}}\subset U$:
they are approximated by strong unstable manifolds or by large unstable manifolds.
One defines the set $I^u_{y}\subset \cD^c_{x}$.
\bigskip

\paragraph{Discussion.}
In the following we continue the proof of proposition~\ref{p.N}.
By lemmas~\ref{l.dicho} and~\ref{l.stable-controle},
the following cases should be considered.
\begin{enumerate}
\item[1.] There is no $x,y\in A$ satisfying $x\not\in \omega(x)$, $y\not\in \alpha(y)$
and $\omega(x)=\alpha(y)\subset U$.
\item[2.] There exist $x,y\in A$ satisfying $x\not\in \omega(x)$, $y\not\in \alpha(y)$
and $\omega(x)=\alpha(y)\subset U$. Taking iterates, one can assume that
$\overline{\{f^n(x),n\geq 0\}}$ and $\overline{\{f^{-n}(y),n\geq 0\}}$ are contained in $U$.
We introduce three subcases.
\begin{enumerate}
\item[a)] $x$ is approximated by strong stable manifolds and $y$ is an endpoint of $I^u_y$.\\
(Or $x$ is an endpoint of $I^s_{x}$ and $y$ is approximated by strong stable manifolds.)
\item[b)] $x$ is approximated by large stable manifolds and $y$ is an endpoint of $I^u_{y}$.\\
(Or $x$ is an endpoint of $I^s_{x}$ and $y$ is approximated by large stable manifolds.)
\item[c)] $x$ is not an endpoint of $I^s_{x}$ and $y$ is not an endpoint of $I^u_{y}$.
\end{enumerate}
\end{enumerate}

\subsection{Trapped central behaviour}

Let us conclude the proof of proposition~\ref{p.N} in the cases 1 and 2.a.
One will show that the dynamics along the centre plaques has bounded deviations,
so that one can hope to recover the argument of
corollary~\ref{c.NN} and create a heterodimensional cycle.

We first deal with case 1.
\begin{lemma}\label{l.1}
Let us suppose that there is no points $x,y\in A$
satisfying $x\not\in \omega(x)$, $y\notin\alpha(y)$
and $\omega(x)=\alpha(y)\subset U$.
Then, one can create in each neighbourhood $V$ of $A$ a heterodimensional cycle associated to
a periodic orbit contained in $U$ by a $C^1$-small perturbation of $f$.
\end{lemma}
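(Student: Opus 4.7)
The plan is to adapt the construction of Corollary~\ref{c.NN}: exhibit a non-periodic point $x_0$ in a small neighbourhood of $K$, disjoint from $K$, whose forward orbit accumulates on $K$ and whose backward orbit escapes $U$; apply Hayashi's connecting lemma at $x_0$ together with Corollary~\ref{c.instable-fort} to create a strong homoclinic intersection for a periodic orbit $\cO$ near $K$; and conclude via Proposition~\ref{p.strong-cycle}.

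First I apply Lemma~\ref{l.isolation} to $K\subsetneq A$: hypothesis~(I') implies the isolation condition~(I), because any chain-transitive subset of $U\cap A$ is minimal by~(I') and therefore cannot strictly contain $K$. This yields $x^\pm\in A\setminus K$ with $x^+\notin\omega(x^+)$, $x^-\notin\alpha(x^-)$, together with invariant compact sets $\Delta^\pm\subset U\cap A$ containing $K$ and $\omega(x^\pm)$ respectively, from which $\varepsilon$-pseudo-orbits to $K$ exist inside $\Delta^\pm$. Under~(I'), the sets $K^+:=\omega(x^+)$ and $K^-:=\alpha(x^-)$ are minimal of type (N); by the no-pair hypothesis applied to $(x^+,x^-)$ they are distinct. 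Next, I concatenate the forward orbit of $x^+$ with an $\varepsilon$-pseudo-orbit in $\Delta^+$ from $K^+$ to $K$, obtaining a pseudo-orbit of $f$ in the compact set $X^+:=\overline{\{f^n(x^+):n\geq 0\}}\cup\Delta^+$ joining $x^+$ to $K$. Lemma~\ref{l.connecting} (applicable since $f\in\cR_{Chain}$) lifts this to a genuine $f$-orbit segment from a point near $x^+$ to a point $p_0$ arbitrarily close to $K$, the whole segment being contained in an arbitrarily small neighbourhood of $X^+\subset V$. Since $K$ has type (N), Remark~\ref{r.classification}(d) provides centre-stable plaques over $K$ that are trapped by $f$, and the weak shadowing lemma~\ref{l.shadows} ensures that the forward iterates of $p_0$ stay in a small neighbourhood of $K$; combined with~(I') and a suitable choice of the endpoint of the pseudo-orbit, these iterates accumulate on $K$ itself. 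Performing the symmetric $f^{-1}$-construction using $x^-$ and $\Delta^-$, I obtain a non-periodic point $x_0\in V\setminus K$ whose forward orbit accumulates on $K$ and whose backward orbit exits a fixed neighbourhood of $K$.

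With $x_0$ at hand, the proof of Corollary~\ref{c.NN} now applies verbatim: Hayashi's connecting lemma produces neighbourhoods $W\subset\widehat W$ of $x_0$ whose first $N$ iterates lie in $V$ and are disjoint from $K$; the weak shadowing lemma~\ref{l.shadows} yields a periodic orbit $\cO$ in an arbitrarily small neighbourhood of $K$; and Corollary~\ref{c.instable-fort} guarantees that both $W^{ss}(\cO)$ and $W^{uu}(\cO)$ meet $W$. A $C^1$-small perturbation of $f$ supported in the iterates of $\widehat W$ (hence disjoint from $\cO$) connects these strong manifolds, producing a strong homoclinic intersection for $\cO$ inside $V$. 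Since $\cO$ lies near the type-(N) set $K$, its central Lyapunov exponent is arbitrarily close to $0$ (Remark~\ref{r.classification}(c)), and a further $C^1$-small perturbation given by Proposition~\ref{p.strong-cycle} realises the desired heterodimensional cycle in $V$ associated to periodic orbits in $U$. The main obstacle is the middle step above: one must argue that the shadowed orbit truly accumulates on $K$ rather than on some other minimal set of $U\cap A$, which relies crucially on the trapping of the centre-stable plaques at $K$ together with~(I') to lock the forward orbit into a shrinking neighbourhood of $K$.
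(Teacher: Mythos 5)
Your plan is to adapt Corollary~\ref{c.NN}, but that corollary requires a \emph{non-minimal} chain-transitive partially hyperbolic set of type~(N) to which Corollary~\ref{c.instable-fort} can be applied --- and this is exactly what the isolation hypothesis~(I') forbids inside $U\cap A$. The work-around you propose is to manufacture a single point $x_0$ near $K$, outside $K$, whose forward orbit accumulates on $K$ while its backward orbit escapes a fixed neighbourhood; the construction you sketch does not produce such a point. Lemma~\ref{l.connecting} yields only a \emph{finite} orbit segment ending at some $p_0$ near $K$, and there is no reason for the iterates of $p_0$ beyond the end of that segment to remain near $K$: the endpoint $p_0$ need not lie on a trapped centre-stable plaque of $K$, so the uniform expansion along $E^u$ generically pushes it out. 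The weak shadowing lemma~\ref{l.shadows} cannot repair this, because it shadows pseudo-orbits that already lie in a small neighbourhood of $K$, which is precisely what is not known here about the genuine forward orbit of $p_0$. And even granting such an $x_0$, Corollary~\ref{c.instable-fort} requires the reference point to lie \emph{in} a chain-transitive type~(N) set; your $x_0$ is not in such a set ($A$ itself is only known to carry a dominated splitting, not a type~(N) partially hyperbolic structure), so the corollary does not supply strong invariant manifolds of $\cO$ passing through a neighbourhood of $x_0$.

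The paper's argument has a structurally different and essential shape. It first proves (Claim~\ref{c.1}) that near any minimal set $K'\subset U\cap A$ there is a \emph{wandering} point $z$ whose full orbit closure stays inside $U'\cap A$ and satisfies $z\notin\alpha(z)\cup\omega(z)$; the no-pair hypothesis enters precisely here, via Lemma~\ref{l.isolation}, to exclude that both $\Delta^+$ and $\Delta^-$ reduce to $K'$. One then shows that the central arcs $f^{-n}(I^s_z)$ shrink, and Lemma~\ref{l.stable-controle} (which applies to a point $x$ whose forward orbit stays in $U$, unlike Corollary~\ref{c.instable-fort} which needs $x\in K$) steers the strong stable manifold of any periodic orbit near $\omega(z)$ into a prescribed neighbourhood of $\alpha(z)$. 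The construction is then iterated: the claim applied to $K$ gives $z$, applied to $\omega(z)$ gives $z'$, and the periodic orbit $\cO$ with the strong homoclinic connection is produced \emph{near $\alpha(z')$}, not near $K$, with $W^{ss}(\cO)$ threaded through $\widehat W^+$ near $\alpha(z)$ and $W^{uu}(\cO)$ through $\widehat W^-$ near $\omega(z')$. This two-stage heteroclinic-chain structure and the control by the intervals $I^s$, $I^u$ have no counterpart in your proposal, and the step asserting that the forward orbit of $p_0$ accumulates on $K$ is unjustified.
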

\begin{proof}
We first prove that $U\cap A$ contain non-recurrent dynamics.
\begin{claim}\label{c.1}
Let $K'\subset A\cap U$ be a minimal compact set and $U'\subset U$ a neighbourhood of $K'$.
There exists a point $z$ such that
\begin{itemize}
\item[--] $\overline{\{f^n(z),n\in \ZZ\}}\subset U'\cap A$ and $z\not\in \alpha(z)\cup \omega(z)$;
\item[--] for any neighbourhood $U_\alpha$ of $\alpha(z)$, there exists a neighbourhood $U_\omega$
of $\omega(z)$ such that for any periodic orbit $\cO\subset U_\omega$, there exists a point
$y\in U_{\alpha}\cap W^{ss}(\cO)$ whose forward orbit is contained in $U'$.
\end{itemize}
\end{claim}
\begin{proof}
Let us apply lemma~\ref{l.isolation} to the set $K'$:
there exists a point $x^+\not\in K'$ and an invariant compact set $\Delta^+\subset U'\cap A$
containing $K'\cup \omega(x^+)$.
If $\Delta^+$ contains a point $z$ such that $z\not\in \alpha(z)\cup \omega(z)$
we are done, otherwise the set $\Delta^+$ is chain-transitive.
In this second case by the assumption (I') one has $K'=\omega(x^+)=\Delta^+$.
Similarly there exists a point $x^-\not\in K'$ and an invariant compact set $\Delta^-\subset U'\cap A$
containing $K'\cup \alpha(x^-)$.
Either it contains a point $z$ such that $z\not\in \alpha(z)\cup \omega(z)$
or $K'=\alpha(x^-)=\Delta^-$.
But by assumption $\omega(x^+)$ and $\alpha(x^-)$ cannot both coincide with $K'$.
This gives the first item of the claim.

We then prove that the length of $f^{-n}(I^s_z)$ goes to zero as $n\to +\infty$.
By~(\ref{e.stable}), one has
$$f^{-n}(I^s_z)=I^s_{f^{-n}(z)}.$$
If one assumes that the length of $f^{-n}(I^s_z)$ does not decrease to zero, one finds
an accumulation point $x\in \alpha(z)$ and an interval $\gamma\subset \cD^c_x$
such that $f^n(\gamma)\subset \cD^c_{f^n(x)}$ for each $n\geq 0$ and $\gamma$ is contained in
the chain-stable set of the chain-recurrence class containing $K$.
Since $\alpha(z)$ has type (N) by assumption (I'),
$\alpha(z)$ is a minimal set whose central dynamics is trapped, so
at any point $x\in \alpha(z)$ such a segment $\gamma$ exists and has a uniform size.
This implies that $\alpha(z)$ has type (P$_{SN}$), contradicting our assumption (I').

We end with the proof of the second item.
Let us fix a neighbourhood $U_\alpha$ of $\alpha(z)$.
Since the length of $f^{-n}(I^s_z)$ goes to zero,
there exists an iterate $f^{-n}(z)$ such that
$I^s_{f^{-n}(z)}$ is contained in $U_\alpha$.
Lemma~\ref{l.stable-controle} then gives a neighbourhood $U_\omega$ of $\omega(z)$ such that
for any periodic orbit $\cO\subset U_\omega$ there is a point $y\in U_\alpha\cap W^{ss}(\cO)$
whose forward orbit is contained in a small neighbourhood of the segments $f^{k-n}(I^s_z)$, $k\geq 0$.

The second item now follows if one proves that all the segments $f^{k}(I^s_z)$, $k\geq 0$
are contained in $U'$: this will be ensured by choosing the point $z$ carefully.
Note first that the construction of $z$ can be performed in any small neighbourhood of $K'$:
one gets a sequence $(z_k)$ such that
the sets $\overline{\{f^n(z_k),n\in \ZZ\}}$ converge to $K'$ for the Hausdorff topology.
Let us assume that the supremum of the lengths $f^n(I^s_{z_k})$ for $n\in \ZZ$
does not goes to zero when $k$ goes to infinity: a sequence of arcs $f^n(I^s_{z_k})$
converges toward a non-trivial segment $\gamma$ contained in a central plaque $\cD^c_x$
for some $x\in K'$ and contained
in the chain-recurrence class of $K$. Hence $K'$ has type (R),
contradicting the assumption (I').
One thus get the required property by choosing $z=z_k$ for $k$ large.
\end{proof}

One can now conclude the proof of the lemma.
The claim~\ref{c.1} applied to $(K,U)$ provides us with a point $z$.
One chooses $\zeta^+\alpha(z)$.
Let us consider any neighbourhood $\cU$ of $f$ in $\diff^1(M)$.
By the connecting lemma (recalled in section~\ref{ss.spread}), one obtains an integer $N\geq 1$ and
arbitrarily small neighbourhoods $W^+\subset \widehat W^+$ of $\zeta^+$.
One can require that the $N$ first iterates of $\widehat W^+$ are disjoint and have closures
disjoint from $\omega(z)$.
Since $\alpha(z)$ is minimal one can find a neighbourhood $U_\alpha$ such that
any orbit that has a point in $U_\alpha$ meets $\widehat W^+$.
One then applies the second item of claim~\ref{c.1} and introduces a neighbourhood $U_\omega$
or $\omega(z)$: for any periodic orbit $\cO$ contained in $U_\omega$
the strong stable set $W^{ss}(\cO)$ meets $\widehat W^+$.

Let us now apply claim~\ref{c.1} to $(\omega(z),U_\omega)$.
One obtains a point $z'$ such that $\overline{\{f^n(z'),n\in \ZZ\}}$
is contained in $U_\omega$ and $z'\not\in \alpha(z')\cup\omega(z')$.
One then chooses two small neighbourhoods $W^-\subset \widehat W^-$
of a point $\zeta^-\in \omega(z')$ and whose $N$ first backward iterates are disjoint and have closures
disjoint from $\alpha(z')$ and are contained in $U_\omega$.
In particular, the iterates of $f^i(\widehat W^+)$ and $f^{-j}(\widehat W^-)$ for $0\leq i,j\leq N$
can be chosen disjoint with closures disjoint from $\alpha(z')$.
By Pugh's closing lemma and our choice of $\cR_N$, there exists a periodic orbit $\cO$ close to the minimal set $\alpha(z')$
for the Hausdorff topology; its strong stable set meets $\widehat W^+$
at a point $x^+$ and its strong unstable set meets $\widehat W^-$ at a point $x^-$.
The forward orbit of $x^+$ is contained in $U$ and the backward orbit of $x^-$ is contained in $U_\omega$.
In particular the iterates $f^i(\widehat W^+)$, for $ 0\leq i\leq N$ are disjoint from the backward orbit
of $x^-$.

Since $f$ belongs to $\cR_N\subset \cR_{Chain}$,
lemma~\ref{l.connecting} is satisfied and there exists a segment of orbit $(x_0,\dots,f^\ell(x_0))$ contained in $V$
such that $x_0$ belongs to $W^-$ and $f^\ell(x_0)$ to $W^+$.
Arguing as in the proof of proposition~\ref{p.extension},
one can then connects the backward orbit $\{f^{-i}(x^-), i\geq m^-\}$
to the forward orbit $\{f^i(x^+),i\geq m^+\}$, for $m^�,m^+$ large,
using the finite orbit $(x_0,\dots,f^\ell(x_0))$
by a perturbation $g_0\in\cU$ supported in the iterates $f^k(\widehat W^+)$
and $f^{-k}(\widehat W^-)$ for $0\leq k\leq N$.
Hence the periodic orbit $\cO$ has a strong homoclinic intersection.
Since $\cO$ is contained in an arbitrarily small neighbourhood of the set $\alpha(z')$
of type (N), its centre Lyapunov exponent is arbitrarily close to zero
and by proposition~\ref{p.strong-cycle},
one can by $C^1$-small perturbation $g$ of $g_0$ create a heterodimensional cycle
associated to periodic orbits contained in $U$.
By construction the cycle is contained in the union of the support of the perturbation with
a neighbourhood of the backward orbit of $x^-$, the forward orbit of $x^+$ and
the segment of orbit $(x_0,\dots,f^\ell(x_0))$. In particular, it is contained in $V$.
\end{proof}
\medskip

The case 2.a is even simpler.

\begin{lemma}
Let $K'\subset U\cap A$ be an invariant compact set and
let $x,y\in A\setminus K'$ such that $\omega(x)=\alpha(y)=K'$
and $\overline{\{f^{-n}(y), n\geq 0\}} \subset U$.

If $x$ is approximated by strong stable manifolds and $y$ is an endpoint of $I^u_y$,
then 
by a $C^1$-small perturbation of $f$ one can create in each neighbourhood $V$ of $A$ a heterodimensional cycle associated to a periodic orbit contained in $U$.
\end{lemma}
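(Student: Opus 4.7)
The plan is to mimic the proof of Lemma~\ref{l.1}, producing a periodic orbit $\cO$ that carries both a strong stable approximant near $x$ and a strong unstable approximant near $y$, and then creating a strong homoclinic intersection for $\cO$ by two Hayashi-type perturbations glued together with the connecting lemma for pseudo-orbits. A final $C^1$-small perturbation via Proposition~\ref{p.strong-cycle} converts the strong homoclinic intersection into the desired heterodimensional cycle.

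First I will extract a common periodic orbit. Since $y$ is an endpoint of $I^u_y$, applying the symmetric version of Lemma~\ref{l.stable-controle} to $f^{-1}$ produces, for every $\varepsilon>0$, a neighbourhood $U'\subset U$ of $K'=\alpha(y)$ such that every periodic orbit $\cO\subset U'$ admits a point $y^-\in W^{uu}(\cO)$ whose backward orbit closure is $\varepsilon$-close in Hausdorff distance to that of $y$. Since $x$ is approximated by strong stable manifolds, for $\varepsilon$ small enough there exists a periodic orbit $\cO\subset U$ and a point $y^+\in W^{ss}(\cO)$ whose forward orbit closure is close to that of $x$; in particular $\cO$ lies close to $\omega(x)=K'$ and can be placed inside $U'$. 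The same $\cO$ therefore supports both $y^-$ and $y^+$.

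Next I will produce a strong homoclinic orbit for $\cO$. Fix a $C^1$-neighbourhood $\cU$ of $f$ and a prescribed neighbourhood $V$ of $A$, and pick non-periodic points $\zeta^+\in\alpha(x)$ and $\zeta^-\in\omega(y)$, which lie in $A$ and can be chosen outside $\cO$. Hayashi's connecting lemma, applied at $\zeta^+$ and at $\zeta^-$, yields an integer $N\geq 1$ and arbitrarily small neighbourhoods $W^\pm\subset\widehat W^\pm$; after shrinking, I can arrange that the first $N$ forward iterates of $\widehat W^-$ and the first $N$ backward iterates of $\widehat W^+$ are pairwise disjoint and avoid $\cO$, $K'$, the backward orbit of $y^-$, and the forward orbit of $y^+$. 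Using chain-transitivity of $A$, the fact that $f\in\cR_{Chain}$, and Lemma~\ref{l.connecting}, I obtain an orbit segment $(z_0,\dots,f^\ell(z_0))\subset V$ with $z_0\in W^-$ and $f^\ell(z_0)\in W^+$. Two Hayashi perturbations with disjoint supports, composed as in~\cite[section 2.1]{crovisier-approximation}, then splice a forward iterate of $y^-$ to $z_0$ and $f^\ell(z_0)$ to a backward iterate of $y^+$. The resulting diffeomorphism $g_0\in\cU$ coincides with $f$ on $\cO$ and on the tails feeding $y^\pm$ into $\cO$, yet $y^-$ and $y^+$ lie on a common $g_0$-orbit contained in $V$, i.e.\ a strong homoclinic orbit for $\cO_{g_0}=\cO$.

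Finally, since $\cO$ sits in an arbitrarily small neighbourhood of the type-(N) set $K'$, Remark~\ref{r.classification}.c) forces its Lyapunov exponent along $E^c$ to be arbitrarily close to $0$, so Proposition~\ref{p.strong-cycle} converts the strong homoclinic connection, by a further $C^1$-small perturbation $g$ of $g_0$, into a heterodimensional cycle contained in $V$ and associated to periodic orbits contained in $U$. The main delicate point is orchestrating the two Hayashi perturbations with pairwise disjoint supports while simultaneously preserving $\cO$ together with the local strong manifolds through $y^+$ and $y^-$ and the intermediate orbit $(z_0,\dots,f^\ell(z_0))$; this is the same bookkeeping as in Lemma~\ref{l.1}, handled by picking $\zeta^\pm$ non-periodic and sufficiently deep in the orbits of $y$ and $x$ and by shrinking $\widehat W^\pm$ enough that all the relevant compact sets stay disjoint from the supports of the perturbations.
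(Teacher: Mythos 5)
Your overall plan (build a periodic orbit $\cO$ close to $K'$ with both a strong-stable approximant near $x$ and a strong-unstable approximant near $y$, create a strong homoclinic intersection by two Hayashi perturbations glued via Lemma~\ref{l.connecting}, then invoke Proposition~\ref{p.strong-cycle}) is the right one, and the first step — using Lemma~\ref{l.stable-controle} for $f^{-1}$ to produce $U'$ and then picking $\cO\subset U'$ via the approximation of $x$ by strong stable manifolds — is correct. The gap is where you site the Hayashi perturbations. You put them at $\zeta^+\in\alpha(x)$ and $\zeta^-\in\omega(y)$, whereas the paper puts them at $x$ and $y$ themselves. Your two splices then require the \emph{forward} orbit of $y^-\in W^{uu}(\cO)$ to enter $W^-$ (near $\zeta^-$) and the \emph{backward} orbit of $y^+\in W^{ss}(\cO)$ to enter $W^+$ (near $\zeta^+$). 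But Definition~\ref{d.approx} and Lemma~\ref{l.stable-controle} control the opposite ends: the forward orbit closure of $y^+$ (close to that of $x$) and the backward orbit closure of $y^-$ (close to that of $y$). Nothing is known about where $y^-$ goes in the future or $y^+$ in the past. In Lemma~\ref{l.1} this kind of argument works because the relevant limit sets lie in $U\cap A$ and are therefore \emph{minimal} by the isolation hypothesis (I'), forcing every nearby orbit to revisit $\widehat W^+$; here $\alpha(x)$ and $\omega(y)$ are unconstrained (the hypothesis controls only $\overline{\{f^{-n}(y),n\geq 0\}}$, not the forward orbit of $y$), so that recurrence argument is unavailable.

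There is also a concrete failure case that illustrates why the choice of sites matters: if $\omega(y)=K'$ (which is compatible with all the hypotheses), every $\zeta^-\in\omega(y)$ lies in $K'$, so $\widehat W^-$ cannot be made disjoint from $K'$, and the Hayashi support would then touch the orbit $\cO$ and the strong manifolds you are trying to preserve. The paper sidesteps both problems by applying the connecting lemma with neighbourhoods $W^+\subset\widehat W^+$ of $x$ and $W^-\subset\widehat W^-$ of $y$ (both in $A\setminus K'$), observing that $W^{ss}(\cO)$ meets $\widehat W^+$ at a point $x^+$ and $W^{uu}(\cO)$ meets $W^-$ at a point $x^-$ precisely because those are the ends of the orbits of $x^\pm$ that the approximation statements control, then getting the orbit segment $(x_0,\dots,f^\ell(x_0))$ from $W^-$ to $W^+$ via Lemma~\ref{l.connecting}. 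Your proof should relocate the Hayashi neighbourhoods to $x$ and $y$; once that is done the rest of your argument goes through as you describe.
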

\begin{proof}
Let $\cU$ be a $C^1$-neighbourhood of $f$.
The connecting lemma (recalled in section~\ref{ss.spread})
associates an integer $N\geq 1$ and some arbitrarily small neighbourhoods
$W^+\subset\widehat W^+$ of $x$ and $W^-\subset \widehat W^-$ of $y$.
One may assume that the iterates $f^i(\widehat W^+)$ and $f^{-j}(\widehat W^-)$
for $0\leq i,j\leq N$ are pairwise disjoint and have closures disjoint from $K'$.
Moreover the iterates $f^i(\widehat W^+)$ for $0\leq i\leq N$ are disjoint from the
backward orbit of $y$.
Since $f$ belongs to $\cR_N\subset \cR_{Chain}$,
lemma~\ref{l.connecting} is satisfied and there exists a segment of orbit $(x_0,\dots,f^\ell(x_0))$ contained in $V$
such that $x_0$ belongs to $W^-$ and $f^\ell(x_0)$ to $W^+$.

Since $x$ is approximated by strong stable manifolds, there exists a periodic orbit $\cO$
arbitrarily close to $K'$ for the Hausdorff topology whose strong stable manifold meets $\widehat W^+$
at some point $x^+$. Since $y$ is an endpoint of $I^u_y$
and by lemma~\ref{l.stable-controle}, the strong unstable manifold of $\cO$
meets $W^-$ at a point $x^-$,
provided $\cO$ has been chosen close enough to $K'$.
The sets $\overline{\{f^{-k}(x^-),k\geq 0\}}$
and $\overline{\{f^{-k}(y),k\geq 0\}}$ are close for the Hausdorff topology.
One thus deduces that the iterates $f^i(\widehat W^+)$ for $0\leq i\leq N$ are disjoint from the
backward orbit of $x^-$.

Arguing as in the proofs of proposition~\ref{p.extension} and lemma~\ref{l.1},
one concludes that there exists a perturbation $g\in \cU$ of $f$ with support 
in the union of the $f^i(\widehat W^+)\cup f^{-i}(\widehat W^-)$, for $0\leq i\leq N$
such that $\cO$ has a strong homoclinic intersection.
Since $\cO$ is close to $K$, its central exponent is close to zero
and from proposition~\ref{p.strong-cycle} one can obtain a heterodimensional cycle by another $C^1$-perturbation,
associated to periodic orbits close to $K'$ (hence contained in $U$).
By construction the cycle is contained in $V$ as required.
\end{proof}

\subsection{Large stable set and trapped unstable set}
In case 2.b, the dynamics can be approximated by periodic orbits
having an iterate whose local central manifold has a uniform size.
Hence, one can hope to argue as in the chain-hyperbolic case
and prove that the chain-recurrence class of $K$
contains hyperbolic periodic orbits whose central exponent is weak.

\begin{lemma}\label{l.dev1}
Let $K'\subset U\cap A$ be an invariant compact set and
let $x,y\in A\setminus K'$ such that $\omega(x)=\alpha(y)=K'$ and $\overline{\{f^{-n}(y), n\geq 0\}}
\subset U$.

If $x$ is approximated by large stable manifolds and $y$ is an endpoint of $I^u_{y}$,
then for any $\delta,\eta>0$, there exists $\rho>0$ and a sequence of hyperbolic periodic orbits $(\cO_n)$
with the following properties.
\begin{enumerate}
\item The sequence $(\cO_n)$ converges for the Hausdorff topology toward a
compact set $\Lambda\subset A$.
\item The $(\dim(E^s)+1)$-th Lyapunov exponent of each orbit $\cO_n$
belongs to $(-\delta,0)$. In particular the index of $\cO_n$ is $\dim(E^s)+1$.
\item In each orbit $\cO_n$ there exists a point $p$ whose stable manifold
contains an immersed ball $B_p$ centred at $p$ of radius $\rho$
and whose forward iterates have radius bounded by $\eta$.
\end{enumerate}
\end{lemma}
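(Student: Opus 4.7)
The result will be deduced essentially from the hypothesis that $x$ is approximated by large stable manifolds; the endpoint condition on $y$ plays only a supporting role (it will matter in the subsequent cycle-construction steps). Given $\delta,\eta>0$, apply the approximation hypothesis with parameter $\eta$: this yields a uniform radius $\rho_0>0$ and, for a sequence $\varepsilon_n\to 0$, hyperbolic periodic orbits $\cO_n\subset U$ together with points $y_n\in W^s(\cO_n)$ such that the Hausdorff distance between $\overline{\{f^k(y_n)\colon k\geq 0\}}$ and $\overline{\{f^k(x)\colon k\geq 0\}}\subset A$ is at most $\varepsilon_n$, together with a $(\dim(E^s)+1)$-dimensional disk $D_n\subset W^s(\cO_n)$ of radius $\rho_0$ centred at $y_n$ satisfying $\operatorname{diam}(f^k(D_n))\leq \eta$ for every $k\geq 0$. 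Since $\cO_n\subset\overline{\{f^k(y_n)\colon k\geq 0\}}$ lies in the $\varepsilon_n$-neighbourhood of the closed invariant set $A$, passing to a subsequence the Hausdorff limit $\Lambda$ of $(\cO_n)$ is a compact invariant subset of $A$, which gives property~(1).

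For property~(2), note first that the $(\dim(E^s)+1)$-dimensional disk $D_n\subset W^s(\cO_n)$ forces the index of $\cO_n$ to be at least $\dim(E^s)+1$; on the other hand, the partially hyperbolic structure $E^s\oplus E^c\oplus E^u$ extends to the maximal invariant set in a small neighbourhood of $K$ containing every $\cO_n$, with $E^u$ uniformly expanded, so the index is at most $\dim(E^s)+1$ and hence equals $\dim(E^s)+1$. The $(\dim(E^s)+1)$-th Lyapunov exponent therefore coincides with the central one and is negative. To push it into $(-\delta,0)$ for $n$ large, consider the periodic measures $\mu_n$ on $\cO_n$; up to extraction they converge weakly to an $f$-invariant probability measure $\mu$ supported on $\Lambda\subset A$. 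Kupka--Smale genericity combined with~(I') forces the periods $\tau_n\to\infty$, since otherwise a subsequence of the $\cO_n$ would accumulate on a hyperbolic periodic orbit inside $A\cap U$, contradicting~(I') which would force it to have type~(N) and hence a vanishing central exponent. The ergodic components of $\mu$ are supported on chain-transitive subsets of $A\cap \overline U$, which by (I') are minimal sets of type~(N); by remark~\ref{r.classification}.c each such component has zero Lyapunov exponent along $E^c$. Hence $\int\log\|Df_{|E^c}\|\,d\mu=0$, and continuity of the integrand together with weak convergence give $\int\log\|Df_{|E^c}\|\,d\mu_n\to 0$, as desired.

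It remains to produce, for each $\cO_n$, a disk in the stable manifold of a periodic point rather than merely a disk at the base point $y_n$. Choose $k_n$ large enough that $f^{k_n}(y_n)$ lies very close to some $p_n\in\cO_n$. Then $\widetilde D_n:=f^{k_n}(D_n)\subset W^s(\cO_n)$ is a $(\dim(E^s)+1)$-dimensional disk of diameter at most $\eta$ lying in $W^s(p_n)$ and passing close to $p_n$. Inside a uniform neighbourhood of $\cO_n$ the stable set is foliated by strong stable leaves tangent to $E^s$, and its intersection with the centre-stable plaque $\cD^{cs}_{p_n}$ is a central leaf through $p_n$; composing the strong-stable holonomy with motion along this central leaf, one transports a sub-disk of $\widetilde D_n$ to an immersed disk $B_{p_n}\subset W^s(p_n)$ centred at $p_n$, of some radius $\rho>0$ depending only on $\rho_0$, $\eta$, and the uniform plaque geometry of the extended partially hyperbolic set. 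The main obstacle is exactly this uniform transport: controlling $\rho$ independently of $n$ despite the diverging periods, while combining the purely geometric information from the large stable disk with the ergodic control of the central exponent. Once both are in place, Kupka--Smale genericity and the isolation assumption~(I') yield all three properties.
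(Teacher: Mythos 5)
Your arguments for properties (1) and (2) are essentially sound: the Hausdorff limit of the approximating orbits lies in $\omega(x)=K'\subset A$, and since $K'$ has type (N) (by (I')) all invariant measures supported on it have zero central exponent, so weak-$*$ convergence of the periodic measures forces the central exponents of $\cO_n$ to converge to $0$ while remaining negative (the index is $\dim(E^s)+1$). This is a cleaner, more direct route to (1)--(2) than the paper's argument.

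The gap is in property (3), and you acknowledge it yourself without closing it. Iterating $D_n$ forward until $f^{k_n}(y_n)$ lands near a periodic point $p_n\in\cO_n$ destroys the uniform size: the paper observes explicitly (right after citing Definition~\ref{d.approx}) that $\operatorname{diam}(f^n(D_k))\to 0$ as $n\to\infty$, because otherwise a limit segment $\gamma$ in a central plaque over $K'$ would be chain-stable to $K'$, contradicting that $K'$ has type (N) only. Strong-stable holonomy is a local diffeomorphism, so composing it with ``motion along the central leaf'' cannot enlarge a vanishingly small disk into a ball of uniform radius $\rho$ at $p_n$; you gain nothing from the plaque geometry if the source has already collapsed. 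In short, there is no way to extract a uniformly large ball centred at a point of $\cO_n$ from the disk at $y_n$ by pure forward iteration.

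This is exactly why the paper's proof is not a direct argument: it is a perturbation-plus-genericity argument. One defines open sets $\cU(V_n,\delta,\eta,\rho)$ of diffeomorphisms possessing a periodic orbit with properties (2)--(3), and shows $f$ lies in a residual set where openness forces membership from approximability. To approximate, one uses the connecting lemma at $f^{-N}(x)$ and at $y$: the large-stable hypothesis on $x$ produces $x^+\in W^s(\widetilde\cO)\cap W^+$ carrying a $\rho$-ball that is \emph{not} iterated forward, while the endpoint hypothesis on $y$ (via Lemma~\ref{l.stable-controle}) produces $x^-\in W^{uu}(\widetilde\cO)\cap W^-$; connecting these closes a homoclinic intersection, a further perturbation makes it transverse, and the resulting transitive hyperbolic set contains periodic orbits with a point $p$ close to $x^+$ whose local stable manifold inherits the $\rho$-ball. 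So the endpoint condition on $y$ is not merely ``supporting'' -- it is what makes the orbit closable, and the whole statement becomes an openness/density argument rather than a hands-on limit of the given orbits $\widetilde\cO_k$.
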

\begin{proof}
Let $\cS=\{1/n, n> 0\}$ and let $(V_n)_{n\geq 0}$ be a countable family of open sets such that
for any compact set $A'$ and any neighbourhood
$V'$ of $A'$, there exists $V_n$ in the family satisfying $A'\subset V_n\subset V'$.
For $\delta,\eta,\rho\in \cS$, let us denote by
$\cU(V_n,\delta,\eta,\rho)$ the set of diffeomorphisms which possess a hyperbolic periodic orbit contained in $V_n$
satisfying the items~2 and~3 above. This set is open.
Let $\cR_{2.b}$ be the intersection of the $\cU(V_n,\delta,\eta,\rho)\cup \left(\diff^ 1(M)
\setminus \overline{\cU(V_n,\delta,\eta,\rho)}\right)$ over
the $4$-uples $(V_n,\delta,\eta,\rho)$. This is a G$_\delta$ dense subset of $\diff^1(M)$.
Since $f$ is $C^1$ generic one can assume that it belongs to $\cR_{2.b}$.

In the following, one fixes $\delta,\eta\in \cS$ and one associates $\rho\in \cS$
which satisfies the definition~\ref{d.approx} of approximation by large stable manifold
for $x$ and $\eta$. The following claim gives for each $V_n$ a periodic orbit $\cO_n\subset V_n$ that
satisfies the items~2 and~3; in particular it implies the lemma.
\begin{claim}
For each $V_n$ containing $A$, the diffeomorphism $f$ belongs to $\cU(V_n,\delta,\eta,\rho)$.
\end{claim}

The definition~\ref{d.approx} provides us with a sequence $(\widetilde \cO_k)$
of periodic orbit that converges towards $K'$ and a sequence of points $x_k$
in the stables sets $W^{s}(\widetilde \cO_k)$ that converges toward $x$ such that
the ball $D_k$ centred at $x_k$ and of radius $\rho$ is contained in the stable manifold of $\widetilde \cO_k$ and its forward iterates have a radius bounded by $\eta$.

We note that the diameter of the disks
$(f^n(D_k))_{n\geq 0}$ decreases uniformly to zero as $n\to \infty$.
Indeed if this were not satisfied, a sub-sequence of disks $(f^n(D_k))$
would converge toward a non-trivial segment $\gamma$ contained in the central plaque of a point of $K'$;
since $\gamma$ is contained in the chain-stable set of $K'$, this would contradict the fact that $K'$
has type (I') only. This gives the property.

We continue with the proof of the claim.
In order to prove that $f$ belongs to $\cU(V_n,\delta,\eta,\rho)$
it is enough to show that $f\in \cR_{2.b}$ is limit of diffeomorphisms in $\cU(V_n,\delta,\eta,\rho)$.
We thus consider a neighbourhood $\cU$ of $f$ in $\diff^1(M)$
and we have to show that $\cU\cap \cU(V_n,\delta,\eta,\rho)$ is non-empty.

One associates to $(f,\cU)$ by the connecting lemma an integer $N\geq 1$ and
some small neighbourhoods $W^+\subset \widehat W^+$ and $W^-\subset \widehat W^-$
of $f^{-N}(x)$ and $y$ respectively such that the iterates
$f^i(\widehat W^+)$ and $f^{-j}(\widehat W^-)$ for $0\leq i,j\leq N$ are pairwise disjoint
and have closures that are disjoint from the set $K'$ and the
orbits $\{f^{k}(x),k>0\}$ and $\{f^{-k-N}(y),k>0\}$.
Recall that the iterates $f^n(D_k)$ considered above have a diameter that decrease uniformly to zero as
$n$ goes to $+\infty$ and accumulate on the set $K'$.
Hence, if $\widehat W^+$ has a small diameter, it is disjoint from all the $f^n(D_k)$, $n,k\geq 0$,
provided $k$ is large enough.

Since $f$ belongs to $\cR_N\subset \cR_{Chain}$,
lemma~\ref{l.connecting} is satisfied and there exists a segment of orbit $(x_0,\dots,f^\ell(x_0))$ contained in $V_n$
such that $x_0$ belongs to $W^-$ and $f^\ell(x_0)$ to $W^+$.
Since $x$ is approximated by large stable manifolds
and $y$ is an endpoint of $I^u_{y}$,
there exists (among the family $(\widetilde \cO_k)$) a periodic orbit $\widetilde \cO$ of index $\dim(E^1)+1$
close to $K'$ and two points $x^+\in W^s(\widetilde \cO)\cap W^+$ and
$x^-\in W^{uu}(\widetilde \cO)\cap W^-$
such that
\begin{itemize}
\item[--] $\overline{\{f^n(x^+),n\geq 0\}}$ is close to $\overline{\{f^n(x),n\geq 0\}}$
and $\overline{\{f^{-n}(x^-),n\geq 0\}}$ is close to $\overline{\{f^{-n}(y),n\geq 0\}}$
for the Hausdorff distance;
\item[--] the stable manifold of $\widetilde \cO$ contains a ball of radius $\rho$ centred at $x^+$
whose forward iterates have a diameter less than $\eta$.
\end{itemize}

Arguing as in the proofs of proposition~\ref{p.extension} and lemma~\ref{l.1},
one concludes that there exists a perturbation $g\in \cU$ of $f$ with support 
in the union of the $f^i(\widehat W^+)\cup f^{-i}(\widehat W^-)$, for $0\leq i\leq N$
such that $\widetilde \cO$ has a homoclinic intersection at $x$.
Moreover the neighbourhood $\widehat W^+$ has been chosen small enough to be disjoint
from the forward iterates of the disk centred at $x^+$ of radius $\rho$ in $W^s(\widetilde \cO)$.
In particular the stable manifold of $\widetilde \cO$ for $g$
still contains a disk $B_{x^+}$ of radius $\rho$ centred at $x^+$ and whose forward iterates 
have a diameter smaller than $\eta$.

By arbitrarily small perturbation, one can assume that the homoclinic intersection at $x^+$
is transversal. As a consequence,
there exists a transitive hyperbolic set containing $\widetilde \cO$
and the orbit of $x^+$. One deduces that there exists a periodic orbit $\cO_n$
whose central Lyapunov exponent is close to the exponent of $\widetilde \cO$,
and a point $p\in \cO_n$ whose local stable manifold is close to the stable set of $x^+$:
in particular, it contains a ball $B_p$ of radius $\rho$.
By hyperbolicity, there exists $n_0$ such that the radius of $f^n(B_p)$ is smaller than $\eta$ for any $n\geq n_0$.
On the other hand, if $p$ has been chosen close enough to $x^+$,
the diameter of $f^n(B_p)$ for $0\leq n\leq n_0$ is close to the diameter
of $B_{x^+}$, hence have a radius smaller than $\eta$.
Consequently all the forward iterates
of have a diameter smaller than $\eta$, as required.

Since $\widetilde \cO$ is close to $K$, its central exponent is close to zero
and belongs to $(-\delta,0)$.
This ends the proof of the claim and of the lemma.
\end{proof}
\medskip

The following result asserts that the periodic orbits provided by lemma~\ref{l.dev1}
are homoclinically related together. This concludes the proof of proposition~\ref{p.N}
in the case 2.b.

\begin{lemma}
Let us fix $\eta$ and $\delta$ small.
Let $(\cO_n)$ be a sequence of hyperbolic periodic orbits satisfying properties 1, 2 and 3 of lemma~\ref{l.dev1}.
Then the orbits $\cO_n$ from an extracted subsequence are homoclinically related together.
\end{lemma}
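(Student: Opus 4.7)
The plan is to show that, after a suitable extraction, any two orbits $\cO_n, \cO_m$ in the sequence satisfy both $W^s(\cO_n) \pitchfork W^u(\cO_m) \neq \emptyset$ and $W^u(\cO_n) \pitchfork W^s(\cO_m) \neq \emptyset$, which is the definition of being homoclinically related.

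First I would extract a subsequence so that the marked points $p_n \in \cO_n$ furnished by item~3 converge to some $p_\infty \in \Lambda$; then $d(p_n, p_m)$ can be made arbitrarily small by taking $n, m$ large. For $n$ large, $\cO_n$ lies in an arbitrarily small neighbourhood of $A$, on which the splitting $E^s \oplus E^c \oplus E^u$ extends and $E^s$, $E^u$ are uniformly contracted and expanded respectively. The $(\dim(E^s)+1)$-th Lyapunov exponent of $\cO_n$ being negative and in $(-\delta,0)$, the stable bundle of $\cO_n$ must coincide with $E^s \oplus E^c$ along the orbit. Hence $B_{p_n}$ is a $(\dim(E^s)+1)$-dimensional disk of radius $\rho$ centred at $p_n$ and tangent to a thin cone around $E^s \oplus E^c$.

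On the unstable side, uniform expansion of $E^u$ on a neighbourhood of $A$ yields a uniform $\rho' > 0$ such that for every point $q$ of every $\cO_m$ with $m$ large, $W^u_{loc}(q)$ contains a $\dim(E^u)$-dimensional disk of radius $\geq \rho'$ tangent to a thin cone around $E^u$. Choosing $n, m$ large enough that $d(p_n, p_m)$ is small compared with $\min(\rho, \rho')$, the classical transversality estimate coming from the dominated splitting $(E^s \oplus E^c) \oplus E^u$ forces $B_{p_n}$ and $W^u_{loc}(p_m)$ to meet transversely, producing a point in $W^s(\cO_n) \pitchfork W^u(\cO_m)$. Exchanging the roles of $n$ and $m$ yields the symmetric intersection, and so $\cO_n$ and $\cO_m$ are homoclinically related.

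The main technical point will be to check that, along each $\cO_n$, the stable bundle genuinely equals $E^s \oplus E^c$ and that the local unstable manifolds have a radius bounded below independently of $n$: both follow from partial hyperbolicity on a neighbourhood of $A$ combined with the exponent control from item~2 and the fact that the forward iterates of $B_{p_n}$ have diameter bounded by the small constant $\eta$. Once these uniform estimates are secured, the transverse intersection of two uniformly-sized disks tangent to transverse cones and centred at nearby points is standard, and the conclusion follows.
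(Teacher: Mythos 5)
Your overall template — produce uniform local stable and unstable manifolds at the distinguished points $p_n$, extract so that the $p_n$ converge, and conclude by transverse intersection — is the right one and matches the paper's endgame. The fatal gap is the claim that "the splitting $E^s\oplus E^c\oplus E^u$ extends and $E^s$, $E^u$ are uniformly contracted and expanded respectively" on a neighbourhood of $A$. That is false. What proposition~\ref{p.extension} gives on $A$ is only a \emph{dominated} splitting $E_1\oplus E^c\oplus E_3$ that extends the partially hyperbolic structure of the minimal set $K$; uniform contraction of $E_1$ and uniform expansion of $E_3$ hold over $K$ but not over $A$. Indeed the paper points this out explicitly (at the start of the case 2.c discussion: ``the difficulty is that the bundles $E_1$ and $E_3$ on $A$ are not uniform''), and the same difficulty governs case~2.b. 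Consequently your assertion that every point $q$ of $\cO_m$ has a local unstable manifold of uniform radius $\rho'$ is unjustified. Item~2 only controls the \emph{average} (Lyapunov exponent) along $E^c$, which by domination controls the average along $E_3$; it does not give the pointwise Pliss-type bound $\|Df^{-k}_{|E_3}(p)\|\le C\lambda^k$ for all $k\ge 0$ that is needed to make $W^u_{loc}(p)$ of uniform size at the specific point $p$ provided by item~3.

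The paper's proof supplies precisely this missing step: it fixes a bounded-distortion constant $\lambda\le e^\delta$ for $Df^{-1}_{|E^c}$ along central plaques, finds a backward Pliss point $q\in\cO_n$ via item~2, and then uses the geometry of the central trapping intervals $\Gamma_z$ together with item~3 (which forces $\Gamma_p$ to contain the $\rho$-ball) to show, by a branching/distortion argument along the segment of orbit from $q$ to $p$, that $p$ itself satisfies $\|Df^{-k}_{|E^c}(p)\|\le e^{k\delta}$ for all $k\ge 0$. Domination then transfers this into uniform backward contraction along $E_3$ at $p$, hence a uniform local unstable manifold; a symmetric distortion estimate using $f^k(\Gamma_p)\subset\cD^c_{f^k(p)}$ yields uniform forward contraction along $E_1$ at $p$, hence a uniform local stable manifold. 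Your proof needs this Pliss-plus-distortion machinery (or some substitute) before the final intersection argument can go through.
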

\begin{proof}
Let us define $\lambda\geq 1$ as the supremum of $\displaystyle \sup_{z,z'\in \cD^ c_x}\frac{\|Df^ {-1}_{|E^c}(z) \|}{\|Df^ {-1}_{|E^c}(z')\|}$
over the $x\in A$. By choosing the plaques $\cD^ c$ small enough one can
ensure that $\lambda \leq e^{\delta}$.
If the constant $\eta$ that appear in
lemma~\ref{l.dev1} is small enough, one can assume that
$\eta$ and $\eta \|Df \|$ are smaller than the infimum of the radii of the plaques $\cD^c$.

Consider any orbit $\cO_n$.
Since the Lyapunov exponent along $E^c$ belongs to $(-\delta,0)$,
by Pliss lemma there exists $q\in \cO_n$
such that for each $k\geq 0$ one has
\begin{equation}
\label{e.exp}
\|Df^{-k}_{|E^c}(q)\|\leq e^{k\delta}.
\end{equation}

For each point $z\in \cO_{n}$ let us define $\Gamma_{z}\subset\cD^c_{z}$ as the maximal interval
containing $z$ such that $f^k(\Gamma_{z})\subset \cD^c_{z}$ for each $k\geq 0$ and $\Gamma_{z}$ is contained in
the stable set of $\cO_{n}$.
By property~3 of lemma~\ref{l.dev1} and our choice of $\eta$, there exists $p\in \cO_n$ such that:
\begin{description}
\item[(*)] $\Gamma_p$ contains the $\rho$-neighbourhood of $p$ in $\cD^c_{z}$.
\end{description}
Let us write $p=f^{\ell}(q)$ where $\ell\geq 0$ is strictly smaller than the period of $\cO_n$.
One can choose $p$ so that it is the only iterate $f^{k}(q)$ with $0\leq k\leq \ell$
which satisfies property (*).
We claim that for each $k\geq 0$ one has
\begin{equation}
\label{e.exp2}
\|Df^{-k}_{|E^c}(p)\|\leq e^{k\delta}.
\end{equation}

By our choice of $p$, for any iterate $f^{-k}(p)$ with $0< k \leq \ell$ one of the branches $J$ of $\Gamma_{f^ {-k}(p)}\setminus \{f^{-k}(p)\}$
has size smaller than $\rho$. Since $\rho\|Df\|$
is smaller than the radius of the plaque $\cD^c$,
this implies that $f(J)$ is contained in
$\cD^c_{f^{-k+1}(p)}$, hence coincides with one branch of $\Gamma_{f^{-k+1}(p)}\setminus \{f^{-k+1}(p)\}$.
One can thus define a sequence $j_{0}=0<j_{1}<\dots<j_{s}=\ell$ having the following property:
\begin{description}
\item[\quad\quad]
For each $0\leq i < s$
there is a branch $J_{i}$ of $\Gamma_{f^{-j_{i}}(p)}\setminus \{f^{-j_{i}}(p)\}$ 
of size larger than $\rho$ and such that all the iterates $f^{-k}(J_{i})$ with $1\leq k\leq j_{i+1}-j_{i}$ have a length smaller than $\rho$.
\end{description}
In particular this implies that $\|Df^{-k}_{|E^c}(f^{-j_{i}}(p)) \|$ with $1\leq k\leq j_{i+1}-j_{i}$
is bounded by $\lambda^k$. By combining the estimates over the different $j_i$,
one thus gets for each $0\leq k \leq \ell$,
$$\|Df^{-k}_{|E^c}(p) \|\leq \lambda^k.$$
With our choice of $\lambda\leq e^\delta$ and~(\ref{e.exp}) one deduces~(\ref{e.exp2}) as claimed.

By the domination $E^c\oplus E_3$, this implies that there exists $\nu\geq 1$
(which does not depend on $\cO_n$) such that
$$\prod_{i=0}^{k-1}\|Df^{-\nu}_{|E_3}(f^{-\nu i}(q))\|\leq e^{-k\rho},$$
where $\rho$ is uniform and positive if $\delta$ has been chosen close enough to $0$.
In particular one deduces that the local unstable manifold at $p$ has a uniform size.

We claim that the local stable manifold at $p$ also has a uniform size:
by property (*) the set $\Gamma_p$ contains the $\rho$-neighbourhood of $p$ in $\cD^ c_p$,
so the stable manifold at $p$ has a uniform size along $\cD^c_p$.
Now since $f^ k(\Gamma_p)$ belongs to $\cD^ c_{f^ k(p)}$ for each $k\geq 0$,
one has at any point $z\in \Gamma_p$
$$\|Df^{k}_{|\Gamma_p}(z) \|\leq C\lambda^k,$$
where $C>0$ is uniform.
The domination $E_1\oplus E^c$ implies that for $k\geq 0$ one has
$$\|Df^{k}_{|E_1}(p) \|\leq Ce^ {-k\rho},$$
where $\rho$ is positive and uniform As a consequence a uniform tubular neighbourhood
of $\Gamma_p$ in $\cD^ {cs}_p$ is contracted by forward iterations and contained in
the stable set of $p$. This proves the claim.

We have shown that each periodic orbit $\cO_n$
contains one point $p_n$ whose local stable and unstable manifolds have uniform sizes.
By considering an extracted subsequence of $(\cO_n)$ one may assume that all the
points $p_n$ are close together. This implies that all the orbits $\cO_n$
are homoclinically related together.
\end{proof}

\subsection{Large stable and unstable sets}
In case 2.c, one can create by perturbation some central segments
that are homoclinic to a minimal set $K'\subset U\cap A$.
Taking their limit, one gets a central segment for the set $A$ and the diffeomorphism $f$.
One can hope to argue as for proposition~\ref{p.R}.
The difficulty is that the bundles $E_1$ and $E_3$ on $A$ are not uniform.

\begin{lemma}\label{l.segment}
Let $K'\subset U\cap A$ be an invariant compact set and
let $x,y\in A\setminus K'$ such that $\omega(x)=\alpha(y)=K'$ and
$\overline{\{f^{n}(x), n\geq 0\}}\cup\overline{\{f^{-n}(y), n\geq 0\}}\subset U$.
If $x$ is not an endpoint of $I^s_{x}$ and $y$ is not an endpoint of $I^u_{y}$,
then there exists $\rho>0$ such that the following property holds.

For each neighbourhoods $\cU$ of $f$ and $V$ of $A$, there exists $g\in \cU$,
$z\in M\setminus K'$ and an arc $\gamma$ such that the following properties hold.
\begin{itemize}
\item[--] $f$ and $g$ coincide in a neighbourhood of $K'$;
the backward and the forward orbits of $z$ by $g$ accumulate on $K'$
and are contained in $V$;
\item[--] $\gamma$ has length $\rho$; the length the iterates $f^k(\gamma)$ is bounded by $\rho$
and goes to zero as $k\to\pm \infty$.
\end{itemize}
\end{lemma}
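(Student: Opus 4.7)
The plan is to extract from the two interior-point hypotheses a pair of shrinking central arcs, use Hayashi's connecting lemma to splice them into a $g$-orbit homoclinic to $K'$, and then produce $\gamma$ as a small central arc placed near $K'$ on this orbit.

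First, since $x$ lies in the interior of $I^s_x$, there is $\rho_0>0$ and an arc $J^s\subset I^s_x\subset \cD^c_x$ of length $2\rho_0$ centred at $x$. The invariance~(\ref{e.stable}) combined with the trapping of central plaques at the type-(N) set $K'=\omega(x)$ (remark~\ref{r.classification}.d) gives $f^n(J^s)\subset \cD^c_{f^n(x)}$ with $|f^n(J^s)|\to 0$; symmetrically, $y$ being interior to $I^u_y$ yields an arc $J^u\ni y$ of length $2\rho_0$ with $|f^{-n}(J^u)|\to 0$. I will fix $\rho$ as a small fraction of $\rho_0$, using uniform continuity of the plaque family $\cD^c$ on the compact domain where the dominated splitting extends.

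Next, since $A$ is chain-transitive and contains both the forward orbit of $y$ and the backward orbit of $x$, there exist pseudo-orbits in $A$ from $y$ to $x$. Two successive applications of Hayashi's connecting lemma together with lemma~\ref{l.connecting}, in the two-stage composition scheme from the proof of proposition~\ref{p.extension}, produce a perturbation $g\in \cU$ equal to $f$ outside two small boxes contained in $V$ and disjoint from $K'$, and a point $z$ such that $g^{-N^-}(z)\in J^u$ and $g^{N^+}(z)\in J^s$ for large $N^\pm$. The $g$-orbit of $z$ thereby accumulates on $\alpha(y)=K'=\omega(x)$ in both directions and lies in $V$; in particular, picking $w=g^k(z)$ for $k\geq N^+$ large places $w$ arbitrarily close to $K'$, keeping $w$ and its forward $g$-orbit inside a small neighbourhood $U_0$ of $K'$ on which $g=f$.

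The arc $\gamma$ is then the central subarc of $\cD^c_w$ of length $\rho$ centred at $w$. In the forward direction, the local invariance of $\cD^c$ together with the attracting type~(N) of $K'$ yield $f^n(\gamma)\subset \cD^c_{f^n(w)}$ with $|f^n(\gamma)|\to 0$, since the forward $f$-orbit of $w$ coincides with the forward $g$-orbit and hence stays in $U_0$. In the backward direction, the backward $f$-orbit of $w$ initially coincides with the backward $g$-orbit in $U_0$ until it crosses a perturbation box; using the $C^1$-smallness of the perturbation and the resulting $C^0$-closeness of finite-time orbit segments, the backward $f$-orbit of $w$ then tracks the backward $g$-orbit all the way to a point near $y'\in J^u$, after which further backward iterates shrink thanks to $|f^{-n}(J^u)|\to 0$. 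The principal obstacle is precisely this backward analysis across the perturbation: one must control the excursion of the $f$-orbit of $\gamma$ outside $U_0$. This is handled by keeping the perturbation boxes arbitrarily small relative to $\rho$, by using the freedom Hayashi's lemma grants in positioning the splice so the backward $f$-orbit of $w$ lands inside a translate of $J^u$, and by choosing $\rho$ small enough that the bounded-time expansion of $\gamma$ during the excursion still yields $|f^k(\gamma)|\leq \rho$.
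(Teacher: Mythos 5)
Your opening steps — extracting the arcs $J^s$, $J^u$ of length $\sim\rho_0$ from the interior-point hypotheses, noting that $|f^n(J^s)|\to 0$ and $|f^{-n}(J^u)|\to 0$ by the type-(N) trapping at $K'$, and then splicing the backward orbit of $y$ to the forward orbit of $x$ via the connecting lemma to get a $g$-orbit through a point $z$ that is bi-asymptotic to $K'$ — match the paper's construction of the perturbation $g$.

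The gap is in the construction and backward control of $\gamma$. You place $\gamma$ as a short central arc centred at a point $w=g^k(z)$ far along the \emph{forward} $g$-orbit of $z$, and you must then control the backward $f$-iterates $f^{-n}(\gamma)$ for all $n$. You claim that the backward $f$-orbit of $w$ tracks the backward $g$-orbit across the perturbation box back to $J^u$, invoking ``$C^0$-closeness of finite-time orbit segments.'' That principle only gives closeness on a time interval of bounded length for diffeomorphisms $C^1$-close to each other; here the backward time from $w$ to the perturbation box is unbounded (it grows with $k$), and once the $g$-orbit of $w$ enters a box where $f\neq g$, the backward $f$- and $g$-orbits of $w$ separate and there is no further control whatsoever — the $f$-backward orbit of $w$ simply does not reach $y$ or $J^u$. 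Consequently, neither the shrinking of $|f^{-n}(\gamma)|$ nor even the bound $|f^{-n}(\gamma)|\leq\rho$ follows from the ingredients you list. A secondary issue is that $\gamma\subset\cD^c_w$ is not automatically a curve whose \emph{backward} iterates remain tangent to a central cone: the centre plaque family is only guaranteed to be locally forward invariant, so on the backward side nothing prevents $\gamma$ from developing an unstable component.

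The paper avoids this entirely by placing the curve at $x$ rather than far forward. Writing $x=g^r(y)$, it forms the transported centre-unstable disc $B^{cu}_x=g^r(B^{cu}_y)\cap\bigcap_{i} g^{i}(V_0)$ and takes the curve $\gamma_x$ inside $B^{cs}_x\cap B^{cu}_x$, which is tangent to a central cone by the domination on $V_0$. Forward $f$-iterates of $\gamma_x$ shrink because they stay in the (unperturbed) stable set $B^{cs}_x$; backward iterates under the spliced dynamics stay in the transported unstable set $B^{cu}_x$, whose preimages pull back to $B^{cu}_y$ and then shrink along the (unperturbed) backward orbit of $y$. Taking the maximal subarc with all iterates of length $<\rho$, by maximality some iterate attains length $\rho$, and this iterate is $\gamma$ with $z$ the corresponding iterate of $x$. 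The key insight you are missing is precisely this intersection $B^{cs}_x\cap B^{cu}_x$: it produces a one-dimensional central curve whose forward \emph{and} backward iterates are both controlled by stable/unstable contraction, rather than relying on re-tracking the spliced orbit, which is impossible.
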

\begin{proof}
By assumption there exists $\rho>0$ such that the ball $B^{cs}_x$ centred at $x$
of radius $\rho$ in $\cD^{cs}_x$ is contained in the stable set of $x$ and
the ball $B^{cu}_y$ centred at $y$ of radius $\rho$ in $\cD^{cu}_y$ is contained in the unstable set of $y$
Note also that since $x$ is not in the basin of a periodic orbit,
the backward orbit of $x$ is disjoint from the forward orbit of $B^{cs}_x$
and the backward orbit of $y$ is disjoint from the forward orbit of $B^{cu}_y$.

The connecting lemma associates at $(f,\cU)$ some neighbourhoods
$W^+\subset \widehat W^+$ and $W^-\subset \widehat W^-$
of $x$ and $y$ respectively and an integer $N$.
If the neighbourhoods are small enough the iterates $f^{-i}(\widehat W^+)$
and $f^{j}(\widehat W^-)$ for $0\leq i,j\leq N$ are contained in $V$,
are pairwise disjoint, disjoint from the iterates $f^{k}(B^{cs}_x)$ and $f^ {-k}(B^{cu}_y)$ for $k>0$
and disjoint from $K'$.

Since $f$ belongs to $\cR_N\subset \cR_{Chain},$
lemma~\ref{l.connecting} applies and there exists
a segment or orbit $(x_0,\dots,f^\ell(x_0))$ contained in $V$
such that $x_0$ belongs to $W^-$
and $f^\ell(x_0)$ to $W^+$.
As in the proof of proposition~\ref{p.extension}, there exists a perturbation
$g\in \cU$ of $f$ with support in the iterates $f^{-i}(\widehat W^+)$
and $f^{j}(\widehat W^-)$ for $0\leq i,j\leq N$ such that the points $x$ and $y$
belong to a same orbit which accumulates on $K'$ in the future and in the past.
Note that the forward orbit of $x$ and the backward orbit of $y$ have not been perturbed.

Let us recall that $V_0$ is a fixed neighbourhood of $A$.
Let us write $x=g^r(y)$ and define
$$B^{cu}_x=g^r(B^{cu}_y)\cap \bigcap_{i=0}^r g^r(V_0).$$
This is a submanifold at $x$ whose backward iterates are contained in $V_0$ and
tangent to a centre-unstable cone.
Let $\gamma_x$ be the largest curve bounded by $x$ in the intersection $B^{cs}_x\cap B^{cu}_y$
whose iterates are all of length smaller than $\rho$.
By construction all the iterates of $\gamma_x$ are tangent to a centre-cone,
and by maximality, one of them $\gamma=f^n(\gamma_x)$ has length $\rho$.
We denote $z=f^n(x)$.
\end{proof}

We now conclude the proof of proposition~\ref{p.N}.

\begin{corollary}
Let $V$ be a neighbourhood of $A$, $K'\subset U\cap A$ be an invariant compact set and
let $x,y\in A\setminus K'$ such that $\omega(x)=\alpha(y)=K'$ and
$\overline{\{f^{n}(x), n\geq 0\}}\cup\overline{\{f^{-n}(y), n\geq 0\}}\subset U$.

If $x$ is not an endpoint of $I^s_{x}$ and $y$ is not an endpoint of $I^u_{y}$,
then there exists a sequence of hyperbolic periodic orbits $(\cO_n)$ contained in $V$
that are homoclinically related together,
which converges toward a compact subset of $A$, whose indices are $\dim(E^s)$,
and whose $\dim(E^s)+1$-th Lyapunov exponent goes to zero as
$n\to \infty$.
\end{corollary}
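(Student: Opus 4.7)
The strategy is to mirror the argument of proposition~\ref{p.R}, using the central arcs produced by lemma~\ref{l.segment} in place of those available for a chain-recurrent set of type~(R).

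First, take decreasing sequences $\cU_n\downarrow\{f\}$ of $C^1$-neighbourhoods and $V_n\downarrow A$ of neighbourhoods of $A$, and apply lemma~\ref{l.segment} for each $n$. This produces a perturbation $g_n\in\cU_n$, a point $z_n\in M\setminus K'$ whose full $g_n$-orbit lies in $V_n$ and accumulates on $K'$ in both directions, and an arc $\gamma_n$ of length $\rho$, tangent to a central cone, whose iterates under $g_n$ remain of length at most $\rho$ and whose lengths go to $0$ as $k\to\pm\infty$. Let $w_n\in\gamma_n$ denote the midpoint.

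Next, apply Pugh's closing lemma (or Hayashi's connecting lemma) on the $g_n$-orbit of $w_n$ to obtain a perturbation $h_n$ of $g_n$, still in $\cU_n$, together with a hyperbolic periodic orbit $\tilde\cO_n$ of arbitrarily large period $\tau_n$ shadowing a long segment of the $g_n$-orbit of $w_n$. Because the orbit of $w_n$ accumulates on $K'\subset A$, one may arrange that $\tilde\cO_n\subset V_n$ and that $\tilde\cO_n$ converges, in Hausdorff topology, to a compact subset $A'\subset A$. The key estimate is the control of the central Lyapunov exponent of $\tilde\cO_n$: since $g_n^k(\gamma_n)$ is tangent to the central cone and has length bounded by $\rho$ for every $k$, the domination $E_1\oplus E^c\oplus E_3$ forces $\prod_{i=0}^{k-1}\|Dg_n|_{E^c}(g_n^i(w_n))\|$ to grow only sub-exponentially in $|k|$; consequently the central Birkhoff average along the orbit of $w_n$ tends to zero, and since the closing lemma modifies only a uniformly bounded number of iterates, the Lyapunov exponent of $\tilde\cO_n$ along $E^c$ can be made arbitrarily small by choosing $\tau_n$ large and $h_n$ close to $g_n$.

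The third step is to transfer this conclusion from perturbations of $f$ to $f$ itself. The condition ``admits a hyperbolic periodic orbit contained in $V_n$ of index $i$ whose $(\dim(E^s)+1)$-th Lyapunov exponent lies in $(-1/n,1/n)$'' is $C^1$-open. Enlarging the residual set $\cR_N$ (by a countable intersection of the sets $\cU\cup(\diff^1(M)\setminus\overline\cU)$ over the countably many such open conditions) we may assume $f$ itself satisfies it, yielding a sequence of hyperbolic periodic orbits $\cO_n$ of $f$ contained in $V_n$ whose $(\dim(E^s)+1)$-th Lyapunov exponent tends to $0$.

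Finally, up to extracting a subsequence, the sign of this exponent is constant; taking it positive, the index of $\cO_n$ equals $\dim(E^s)$. Since $\cO_n\subset V_n$ and $V_n\downarrow A$, the Hausdorff-accumulation of $(\cO_n)$ lies in $A$, which is contained in a single chain-recurrence class $\cC$ of $f$. By the genericity assumption $f\in\cR_N$, any two hyperbolic periodic orbits of the same index inside $\cC$ are homoclinically related, so after a further extraction the $\cO_n$ are homoclinically related to one another.

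The main obstacle is the central Lyapunov exponent estimate at step two: one must check that the bounded-length iteration of the central arc $\gamma_n$ under $g_n$ translates, through the domination, into sub-exponential growth of the derivative along $E^c$ at the midpoint $w_n$, and that this estimate survives the closing perturbation. The transfer from perturbations back to $f$ via the open--dense dichotomy is then routine.
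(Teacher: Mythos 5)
Your overall outline (extract central arcs via lemma~\ref{l.segment}, close orbits, control the central exponent through the bounded-length iterates of the arc, transfer to $f$ by an open--dense dichotomy) tracks the paper's strategy, but there are two genuine gaps, the second of which is the crux of the corollary.

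First, the open condition you build into the residual set is too coarse. You only record that $f$ has a hyperbolic periodic orbit in $V_n$ of index $\dim(E^s)$ with weak $(\dim(E^s)+1)$-th exponent. The paper instead defines $\cU(V,W,\delta)$ to record, in addition, that some point $p\in\cO\cap W$ satisfies the two-sided pointwise bound $\|D_{|E^c}f^{\pm k}(p)\|<e^{k\delta}$ for all $k>0$ (equation~(\ref{e.cont})), where $W$ is a small neighbourhood of the centre of the limit arc $\gamma$. This extra information is not cosmetic: the Lyapunov exponent of the orbit alone does not localise a point with uniformly large strong stable and strong unstable manifolds near $\gamma$, which is exactly what is needed later.

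Second — and this is the real gap — your final step concludes homoclinic relatedness from ``$\cO_n\subset V_n$, $V_n\downarrow A$, and $A$ lies in a single chain-recurrence class.'' That does not follow. A periodic orbit contained in an arbitrarily small neighbourhood of $A$ need not belong to the chain-recurrence class of $A$; in general, proximity in the Hausdorff topology gives no control over the chain-recurrence class. The paper addresses this by using the pointwise derivative bound~(\ref{e.cont}) together with the domination to show that $p$ has strong stable and strong unstable manifolds of uniform size, that the same holds for the points of $\gamma$, and then — repeating the intersection argument of proposition~\ref{p.R} — that $W^{uu}$ of a point of $\gamma$ meets $W^{ss}(p)$ and symmetrically, so that the chain-recurrence classes of $\cO$, $\gamma$ and $K'$ coincide. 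Only then can one invoke the generic fact that same-index periodic orbits in a common chain-recurrence class are homoclinically related. Your proposal skips this entirely. (There is also a minor issue with ``taking the sign positive up to extraction'': nothing guarantees a positive-sign subsequence exists. The paper forces the sign by a further small perturbation supported near $K'$, which is available precisely because $K'$ has type (N) and so the central exponent there is free to tip.)
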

\begin{proof}
Let $\cS=\{1/n, n\geq 0\}$ and let $\cV$ be a countable basis of open sets of $M$.
For $\delta\in \cS$ and $V,W\in \cV$, let us denote by
$\cU(V,W,\delta)$ the set of diffeomorphisms which possess a hyperbolic periodic orbit $\cO$
contained in $V$ of index $\dim(E^s)$, whose $\dim(E^s)+1$-th exponent belongs to $(0,\delta)$ and
such that for some point $p\in \cO\cap W$ one has
\begin{equation}
\label{e.cont}
\forall k> 0,\quad \|D_{|E^c} f^k(p)\|< e^{k\delta},
\text{ and } \|D_{|E^c} f^{-k}(p)\|< e^{k\delta}.
\end{equation}
This set is open.
Let $\cR_{2.c}$ be the intersection of the $\cU(V,W,\delta)\cup \left(\diff^ 1(M)
\setminus \overline{\cU(V,W,\delta)}\right)$ over
the $3$-uples $(V,W,\delta)$. This is a G$_\delta$ and dense subset of $\diff^1(M)$.
Since $f$ is $C^1$ generic one can assume that it belongs to $\cR_{2.c}$.

Let us fix $\delta\in \cS$ small and let $V$ be a small neighbourhood of $A$.
By applying lemma~\ref{l.segment}, one gets a sequence of diffeomorphisms $g_n$
that converges to $f$ and a sequence of arcs $\gamma_n$.
Note that the constant $\rho$ provided by the lemma can be reduced so that the $\rho$-neighbourhood of $K'$ is contained
in $V$ and for any segment $\gamma'$ tangent to a central cone
and of length less than $\rho$, one has for any $x,y\in \gamma'$, and any $g$ in a neighbourhood of $f$,
$$\|Dg_{|E^c}(x)\|\leq e^{\delta/2} \|Dg_{|E^c}(y)\|.$$
Consequently, for each diffeomorphism $g_n$ and each arc $\gamma_n$
provided by the lemma~\ref{l.segment}, one has at any $z\in \gamma_n$,
\begin{equation}\label{e.cont2}
\forall k\geq 0, \|D_{|E^c} g_n^k(z)\|\leq e^{k\delta/2},
\text{ and } \|D_{|E^c} g_n^{-k}(z)\|\leq e^{k\delta/2}.
\end{equation}
A subsequence of the arcs $\gamma_n$ converges toward an arc $\gamma$
which is a central segment of $A$ for $g$:
in particular it is contained in the chain-recurrence
class of $K'$ and property~(\ref{e.cont2}) is still satisfied.
One can choose $W$ as a small neighbourhood of
the centre point of $\gamma$.

\begin{claim}
The diffeomorphism $f$ belongs to $\cU(V,W,\delta)$.
\end{claim}
\begin{proof}
Since $f$ belongs to $\cR_{2.c}$, it is enough to approximate $f$ by diffeomorphisms
in $\cU(V,W,\delta)$.

Let us consider $g_n$ arbitrarily close to $f$ and let $\gamma_n$
be a segment given by lemma~\ref{l.segment}.
Fix a point $p$ in $W\cap \gamma_n$.
The orbit of $p$ under $f_n$ accumulates in the future and in the past on the minimal set $K'$.
Hence, it is possible by the connecting lemma to close its orbit:
for a small perturbation $h_0$ of $g_n$, the orbit $\cO$ of $p$ is periodic,
its shadows the orbit of $p$ for $g_n$ during a large time interval $[-\ell,\ell]$
and spends the remaining iterates in a small neighbourhood of $K'$.
As a consequence, $\cO$ is contained in $V$.
If the orbit of $\cO$ is long enough, the Lyapunov exponent of $\cO$ along $E^c$ is arbitrarily close
to zero. Hence for a small perturbation $h$ of $h_0$ in a neighbourhood of $K'$,
the Lyapunov exponent of $\cO$ along $E^c$
belongs to $(0,\delta)$.
From~(\ref{e.cont2}) and the fact that the Lyapunov exponents along $E^c$ on $K'$
are all zero, the point $p$ satisfies
$$\forall k> 0, \quad \|D_{|E^c} h_0^k(p)\|< e^{k\delta},
\text{ and } \|D_{|E^c} h_0^{-k}(p)\|< e^{k\delta}.$$

This proves that the diffeomorphism $h$ belongs to $\cU(V,W,\delta)$ and is close to $f$,
implying the claim.
\end{proof}

Let $\cO$ be the periodic orbit provided by the claim.
If $\delta$ has been chosen small enough, the dominated splitting $E_1\oplus E^c\oplus E_3$
that holds on a neighbourhood of $A$ and~(\ref{e.cont}) ensure that for some uniform constants $C>0$
and $\lambda\in (0,1)$ one has
$$\forall k\geq 0,\quad \|D_{|E_1}f^k(p)\|\leq C\lambda^k
\text{ and } \|D_{|E_3}f^{-k}(p)\|\leq C\lambda^k.$$
In particular,
the point $p$ has large strong stable and strong unstable manifolds.
Similarly any point of $\gamma$ has large strong stable and strong unstable manifolds.
Since $p$ is close to the centre of $\gamma$, we are in the same situation as in proposition~\ref{p.R}
(we have replaced the uniformity on the bundles $E^s$ and $E^u$ by a control at $p$
and at the points of $\gamma$). By the same proof (see~\cite{crovisier-palis-faible}),
one deduces that the strong unstable manifold of some point of $\gamma$ intersects the strong stable
manifold of $p$ and he strong stable manifold of some point of $\gamma$ intersects the strong unstable
manifold of $p$. Hence the chain-recurrence class of $\cO$, $\gamma$ and $K'$ coincide.

We have found in $V$ a periodic orbit $\cO$ whose $\dim(E^s)+1$-th
Lyapunov exponent belongs to $(0,\delta)$ and whose chain-recurrence class intersects
$K'$: the same argument, for each $\delta_n\in \cS$ in a sequence which decreases to $0$
and each $V_n\in \cV$ in a sequence of neighbourhoods which decreases to $A$, will produce
a sequence of periodic orbits of index $\dim(E^s)$ whose chain-recurrence classes coincide
and by definition of $\cR_N$ these orbits are homoclinically related together.
\end{proof}

\section{Dynamics far from homoclinic bifurcations}\label{s.conclusion}
We complete in this section the proofs of the results announced in the introduction.

One chooses the residual set $\cR\subset \diff^1(M)\setminus \overline{\Tang\cup\Cycl}$
so that any $f\in \cR$ satisfies the following properties.
\begin{itemize}
\item[--] Each periodic point $p$ is hyperbolic and its chain-recurrence class
coincides with the homoclinic class $H(p)$ (see~\cite{BC}).
\item[--] For each periodic orbits $\cO,\cO'$ in a same chain-recurrence class,
and such that the index of $\cO$ is smaller or equal to the index of $\cO'$, then
it holds that $W^u(\cO)$ and $W^s(\cO')$ have a non-empty transverse intersection (see~\cite[lemma 2.9]{abcdw}).
\item[--] $\cR$ is contained in $\cR_R,\cR'_R,\cR_H,\cR_N$ and hence satisfies the conclusion of propositions~\ref{p.R}, \ref{p.R'}, \ref{p.CH} and~\ref{p.N}.
\end{itemize}

In the following $f$ is a diffeomorphism in $\cR$.
Let us recall that a homoclinic class of $f$ cannot contains periodic points of different indices:
indeed if $\cO,\cO'$ are contained in the same homoclinic class and if the index of $\cO$
is smaller than the index of $\cO'$, then $W^u(\cO)$ and $W^s(\cO')$ have a non-empty transverse intersection
which persits under small $C^1$-perturbations; with the connecting lemma it is possible to perturb the dynamics
so that $W^s(\cO)$ and $W^u(\cO')$ intersect (see~\cite[lemma 2.9]{abcdw}); one thus has built a heterodimensional
cycle and contradicted the fact that $f\notin \overline{\Cycl}$.

\paragraph{Minimal sets.}
Let us consider a minimal invariant compact set $K$ which has a partially hyperbolic
structure $E^s\oplus E^c\oplus E^u$ with $\dim(E^c)=1$ and such that
the Lyapunov exponent along $E^c$ of any invariant measure supported on $K$ is zero.

\begin{claim}\label{c.minimal}
One of the two following cases occur:
\begin{itemize}
\item[--] $K$ is a chain-recurrence class and has type (N);
\item[--] $K$ is contained in a homoclinic class having periodic pointy of index
$\dim(E^s)$ or $\dim(E^s)+1$ and whose central Lyapunov exponent is arbitrarily close to zero.
\end{itemize}
\end{claim}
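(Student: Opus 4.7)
The plan is to classify $K$ according to the type of its central model from section~\ref{s.model} and dispatch each case using the earlier results.

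I would first observe that $K$ cannot be a single periodic orbit: every periodic orbit of $f \in \cR$ is hyperbolic, so such a $K$ would have a non-zero Lyapunov exponent along the one-dimensional bundle $E^c$, contradicting the hypothesis. Then I would rule out the type (P)-twisted case using corollary~\ref{c.twist}, which would produce a heterodimensional cycle by arbitrarily small $C^1$-perturbation, contradicting $f \in \cR \subset \diff^1(M)\setminus\overline{\Cycl}$.

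For types (R), (H) and (P)-untwisted, I would apply propositions~\ref{p.R} and~\ref{p.CH} to obtain, in any small neighbourhood $U$ of $K$, a periodic orbit $\cO \subset U$ contained in the chain-recurrence class of $K$. If that class coincided with $K$, minimality would force $\cO = K$, contradicting the preliminary observation; hence $K$ is strictly contained in its chain-recurrence class. Since $f \in \cR$, that class is the homoclinic class $H(\cO)$, so $K \subset H(\cO)$. The index of $\cO$ belongs to $\{\dim(E^s),\dim(E^s)+1\}$ by the dominated splitting extending to a neighbourhood of $K$, and the central Lyapunov exponent of $\cO$ tends to zero as $U$ shrinks, since every invariant measure on $K$ has zero central exponent. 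This yields the second bullet.

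For type (N), either $K$ coincides with its chain-recurrence class (first bullet) or $K$ is strictly contained in some chain-transitive set $A$; in the latter subcase proposition~\ref{p.N} applies. Its second alternative is excluded by $f \notin \overline{\Cycl}$, so its first alternative must hold, giving the second bullet. The hard part will be organising the dichotomy cleanly — in particular using the non-periodicity of $K$ to eliminate, for the non-(N) types, the degenerate subcase in which $K$ would equal its own chain-recurrence class; once that is ruled out the propositions from sections~\ref{s.HPR} and on type (N) do all the remaining work.
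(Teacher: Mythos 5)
Your proposal is correct and follows essentially the same route as the paper: you rule out the twisted parabolic type via corollary~\ref{c.twist}, handle types (R), (H) and untwisted (P) via propositions~\ref{p.R} and~\ref{p.CH} (getting a periodic orbit in the chain-recurrence class of $K$, which is therefore a homoclinic class containing $K$), and dispatch type (N) via proposition~\ref{p.N}, whose second alternative is excluded by $f\notin\overline{\Cycl}$. The only small deviation is the detour through the observation that $K$ is non-periodic to argue strict containment in its chain-recurrence class for the non-(N) types — this is harmless but unnecessary, since once a periodic orbit lies in the chain-recurrence class of $K$, that class is a homoclinic class containing $K$ regardless of whether the containment is strict.
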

\begin{proof}
First corollary~\ref{c.twist} asserts that $K$ can not have type (P) and be twisted.

If $K$ has type (P) and is not twisted, or if it has type (R) or (H),
then propositions~\ref{p.R} and \ref{p.CH} imply that $K$ is contained
in a homoclinic class associated to periodic orbits contained in an arbitrarily small neighbourhood
of $K$: in particular, the class contains periodic points of index $\dim(E^s)$ or $\dim(E^s)+1$
and whose $(\dim(E^s)+1)$-th Lyapunov exponent is arbitrarily close to zero.
The second item holds.

If $K$ has type (N), proposition~\ref{p.N} directly implies that
the first or the second item holds.
\end{proof}

\paragraph{Aperiodic classes.}
Let $\cC$ be an aperiodic class. The tangent bundle $T_\cC M$ is not uniformly contracted.
Hence by theorem~\ref{t.trichotomie} applied to the trivial decomposition
$E\oplus F=T_\cC M\oplus \{0\}$, the class contains a minimal set $K$ with a partially
hyperbolic structure as discussed in the previous paragraph.
Since $\cC$ is aperiodic, the set $K$ has type (N) and coincides with $\cC$.
This proves the addendum B.

\paragraph{Homoclinic classes.}
Let $H(p)$ be a non-trivial homoclinic class. It contains a dense set of periodic points
having the same index as $p$. As a consequence of Wen's corollary~\ref{c.wen},
there exists a dominated splitting $T_{H(p)}M=E\oplus F$ such that $\dim(E)$
coincides with the index of $p$.

Let us consider the case
$E$ is not uniformly contracted.
One can apply Theorem~\ref{t.trichotomie}:
\begin{itemize}
\item[--] Since the class $H(p)$ does not contain periodic points
with different indices, the first case of the theorem does not hold.

\item[--] If we are in the second case, the bundle $E$ splits again as $E'\oplus E^c_1$ with $\dim(E^c_1)=1$
and the homoclinic class contains periodic points homoclinically related to $p$
and whose Lyapunov exponent along $E^c_1$ is arbitrarily close to zero.

\item[--] If we are in the third case, the class contains a minimal set $K$ with a partially
hyperbolic structure such that $\dim(E^s_{|K})<\dim(E)$
as discussed in the first paragraph.
By the claim~\ref{c.minimal} $H(p)$ contains periodic points
of index less or equal to $\dim(E^s_{|E})+1$.
Since the index of the class is unique, one has $\dim(E^s_{|K})=\dim(E)-1$.
We have thus shown that
the class contains periodic points whose $\dim(E)$-th Lyapunov exponent is arbitrarily close to zero.
These points are dense in $H(p)$, hence by corollary~\ref{c.wen}, the bundle $E$ splits again as $E'\oplus E^c_1$ with $\dim(E^c_1)=1$.
\end{itemize}

From these two last cases, we claim that
the bundle $E'$ is uniformly contracted:
otherwise one would apply Theorem~\ref{t.trichotomie}, but
each case given by the theorem would imply that the class $H(p)$ contains periodic orbits of index smaller or equal to $\dim(E')$ which is a contradiction.
We then study the type of $E^c_{1}$:
the dynamics along $E^c_1$ above the periodic points is contracting, hence
$E^c_1$ cannot have type (N), (P) or (H)-repelling; by proposition~\ref{p.R'},
it cannot have type (R) either since this would imply that the class also contains periodic points of index
$\dim(E^s)$. As a consequence $E^c_1$ has type (H)-attracting.

The same argument applies to the other bundle $F$.
This concludes the proof of the addendum A and of the main theorem.

\paragraph{Finiteness.}
There could be apriori countably many homoclinic classes. However we have:
\smallskip

\emph{Any diffeomorphism $f\in \cR$ has only finitely many homoclinic classes
whose central bundle is two-dimensional.}
\smallskip

\begin{proof}
Let $H(\cO)$ be a homoclinic class of $f\in \cR$
whose non-hyperbolic central bundle $E^c_1\oplus E^c_2$ has dimension $2$.
From corollary~\ref{c.wen}, there exists $C>0$ and $\sigma>1$ such that for any $x\in H(\cO)$
and any unitary vectors $u\in E^c_{1,x}$ and $v\in E^c_{2,x}$, one has for any $n\geq 0$,
$$\|Df^n.u\|\leq C\sigma^{-n}\|D f^n.v\|.$$
Note that $\sigma$ does not depend on the class $H(\cO)$
but only on the uniform domination associated to periodic points of index $\dim(E^s\oplus E^c_1)$.
One fixes $a\in (0,\log\sigma)$.

The class contains periodic orbits whose Lyapunov exponent along $E^c_1$ is arbitrarily close to $0$
and by domination, one deduces that the Lyapunov exponent along $E^c_2$ for these orbits is larger than $a>0$.
The same property holds for $E^c_2$: the class contains some periodic orbits whose Lyapunov exponent along $E^c_1$
is smaller than $-a<0$ and whose Lyapunov exponent along $E^c_2$ is close to $0$.
The transition property of~\cite{BDP}
now implies that the class contains periodic orbits $\cO'$ whose Lyapunov exponent
along $E^c_1$ is close to $-\frac a 2$ (and, by domination, whose Lyapunov exponent along $E^c_2$ is larger than $\frac a 2$).

Let $\lambda=e^{-b}$ where $b$ (close to $-\frac a 2$) is the Lyapunov exponent of $\cO'$ along $E^c_1$.
There exists a point $p\in \cO'$ such that for each $n\geq 0$ one has:
$$\|Df^n_{|E^c_1}(p)\|\leq \lambda^n.$$
If $\tau$ is the period of $p$,
then $\|Df^\tau_{|E^c_1}(p)\|=\lambda^\tau$. Hence
for each $n\geq 0$ one has:
$$\|Df^{-n}_{|E^c_1}(p)\|^{-1}=\|Df^{n}_{|E^c_1}(f^{\tau-n}(p))\|\geq \lambda^{n}.$$
By the domination, one deduces that there exists some uniform constant $C>0$ such that
$$\|Df^{-n}_{|E^c_2}(p)\|\leq C.e^{-\frac {n.a} 2}.$$
Arguing as in proposition~\ref{p.anosov}, one deduces that $H(\cO)$ contains a periodic point
whose stable and unstable manifold have uniform sizes.

If one assumes that there exists an infinite number of such distinct homoclinic classes $H(\cO_n)$,
one considers a sequence of such periodic points $p_n\in H(\cO_n)$. Taking an extracting sequence,
the uniform size of the invariant manifolds implies that all the periodic points $p_n$ are homoclinically related,
contradicting the fact that the homoclinic classes are distinct.
\end{proof}

\small

\vskip 1cm

\flushleft{\bf Sylvain Crovisier} \\
CNRS - Laboratoire Analyse, G\'eom\'etrie et Applications, UMR 7539,\\
Institut Galil\'ee, Universit\'e Paris 13, Avenue J.-B. Cl\'ement, 93430 Villetaneuse, France\\
\textit{E-mail:} \texttt{crovisie@math.univ-paris13.fr}\\

\end{document}